\documentclass[12pt, a4paper]{amsart}
\usepackage{amscd}
\usepackage{amsmath}
\usepackage{latexsym}
\usepackage{graphicx}
\usepackage{amsfonts}
\usepackage{amssymb}
\usepackage[all]{xy}


\setcounter{MaxMatrixCols}{10}
\hyphenation{equi-var-i-ant compacti-fi-ca-tion Corol-lary Gro-then-dieck}
\renewcommand\atop[2]{\genfrac{}{}{0pt}{}{#1}{#2}}

\providecommand{\U}[1]{\protect\rule{.1in}{.1in}}
\nonstopmode
\newcounter{fig}

\newcommand{\coker}{\mathop{\mathrm{coker}}}

\newcommand{\R}{\mathbb{R}}
\newcommand{\C}{\mathbb{C}}
\renewcommand{\H}{\mathbb{H}}
\newcommand{\Q}{\mathbb{Q}}
\newcommand{\Z}{\mathbb{Z}}

\newcommand{\GW}{\mathbb{G}W}
\newcommand{\GWH}{\mathbb{GWH}}

\newcommand{\bbK}{\mathbb{K}}
\newcommand{\bKU}{\mathbb{KU}}
\newcommand{\cE}{\mathcal{E}}
\newcommand{\cO}{\mathcal{O}}
\newcommand{\red}{\textrm{red}}
\newcommand{\tors}{\textrm{tors}}
\newcommand{\eps}{\varepsilon}

\numberwithin{equation}{section}
\theoremstyle{plain}
\newtheorem{theorem}[equation]{Theorem}

\newtheorem{corollary}[equation]{Corollary}
\newtheorem{proposition}[equation]{Proposition}
\newtheorem{lemma}[equation]{Lemma}
\newtheorem{substuff}{\bf Remark}[equation] 
\newtheorem{sublem}[substuff]{Lemma}
\newtheorem{subcor}[substuff]{Corollary}
\newtheorem{subvariant}[substuff]{Variant}
\theoremstyle{definition}

\newtheorem{definition}[equation]{Definition}
\newtheorem{examples}[equation]{Examples}
\newtheorem{example}[equation]{Example}
\newtheorem{subex}[substuff]{Example}

\newtheorem{defn}[equation]{Definition}
\newtheorem{Void}[equation]{}
\theoremstyle{remark}
\newtheorem{remark}[equation]{Remark}

\newtheorem{rem}[equation]{Remark}
\newtheorem*{remm}{Remark}
\newtheorem{subremark}[substuff]{Remark} 

\newtheorem*{notations}{Notation}

\def\smap#1{\ {\buildrel #1 \over \rightarrow}\ }
\def\map#1{{\buildrel #1 \over \longrightarrow}}
\newcommand{\mathdot}{{\mathbf{\scriptscriptstyle\bullet}}}
\newcommand{\Spec}{\operatorname{Spec}}
\newcommand{\Pic}{\operatorname{Pic}}

\newcommand{\Hom}{\operatorname{Hom}}
\newcommand{\pt}{{\mathrm{pt}}}
\newcommand{\nis}{{\mathrm{nis}}}
\newcommand{\ret}{{\mathrm{ret}}}
\newcommand{\topl}{{\mathrm{top}}}
\newcommand{\RP}{\mathbb{RP}}
\newcommand{\Gm}{\mathbb{G}_{m}}
\renewcommand{\L}{\mathbb{L}}

\newcommand{\Sper}{\operatorname{Sper}}
\newcommand{\Max}{\operatorname{Max}}
\newcommand{\Sh}{\operatorname{Sh}}

\usepackage{color}
\newcommand{\mGW}{GW}  
\newcommand{\kGW}{\GW} 
\newcommand{\uGW}{GW}  

\begin{document}
\title[Witt group of real varieties]{The Witt group of real algebraic
varieties}
\date{\today}

\author{Max Karoubi}
\address{Universit\'e Denis Diderot Paris 7 \\
Institut Math\'ematique de Jussieu --- Paris Rive Gauche}
\email{max.karoubi@gmail.com}
\urladdr{http://webusers.imj-prg.fr/~max.karoubi}

\author{Marco Schlichting}
\address{Math.\ Institute, University of Warwick,Coventry CV4 7AL, UK}
\email{M.Schlichting@warwick.ac.uk}
\urladdr{http://http://homepages.warwick.ac.uk/~masiap}
\thanks{Schlichting was supported by EPSRC grant EP/M001113/1}

\author{Charles Weibel}
\address{Math.\ Dept., Rutgers University, New Brunswick, NJ 08901, USA}
\email{weibel@math.rutgers.edu}\urladdr{http://math.rutgers.edu/~weibel}
\thanks{Weibel was supported by NSA and NSF grants, 
and by the IAS Fund for Math} 

\begin{abstract}
The purpose of this paper is to compare the algebraic Witt group 
$W(V)$ of quadratic forms for an algebraic variety $V$ over $\R$ 
with a new topological invariant, $WR(V_{\C})$, 
based on symmetric forms on Real vector bundles (in the sense of Atiyah)
on the space 
of complex points of $V$. 
This invariant lies between $W(V)$ and the group $KO(V_{\R})$
of $\R$-linear topological vector bundles on the space $V_{\R}$
of real points of $V$.

We show that the comparison maps $W(V)\to WR(V_{\C})$ and
$WR(V_{\C})\to KO(V_{\R})$ 
are isomorphisms modulo bounded 2-primary torsion. We give 
precise bounds for their exponent of the kernel and cokernel, 
depending upon the dimension of $V.$ 
These results improve theorems of Knebusch, Mah\'e and Brumfiel.

Along the way, we prove comparison theorem between algebraic 
and topological hermitian $K$-theory, and 
homotopy fixed point theorems for the latter.
We also give a new proof (and a generalization) of a theorem of Brumfiel.
\end{abstract}
\maketitle

\pagestyle{myheadings} 
\setcounter{section}{0}%
\vspace{-30pt}
\tableofcontents

\goodbreak
Let $V$ be an algebraic variety defined over $\R$. 
The computation of its Witt ring $W(V)$ of quadratic forms
is a classical problem which 
has attracted much attention. For instance, if the topological space 
$V_\R$ of $\R$-points of $V$ has $c$ components, there is a classical
``signature map'' $W(V)\to\Z^c$ \cite[pp.\,186--188]{Knebusch.Queens}. 
To define it, note that a nondegenerate
symmetric bilinear form $\varphi$ on an algebraic vector bundle over $V$
yields a continuous family of bilinear forms $\varphi_x$ 
on the vector spaces over the points $x$ of $V_\R$; the signatures of the
$\varphi_x$  are constant on each connected component, and the 
signature map sends $\varphi$ to the sequence of these signatures in $\Z^c$.

Knebusch  \cite{Knebusch.Queens}, in analogy with results of 
Witt \cite{Witt}, asked whether the image of this signature map
is a sugbroup of finite index.
This was established in special cases
by Knebusch \cite{Knebusch} (curves) and by 
Colliot-Th\'el\`ene--Sansuc \cite{CTS} (surfaces).
The general case was obtained by Mah\'e \cite{Mahe, Mahe1, Mahe2}
($V$ affine) and Mah\'e--Houdebine \cite{MH} ($V$ projective).  
Building on the work of Mah\'e, 
this theorem was greatly generalized by 
Brumfiel \cite{Brumfiel84}, using the topological $KO$-group
associated to the space $V_\R$. Brumfiel constructed a map
\[
\gamma: W(V)\to KO(V_\R) = \Z^c\oplus \widetilde{KO}(V_\R),
\]
and showed that the kernel and cokernel of this map are
2-primary torsion groups. It follows that the cokernel is a finite group.
A recent result of Totaro \cite{Totaro} shows that the kernel
of $\gamma$ can be infinite.

In this paper we introduce a finer invariant, $WR(V_\C)$, which uses 
the space $X=V_\C$ of complex points of $V$, endowed with
the involution coming from complex conjugation.  Our construction
is based on the fundamental paper of Atiyah \cite{Atiyah},
who introduced the term {\it Real space} (with a capital R) for a space $X$
with involution and defined the {\it Real $K$-groups} $KR(X)$
of Real vector bundles on $X$.
We mimick the definition of $W(V)$ in the setting of Witt groups 
to define a finitely generated abelian group for any Real space $X$, 
called the {\it Real Witt group} $WR(X)$.

We show that Brumfiel's map $\gamma$ factors through $WR(V_\C)$,
and that the kernel of the maps $\theta: W(V)\to WR(V_\C)$ and
$W(V)\to KO(V_\R)$ are (2-primary) torsion groups
of {\it bounded} exponent $2^e$ and $2^{e+f+1}$ respectively,
only depending upon the dimension $d$ of $V$:
$e=2+4\lceil(d-2)/8\rceil$, where $\lceil x\rceil$ is the 
least integer $n\ge x$, and $f$ is the number of positive
$i\le2d$ with $i=0,1,2$ or $4$ mod $8$ 
(see Theorems \ref{W-WR.exponent} and \ref{signature:8m+2}).
Even better, if $V$ is a curve we have $W(V)\cong WR(V_\C)$; 
see Theorem \ref{curves:W=WR}. 

Brumfiel's map $\gamma$ is not an isomorphism for arbitrary $V$;
for instance, $\gamma$ is zero when $V$ has no $\R$-points.
In contrast, the map $W(V)\to WR(V_\C)$ is always nonzero
(unless $V=\emptyset$); 
a complex point $\Spec(\C)\to V$ yields a surjection 
$WR(V_\C)\to WR(\C)=\Z/2$ which is nonzero on $W(V)$. 
In short, $WR(V_\C)$ is a better invariant than $KO(V_\R)$.

A key step in the proofs of our results is a description of Real
Grothendieck-Witt theory $GR$ as a homotopy fixed point set, given in
Theorem \ref{Williams-GR}; it is an 
analogue of a result for schemes \cite{BKOS} conjectured by B. Williams.
The proofs also use the authors' computation of $K$-theory of real 
varieties in \cite{KW}
and some older results related to Bott periodicity \cite{MKAnnalsH}.

To our knowledge, the group $WR(X)$ has not been studied in the literature.
We find this a bit surprising.

Similar results also hold for skew-symmetric forms on a variety over $\R$.
We write  $_{-1}GW(V)$,~ ${}_{-1}GR(X),$ etc.\ for the associated theories.
See Theorems \ref{-GR=KRH} and \ref{GW=GR{-1}}, for example.
When we deal with both symmetric and antisymmetric forms, we use the
notation $_{\eps}GW(V),{}_{\eps}GR(X),$ etc., with $\eps=\pm1$. 
(This essentially follows the notations in \cite{MKAnnalsH}.)

\medskip 
Here is a more detailed description of our paper.
In Section \ref{sec:GR-WR}, we define the groups $GR(X)$ and $WR(X)$.
In Section \ref{sec:GR=KOG}, we identify the Real Grothendieck-Witt
group $GR(X)$ of a Real space $X$ with the Grothendieck group $KO_{G}(X)$ of
equivariant real vector bundles for $G=C_{2}$.
In Section \ref{sec:KR}, 
we improve the results in our earlier paper \cite{KW}, 
relating the algebraic $K$-theory of a variety $V$ over $\R$ 
to the $KR$-theory of its associated Real space.

In Section \ref{sec:curves}, 
we determine the Real Witt groups of smooth projective curves over $\R$;
see Theorems \ref{W(curve)} and \ref{W(real curve)}.
(The calculation for any curve over $\R$ may be determined from this.)
Using results in later sections, we show that $W(V)\cong WR(V)$
for any curve. Given this identification, we recover 
some results of Knebusch \cite{Knebusch} via topological arguments.
We have placed these computations here because they illustrate and
motivate the general results proved in other sections. 
Our explicit computation of $WR(V)$ uses a seemingly unknown relation 
between Atiyah's $KR$-theory and equivariant $KO$-theory, 
which we prove at the end of Appendix \ref{app:Banach}. 

In Section \ref{sec:williams}, 
we verify an analogue of a conjecture of Bruce Williams concerning the
forgetful map $GR(X)\to KR(X)$.

In Section \ref{sec:Brumfiel},  
we reprove and generalize Brumfiel's theorem \cite{Brumfiel84},
mentioned above.
We then show (in Section \ref{sec:WR})  
that $WR(X)$ is isomorphic to $KO(V_{\R})$ modulo {\it bounded} 2-groups.
Combining these results, this implies that the map
$W(V)\to WR(V)$ is an isomorphism modulo 2-primary torsion
for every variety $V$ over $\R$, with a finite cokernel.

In Section \ref{sec:exponents} 
we consider the signature map $W(V)\to KO(V_\R)$ associated to
an algebraic variety $V$ over $\R$, mentioned above.
We show that the kernels of both the signature map 
and the canonical map $W(V)\to WR(V)$ are 2-primary torsion 
groups of {\it bounded} exponent, with a bound which
depends only on $\dim(V)$. Jeremy Jacobson \cite{Jeremy} 
has given a different proof of a similar result.
%
We also bound the exponent of the kernel and cokernel of 
$W(V)\to WR(V)$ when $\dim(V)\leq 8$. 
Analogous but weaker results are proved when $\dim(V)>8.$

For $n>0$, we define the higher Witt group $W_n(V)$ to be the
cokernel of the hyperbolic map $K_n(V)\to GW_n(V)$. We also define
the co-Witt groups $W'_n(V)$ to be the kernels of the
forgetful maps $GW_n(V)\to K_n(V)$. They differ from the Witt groups
by groups of exponent~2. 

In Section \ref{sec:co-Witt}, 
we briefly consider the co-Witt groups $W'_n(V)$.
If $V$ is a smooth curve, we show that
the kernel and cokernel of $W'_n(V)\to WR{\,}'_n(V)$ have exponent~2.

In Section \ref{sec:WnR},    
we determine the Witt groups $W_n(\R)$ and $W_n(\C)$ for $n>0$. 
(This is for the trivial involution on $\C$.)
In this range, we show that the map $W_n(\R)\to KO_n$ is an isomorphism,
except for $n\equiv0\pmod4$, when $W_n(\R)$ injects into $KO_n\cong\Z$
as a subgroup of index~2. We also determine the
co-Witt groups $W'_n(\R)$ for $n>0$.

The appendices, which are of independent interest, introduce technical
results needed in the paper.
In Appendix \ref{app:errata}, we correct some statements in our 
earlier paper \cite{KW}, which we use in Example \ref{WRfree}.
%
In Appendix \ref{app:Bott}, 
we recall some basic facts about Bott elements in Hermitian $K$-theory.
These will be used in Sections \ref{sec:exponents}, 
and \ref{sec:WnR}.

Appendix \ref{app:Banach} 
is devoted to the \textquotedblleft fundamental theorem\textquotedblright\
in topological Hermitian $K$-theory in the context of involutive Banach
algebras and Clifford modules as in Atiyah, Bott and Shapiro (see \cite{ABS}
and \cite{MKThesis}). This appendix is in fact a recollection and a
rewriting in a more readable form of an old paper 
of the first author \cite[Section III]{MKSLN343}.

Appendix \ref{sec:Marco} 
establishes a Hermitian analogue of the
Lichtenbaum-Quillen conjecture in the framework of involutive Banach
algebras.

Throughout this paper, we use the expression ``$A$ has exponent $e$''
to mean that $e\cdot a=0$ for every $a\in A$. When talking about
vector bundles, we make a distinction between ``Real'' 
(for $\C$-antilinear) and ``real'' (for $\R$-linear).
Another convention we use throughout this paper is to write $X$ for the
space $V_{\C}$ of complex points of a variety $V$ defined over $\R$, 
while the space of real points is written $V_{\R}.$

\begin{notations}
The notation $K_0(V)$ and $\mGW_0(V)$ (resp., $KR(X)$ and $GR(X)$)
refers to abelian groups: the Grothendieck and Grothendieck-Witt 
groups of algebraic vector bundles and their symmetric forms on $V$
(resp., of Real topological bundles and their symmetric forms on $X$).
The corresponding spectra are $\bbK(V)$, 
$\GW(V)$, $\mathbb{KR}(X)$ and $\mathbb{GR}(X)$. The spectrum $\uGW(V)$ 
has the same connective cover as $\GW(V)$, and will appear
in Section \ref{sec:Brumfiel} and Appendix \ref{sec:Marco}.

We write $W(V)$ and $WR(X)$ for the Witt group of $V$ and the
Real Witt group of $X$.  We will also abuse notation and write
$KR(V)$, $GR(V)$ and $WR(V)$ for $KR(V_\C)$, $GR(V_\C)$ and $WR(V_\C)$.
\end{notations}

\smallskip
\paragraph{\textit{Acknowledgements}}

The authors are grateful to 
J.\,Rosenberg for pointing out the
misstatement in \cite[1.8]{KW} which we describe in \ref{errata1}.
Conversations with P.\,Balmer, J.-L.\,Colliot-Th\'el\`ene, M. Coste,
J.\,Jacobson, Parimala, A.\,Ranicki and B.\,Totaro were also helpful.

\bigskip\goodbreak
\section{Real Witt and Real Hermitian $K$-theory}
\label{sec:GR-WR}

By a \emph{Real space} we mean a CW complex $X$ with an involution 
$\sigma$, i.e., an action of the cyclic group $C_{2}$. 
By a \textit{Real vector bundle} on $X$ we mean a complex vector 
bundle $E$ which is equipped with an involution 
(also called $\sigma$, by abuse),
such that the projection $p:E\to X$ satisfies $p\sigma=\sigma p$ 
and for each point $x$ of $X$ the map 
$E_{x}\map{\sigma} E_{\sigma x}$ is $\C$-antilinear. 
Following Atiyah \cite{Atiyah}, we define $KR(X)$ as the Grothendieck group of 
the category $\cE_X$ of Real vector bundles on $X$. 

The dual $E^{\ast}$ of a Real vector bundle is the dual of the underlying
complex vector bundle; if $\phi_x\in E_x^*$ then
$(\sigma\phi_x)(u)=\sigma(\phi_x(\sigma u))$, as in \cite{Atiyah}.
A \textit{Real symmetric form} $(E,\varphi)$ is a
Real vector bundle $E$ together with an isomorphism 
$\varphi:E\map{\simeq}E^{\ast}$ of Real vector bundles
such that $\varphi=\varphi^{\ast}.$ 
The hyperbolic forms $H(E)=(E\oplus E^{\ast },h)$ play a special role.
In fact, the category $\cE_X$ of Real vector bundles on $X$ is a 
\emph{Hermitian category} (= exact category with duality), 
which essentially means that
it has a duality $\cE\map{*}\cE^{op}$ and a natural isomorphism 
$E\map{\simeq}E^{\ast\ast}$. (See \cite[2.1]{Schlichting.Herm} for
the precise definition.)

\begin{defn}\label{def:GR} 
If $\cE$ is a Hermitian category, its
Grothendieck-Witt group $\mGW_0(\cE)$ is the Grothendieck group of the
category of symmetric forms $(E,\varphi)$ with $E$ in $\cE$, modulo
the relation that $[E,\varphi]=[H(L)]$ when $E$ has a Lagrangian $L$
(a subobject such that $L=L^\perp$).

The Witt group $W(\cE)$ is the cokernel of the hyperbolic map 
$H:K_0(\cE)\to \mGW_0(\cE)$, which sends $[E]$ to its
associated hyperbolic form $H(E)$. 
Similarly, forgetting $\varphi$ induces a functor 
from symmetric forms to $\cE$, 
and hence a \emph{forgetful} homomorphism 
$F:{\mGW_0}(\cE)\!\to \!K_0(\cE)$.
The co-Witt group $W'(\cE)$ is the kernel of the map $F$.

If $\cE$ has an exact tensor product, then ${\mGW_0}(\cE)$ is a ring and 
$W(\cE)$ is a quotient ring, because the image of the hyperbolic map
is an ideal of ${\mGW_0}(\cE)$. In this case, we refer to ${\mGW_0}(\cE)$ as the
Grothendieck-Witt ring and refer to $W(\cE)$ as the Witt ring of $\cE$.
\end{defn}

If $V$ is a scheme, we write ${\mGW_0}(V)$ for the Grothendieck-Witt ring 
of the Hermitian category of vector bundles on $V$, 
with the usual duality $E\mapsto Hom_{\cO_V}(E,\cO_V)$.
The classical Witt ring $W(V)$ is the Witt ring of this category.

\begin{defn}\label{def:WR} 
If $X$ is a Real space, the Grothendieck-Witt ring of the
Hermitian category $\cE_X$ of Real vector bundles on $X$ is 
written as $GR(X)$. The
Real Witt ring of $X$, $WR(X)$, and the Real co-Witt group, 
$WR{\,}'(X)$, are defined to be 
\begin{align*}
WR(X) & =\mathop{\mathrm{coker}}(KR(X)\map{H} GR(X)),\qquad \\
WR{\,}'(X) & =\ker(GR(X)\map{F} KR(X)).
\end{align*}
\end{defn}

\goodbreak
If $V$ is a variety over $\R$, its \textit{associated Real space} is
the topological space $V_{\C}$ consisting of the complex points of 
$V$, provided with the involution induced by complex conjugation. 
As Atiyah observed \cite{Atiyah}, any algebraic vector bundle on $V$ 
determines a Real vector bundle on $V_\C$, and the resulting functor
sends the dual vector bundle on $V$ to the dual Real vector bundle on
$V_\C$. That is, we have a Hermitian functor from algebraic
vector bundles on $V$ to $\cE_{V_\C}$. This induces natural maps
\[
{\mGW_0}(V)\to GR(V_\C),~ W(V)\to WR(V_\C) 
 ~\textrm{and}~ W'(V)\to WR{\,}'(V_\C).
\]
By abuse of notation, we shall write $KR(V)$, $GR(V)$, $WR(V)$ and 
$WR{\,}'(V)$ for the abelian groups $KR(V_\C),GR(V_\C),$ etc. 
(The corresponding topological spectra are written in boldface
or blackboard bold. 
We will occasionally write
$X$ for $V_\C$.  As we showed in \cite[1.6]{KW}, if $V$ is 
quasi-projective but not projective then $V_{\C}$  has a compact
Real subspace $X_0$ as a Real deformation retract, constructed as 
the complement of an equivariant regular neighborhood of the complement
of $V_{\C}$ in a projective variety $\bar V_{\C}$.

\medskip
\paragraph{\it Cohomology theories}
Following Atiyah \cite[\S 2]{Atiyah}, we may form cohomology theories on
(finite dimensional)
Real spaces by defining $KR^{-n}(X)$ and $GR^{-n}(X)$ for $n>0$ as in
topological $K$-theory, \textit{i.e.}, by considering bundles on 
suspensions $S^n\wedge X^+$, where $X^{+}=X\cup\{\pt\}$ and
$S^n$ has the trivial involution. However, we will
often write $KR_{n}(X)$ for $KR^{-n}(X)$ and $GR_{n}(X)$ for $GR^{-n}(X)$
in order to have a consistent notation with the algebraic analogues.
As usual, $KR_0(X)$ and $GR_0(X)$ agree with $KR(X)$ and $GR(X)$.

For example, if $V=\Spec(\R)$ (so $X=\pt$ is a point), 
$KR_{n}(\pt)\cong KO_{n}(\pt)$, $GR_{n}(\pt)\cong KO_{n}(\pt)^{2}$ 
and $WR_{n}(\pt)\cong KO_{n}(\pt)$. When $n=0$, we have 
${\mGW_0}(V)\cong GR(\pt)$ and $W(V)\cong WR(\pt)\cong\Z$.
If $V=\Spec(\C)$, so $X$ is $S^{1,0}$ (two points with the nontrivial 
involution), we have $KR_n(S^{1,0})=KU_n$, and $GR(S^{1,0})\cong KO_n$
(as we will see in Example \ref{WR(X^G)}), so $WR_n(S^{1,0})$ is the 
cokernel of the canonical map $KU_n\to KO_n$. When $n=0$, we have
$W(\C)\cong WR(S^{1,0})\cong\Z/2$.

If $V$ is a curve then $K_{0}(V)$ can differ from $KR(V)$,
even modulo~2 (see \cite[4.12.1]{KW}),
and $GW_{0}(V)$ differs from $GR(V)$, but we
shall see in Theorem \ref{curves:W=WR} 
that $W(V)\cong WR(V)$.

There are two non-connective spectra associated with the abelian group
${\mGW_0}(V)$. One, which we write as $\uGW(V)$, 
arises from applying a hermitian analogue of
the Waldhausen infinite loop space
machine (iterated $S_\mathdot$) to the category of strictly perfect
complexes with the usual duality $E^*=\Hom(E,\cO_V)$; 
see \cite[Defn.\,5.4]{Schlichting.Fund}.
The other, which we write as $\GW(V)$,
was introduced in \cite{Schlichting.MV} and is called the
``Karoubi-Grothendieck-Witt spectrum'' in \cite{Schlichting.Fund};
it is obtained using algebraic suspensions; 
see also Definition 8.6 and Remark 8.8 of {\it loc.\,cit.}.
There is
a canonical map $\uGW(V)\to\GW(V)$.
We write ${\mGW_n}(V)$ and ${\kGW_n}(V)$ for the homotopy groups 
of the spectra ${\mGW}(V)$ and ${\kGW}(V)$.
 Since we are assuming that $V$ 
is always a quasi-projective variety, the maps 
$\mGW_n(V)\to\kGW_n(V)$ are isomorphisms for $n\ge0$;
see \cite[8.14]{Schlichting.Fund}.
When $V$ is regular then these maps are isomorphisms for all $n\in \Z$.
By \cite[9.7]{Schlichting.Fund}, $\GW(V)$ satisfies Nisnevich descent;
$\uGW(V)$ does not.

\medskip 
As is customary nowadays, we  can consider the shifted
Grothendieck-Witt groups ${\kGW_{n}^{[i]}}(\cE)=\pi_n\GW^{[i]}(X)$ ($i\in\Z$)
and have ${\kGW_n}(\cE)={\kGW}_n^{[0]}(\cE)$, 
as in \cite[5.7]{Schlichting.Fund}.
These groups fit into long exact sequences 
\begin{equation}\label{seq:GW[i]}
\to {\kGW_{n}^{[i-1]}}(\cE) \map{F} K_{n}(\cE)%
\overset{H}{\to }{\kGW_{n}^{[i]}}(\cE)\to
\kGW_{n-1}^{[i-1]}(\cE) \to.
\end{equation}
where $F$ and $H$ are the forgetful and hyperbolic maps and $K_n$ 
stands for the homotopy groups of the non-connective $K$-theory spectrum; 
see \cite[Thm.\,8.11]{Schlichting.Fund}. 

Following Karoubi, we shall define the $n$-th higher Witt group  
$W_{n}^{[i]}(\cE)$ of $\cE$ to be the cokernel of 
$K_n(\cE)\smap{H} {\kGW_{n}^{[i]}}(\cE)$. 
When $n=0$, $W_0^{[i]}(\cE)$ coincides with the group 
$W^i(\cE)$ defined by Balmer in \cite{Balmer99}, and
$W_n(\cE)=W_{n}^{[0]}(\cE)$. (If $V$ is smooth and $n<0$, then 
$W_n^{[i]}(V)$ also agrees with Balmer's $W^{i-n}(V)$.) We also set
${}_{-1}{\kGW_n}(\cE)={\kGW_{n}^{[2]}}(\cE)$ and 
${}_{-1}W_n(\cE)=W_{n}^{[2]}(\cE)$.

In Section \ref{sec:Brumfiel} and Appendix \ref{sec:Marco} 
we will also use the spectra $GW^{[i]}(V)$ and its homotopy groups 
$GW^{[i]}_n(V)=\pi_nGW^{[i]}(V)$, which generalize $GW^{[0]}(V)=GW(V)$.
The spectra $GW^{[i]}(V)$  have the same connective cover as 
$\GW^{[i]}(V)$, but its negative degree groups $GW_n^{[i]}(V)$
are the Balmer Witt groups $W^{i-n}(V)$ for $n<0$; see
\cite[5.7]{Schlichting.Fund}.  These are the Grothendieck-Witt spectra 
associated with the category of chain complexes of vector bundles on $V$
and duality $E \mapsto\Hom_{\cO_V}(E,\cO_V[i])$.
Again, the comparison map $GW_n^{[n]}(V) \to \GW_n^{[i]}(V)$ 
is an isomorphism for $n\ge0$ in general, 
and for all $n\in\Z$ when $V$ is regular.

When $\cE$ is the (Hermitian) category of algebraic vector bundles 
on $V$, we shall adopt the classical notation $U_{n}(V)$ for
${\kGW_{n}^{[-1]}}(\cE)$, following \cite{MKSLN343}, so that
the sequence \eqref{seq:GW[i]} reduces to the classical sequence
\begin{equation}\label{seq:U-K-GW}
\to U_{n}(V) \map{F} K_{n}(V) \map{H} {\kGW_n}(V) \to U_{n-1}(V)\to.
\end{equation}%
Although ${\kGW_{n+1}^{[1]}}(\cE)$ would be classically written as the
group $V_n(V)$, we will avoid that awkward notation.

When $\cE$ is the Hermitian category $\cE_X$ of Real vector bundles 
on $X$, we shall write $GR_{n}^{[i]}(X)$ for $GW_{n}^{[i]}(\cE_X)$. 
Following this tradition, we shall write $UR_{n}(X)$ for $GR_{n}^{[-1]}(X)$ 
but avoid the awkward $VR_{n}$ notation. In this case, 
\eqref{seq:GW[i]} becomes the exact sequence:
\begin{equation}\label{eq:UR-KR-GR}
KR_{n+1}(X)\!\to \!GR_{n+1}(X)\!\to \!UR_{n}(X)\to
KR_{n}(X)\to GR_{n}(X).  
\end{equation}
In particular, $WR(X)$ is a subgroup of $UR_{-1}(X)$. 
By \cite[\S 5.4]{Schlichting.Fund}, the map 
${\kGW_{\ast }^{[\ast ]}}(V)\to GR_{\ast }^{[\ast ]}({\color{red}V_{\C}})$ is a
homomorphism of bigraded rings.

\section{Real Hermitian $K$-theory and equivariant $KO$}
\label{sec:GR=KOG}

Let us write $G$ for the cyclic group $\{1,\sigma \}$, so that 
a Real space $X$ is a $G$-CW complex. 
The forgetful functor from $\cE_X$ to the
category of $G$-equivariant $\R$-linear vector bundles on $X$ defines
a forgetful functor from $KR(X)$ to the Grothendieck group $KO_{G}(X)$
of $G$-equivariant $\R$-linear bundles on $X$.


Given a $G$-equivariant $\R$-linear vector bundle $F$, we may extend 
the involution on $F$ to an anti-linear involution on the complex bundle
$\C\otimes F$ by $\sigma (v+iw)=\sigma (v)-i\sigma (w)$ ($v,w\in F_{x}$).
This makes $\C\otimes F$ into a Real bundle on $X$, 
and defines a homomorphism 
\begin{equation}\label{eq:KOG-KR}
\rho: KO_{G}(X)\to KR(X).  
\end{equation}
Given a $G$-invariant Riemannian metric on $F$, viewed as an 
$\R$-linear symmetric form $\theta :F\map{} F^{\ast}$ on $F$, the formula
$\theta(v+iw)=\theta(v)+i\theta(w)$ defines a Real symmetric form 
$(\C\otimes F,\theta)$ (as $\theta\sigma=\sigma\theta$).
Since the metric is well defined up to homotopy, the
isomorphism class of $(\C\otimes F,\theta)$ is well defined. This
defines a homomorphism $\psi:KO_{G}(X)\to GR(X)$ which refines
the map $\rho$ in the sense given by our next result.

\begin{theorem}\label{GR=KOG} 
For any Real space $X$, the homomorphism
\[
\psi: KO_{G}(X) \overset{\simeq}\to GR(X)
\]
is an isomorphism of rings. The composition of the isomorphism $\psi$ with 
the forgetful map $GR(X)\to KR(X)$ is the map $\rho$ in \eqref{eq:KOG-KR}.

The hyperbolic map $KR(X)\to GR(X)$ is the composition of the
isomorphism $\psi: KO_{G}(X)\cong GR(X)$ with the map
\[ 
KR(X)\to KO_{G}(X)
\]  
which associates to a Real bundle its underlying real vector bundle,
equipped with the induced action of $G$.
\end{theorem}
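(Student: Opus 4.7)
The plan is to construct an explicit inverse $\Phi\colon GR(X)\to KO_G(X)$ via polar decomposition, verify $\psi\circ\Phi=\id$ and $\Phi\circ\psi=\id$, and then deduce the two additional assertions about compositions by direct inspection.

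Given a Real symmetric form $(E,\varphi)$, choose a $\sigma$-compatible Hermitian metric $h$ on the underlying complex vector bundle $E$; such an $h$ exists by averaging any Hermitian metric over $G=\{1,\sigma\}$, and the space of such metrics is convex, hence contractible. Writing $h^\flat\colon E\to E^*$ for the associated $\C$-antilinear isomorphism and $\varphi^\flat\colon E\to E^*$ for the $\C$-linear one, the composite $\tau=(h^\flat)^{-1}\varphi^\flat\colon E\to E$ is $\C$-antilinear and commutes with $\sigma$ since both $\varphi$ and $h$ do. Its square $\tau^2$ is a positive self-adjoint $\C$-linear endomorphism of $E$, so fiberwise functional calculus yields a positive square root $|\tau|$ commuting with $\tau$ and with $\sigma$. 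The operator $J=\tau\cdot|\tau|^{-1}$ is then a $\C$-antilinear involution on $E$ commuting with $\sigma$; its fixed subbundle $F:=E^J$ is a real subbundle, the inclusion extends to a canonical isomorphism $\C\otimes_\R F\cong E$ of Real bundles, and the restriction of $\sigma$ makes $F$ a $G$-equivariant real vector bundle. Set $\Phi[E,\varphi]:=[F]$.

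To see $\Phi$ is well-defined as a map from $GR(X)$, two checks are needed. Independence of $h$ follows from contractibility: two compatible metrics $h_0, h_1$ are joined by the straight-line homotopy $h_t$, which produces a family $J_t$ whose fixed subbundles $F_t$ assemble into a $G$-equivariant real bundle over $X\times[0,1]$, giving $F_0\cong F_1$ in $KO_G(X)$. Preservation of the Lagrangian relation $[E,\varphi]=[H(L)]$ is a direct fiberwise calculation: for $E=L\oplus L^*$ with the hyperbolic form and a natural choice of $h$ built from a metric $h_0$ on $L$, one finds that $\tau$ swaps $L$ and $L^*$ antilinearly via $h_0^\flat$, so $\tau^2=\id$, $J=\tau$, and $F=\{(v,h_0^\flat v):v\in L\}\cong L_\R$, matching the image of $[L]$ under the forgetful map $KR(X)\to KO_G(X)$. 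The identities $\psi\circ\Phi=\id$ and $\Phi\circ\psi=\id$ are then verified fiberwise: for $\psi(F)=(\C\otimes_\R F,\theta)$ with the natural Hermitian extension of a real metric on $F$, $\tau$ is coordinate conjugation in a real basis of $F$, so $|\tau|=\id$, $J=\tau$, and $E^J=F$ on the nose; for $(E,\varphi)$ and $F=E^J$, the restriction $\varphi|_F$ equals $h(-,|\tau|-)$, a positive-definite real inner product on $F$ whose $\C$-bilinear extension to $\C\otimes_\R F\cong E$ is $\varphi$.

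The ring structure is preserved by $\psi$ because $\C\otimes_\R(F\otimes_\R F')\cong(\C\otimes_\R F)\otimes_\C(\C\otimes_\R F')$ intertwines the tensor products of metrics with the tensor products of symmetric forms. The assertion that $\psi$ followed by the forgetful map $GR(X)\to KR(X)$ equals $\rho$ is immediate from the construction, since $\psi(F)$ has underlying Real bundle $\C\otimes_\R F=\rho(F)$. For the hyperbolic identity, the Lagrangian computation above specializes to show $\Phi(H(E))\cong E_\R$ with the $G$-action induced by $\sigma$; applying $\psi$ to both sides (using $\psi\circ\Phi=\id$) gives $H=\psi\circ(\text{underlying})$ as claimed. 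The main obstacle is the polar-decomposition step and its descent to $GR(X)$: existence of $|\tau|$ as a continuous bundle endomorphism is standard fiberwise functional calculus, but continuity in both the base and in the parameter $h$ must be checked carefully so that the contractibility argument produces bona fide isomorphisms in $KO_G(X)$; the remaining verifications are then routine.
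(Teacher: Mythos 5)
Your proof is correct and takes essentially the same approach as the paper: the paper's operator $T$ (defined by $\langle Tv,w\rangle=\theta(v,w)$) is your $\tau=(h^\flat)^{-1}\varphi^\flat$, and the paper's step of ``changing the metric up to homotopy so that $\pm1$ are the only eigenvalues of $T$'' is just your polar decomposition $J=\tau|\tau|^{-1}$ phrased differently, with the $+1$-eigenspace of $T$ being your fixed subbundle $E^J$. You are somewhat more careful than the paper about the descent of $\Phi$ through the Lagrangian relation and about the contractibility argument for metric-independence, both of which the paper dispatches as ``a routine check.''
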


\begin{proof}
We first show that the map $KO_{G}(X)\to GR(X)$ is a ring homomorphism.
Given two $G$-equivariant real vector bundles $F_1$ and $F_2$,
with $G$-invariant metrics, the tensor product of the metrics is a
metric on $F_1\otimes F_2$, and $\C\otimes F_1\otimes F_2$ with the
associated form is the product of the $(\C\otimes F_i,\theta_i)$,
as claimed.

Next, we show that $KO_{G}(X)\to GR(X)$ is an isomorphism.
Suppose that $E$ is a Real vector bundle with a nondegenerate Real
symmetric form $\theta$. If $\langle\;,\;\rangle$ is a $G$-invariant
Hermitian 
metric on the complex bundle underlying $E$, define $T:E\to E$ by
the formula $\langle Tv,w\rangle=\theta(v,w)$. 
Then $T$ is $\R$-linear and self-adjoint, because 
\begin{equation*}
\langle v,Tw\rangle=\langle Tw,v\rangle=\theta(w,v)=\theta(v,w)=\langle
Tv,w\rangle.
\end{equation*}
It follows that all eigenvalues of $T$ are real and nonzero. Changing the
metric up to homotopy, we may even assume that $\pm1$ are the only
eigenvalues of $T$. Since the metric and $\theta$ are $G$-invariant, so is $%
T $. Thus the family of $+1$ eigenspaces $F_{x}\subset E_{x}$ form an
equivariant real sub-bundle $F$ of $E$, and the subspaces $iF_{x}$ are the $%
-1$-eigenspaces of $T$, i.e., $E$ is the complexification of $F$, and $F$ is
independent of the choice of Hermitian 
metric. The assignment 
$(E,\theta)\mapsto F$ defines a map $GR(X)\to KO_{G}(X)$, and a
routine check shows that it is inverse to the map $KO_{G}(X)\to GR(X)$.

It is clear that the composition $KO_{G}(X)\to GR(X)\to KR(X)$ is 
$F\mapsto\C\otimes F$. Moreover, the composition 
$KR(X)\to KO_{G}(X)\to GR(X)$ sends a Real bundle $E$ 
(with a $G$-invariant Riemannian metric) to the Real
symmetric form $(\C\otimes_{\R}E,\theta)$, which is
isomorphic to the hyperbolic form $(E\oplus E^{*},h)$. This verifies the
rest of the assertions.
\end{proof}

\begin{subremark}\label{GR*=KO*G}
By taking appropriate suspensions of $X^+$ 
(i.e., $X$ with a disjoint basepoint), we also get isomorphisms
$GR^{-n}(X)\map{\cong} KO_G^{-n}(X)$ for $n>0$, 
as well as quotient isomorphisms
\[
WR^{-n}(X)\map{\cong} \coker\big\{ KR^{-n}(X)\to KO_G^{-n}(X)\big\}.
\] 
Since these isomorphisms are compatible with the external tensor product 
of vector bundles, we have  $\Z$-graded ring isomorphisms 
\[GR^{*}(X)\map{\cong} KO_G^{*}(X) ~\textrm{and}~
WR^{*}(X)\map{\cong}\coker\!\big\{ KR^{*}(X)\to KO_G^{*}(X)\big\}.
\]
This generalizes the observation in Definition \ref{def:GR} that
$WR(X)$ is a quotient ring of $GR(X)$.
\end{subremark}

As mentioned after Definition \ref{def:WR}, quasi-projective varieties
which are not compact still have the homotopy type of a finite
$G$-CW complex. Recall our notational convention that we write
$GR_n$ for $GR^{-n}$ and $WR_n$ for $WR^{-n}$.

\begin{corollary}\label{GR.fingen} 
Suppose that $X$ has the homotopy type of a finite $G$-CW complex, 
such as the Real space associated to a variety over $\R$. 
Then the groups $GR_{n}(X)$ and $WR_{n}(X)$
are finitely generated and 8-periodic with respect to $n.$
\end{corollary}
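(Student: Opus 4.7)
The plan is to reduce both assertions to properties of equivariant real $K$-theory and of Atiyah's $KR$-theory, which are well-behaved on finite $G$-CW complexes. By Remark \ref{GR*=KO*G}, we have natural isomorphisms
\[
GR_n(X) \cong KO_G^{-n}(X),\qquad
WR_n(X) \cong \coker\!\bigl(KR^{-n}(X) \to KO_G^{-n}(X)\bigr).
\]
So it suffices to establish that each of $KO_G^{-n}(X)$ and $KR^{-n}(X)$ is finitely generated in each degree and $8$-periodic in $n$. The statements for $GR_n$ are then immediate, and those for $WR_n$ follow from the five lemma applied to the displayed cokernel description after multiplication by the Bott element.

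To prove finite generation I would proceed by induction on the number of equivariant cells of $X$. Each cell attachment gives a cofiber sequence of finite Real spaces, hence a Mayer--Vietoris long exact sequence in both $KO_G^*$ and $KR^*$, and this reduces the claim to the case of the orbits $X = G/H$ with $H \subseteq G = C_2$. For the free orbit $H = \{1\}$, change-of-groups identifies $KO_G^*(G)$ with $KO^*(\mathrm{pt})$ and $KR^*(G)$ with $KU^*(\mathrm{pt})$, both of which are finitely generated in each degree by classical Bott periodicity. For the point-orbit $H = G$, the group $KR^*(\mathrm{pt}) \cong KO^*(\mathrm{pt})$ is finitely generated in each degree, and $KO_G^*(\mathrm{pt})$ is a finitely generated module over the Noetherian ring $RO(G) = \Z[\sigma]/(\sigma^2-1)$ times the Bott-periodic $KO^*(\mathrm{pt})$, hence also finitely generated in each degree.

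For $8$-periodicity, the real Bott element $\beta \in KO^{-8}(\mathrm{pt})$ pulls back along the canonical map $KO \to KO_G$ to an element of $KO_G^{-8}(\mathrm{pt})$, and multiplication by $\beta$ induces the equivariant Bott periodicity isomorphism $KO_G^{-n}(X) \cong KO_G^{-n-8}(X)$ in the trivial representation direction. The corresponding $8$-periodicity of $KR^*(X)$ is Atiyah's theorem from \cite{Atiyah}. I expect the main technical point to be verifying equivariant Bott periodicity in the trivial representation direction; this is a classical result of Segal--Atiyah, and it can in any case be checked directly by the same cellular induction above, since $\beta$-multiplication is an isomorphism on the two orbits $G/G$ and $G/\{1\}$ by ordinary Bott periodicity, and Mayer--Vietoris propagates the isomorphism across cell attachments.
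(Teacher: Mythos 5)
Your argument takes essentially the same route as the paper's one-line proof: reduce via Theorem \ref{GR=KOG} and Remark \ref{GR*=KO*G} to the $8$-periodicity, homotopy invariance, and finite generation of $KO_G^*(X)$ and $KR^*(X)$ on finite $G$-CW complexes, which the paper simply cites as standard and which you fill in by cellular induction and equivariant Bott periodicity. The only small phrasing point is that transferring periodicity and finite generation to $WR_n$ is a direct consequence of the commutativity of the Bott squares (and of the fact that a quotient of a finitely generated abelian group is finitely generated), rather than an application of the five lemma, since there is no long exact sequence in play.
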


\begin{proof}
As both $KR^*(X)$ and $KO_G^*(X)$ are 8-periodic, and homotopy invariant,
this is immediate from Theorem \ref{GR=KOG} and Remark \ref{GR*=KO*G}.
\end{proof}


\begin{examples}\label{WR(X^G)}
(a)
If $G$ acts trivially on $X$, $WR(X)\cong WR{\,}'(X)\cong KO(X)$
as abelian groups.
Indeed, $KR(X)\cong KO(X)$, $GR(X)\cong KO_{G}(X)\cong KO(X)\oplus KO(X)$;
the map $KR(X)\to GR(X)$ is identified with the diagonal, 
while the forgetful map is identified with addition.

In this case, the cokernel of $KO(X)\to KO_G(X)$ is isomorphic to the
ring $KO(X)$, so by the above remarks the map $WR(X)\to KO(X)$ is a
ring isomorphism. This is a version of Brumfield's generalization 
of the signature map $W(X)\to\Z^c$ mentioned in the Introduction.

(b)
At the other extreme, when $G$ acts freely on $X$, 
Theorem \ref{GR=KOG} allows us to identify the hyperbolic map with the map 
\begin{equation*}
KR(X)\to KO(X/G)\cong KO_{G}(X)\cong GR(X).
\end{equation*}
\end{examples}

\begin{example}\label{WRfree} 
Following Atiyah's notation,
let $S^{p,0}$ denote the sphere $S^{p-1}$ with the antipodal
involution; $S^{p,0}/G$ is the $(p\!-\!1)$-dimensional real 
projective space, $\RP^{p-1}$. If $p\geq 3$, the isomorphism 
$KR(S^{p,0})\cong \Z\oplus KO^{p+1}(\pt)$ is derived in Appendix \ref{errata1},
and we have $GR(S^{p,0})\cong KO(\RP^{p-1})$ by 
Theorem \ref{GR=KOG} and Example \ref{WR(X^G)}(b), 
so the hyperbolic map is described as a sum%
\[
KR(S^{p,0})\cong KO^{0}(\pt)\oplus KO^{p+1}(\pt)\to KO(\RP^{p-1})
\]
The group $KO(\RP^{p-1})$ has been computed by Adams \cite{Adams}:
it is the direct sum of the rank summand $KO^{0}(\pt)\cong \Z$ and 
a cyclic group of exponent $2^{f},$ where $f$ is the number of 
integers $i$ in the range $0<i<p$ 
with $i\equiv0,1,2$ or $4$ mod $8$ (see for instance 
\cite[IV.6.46]{MKbook}).  
If $p\equiv0,1,2,4\pmod8$, the second component 
$KO^{p+1}(\pt)\to KO(\RP^{p-1})$ must be reduced to 0, 
because $KO^{p+1}(\pt)=0$.  Thus the Witt group is
\[
WR(S^{p,0}) \cong \widetilde{KO}(\RP^{p-1}) \cong \Z/2^f\Z
\]
if $p\equiv0,1,2,4\pmod8$. For the other values of $p$,
the Witt group equals either $\Z/2^f$ or $\Z/2^{f-1}$. 
To see this, let $r$ be maximal such that $r<p$ and
$r\equiv0,1,2,4\pmod8$. Then $KO(\RP^{p-1})\to KO(\RP^{r-1})$ 
is surjective with kernel $\Z/2$, and the composition from
$KO^{p+1}(\pt)$ to $KO(\RP^{r-1})$ factors through
$KO^{r+1}(\pt)=0$ by naturality.



This example shows that we can get arbitrary $2$-primary
order in the topological Witt groups $WR(S^{p,0})$ by varying $p$.
\end{example}

\begin{example}
Let $V'$ be a complex variety, and $V$ this variety regarded as
defined over $\R$. By choosing real coordinates $(z,\overline{z})$ 
on $V$, we see that the Real space $X$ of $V$ is $G\times Y$, 
where $Y=V'(\C)$. The Real Witt group $WR(V)$ is the cokernel of 
$KR(G\times Y)\to KO_{G}(G\times Y)$; this map is the realification map in
topological $K$-theory $KU(Y)\to KO(Y)$, since $KR(G\times Y)=KU(Y)$ and
$KO_{G}(G\times Y)=KO(Y)$. Of course, $W(V)$ is an
algebra over $WR(\Spec\C)=\Z/2$.
\end{example}

\begin{example}\label{ex:canonical}
When $G$ acts freely on $X$, there is a canonical element $\gamma$ of
$GR(X)$ and hence $WR(X)$, given by the symmetric form $-1$ on
the trivial complex bundle $E=X\times\C$ with involution
$\sigma{(x,z)}=(\sigma(x),\bar{z})$. Since it has rank~1, 
$\gamma$ is nonzero; its image is nonzero under the augmentation 
$WR(X)\to\Z/2$ defined by any connected component of $X/G$.
There is also an $\R$-linear line bundle $L=X\times_G\R$ over $X/G$,
associated to the principal $G$-bundle $X\to X/G$
(where $G$ acts on $\R$ by the sign representation);
the isomorphism $KO(X/G)\map{\cong} KO_G(X)$ sends $L$
to $L_X$, the trivial bundle $X\times\R$ with the involution
$(x,t)\mapsto(\sigma(x),-t)$, and the isomorphism 
$$\psi: KO_G(X)\map{\cong}GR(X)$$ 
of Theorem \ref{GR=KOG} 
identifies $L_X$ with the canonical element $\gamma$.
\end{example}

\begin{lemma}\label{lem:L=-1}
Suppose that $G$ acts freely on $X$. If $F$ is an $\R$-linear
$G$-vector bundle on $X$ with $\psi(F)=(E,\varphi)$ then
$\psi(F\otimes L_X)=(E,-\varphi)$.

Hence the composition $GR(X)\map{} KR(X) \map{H} GR(X)$
sends $\psi(F)$ to $\psi(F\otimes(1+L_X))$, and $WR(X)$ is a
quotient of $KO_G(X)/(1+L_X)$.
\end{lemma}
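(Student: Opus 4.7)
The plan is to unwind the definition of $\psi$ in Theorem \ref{GR=KOG} on the twisted bundle $F\otimes L_X$, then to identify the hyperbolic form $H(E)$ with $(E,\varphi)\oplus(E,-\varphi)$ via a diagonal Lagrangian, and finally to combine these to deduce the quotient statement for $WR(X)$.

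First I would observe that, as an $\R$-linear $G$-bundle, $F\otimes_\R L_X$ is just $F$ with its involution twisted by $-1$, since $L_X$ is trivial as an $\R$-bundle and carries the involution $-\mathrm{id}$. Writing $(E,\tau)$ for the Real bundle underlying $\psi(F)=(E,\varphi)$, so that $\tau(v+iw)=\sigma_F(v)-i\sigma_F(w)$ on $E=\C\otimes_\R F$, complexifying $F\otimes L_X$ gives the same complex bundle $E$ but with the antilinear involution $\tau'=-\tau$. Multiplication by $i$ is $\C$-linear and satisfies $\tau\circ i=-i\circ\tau=\tau'\circ i$, so $i\colon(E,\tau)\to(E,\tau')$ is an isomorphism of Real vector bundles. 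Equipping $L_X$ with its standard $G$-invariant metric and tensoring with the metric on $F$ identifies, under $F\otimes L_X\cong F$ as $\R$-bundles, the metric with $\theta_F$, so the $\C$-linear extension $\varphi'$ is literally the same map $E\to E^*$ as $\varphi$. Pulling $\varphi'$ back along $i$ and using that the associated bilinear form is $\C$-linear in both arguments yields $\varphi'(i\xi,i\eta)=i^{2}\varphi(\xi,\eta)=-\varphi(\xi,\eta)$, hence $\psi(F\otimes L_X)\cong(E,-\varphi)$ in $GR(X)$.

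For the second assertion, Theorem \ref{GR=KOG} identifies the composition $GR(X)\to KR(X)\to GR(X)$ applied to $\psi(F)=(E,\varphi)$ with the hyperbolic form $H(E)$. On $(E\oplus E,\varphi\oplus(-\varphi))$ the diagonal $\Delta=\{(v,v):v\in E\}$ is a $G$-stable Real subbundle of half rank, and $(\varphi\oplus-\varphi)((v,v),(w,w))=0$, so $\Delta$ is a Lagrangian; by Definition \ref{def:GR} this gives $[E\oplus E,\varphi\oplus-\varphi]=[H(\Delta)]=[H(E)]$ in $GR(X)$. Combined with the first part, $H(E)=\psi(F)+\psi(F\otimes L_X)=\psi(F\otimes(1+L_X))$. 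Finally, since $\psi\colon KO_G(X)\to GR(X)$ is a ring isomorphism and every class in $GR(X)$ has the form $\psi(F)$, the image of $H\colon KR(X)\to GR(X)$ contains the principal ideal $(1+L_X)\cdot KO_G(X)$, so $WR(X)=\operatorname{coker}H$ is a quotient of $KO_G(X)/(1+L_X)$.

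The main obstacle is the sign calculation in the first step: one has to juggle the $\C$-antilinearity of $\tau$ against the $\C$-linearity of both $i$ and $\varphi$, and it is precisely this interplay that produces the crucial minus sign converting $(E,\varphi)$ into $(E,-\varphi)$.
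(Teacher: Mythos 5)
Your proof is correct, but you take a more hands-on route than the paper, which dispatches the first claim in one line: since $\psi$ is a ring isomorphism and Example \ref{ex:canonical} already identifies $\psi(L_X)$ with the canonical element $\gamma=(X\times\C,-1)$, multiplicativity gives $\psi(F\otimes L_X)=\psi(F)\cdot\gamma=(E,\varphi)\otimes(X\times\C,-1)=(E,-\varphi)$. You instead re-derive the sign from scratch by unwinding $\psi$: twisting the $G$-action on $F$ by $L_X$ negates the antilinear involution $\tau$ on $E=\C\otimes F$, multiplication by $i$ gives a Real isomorphism $(E,\tau)\to(E,-\tau)$, and pulling the (unchanged, $\C$-bilinear) form back along $i$ produces the factor $i^2=-1$. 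This is correct — in effect you are re-proving the computation that underlies Example \ref{ex:canonical} — and it makes the lemma self-contained at the cost of duplicating work the paper has already done. For the second assertion, the paper gives no details beyond ``Hence''; the intended argument is that, under $\psi$, the forgetful-then-hyperbolic composition corresponds on the $KO_G$-side to passing from $F$ to $\C\otimes F$ and back to its underlying real $G$-bundle, which is $F\oplus(F\otimes L_X)=F\otimes(1+L_X)$. Your alternative, identifying $H(E)$ with $(E,\varphi)\oplus(E,-\varphi)$ via the diagonal Lagrangian and then applying the first part, is an equally valid way to fill in that gap and is arguably a cleaner statement purely inside $GR(X)$.
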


\begin{proof}
Since $\psi$ is a ring homomorphism, if suffices to recall from
Example \ref{ex:canonical} that $\psi(L_X)$ is $\gamma=(X\times\C,-1)$. 
\end{proof}

\medskip\goodbreak
The Grothendieck-Witt group ${}_{-1}GR(X)$, which is associated to
skew-symmetric forms, is related to the Grothendieck group 
$KR_{\H}(X)$ of Real quaternion bundles on $X$ 
by Theorem \ref{-GR=KRH} below. The cohomology theory for $KR_{\H}$
is developed in \cite{KW:KRA}; here is a 1-paragraph summary.

Letting $\sigma:\H\to\H$ denote conjugation by $j$, a
{\it Real quaternion bundle} on a Real space $X$ is a 
quaternion bundle $E$ (each fiber $E_x$ has the structure of
a left $\H$-module) with an involution $\tau$  
on $E$ compatible with the involution on $X$
such that each map $\tau:E_x\to E_{\sigma(x)}$ satisfies
\[
\tau(a\cdot e) = \sigma(a)\cdot \tau(e),
\qquad a\in \H,\quad e\in E_x.
\]
Since $\sigma(i)=-i$, the underlying complex vector bundle has the
structure of a Real vector bundle in the sense of Atiyah \cite{Atiyah}.

We write $KR_{\H}(X)$ for the Grothendieck group of Real quaternion
bundles on $X$. Passing to the underlying Real vector bundle induces a
canonical map $KR_\H(X) \to KR(X)$. Here is the skew-symmetric
analogue of Theorem \ref{GR=KOG}.


\begin{theorem}\label{-GR=KRH} 
The Grothendieck-Witt group $_{-1}GR(X)$ 
of a Real space $X$ is naturally isomorphic to $KR_{\H}(X)$.

Using this isomorphism, the forgetful map $_{-1}GR(X)\to KR(X)$ may
be identified with the canonical map $KR_{\H}(X)\to KR(X)$.
\end{theorem}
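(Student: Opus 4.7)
The plan is to parallel the proof of Theorem \ref{GR=KOG}, replacing the correspondence ``equivariant real bundle $\leftrightarrow$ Real bundle with symmetric form'' by ``Real quaternion bundle $\leftrightarrow$ Real bundle with skew-symmetric form''. The key algebraic input is that, under the definition $\tau(ae) = \sigma_{\H}(a)\tau(e)$ with $\sigma_{\H} = $ conjugation by $j$, one has $\sigma_{\H}(i) = -i$ and $\sigma_{\H}(j) = j$; consequently, letting $J$ denote fiberwise left multiplication by $j$, the operator $J$ is $\C$-antilinear with $J^{2} = -1$ and commutes with the Real structure $\tau$.

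First, I would construct a homomorphism $KR_{\H}(X) \to {}_{-1}GR(X)$. Given a Real quaternion bundle $(E,\tau)$, choose a $G$-invariant complex Hermitian metric $h$ that is $J$-antiunitary, i.e., $h(Jv, Jw) = \overline{h(v,w)}$, and $\tau$-compatible; such $h$ exists because it is exactly the complex part of a quaternionic Hermitian metric, which one obtains by averaging over the compact group $G$. Define
\[
\varphi(v,w) := h(v, Jw).
\]
Since $h$ is $\C$-antilinear and $J$ is $\C$-antilinear in the second slot, the two antilinearities cancel, making $\varphi$ a $\C$-bilinear form; the antiunitarity of $J$ together with $J^{2} = -1$ yields $\varphi(w,v) = -\varphi(v,w)$; nondegeneracy is inherited from $h$; and commutation of $J$ with $\tau$ makes $\varphi$ a Real form. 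Two such metrics being homotopic through valid choices, the isomorphism class $[E,\varphi]$ is well defined.

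Second, I would construct the inverse. Given a skew-symmetric Real form $(E,\varphi)$, pick a $G$-invariant $\tau$-compatible Hermitian metric $h$ and define $T\colon E \to E$ by $\varphi(v,w) = h(Tv, w)$. Comparing linearity types shows $T$ is $\C$-antilinear, and skew-symmetry of $\varphi$ translates into $T$ being anti-self-adjoint with respect to $h$. Hence $T^{2}$ is $\C$-linear and self-adjoint, and nondegeneracy of $\varphi$ forces $T^{2}$ to be invertible with strictly negative spectrum; by homotoping the metric $h$ (the precise analogue of the eigenvalue-adjustment step in the proof of Theorem \ref{GR=KOG}, performed $G$-equivariantly via the polar decomposition of $T$) we may arrange $T^{2} = -1$. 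Setting $j\cdot v := Tv$ then equips $E$ with a Real quaternion structure, and $\tau$-compatibility of $h$ and $\varphi$ forces $T\tau = \tau T$, so this genuinely lives in $KR_{\H}(X)$.

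Third, I would check that the two constructions are mutually inverse on isomorphism classes, and that the first sends the hyperbolic Real quaternion bundle $H_{\H}(F) = F \oplus F^{*}$ to the hyperbolic skew form on the underlying Real bundle; this shows the Grothendieck-Witt relations in ${}_{-1}GR(X)$ collapse to the mere direct-sum relations in the Grothendieck group $KR_{\H}(X)$, so the map is an isomorphism. The forgetful assertion is then immediate: on the one side we drop $\varphi$, on the other we drop the $j$-action, and both amount to retaining only the underlying Real vector bundle.

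The main obstacle is purely algebraic: keeping the signs and conjugations straight so that $\varphi = h(-,J-)$ is genuinely $\C$-bilinear and skew-symmetric (as opposed to skew-Hermitian). The quaternionic setting introduces a second antilinear involution ($J$) alongside $\tau$, and only the specific combination above satisfies all three requirements simultaneously. The subsequent homotopy step adjusting $T$ to satisfy $T^{2}=-1$ $G$-equivariantly is a routine equivariant functional-calculus argument, entirely parallel to Theorem \ref{GR=KOG}.
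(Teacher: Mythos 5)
Your proposal is correct and follows essentially the same route as the paper: you pass between skew forms and quaternionic structures via a Hermitian metric, $\varphi(v,w)=h(v,Jw)$ (the paper uses $\theta(e_1,e_2)=\langle je_1,e_2\rangle$, differing only in which slot carries $J$ and the metric's conjugate-linearity convention), and in the reverse direction you homotope the metric so the antilinear anti-self-adjoint operator $T$ satisfies $T^2=-1$, exactly as in the paper's proof and in Theorem \ref{GR=KOG}. The only small omission is the explicit $j$-symmetrization step (replacing $h(v,w)$ by $h(v,w)+\overline{h(jv,jw)}$, not merely $G$-averaging) needed to make $h$ $J$-antiunitary, but you correctly identify that as the required property.
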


\begin{proof}
Given a Real quaternion bundle $E$, choose a $\tau$-equivariant 
Hermitian metric $\langle\;,\;\rangle$ on the underlying complex bundle.
Replacing the metric by 
$\langle e_1,e_2\rangle+ \overline{\langle je_1,je_2\rangle}$,
we may also assume that $\langle je_1,je_2\rangle$ is the complex
conjugate of $\langle e_1,e_2\rangle$. Then $\theta$, defined by
\[
\theta(e_1,e_2)=\langle j e_1,e_2\rangle,
\]
is a skew-symmetric $\C$-bilinear form: 
$\theta(ae_1,be_2)=ab\theta(e_1,e_2)$ and
\[
\theta(e_2,e_1)=\langle j e_2,e_1\rangle =
\overline{\langle e_1,je_2\rangle} =
\langle j e_1,j^2 e_2\rangle = -\theta(e_1,e_2).
\]
Thus $(E,\tau,\theta)$ defines an element in ${}_{-1}GR(X)$. 
Since the metric is well defined up to homotopy,
this yields a map $KR_{\H}(X)\to {}_{-1}GR(X)$.
By inspection, the composite with the forgetful map 
${}_{-1}GR(X)\to KR(X)$ is the canonical map $KR_\H(X) \to KR(X)$.

Conversely, suppose that $E$ is a Real vector bundle, 
equipped with a non-degenerate skew-symmetric $\C$-bilinear form $\theta$. 
Choose a $\sigma$-equivar-iant Hermitian metric $\langle\;,\;\rangle$ 
on $E$, and define the automorphisms $J$, $J^*$ of $E$ by 
$\langle Je_1,e_2\rangle=\theta(e_1,e_2) = 
 \overline{\langle e_1,J^*e_2\rangle}$.
(See \cite[Ex.\,I.9.21 and I.9.22c]{MKbook}.) 
Then $J$ commutes with $\sigma$ and is complex antilinear:
\[
\langle J(ie_1),e_2\rangle = \theta(ie_1,e_2) = i\theta(e_1,e_2)
=i \langle Je_1,e_2\rangle = \langle (-i)Je_1,e_2\rangle.
\]
Moreover, $J^*=-J$ because
\[
\langle Je_1,e_2\rangle=\theta(e_1,e_2)=-\theta(e_2,e_1)=
\overline{-\langle e_2,J^*e_1\rangle} = -\langle J^*e_1,e_2\rangle.
\]
Since $\langle JJ^*e,e \rangle = \langle J^*e,J^*e\rangle>0$ for all $e$,
we may change the metric up to homotopy to assume 
that $JJ^*=1$ and hence that $J^2=-1$. 
Thus $(E,i,J)$ is a quaternionic bundle on $X$. 

Since the choices are well defined up to homotopy, this gives a map from $%
{}_{-1}GR(X)$ to $KR_{\H}(X)$. By inspection, this map is an inverse to the
map in the first paragraph.
\end{proof}

\begin{corollary}\label{cor:8periodic}
Suppose that $X$ has the homotopy type of a finite $G$-CW complex, 
such as the Real space associated to a variety over $\R$. 
Then the groups ${}_{-1}GR_{n}(X)$ and ${}_{-1}WR_{n}(X)$ 
are finitely generated and 8-periodic with respect to $n.$

The groups $GR^{[i]}_n(X)$
are all finitely generated, 4-periodic in $i$ and 8-periodic in $n$.
\end{corollary}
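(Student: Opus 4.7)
The plan is to reduce everything to the two identifications already established: $GR(X) \cong KO_G(X)$ from Theorem \ref{GR=KOG} and ${}_{-1}GR(X) \cong KR_\H(X)$ from Theorem \ref{-GR=KRH}. From these, the general case will follow via Schlichting's 4-fold periodicity of shifted Grothendieck--Witt theory in $i$, together with the long exact sequence \eqref{seq:GW[i]} for the odd shifts.

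For the first claim, Theorem \ref{-GR=KRH} identifies ${}_{-1}GR(X)$ with $KR_\H(X)$. Taking suspensions of $X^+$ as in Remark \ref{GR*=KO*G} extends this to ${}_{-1}GR_n(X) \cong KR_\H^{-n}(X)$ for $n\ge 0$, and the non-connective spectrum definition extends it to all $n\in\Z$. The cohomology theory $KR_\H$ satisfies 8-fold Bott periodicity (see \cite{KW:KRA}) and, on a finite $G$-CW complex, takes finitely generated values in each degree, by the usual induction on cells since $KR_\H^*$ of each $G$-orbit is finitely generated. Hence ${}_{-1}GR_n(X)$ is finitely generated and 8-periodic in $n$, and its quotient ${}_{-1}WR_n(X)$ inherits both properties.

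For the second claim, Schlichting's general framework yields natural isomorphisms $GR^{[i+4]}_n(X) \cong GR^{[i]}_n(X)$, because shifting the duality by four returns to an equivalent duality on the category $\cE_X$ (cf.\ \cite{Schlichting.Fund}). This 4-periodicity in $i$ reduces matters to $i\in\{0,1,2,3\}$. The cases $i=0$ and $i=2$ are covered by Corollary \ref{GR.fingen} and the first claim above, respectively. For the odd cases $i=1,3$, the exact sequence \eqref{seq:GW[i]} reads
\[
KR^{-n-1}(X) \to GR^{[i]}_{n+1}(X) \to GR^{[i-1]}_n(X) \to KR^{-n}(X) \to GR^{[i]}_n(X),
\]
sandwiching $GR^{[i]}_n(X)$ between finitely generated 8-periodic neighbors (note that $K_n(\cE_X)=KR^{-n}(X)$ is finitely generated and 8-periodic by Atiyah). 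A subquotient count gives finite generation, and multiplication by a Bott element $\beta\in KR^{-8}(\mathrm{pt})$ yields a natural map of exact sequences between degrees $n$ and $n-8$ which is an isomorphism on the four neighboring terms, so the five-lemma forces 8-periodicity on $GR^{[i]}_n(X)$ as well.

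The main technical point is the applicability of Schlichting's 4-periodicity and the exact sequence \eqref{seq:GW[i]} to the Hermitian category $\cE_X$ of Real vector bundles rather than algebraic vector bundles on a scheme. Both are formal consequences of $\cE_X$ being an exact category with duality, and the constructions in \cite{Schlichting.Fund} are set up at precisely this level of generality, so the arguments apply verbatim. No essentially new topological input is required beyond what is already packaged in Theorems \ref{GR=KOG} and \ref{-GR=KRH}.
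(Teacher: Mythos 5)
Your proof is correct and follows the same overall outline as the paper's: invoke the $4$-periodicity $GR^{[i+4]}_n \cong GR^{[i]}_n$ from \cite[5.9]{Schlichting.Fund}, then descend from the known cases via the exact sequence \eqref{seq:GW[i]}. There are two minor departures from the paper's very terse two-sentence proof. First, you anchor the $i=2$ case directly at Theorem \ref{-GR=KRH} and the $8$-periodic cohomology theory $KR_\H^*$, whereas the paper obtains it by the same inductive step (via \eqref{seq:GW[i]}) that it uses for the other shifts; your route is a bit more self-contained for that case but relies on the reference \cite{KW:KRA} for the cohomological properties of $KR_\H$. Second, the paper's proof explicitly addresses only finite generation through the induction, leaving the $8$-periodicity in $n$ for odd $i$ implicit (it falls out of the $KO$-module structure and Bott periodicity of the underlying Banach-algebra picture in Appendix \ref{app:Banach}); your Bott-element plus five-lemma argument spells this out cleanly, which is a welcome clarification rather than a different strategy.
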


\begin{proof}
We have $GR^{[i]}_n(X)\cong GR^{[i+4]}_n(X)$ by
\cite[5.9]{Schlichting.Fund}. The finitely generation
is obtained from Corollary \ref{GR.fingen},
\eqref{seq:GW[i]} and induction on $i$.
\end{proof}

\begin{remark}\label{rem:GW^c}
We can also introduce comparison Grothendieck-Witt groups 
$GW_{n}^{c}(V)=\pi_n \GW^{c}(V)$
which fit into exact sequences%
\begin{equation*}
{\kGW_{n+1}}(V)\to GR_{n+1}(V)\to GW_{n}^{c}(V)\to
{\kGW_{n}}(V)\to GR_{n}(V).
\end{equation*}%
Here $GW^{c}(V)$ is the homotopy fiber of $\GW(X)\to\mathbb{GR}(X)$.
This sequence will be used repeatedly in the next few sections.
\end{remark}

\newpage\goodbreak

\section{Algebraic $K$-theory of varieties over $\R$, revisited}
\label{sec:KR}

In this section we revisit the results found in our paper \cite{KW} 
and generalize them in two directions. Theorem \ref{singular.KW} below
gives a sharper bound in the comparison of algebraic $K$-theory 
with Atiyah's Real $K$-theory, and also drops the assumption of 
smoothness (thanks to a better understanding of varieties over $\R$ of
infinite \'{e}tale dimension). To state the result, we need some notation.

Given a variety $V\!,$ let $\bbK(V)$ denote the (non-connective)
spectrum representing the algebraic $K$-theory of $V$. We write $K_{n}(V)$
for $\pi_n\bbK(V)$ and $K_{n}(V;\Z/q)$ for the homotopy groups 
$\pi_{n}(\bbK(V);\Z/q)$ of $\bbK(V)$ with coefficients $\Z/q$.

If $X$ is a Real space, let $\mathbb{KR}(X)$ denote the spectrum
representing Atiyah's $KR$-theory on $X.$ 
We write $KR_{n}(X)$ and $KR_{n}(X;\Z/q)$ 
for the homotopy groups of this spectrum. 

If $V$ is a variety over $\R$, the passage from algebraic vector 
bundles on $V$ to Real vector bundles on $V_\C$ induces a spectrum map 
$\bbK(V)\to \mathbb{KR}(V_\C)$; on homotopy groups it yields the
maps $K_n(V)\to KR_n(V_\C)$ studied in \cite{KW}.
%
By abuse of notation, we shall write $KR_{n}(V)$ for $KR_{n}(V_{\C})$, etc. 
Here is the main result of this section.

\begin{theorem}\label{singular.KW} 
Let $V$ be a $d$-dimensional variety over $\R$,
possibly singular, with associated Real space $V_{\C}$. 
For all $q$, the map $\bbK(V)\to\mathbb{KR}(V)$ 
induces an isomorphism 
\begin{equation*}
K_{n}(V;\Z/q)\to KR_{n}(V;\Z/q)
\end{equation*}
for $n\geq d-1$, and a monomorphism for $n=d-2.$
\end{theorem}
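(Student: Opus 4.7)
The plan is to refine \cite{KW} in two ways: use the full strength of the Bloch-Kato conjecture (Voevodsky-Rost) to sharpen the bound in the smooth case, and use $cdh$-descent via Hironaka's resolution of singularities to extend from smooth to arbitrary quasi-projective varieties.

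For smooth $V$ of dimension $d$, the plan is to factor the comparison as
\[
\bbK(V;\Z/q) \longrightarrow \bbK(V;\Z/q)[\beta^{-1}] \longrightarrow \mathbb{KR}(V;\Z/q),
\]
where $\beta$ denotes an appropriate Bott element.  The Beilinson-Lichtenbaum theorem, equivalent to Bloch-Kato (due to Voevodsky-Rost, and to Voevodsky at $\ell=2$), shows that the first map is an isomorphism for $n\ge d-1$ and a monomorphism for $n=d-2$.  For the second map: Thomason's descent theorem gives that Bott-inverted $K/q$ satisfies \'etale descent, whence $\bbK(V;\Z/q)[\beta^{-1}] \simeq \bbK(V_\C;\Z/q)[\beta^{-1}]^{hC_2}$ for $C_2=\mathrm{Gal}(\C/\R)$; Suslin rigidity identifies $\bbK(V_\C;\Z/q)[\beta^{-1}]$ with $\bKU(V_\C;\Z/q)$; and Atiyah's identification $\mathbb{KR}(V)\simeq\bKU(V_\C)^{hC_2}$ (for the complex-conjugation action) completes the chain.

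For singular $V$, induct on $d=\dim V$ using an abstract blow-up square.  By Hironaka, choose a proper birational $\pi:\tilde V\to V$ with $\tilde V$ smooth and a closed $E\subsetneq V$ containing the singular locus, with $\tilde E=\pi^{-1}(E)$, such that $\pi$ is an isomorphism over $V\setminus E$; then $\dim E,\dim\tilde E\le d-1$.  Both $\bbK/q$ and $\mathbb{KR}/q$ satisfy Mayer-Vietoris descent for this square: algebraically by $cdh$-descent for $\mathbb{KH}$ (Corti\~nas-Haesemeyer-Schlichting-Weibel), combined with the equality $K/q=KH/q$ in the relevant range for noetherian schemes of finite Krull dimension in characteristic $0$; topologically because $V_\C$ is the $C_2$-equivariant homotopy pushout of $\tilde V_\C\leftarrow\tilde E_\C\to E_\C$ and $\mathbb{KR}$ is homotopy invariant.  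The five lemma applied to the comparison of the resulting Mayer-Vietoris cofibre sequences closes the induction: the smooth case for $\tilde V$ (iso in degrees $n\ge d-1$) together with the inductive hypothesis for $E$ and $\tilde E$ (iso in degrees $n\ge d-2$, since these have dimension $\le d-1$) give iso for $V$ in degrees $n\ge d-1$ and mono in degree $n=d-2$.

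The main obstacle will be the equivalence $\bbK(V;\Z/q)[\beta^{-1}]\simeq\mathbb{KR}(V;\Z/q)$ at the prime $2$: here $\R$ has infinite mod-$2$ \'etale cohomological dimension, but the Bott-inverted theory is still well-behaved thanks to Thomason's descent.  Care is needed in reconciling the degree-$2$ Bott element on $\bKU$ with the degree-$8$ periodicity of $\mathbb{KR}$ that emerges after passing to $C_2$-homotopy fixed points; this is precisely where the homotopy fixed-point spectral sequence (central to the paper) enters the argument.
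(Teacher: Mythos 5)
Your singular-to-smooth reduction is essentially the same as the paper's: the paper also passes to $\mathbb{KH}$, invokes cdh-descent and resolution of singularities to produce a blow-up square, compares with the topological excision sequence, and runs the five lemma with induction on dimension. The only cosmetic difference is that the paper takes $Z$ to be the singular locus and $V'\to V$ a resolution (so $Z'=Z\times_V V'$), rather than an arbitrary abstract blow-up square; the content is identical.

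The real issue is the smooth case. The paper does not prove it: it simply \emph{cites} [KW, Thms.~4.7--4.8] for the range $n\ge d$, and then cites Rosenschon--\O stvaer [RO, Thm.~2] for the two extra degrees $n=d-1,d-2$. Your proposed re-derivation contains a genuine gap at the very first arrow. You claim that Beilinson--Lichtenbaum implies $K_n(V;\Z/q)\to K_n(V;\Z/q)[\beta^{-1}]$ is an isomorphism for $n\ge d-1$ and a monomorphism for $n=d-2$. That bound is correct over a field of finite mod-$q$ \'etale cohomological dimension (where it reads $n\ge \mathrm{cd}_q - 1$, with $\mathrm{cd}_q(V)\le d + \mathrm{cd}_q(k)$), but for $q$ a power of $2$ we have $\mathrm{cd}_2(\R)=\infty$, so the naive Beilinson--Lichtenbaum bound gives you nothing at all once $V$ has real points. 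Obtaining a bound depending only on $\dim V$ is precisely the ``homotopy limit problem over $\R$'' and is the nontrivial content of [KW] and especially [RO]; it requires a delicate argument (using the real points $V_\R$ and the descent spectral sequence) rather than a direct invocation of Voevodsky's theorem. You do gesture at a difficulty in your final paragraph, but you locate it at the second arrow (Bott-inverted $K$-theory versus $\mathbb{KR}$) rather than here, and you do not supply the argument needed to rescue the first arrow. As written, the proposal's smooth case is not a proof but a reformulation of what [KW]+[RO] establish, with the hard step asserted rather than proven.
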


The corresponding theorem in \cite[4.8]{KW} required $V$ to be nonsingular,
and only established the isomorphism for $n\geq d$. Similar results have
been proved independently in \cite{Heller-Voineagu}.

\begin{proof}
First we assume that $V$ is a nonsingular variety. 
When $V(\R)=\varnothing$, the theorem was established in \cite[4.7]{KW}; 
when $V(\R)\neq\varnothing$, it was established in \cite[4.8]{KW}, except
for the cases $n=d-1,d-2$. These two cases were handled by Rosenschon and 
{\O}stvaer in \cite[Thm.\,2]{RO}.

For singular $V$, we first note that $\bbK(V;\Z/q)\cong\bbK(V_\red;\Z/q)$ and
$\bbK(V;\Z/q)\,\map{\cong}\, \mathbb{KH}(V;\Z/q)$,
where $\mathbb{KH}$ denotes homotopy $K$-theory;
if $V$ is affine, this is \cite[1.6 and 2.3]{Weibel.KH}; 
the general case follows by Zariski descent.
Since $\mathbb{KR}(V)=\mathbb{KR}(V_\red)$, 
we may assume $V$ is reduced.  

We deduce the result for
reduced varieties by induction on dim($V$). 
Given $V$ with singular locus 
$Z$, choose a resolution of singularities, $V'\to V$, and
set $Z'=Z\times_{V}V'.$ 
By Haesemeyer  \cite{Haesemeyer}, $\mathbb{KH}(V;\Z/q)$ satisfies 
$cdh$ descent. It follows that we have a fibration
sequence 
\begin{equation*}
\bbK(V;\Z/q)\to\bbK(V';\Z/q)\times\bbK(Z;\Z/q)\to\bbK(Z';\Z/q).
\end{equation*}
On the other hand, by excision for $(V_{\C}',Z_{\C}')\to(V_{\C},Z_{\C})$, 
we have a fibration sequence 
\begin{equation*}
\mathbb{KR}(V;\Z/q)\to\mathbb{KR}(V';\Z/q)\times\mathbb{KR}(Z;\Z/q)
\to\mathbb{KR}(Z';\Z/q).
\end{equation*}
There is a natural map between these fibration sequences. 
Because $\dim(Z)$ and $\dim(Z')$ are at most $d-1$, we know by 
induction that the maps $K_{n}(Z';\Z/q)\to KR_{n}(Z';\Z/q)$ 
are isomorphisms for $n\geq d-2$, and injections for $n=d-3$.

Since $V'$ is smooth, $K_{n}(V';\Z/q)\to KR_{n}(V';\Z/q)$ 
is an isomorphism for $n\ge d-1$, and an injection for $n=d-2$. 
The result now follows from the 5--lemma,
applied to the long exact homotopy sequences.
\end{proof}

Let $\bbK^{c}(V)$ denote the homotopy fiber of 
$\bbK(V)\to \mathbb{KR}(V)$, and set $K_{n}^{c}(V)=\pi _{n}\bbK^{c}(V)$.
These are called the {\it $K$-theory comparison groups}.
Note that the previous theorem may be stated as 
$K_{n}^{c}(V;\Z/q)=0$ for $n\geq d-2.$
Since $q$ is arbitrary, the universal coefficient formula yields:

\begin{corollary}
\label{Kc-u.d.} If $n\geq d-2$, the groups $K_{n}^{c}(V)$ are uniquely
divisible. Moreover, 
the group $K_{d-3}^{c}(V)$ is torsionfree.
\end{corollary}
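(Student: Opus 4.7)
The plan is to derive the corollary from Theorem \ref{singular.KW} by applying the universal coefficient theorem to the homotopy groups of $\bbK^c(V)$. The theorem directly says that $K_n^c(V;\Z/q)=0$ for $n\ge d-2$ and every $q$, since a fibration sequence of spectra induces a long exact sequence on mod-$q$ homotopy groups, and the map $\bbK(V;\Z/q)\to\mathbb{KR}(V;\Z/q)$ is an isomorphism in degrees $\ge d-1$ and injective in degree $d-2$.

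Next I would invoke the universal coefficient short exact sequence
\[
0 \to K_n^c(V)\otimes\Z/q \to K_n^c(V;\Z/q) \to \mathrm{Tor}(K_{n-1}^c(V),\Z/q) \to 0.
\]
The vanishing of the middle term for $n\ge d-2$ forces both outer terms to vanish for the same range of $n$. Reading off each outer term separately: $K_n^c(V)\otimes\Z/q=0$ for all $q$ says that $K_n^c(V)$ is $q$-divisible for every $q\ge 1$, hence divisible, whenever $n\ge d-2$. Similarly, $\mathrm{Tor}(K_{n-1}^c(V),\Z/q)=0$ for all $q$ says that $K_{n-1}^c(V)$ has no $q$-torsion for any $q$, hence is torsionfree, whenever $n-1\ge d-3$, that is for $K_m^c(V)$ with $m\ge d-3$.

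Combining the two conclusions: for $n\ge d-2$ the group $K_n^c(V)$ is both divisible and torsionfree, therefore uniquely divisible (a $\Q$-vector space). For the borderline case $n=d-3$, only the torsionfreeness survives, giving the second assertion.

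There is no real obstacle here; the only thing to be careful about is the index shift in the universal coefficient sequence, which is why divisibility is obtained one degree higher than torsionfreeness. Given the clean reformulation of Theorem \ref{singular.KW} as $K_n^c(V;\Z/q)=0$ for $n\ge d-2$, this is a direct consequence of universal coefficients applied to the spectrum $\bbK^c(V)$.
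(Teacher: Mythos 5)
Your proof is correct and is essentially the argument the paper intends: the paper condenses it to "Since $q$ is arbitrary, the universal coefficient formula yields," and you have spelled out the details — deducing $K_n^c(V;\Z/q)=0$ for $n\ge d-2$ from the long exact sequence of the fiber sequence, then reading off divisibility from the $\otimes\Z/q$ term and torsionfreeness from the $\mathrm{Tor}$ term, with the correct degree shift.
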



\begin{example}\label{ex:KRcurve} 
Let $V$ be a smooth projective curve of genus $g$,
defined over $\R$, such that $V_{\R}$ is not empty but has 
$\nu>0$ connected components (circles). Using Theorem \ref{singular.KW},
the finite generation of $KR^*(V)$ 
and the calculations of 
$K_*(V)$ in \cite[0.1]{PW}, it is not hard to check that 
\begin{align*}
KR^{0}(V)& =\Z^{2}\oplus (\Z/2)^{\nu -1} & KR^{4}(V)& =%
\Z^{2} \\
KR^{1}(V)& =\Z^{g} & KR^{5}(V)& =\Z^{g}\oplus (\Z%
/2)^{\nu -1} \\
KR^{2}(V)& =0 & KR^{6}(V)& =(\Z/2)^{\nu +1} \\
KR^{3}(V)& =\Z^{g} & KR^{7}(V)& =\Z^{g}\oplus (\Z%
/2)^{\nu +1}. 
\end{align*}%
For example, the calculation $KR^3(V)=KR_5(V)$ follows from the fact that
$K_4(V)_\tors=(\Q/\Z)^g$ and $K_5(V)$ is divisible \cite[0.1]{PW}. 
By Theorem \ref{singular.KW}, 
$KR_5(V;\Z/q)\cong K_5(V;\Z/q)$ is $(\Z/q)^g$ for all $q$,
and the result follows from universal coefficients.
%
The calculation of $KR^5(V)=KR_3(V)$ follows from 
$K_2(V)_\tors=(\Q/\Z)^g\oplus(\Z/2)^{\nu+1}$ and
$K_3(V)=D\oplus(\Z/2)^{\nu-1}$, $D$ divisible. Hence
$KR_3(V;\Z/2q)\cong K_3(V;\Z/2q)$ is $(\Z/2q)^g\oplus(\Z/2)^{2\nu-1}$,
but the image of $KR_3(V;\Z/4q)\to KR_3(V;\Z/2q)$ is the group
$(\Z/2q)^g\oplus(\Z/2)^{\nu-1}$ for all $q$.

The image of $K_n(V)\to KR^{-n}(V)$ is the torsion subgroup for all $n>0$.
For $n=0$, we have
$K_{0}(V)\cong \Z^{2}\oplus (\Z/2)^{\nu-1}\oplus(\R/\Z)^{g}$ 
by \cite[1.1]{PW90}; this group
surjects onto $KR^{0}(V)$, and $(\R/\Z)^{g}$ is the image of 
$K_{0}^{c}(V)\cong \R^{g}$ in $K_{0}(V)$. We also have $%
K_{-1}^{c}(V)=0$ and $K_{-2}^{c}(V)\cong \Z^{g}$.
\end{example}

\begin{example}\label{KR.affinecurve}
Suppose that $V$ is a smooth affine curve, obtained from a smooth
irreducible projective curve $\bar V$ of genus $g$ by removing $r>0$ points.
Then $KR_0(V)\cong\Z\oplus(\Z/2)^\lambda$, where $\lambda\ge0$ is the number
of closed components (circles) of $V_\R$, and $K_0(V)\to KR_0(V)$
is a surjection whose kernel is the quotient of $(\R/\Z)^g$ by a
finitely generated subgroup. 

This calculation follows from the 
compatibility of the localization sequence 
\vspace{-5pt}
\[
\Z^r \to K_0(\bar V) \to K_0(V) \to 0
\]
with its $KR$-analogue \cite[3.4]{KW}, and \cite[1.4]{PW90}.
\end{example}

\bigskip\goodbreak

\section{Witt groups for curves over $\R$}
\label{sec:curves}

The purpose of this section is to show that the groups $W(V)$ 
and $WR(V)$ are isomorphic for curves $V$\!, and to explicitly 
compute this Witt group when $V$ is smooth projective. Our computations 
recover some older results of Knebusch \cite{Knebusch} for $W(V)$. 
We will prove analogous results for the co-Witt groups $W'$ of curves
in Section \ref{sec:co-Witt}.

The results in this section are intended to illustrate the more general
results which we will give later on in the paper, although
the results in this section will not be used in the rest of this paper.
Our proofs will use results from Sections \ref{sec:williams}, 
\ref{sec:WR} and \ref{sec:exponents} below.

\smallskip

Here is our main theorem about curves. It is reminiscent of the 
results in \cite{PW} for $K$-groups of curves. 


\begin{theorem}\label{curves:W=WR} 
Let $V$ be any curve over $\R$, possibly singular. 
Then the natural map 
\begin{equation*}
W(V)\overset{\theta}{\to}WR(V)
\end{equation*}%
is an isomorphism.
\end{theorem}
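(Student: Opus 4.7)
The plan is to leverage the comparison spectrum $GW^c(V)$ of Remark \ref{rem:GW^c} together with its $K$-theoretic analogue $\bbK^c(V)$, the Williams-type homotopy fixed-point theorem of Section \ref{sec:williams} (Theorem \ref{Williams-GR}), and the bounds coming from Sections \ref{sec:WR} and \ref{sec:exponents}.

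First I would reduce the theorem to a divisibility assertion about the comparison groups. The map $\theta\colon W(V)\to WR(V)$ is induced by the vertical map $GW_0(V)\to GR_0(V)$ in the morphism of short exact sequences
\[
0\to H(K_0(V)) \to GW_0(V) \to W(V)\to 0,\quad 0\to H(KR_0(V))\to GR_0(V)\to WR(V)\to 0.
\]
A snake-lemma analysis, combined with the long exact sequences for $\bbK^c(V)$ and $GW^c(V)$, identifies $\ker\theta$ and $\coker\theta$ as subquotients of $GW^c_0(V)$, $GW^c_{-1}(V)$, and $K^c_{-1}(V)$. It therefore suffices to show that each of these three groups is uniquely $2$-divisible.

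Next I would feed in the $K$-theoretic input, which for a curve $(d=1)$ is already sharp: Theorem \ref{singular.KW} and Corollary \ref{Kc-u.d.} give that $K^c_n(V)$ is uniquely divisible for all $n\ge -1$. To transport this to the hermitian world I would invoke Theorem \ref{Williams-GR}, which identifies $\GW(V)$ and $\mathbb{GR}(V)$ as homotopy fixed-point spectra for suitable $C_2$-actions on $\bbK(V)$ and $\mathbb{KR}(V)$, so that $GW^c(V)$ is the homotopy fixed points of $\bbK^c(V)$. This yields a descent spectral sequence
\[
E_2^{p,q} = H^p(C_2;\, K^c_{-q}(V)) \;\Longrightarrow\; GW^c_{-p-q}(V).
\]
Since $H^p(C_2;M)$ vanishes for $p>0$ whenever $M$ is uniquely $2$-divisible, the spectral sequence collapses in total degrees $0$ and $-1$, transferring unique $2$-divisibility from $K^c$ to $GW^c_0(V)$ and $GW^c_{-1}(V)$.

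Combining the previous two steps shows $\ker\theta$ and $\coker\theta$ are uniquely $2$-divisible. On the other hand, by Sections \ref{sec:WR} and \ref{sec:exponents}, both are $2$-primary torsion of bounded exponent; a group which is simultaneously $2$-divisible and $2$-primary of bounded exponent must vanish, so $\theta$ is an isomorphism. For singular $V$ I would reduce to the smooth case via the normalization/excision square used in the proof of Theorem \ref{singular.KW}, exploiting the parallel descent properties of $GW^c$. The main technical obstacle will be the bootstrap step: verifying that the Williams identification really produces the descent spectral sequence above and that it collapses in the required bidegrees, carefully controlling the interplay between the $C_2$-action coming from complex conjugation and the one coming from hermitian duality.
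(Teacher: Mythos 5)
Your overall strategy (use the Williams-type theorems to transport unique $2$-divisibility from $\bbK^c(V)$ to $\GW^c(V)$, extract $\ker\theta$ and $\coker\theta$ from a comparison diagram, then kill them by playing divisibility off against the bounded-exponent results of Sections \ref{sec:WR} and \ref{sec:exponents}) is the same as the paper's. However, there are two genuine gaps.

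First, you never use the Bass--Murthy fact that $K_{-1}(V)$ is a free abelian group for a (possibly singular) curve $V$; this is essential. Because $K_{-1}^c(V)$ is divisible (Corollary \ref{Kc-u.d.} with $d=1$) and $K_{-1}(V)$ is free, the map $K_{-1}^c(V)\to K_{-1}(V)$ is zero, and this vanishing is exactly what the diagram chase needs — both to show that the image of $GW^c_0(V)$ lands inside $W(V)\subset U_{-1}(V)$ (controlling $\ker\theta$), and, crucially, to show that $\coker\theta$ \emph{injects} into the comparison group in degree $-2$ rather than merely being a subquotient of something torsion-free. Without this input, the element of $U_{-1}(V)$ you lift from $UR_{-1}(V)$ need not land in $W(V)=\ker(U_{-1}\to K_{-1})$, and the identification of $\coker\theta$ with a subgroup of a comparison group breaks down.

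Second, the claim that $\ker\theta$ and $\coker\theta$ are "subquotients of $GW^c_0(V)$, $GW^c_{-1}(V)$, and $K^c_{-1}(V)$," all uniquely $2$-divisible, hence themselves uniquely $2$-divisible, does not go through as stated. A subquotient of a uniquely $2$-divisible group need not be $2$-divisible (a subgroup need not be) nor torsion-free (a quotient need not be); what the precise chase yields is that $\ker\theta$ is a \emph{quotient} of $U^c_{-1}(V)$ (hence $2$-divisible) and that $\coker\theta$ is a \emph{subgroup} of $U^c_{-2}(V)$ (hence $2$-torsion-free). Note that $U^c_{-2}(V)$ lives in degree $d-3=-2$, where Corollary \ref{U=UR} gives only $2$-torsion-freeness, \emph{not} unique $2$-divisibility — so this third comparison group (which your list omits) is needed and is weaker than you claim. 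Fortunately, $2$-torsion-free plus bounded-exponent $2$-group still forces the cokernel to vanish, so the conclusion survives once the chase is done carefully. Finally, the spectral-sequence step is redundant (Theorem \ref{GW=GR} and Corollary \ref{U=UR} already give the divisibility statements, including for singular $V$), and the proposed $cdh$-type reduction to the smooth case for singular curves is both unnecessary and unsupported, as $cdh$ descent for $\GW^c$ is not established in the paper.
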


The skew-symmetric analogue ${}_{-1}W(V) \map{\sim} {}_{-1}WR(V)$
is also true, but not very interesting, because we shall see in
Proposition \ref{skewWR(curve)} that both terms are zero.

\begin{proof}[Proof of Theorem \ref{curves:W=WR}]
Consider the following commutative diagram,
where, as in \ref{rem:GW^c} and \ref{Kc-u.d.}, the superscript `${c}$' 
in $K^{c},GW^{c}$ and $U^{c}$ means ``comparison groups.'' 
\begin{equation*}
\begin{array}{ccccccc}
K_{0}^{c}(V) & \overset{h}{{\longrightarrow }} & GW_{0}^{c}(V) & \to
& U_{-1}^{c}(V) & \to & K_{-1}^{c}(V) \\ 
\downarrow &  & \downarrow &  & \downarrow &  & ~\downarrow 0 \\ 
K_{0}(V) & \overset{h}{{\longrightarrow }} & GW_{0}(V) & \to & 
U_{-1}(V) & \to & K_{-1}(V) \\ 
\downarrow &  & \downarrow &  & \downarrow &  & \downarrow \\ 
KR_{0}(V) & \overset{h}{{\longrightarrow }} & GR_{0}(V) & \to & 
UR_{-1}(V) & \to & KR_{-1}(V) \\ 
&  &  &  & \downarrow &  &  \\ 
&  &  &  & U_{-2}^{c}(V). &  & 
\end{array}%
\end{equation*}%
The second and third rows are the exact sequences \eqref{seq:U-K-GW} and 
\eqref{eq:UR-KR-GR}, with homological indexing. 
By Bass-Murthy \cite[Ex.\,III.4.4]{WK}, $K_{-1}(V)$ is a free abelian group.
Since $K_{-1}^{c}(V)$ is 2-divisible by Corollary \ref{Kc-u.d.},
the map $K_{-1}^{c}(V)\to K_{-1}(V)$ is zero, 
and $K_{-1}(V)$ injects into $KR_{-1}(V)$.

By a diagram chase, the kernel of $\theta:W(V)\to WR(V)$ 
is a quotient of $U_{-1}^{c}(V)$, which is 2-divisible
by Corollary \ref{U=UR}, and the cokernel of $\theta$ injects into 
$U_{-2}^{c}(V)$, which has no 2-torsion by Corollary \ref{U=UR}. 
By Theorem \ref{W=WR.mod2tors} and Theorem \ref{W-WR.exponent}, 
$\ker(\theta)$ is a 2-primary group of bounded exponent, 
and $\coker(\theta)$ is a finite 2-group
(the cokernel is finitely generated by Corollary \ref{GR.fingen}.)
It follows that $\ker(\theta)$ and $\coker(\theta)$ are zero.
\end{proof}

\medskip

To illustrate the role of the Real Witt group, we now calculate $WR(V)$ 
more explicitly when $V$ is a smooth projective curve over $\R$.
It will be a function of the genus $g$ and the number $\nu $ of components
of the space of real points of $V$. Given Theorem \ref{curves:W=WR}, that 
$W(V)$ is isomorphic to the topological group $WR(V)$, we recover and
slightly improve the algebraic calculations of Knebusch \cite{Knebusch}.

\begin{subex}
If $V$ is a curve defined over $\C$ (a complex curve of genus $g$),
it is well known that $W(V)\cong(\Z/2)^{2g+1}$; 
see \cite[p.\,263]{Knebusch.Queens}. 

To compute $WR(X)$, we must distinguish between the Riemann surface 
$X'=\Hom_\C(\Spec\,\C,V)$ of complex points over $\C$ and the space 
$X=\Hom_\R(\Spec\,\C,V)$ of complex points over $\R$; 
we have $X=X'\times G$.  
Since $KR(X)=KU(X')$ and $GR(X)=KO_G(X)=KO(X')$, $WR(X)$ is the 
cokernel of $KU(X')\to KO(X')$. A simple calculation using the
Atiyah-Hirzebruch spectral sequences for $KU$ and $KO$
shows that $W(V)\cong(\Z/2)^{2g+1}$.
\end{subex}

If $V$ is geometrically connected (i.e., not defined over $\C$),
the associated Real space $X=V_{\C}$ is a Riemann
surface of genus $g$ with an involution $\sigma $. Thus $\sigma $ acts on 
$H^{1}(X,\Z)\cong \Z^{2g}$. If we take coefficient $\Z[1/2]$, 
we get an eigenspace decomposition for this induced involution.

\begin{lemma}\label{H1.decomp}
Both eigenspaces of $\sigma$ on $H^{1}(X;\Z[1/2])$ have rank $g$.
\end{lemma}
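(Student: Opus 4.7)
The plan is to reduce the assertion to a trace computation on $H^1(X;\Q)$ and then evaluate that trace using the Lefschetz fixed point theorem.

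First I would note that, because $2$ is invertible in $\Z[1/2]$, the idempotents $\tfrac{1}{2}(1\pm\sigma^*)$ split $H^1(X;\Z[1/2])$ canonically as $H^+\oplus H^-$, and the ranks of $H^\pm$ can be computed after further tensoring with $\Q$. Setting $g^\pm=\dim_\Q(H^\pm\otimes\Q)$, one has $g^++g^-=2g$, so it suffices to prove $\operatorname{tr}(\sigma^*\mid H^1(X;\Q))=g^+-g^-=0$.

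Next I would compute the Lefschetz number $L(\sigma)=\sum_i(-1)^i\operatorname{tr}(\sigma^*\mid H^i(X;\Q))$. Since $V$ is geometrically connected, $X$ is connected, so $\sigma^*$ acts as $+1$ on $H^0(X;\Q)$. The key geometric input is that $\sigma$ acts on $X$ by an anti-holomorphic involution (complex conjugation); consequently it reverses the canonical complex orientation of the Riemann surface, so $\sigma^*$ acts as $-1$ on $H^2(X;\Q)\cong\Q$. Therefore
\[
L(\sigma)=1-\operatorname{tr}\bigl(\sigma^*\mid H^1(X;\Q)\bigr)-1=-\operatorname{tr}\bigl(\sigma^*\mid H^1(X;\Q)\bigr).
\]

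Finally I would apply the Lefschetz fixed point theorem to evaluate $L(\sigma)$ in terms of $X^\sigma=V_\R$. Since $V$ is a smooth projective curve, $V_\R$ is a compact real $1$-manifold, hence either empty or a disjoint union of circles; in either case $\chi(V_\R)=0$. Thus $L(\sigma)=\chi(X^\sigma)=0$, the trace on $H^1$ vanishes, and $g^+=g^-=g$.

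The main step, and the only one requiring care, is the orientation computation that identifies $\sigma^*$ as $-1$ on $H^2$; everything else is a direct application of standard tools. There is no substantive obstacle, which is why this lemma fits naturally as a preliminary to the subsequent explicit computation of $WR(V)$ for smooth projective curves.
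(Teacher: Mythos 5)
Your proof is correct, and it takes a genuinely different route from the paper's. The paper argues algebraically: it classifies $H^1(X;\Z)$ as a $\Z[G]$-lattice $\Z^a\oplus\Z(1)^b\oplus\Z[G]^c$, identifies $\Pic^0(V\otimes_\R\C)$ with $H^1(X;\Z)\otimes(\R/\Z)$, and then invokes the classical descriptions of $\Pic^0(V)$ due to Weichold (when $V_\R\ne\emptyset$) and Klein (when $V_\R=\emptyset$) to extract $a+c=g$. Your argument replaces all of this with a single topological computation: $\sigma$ is anti-holomorphic, hence orientation-reversing, so the Lefschetz number is $-\operatorname{tr}(\sigma^*\!\mid\! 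H^1)$, which must equal $\chi(V_\R)=0$. This is cleaner and more self-contained; the only ingredient you should flag explicitly is the form of the Lefschetz theorem you invoke — for a finite-order smooth map on a compact manifold, $L(\sigma)=\chi(X^\sigma)$ follows by passing to a regular $\sigma$-equivariant triangulation (or via Atiyah--Bott), and a reader would appreciate a citation for it. By contrast, what the paper's route buys is that the Weichold--Klein structure of real Jacobians is also used elsewhere in the paper (e.g.\ in Example~\ref{ex:KRcurve} via \cite{PW90}), so recalling it there has additional payoff; your proof does not need it but also does not make it available for reuse.
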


\begin{proof}
(Cf.\,\cite[2.3]{PW}) We consider $H=H^{1}(X;\Z)$ as a module over
the group $G=\{1,\sigma \}$. As an abelian group, $H\cong \Z^{2g}$
so if $\Z(1)$ denotes the sign representation then $H^{1}(X;\mathbb{Z%
})=\Z^{a}\oplus\Z(1)^{b}\oplus\Z[G]^c$, where $%
a+b+2c=2g$. We need to show that $a+c=b+c=g$.

Now the complex variety $V'=V\otimes_{\R}\C$ has reduced Picard group
$\Pic^0(V')\cong H\otimes(\R/\Z)$, so we have 
$\Pic^0(V')^G=(\R/\Z)^{a+c}\oplus(\Z/2)^b$. 
If the real points $V_{\R}$ consist of $\nu>0$ circles, 
then Weichold proved (in his 1882 thesis \cite{Whd}) that 
$\Pic^0(V)\cong (\R/\Z)^g\times(\Z/2)^{\nu-1}$; 
see \cite[1.1]{PW}. In this case, $\Pic^0(V)\cong\Pic^0(V')^G$, 
and it follows that $a+c=g$.

If there are no real points on $V$, then Klein proved 
(in the 1892 paper \cite{Klein}) that 
$\Pic^0(V)\cong (\R/\Z)^g$. In this case $\Pic^0(V)$ is
a subgroup of $\Pic^0(V')^G$ of finite index, so again $a+c=g$. (More
precisely, if $g$ is even, then $a=b=0$, while if $g$ is odd then $a=b=1$;
see \cite[1.1.2]{PW90}.)
\end{proof}

If $X$ is compact, we shall write $(X\times\R)^+$ for the 1-point
compactifi\-cation $(X\times\R)\cup\{\pt\}$ 
of $X\times\R$. If $X$ is a Real space, we regard $(X\times\R)^+$ 
as the Real space in which the involution sends $(x,t)$ to $(\sigma x,-t)$.
The reduced group $\widetilde{KO}_G((X\times\R)^+)=
KO_G((X\times\R)^+,\pt)$ is written as $KO_G(X\times\R)$ in Appendix
\ref{app:Banach}, to be consistent with the notation
in \cite[II.4.1]{MKbook}. We avoid this notation here, 
as it conflicts with our notation for $KO_G(X)$ when $X=V_\C$
for a non-projective variety.

\begin{theorem}\label{WR=KOG} 
Let $V$ be a smooth projective curve over $\R$ and
let $X=V_{\C}$ be the space of its complex points. Then there is an
isomorphism:%
\begin{equation*}
WR(X)\cong \widetilde{KO}_{G}((X\times\R)^+).
\end{equation*}
\end{theorem}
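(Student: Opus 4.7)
The strategy will be to reduce the statement to a general relation between $KR$-theory and equivariant $KO$-theory that is proved at the end of Appendix~\ref{app:Banach}, combined with Theorem~\ref{GR=KOG}.

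First I would rewrite $WR(X)$ using Theorem~\ref{GR=KOG}, which identifies $GR(X)$ with $KO_G(X)$ and, under this isomorphism, identifies the hyperbolic map $KR(X)\to GR(X)$ with the natural forgetful map $\phi\colon KR(X)\to KO_G(X)$ that sends a Real vector bundle to its underlying $G$-equivariant real vector bundle. This yields $WR(X)=\coker(\phi)$, so the theorem reduces to the claim that this cokernel is naturally isomorphic to $\widetilde{KO}_G((X\times\R)^+)$.

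Next I would invoke the general isomorphism
$$\coker\bigl(\phi\colon KR(X)\to KO_G(X)\bigr)\;\cong\;\widetilde{KO}_G((X\times\R)^+)$$
that is established for compact Real spaces at the end of Appendix~\ref{app:Banach}. The geometric input behind that identification is the cofiber sequence of Real spaces $(X\times G)_+\to (X\times D^{\sigma})_+\to X_+\wedge S^{\sigma}$, where $D^{\sigma}=[-1,1]$ carries the sign action; this yields an exact sequence relating $\widetilde{KO}_G((X\times\R)^+)$, $KO_G(X)$, and $KO(X)$, and the appendix carries out a Bott-periodicity/Clifford-module argument in the spirit of Atiyah--Bott--Shapiro to match the kernel of the forgetful map $KO_G(X)\to KO(X)$ up with the cokernel of $\phi$, producing the desired isomorphism.

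Finally, I would verify the hypotheses of the general result for $X=V_\C$: since $V$ is smooth and projective, $X$ is a compact Hausdorff Real space, even a finite $G$-CW complex of equivariant dimension~$2$, so the appendix applies and yields the theorem. The real work in the overall argument sits in Appendix~\ref{app:Banach}; given that general identification, the deduction here is immediate, so the main obstacle is already handled externally by the Bott-periodicity analysis in the appendix.
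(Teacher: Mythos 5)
Your reduction of $WR(X)$ to $\coker\bigl(KR(X)\to KO_G(X)\bigr)$ via Theorem~\ref{GR=KOG} matches the paper, and your appeal to the cup-product exact sequence from Appendix~\ref{app:Banach} (Theorem~\ref{cup-product}) is the right tool. But there is a genuine gap: Appendix~\ref{app:Banach} does \emph{not} establish a general isomorphism
$\coker\bigl(KR(X)\to KO_G(X)\bigr)\cong\widetilde{KO}_G((X\times\R)^+)$
for compact Real spaces. What it establishes is only the exact sequence
\begin{equation*}
KR(X)\to KO_G(X)\to \widetilde{KO}_G((X\times\R)^+)\to KR^1(X)\to KO_G^1(X),
\end{equation*}
so in general $WR(X)$ is merely a \emph{submodule} of $\widetilde{KO}_G((X\times\R)^+)$ — exactly what the Corollary at the end of Appendix~\ref{app:Banach} records — and equals it if and only if the boundary map $KR^1(X)\to KO_G^1(X)$ is injective.

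Establishing that injectivity for $X=V_\C$ is precisely the content of the paper's proof, and it is where the curve-specific work lives: one uses $\dim X=2$ to identify $KU^1(X)\cong H^1(X;\Z)\cong\Z^{2g}$ and, via Example~\ref{ex:KRcurve}, $KR^1(X)\cong\Z^g$; then Lemma~\ref{H1.decomp} on the eigenspaces of $\sigma$ on $H^1(X;\Z[1/2])$ shows $KR^1(X)$ injects (with 2-power index) into the $(-\sigma)$-invariants of $KU^1(X)$, and the factorization $KR^1(X)\to KO_G^1(X)\to KO^1(X)\to KU^1(X)$ forces $KR^1(X)\to KO_G^1(X)$ to be injective. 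Your proposal replaces this step by an assertion that the appendix has already done the matching, which it has not; "compact finite $G$-CW of dimension 2" is not by itself sufficient, and the desired injectivity can fail for general Real spaces.
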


\begin{proof}
Since $\dim (X)=2$, we have $KU^{1}(X)\cong H^{1}(X,\Z)\cong{\Z}^{2g}$ 
and (by Example \ref{ex:KRcurve}) $KR^{1}(X)\cong \Z^{g}$.
These are connected by the complexification $KR^{\ast}\to KU^{\ast}$ 
and realification maps $KU^{\ast }\to KR^{\ast }$; see \cite{Atiyah}. 
The composition $KU^*\to KR^*\to KU^*$ is $1-\sigma $, since it is 
induced by the functor sending $E$ to $E\oplus \sigma ^{\ast }(\bar{E})$.
Inverting $2$, the invariant subspace of $-\sigma $ acting on 
$KU^{1}(X)[1/2]$ is $\Z[1/2]^{g}$, by Lemma \ref{H1.decomp}. Thus 
$KR^{1}(X)\cong \Z^{g}$ injects into $KU^{1}(X)^{-\sigma }$ with
index a power of~2. 
Since the composition 
\begin{equation*}
KR^{1}(X)\to KO_{G}^{1}(X)\to KO^{1}(X)\to KU^{1}(X)
\end{equation*}%
is injective, so is the first map $KR^{1}(X)\to KO_{G}^{1}(X)$. We
now use the exact sequence established in Theorem \ref{cup-product}, 
\begin{equation*}
KR(X)\to KO_{G}(X)\to\widetilde{KO}_G((X\!\times\!\R)^+)\to
KR^{1}(X)\to KO_{G}^{1}(X),
\end{equation*}%
and the identification of Theorem \ref{GR=KOG} to conclude that 
$WR(X)$, the cokernel of $KR(X)\to KO_{G}(X))$,
is $\widetilde{KO}_G((X\!\times\!\R)^+)$.
\end{proof}

Let $G=C_{2}$ act on $[-1,1]$ as multiplication by $\pm 1$. Recall from 
\cite[II.4]{MKbook} that the group $\widetilde{KO}_G((X\!\times\!\R)^+)$ 
is isomorphic to the relative group 
$KO_{G}(X\times[-1,1],X\times S^{0})$; since
$KO_G^*(X\times[-1,1])\cong KO_G^*(X)$ and 
$KO_G^*(X\times S^0)\cong KO^*(X)$, we have an exact sequence:
\vspace{6pt}
\begin{equation}\label{seq:KOG-KO}
KO_{G}^{-1}(X)\!\to\!KO^{-1}(X)\!\to\!\widetilde{KO}_G((X\!\times\!\R)^+)%
\!\to\! KO_{G}(X)\!\to\! KO(X)
\end{equation}

\begin{lemma}\label{Witt:Z/4}
Suppose that $V$ has no $\R$ points, and is 
geometrically connected (i.e., not defined over $\C$). 
Then $WR(V)$ surjects onto $\Z/4$.
\end{lemma}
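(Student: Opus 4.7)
The plan is to apply Theorem \ref{WR=KOG} to realize $WR(V)$ as the reduced $KO$-group of a Thom space over the quotient surface $Y := X/G$, and then extract a $\Z/4$ quotient by restricting to an embedded circle, invoking the classical identity $\widetilde{KO}(\RP^2) = \Z/4$.

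Since $V$ has no $\R$-points, $G$ acts freely on $X = V_\C$; since $V$ is geometrically connected, $X$ is connected and hence so is $Y = X/G$. The antiholomorphic involution $\sigma$ reverses the complex orientation of $X$, so the freeness of its action forces $Y$ to be non-orientable; the Euler-characteristic count $\chi(Y) = (2-2g)/2 = 1-g$ then identifies $Y$ with $N_{g+1}$. Theorem \ref{WR=KOG} together with Example \ref{WR(X^G)}(b) gives
\begin{equation*}
WR(V) \;\cong\; \widetilde{KO}_G\bigl((X\times\R)^+\bigr) \;\cong\; \widetilde{KO}\bigl(T(\lambda)\bigr),
\end{equation*}
where $\lambda := X\times_G\R \to Y$ is the real line bundle associated to the double cover $\pi\colon X\to Y$. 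Because that double cover is the orientation cover, $\lambda$ is the orientation line bundle of $Y$, so $w_1(\lambda) = w_1(Y) \neq 0$.

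Since $w_1(\lambda)\neq 0$, I pick an embedded smooth loop $\iota\colon S^1\hookrightarrow Y$ whose $\Z/2$-homology class pairs nontrivially with $w_1(\lambda)$; then $\iota^*\lambda$ is the M\"obius line bundle $\mu\to S^1$, whose Thom space is $T(\mu)\cong\RP^2$. The resulting Thom-space map $j\colon \RP^2 = T(\mu)\hookrightarrow T(\lambda)$ yields a restriction homomorphism
\begin{equation*}
j^*\colon WR(V)\longrightarrow \widetilde{KO}(\RP^2) = \Z/4.
\end{equation*}
To show $j^*$ is surjective, I chase the commutative diagram of $KO$-cohomology long exact sequences coming from the cofiber sequence $S(\lambda)_+ \to Y_+ \to T(\lambda)$ (which is precisely \eqref{seq:KOG-KO}, since $S(\lambda)=X$) and its counterpart for $\mu$ (where $S(\mu)=S^1$), with the vertical maps induced by $\iota$. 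The class $[\lambda]-1\in\widetilde{KO}(Y)$ pulls back to $0$ in $\widetilde{KO}(X)$ because $\pi^*\lambda$ is canonically trivial, so it lifts to some $a\in WR(V)$. Under $j^*$, the element $a$ lands on a lift of $\iota^*([\lambda]-1) = [\mu]-1$, the generator of $\widetilde{KO}(S^1)=\Z/2$. Since the generator $[L]-1$ of $\widetilde{KO}(\RP^2)=\Z/4$ restricts along $\RP^1\hookrightarrow\RP^2$ to $[\mu]-1$, the reduction map $\Z/4\to\Z/2$ is the canonical surjection, and therefore $j^*(a)$ is odd in $\Z/4$, hence a generator.

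The only real work is the Thom-space identification of $WR(V)$ from Theorem \ref{WR=KOG} and Example \ref{WR(X^G)}(b) together with freeness of the action, and its functoriality under $\iota$; with that in hand, surjectivity reduces to the classical $\widetilde{KO}(\RP^2)=\Z/4$ and the non-triviality of the M\"obius class in $\widetilde{KO}(S^1)$.
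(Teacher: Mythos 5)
Your proof is correct, and it takes a genuinely different route from the paper's. The paper argues algebraically through the function field $F$ of $V$: it shows that the signature map $I(F)/I^2(F)\cong F^\times/F^{\times 2}$ sends $2\langle 1\rangle=\langle\langle -1\rangle\rangle$ to $[-1]$, which is nonzero precisely when $-1$ is not a square in $F$ (i.e.\ when $V$ is geometrically connected), so $W(F)/I^2(F)$ surjects onto $\Z/4$; to transfer this to $WR(V)$ one then needs (implicitly) the identification $W(V)\cong WR(V)$ of Theorem~\ref{curves:W=WR}. Your argument is instead purely topological: you use Theorem~\ref{WR=KOG} together with freeness of the $G$-action to identify $WR(V)$ with $\widetilde{KO}(T(\lambda))$, where $\lambda$ is the orientation line bundle of the non-orientable quotient surface $Y=X/G$, and then restrict along an embedded $\RP^2=T(\mu)\subset T(\lambda)$ over a loop dual to $w_1(\lambda)$, reducing to the classical fact $\widetilde{KO}(\RP^2)=\Z/4$. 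Your computation that $j^*(a)$ is odd (for $a$ a lift of $[\lambda]-1$) is correct: the restriction $\widetilde{KO}(\RP^2)\to\widetilde{KO}(\RP^1)$ is the canonical surjection $\Z/4\to\Z/2$ by Adams, and $a$ restricts to $[\mu]-1\ne 0$. The trade-off: your proof is self-contained within the topological side of the story and does not invoke $W(V)\cong WR(V)$; the paper's proof produces a \emph{ring} map (which the subsequent Remark exploits when passing to higher-dimensional varieties via an embedded curve) and isolates the algebraic mechanism ($\langle\langle -1\rangle\rangle\not\in I^2$) responsible for the $\Z/4$.
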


\begin{proof}
Let $F$ denote the function field of $V$.
The generator of $W(F)$ is represented by 
$\langle1\rangle=-\langle-1\rangle$ so twice it is
$2\langle1\rangle=\langle1\rangle-\langle-1\rangle$, 
and the signature map $I(F)/I^2(F)\smap{\cong} F^\times/F^{\times2}$ 
sends $2\langle1\rangle$ to $[-1]$.
Since $I(F)/I^2(F)$ is a square-zero ideal in $W(F)/I^2(F)$,
there is a surjection from $W(F)/I^2(F)$ to $\Z/4$ if and only if
$2\langle1\rangle\ne0$ in $I(F)/I^2(F)$, which is true
only when $-1$ is a square in $F$, i.e., 
iff $F$ contains $\C$. 
\end{proof}

\begin{subremark}
Let $V$ be a smooth variety over $\R$ of dimension $\ge2$
which is geometrically connected (i.e., not defined over $\C$).
If $V$ has no $\R$ points, then the ring $WR(V)$ surjects onto $\Z/4$.
This is immediate from Lemma \ref{Witt:Z/4}, given
Bertini's Theorem and Zariski's Main Theorem
(see \cite[III.7.9]{Hart}), which guarantees that $V$ contains a 
smooth curve $C$ which is also geometrically connected; the ring map
$WR(V)\to WR(C)\cong W(C)\to\Z/4$ sends the image of the subring $\Z$
onto $\Z/4$.
\end{subremark}

\begin{theorem}\label{W(curve)}
Let $V$ be a geometrically connected, smooth projective algebraic curve
over $\R$ of genus $g$ without $\R$-points.
Then the Witt group $W(V)$ is 
\begin{equation*}
W(V)\cong WR(V)\cong 
\Z/4\oplus (\Z/2)^{g}. 
\end{equation*}
\end{theorem}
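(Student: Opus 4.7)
The plan is to apply Theorem~\ref{curves:W=WR} to reduce to the topological group $WR(V)$, and then to exploit the free $G$-action on $X := V_\C$ furnished by the hypothesis $V(\R) = \emptyset$.

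Complex conjugation is antiholomorphic, so the free involution on the genus-$g$ Riemann surface $X$ reverses orientation, and the quotient $Y := X/G$ is a closed non-orientable surface of Euler characteristic $1-g$; hence $Y \cong N_{g+1}$, the connected sum of $g+1$ copies of $\RP^2$. By Theorem~\ref{GR=KOG} and Example~\ref{WR(X^G)}(b), $GR(V) \cong KO_G(X) \cong KO(N_{g+1})$, and by Remark~\ref{GR*=KO*G} we have $WR(V) \cong \coker(KR(X) \to KO(N_{g+1}))$. I would compute $\widetilde{KO}(N_{g+1})$ from the cofibration $\bigvee_{g+1} S^1 \hookrightarrow N_{g+1} \to S^2$ obtained by collapsing the $1$-skeleton: the $2$-cell attaching word $\prod_i a_i^2$ makes the connecting maps in the resulting long exact sequence multiplication by~$2$, which vanishes on the $2$-torsion groups $\widetilde{KO}^*(S^1)$ and $\widetilde{KO}^*(S^2)$. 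This yields a short exact sequence
\[ 0 \to \widetilde{KO}(S^2) \to \widetilde{KO}(N_{g+1}) \to \widetilde{KO}\bigl(\textstyle\bigvee_{g+1} S^1\bigr) \to 0 \]
of order $2^{g+2}$. An analogous computation for $KR^*(X)$, together with the five-term exact sequence appearing in the proof of Theorem~\ref{WR=KOG}, then shows $|WR(V)| = 2^{g+2}$.

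To determine the group structure, I would invoke Lemma~\ref{Witt:Z/4}, which furnishes a ring surjection $WR(V) \twoheadrightarrow \Z/4$. Together with the total order, the conclusion $WR(V) \cong \Z/4 \oplus (\Z/2)^g$ follows once one establishes that $WR(V)$ has exponent~$4$ and contains exactly one cyclic summand of order~$4$. Both are extracted from the short exact sequence above by comparison with the base case $\widetilde{KO}(\RP^2) \cong \Z/4$ recalled in Example~\ref{WRfree}: the connect-sum decomposition $N_{g+1} \cong \RP^2 \# N_g$ provides a retraction $N_{g+1} \to \RP^2$ splitting off the $\Z/4$-summand, while the remaining $(\Z/2)^g$ part consists of line-bundle classes of order~$2$ in reduced $KO$. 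The main obstacle is precisely this extension-class analysis: one must rule out a $\Z/8$-summand or multiple independent $\Z/4$-summands, which is handled by a direct comparison with Adams' formulas for $KO^*(\RP^n)$ in~\cite{Adams}.
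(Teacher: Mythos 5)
Your setup is correct: $Y:=X/G\cong N_{g+1}$, $WR(V)=\coker(KR(X)\to KO(N_{g+1}))$, and the cofibration calculation $|\widetilde{KO}(N_{g+1})|=2^{g+2}$ is sound (the suspended attaching map has degree $2$ into each wedge factor, so it vanishes on the $\Z/2$ groups). But the key step from $\widetilde{KO}(N_{g+1})$ to $WR(V)$ is not supplied. The five-term sequence you cite identifies $WR(V)$ with $\widetilde{KO}_G((X\times\R)^+)$, which (since $G$ acts freely) is the reduced $KO$ of the \emph{mapping cone} $K$ of $X\to N_{g+1}$ --- the Thom space of the line bundle classifying the double cover, a $3$-complex that is not $N_{g+1}$. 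So you need either $\widetilde{KO}(K)$, or else $KR^*(X)$ together with the map $KR(X)\to KO(N_{g+1})$ so as to compute the cokernel directly; ``an analogous computation for $KR^*(X)$'' is a promise, not an argument, and without it even $|WR(V)|=2^{g+2}$ has not been established (Lemma~\ref{Witt:Z/4} only forces the order to be $\geq 4$).

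The extension-class step is also not correct as stated. There is no retraction $N_{g+1}\to\RP^2$: for $g\ge1$, $\RP^2$ does not embed in $N_{g+1}$ (a closed connected surface embedded in another of the same dimension must be the whole surface), so no retraction can exist. The connect-sum decomposition only gives the pinch map $N_{g+1}\to\RP^2$, whose pullback detects an element of order $4$ in $\widetilde{KO}(N_{g+1})$ but does not split off a $\Z/4$ summand, and ``direct comparison with Adams' formulas'' is left undeveloped. The paper's proof avoids all of this by identifying $WR(V)\cong\widetilde{KO}(K)$ and applying the low-dimensional description $KO(M)\cong\Z\times H^1(M;\Z/2)\times H^2(M;\Z/2)$, valid for $\dim M\le 3$, with group law $(n,w_1,w_2)*(n',w_1',w_2')=(n+n',w_1+w_1',w_2+w_2'+w_1w_1')$; the cup-square term then pins down the unique $\Z/4$ summand. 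If you want to pursue your route, the honest fix would be to replace the ``retraction'' by this Stiefel--Whitney group law applied to $N_{g+1}$ (using that the intersection form on $H^1(N_{g+1};\Z/2)$ is the identity matrix), and to actually prove that the torsion of $KR(X)$ maps to zero in $\widetilde{KO}(N_{g+1})$.
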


Theorem \ref{W(curve)} recovers 
Knebusch's result in \cite[Corollary 10.13]{Knebusch}) that the 
kernel $I$ of $W(V)\to\Z/2$ is $(\Z/2)^{g+1}$. 


\begin{proof}
Since $X^G=\emptyset$, $G=C_{2}$ acts freely on $X$ and if we set $Y=X/G$
then $KO^*_G(X)=KO^*(Y)$. Therefore \eqref{seq:KOG-KO} becomes
the exact sequence%
\begin{equation*}
KO^{-1}(Y)\to KO^{-1}(X)\to \widetilde{KO}_G((X\!\times\!\R)^+)
\to KO(Y)\to KO(X).
\end{equation*}%
In fact, $\widetilde{KO}_G((X\times\R)^+)$ may be identified with 
the reduced $KO$-group of the cone $K$ of the map $X\to Y$. 
Using $\chi(X)=2\chi(X/G)=2-2g$, 
we see that $H^{1}(X/G;\Z/2)\cong (\Z/2)^{g+1}$\!. Therefore 
$H^2(K;\Z/2)\cong(\Z/2)^{g+1}$.

We now use a classical result: if $M$ is a connected $CW$-complex of
dimension $\leq 3,$ 
the Atiyah-Hirzebruch spectral sequence collapses to yield an isomorphism
\begin{equation*}
KO(M)\cong \Z\times H^{1}(M;\Z/2)\times H^{2}(M;\Z/2),
\end{equation*}%
and (by the Cartan Formula for Stiefel-Whitney classes)
the group law $\ast $ on the right hand side is given by the formula%
\begin{equation*}
(n,w_{1},w_{2})\ast (n',w_{1}',w_{2}^{\prime
})=(n+n',w_{1}+w_{1}',w_{2}+w_{2}^{\prime
}+w_{1}w_{1}').
\end{equation*}%
It follows that $\widetilde{KO}(K)$ is an extension of 
$H^1(K)\cong\Z/2$ (with generator $x$) by $H^2(K)\cong(\Z/2)^{g+1}$, 
with $x*x=(0,x^2)$.
\end{proof}

Now consider the case when $V$ has $\R$-points. These points form
the subspace $X^G$ of $X$, which is homeomorphic to a disjoint union of 
$\nu>0$ copies of $S^1$ (with $G$ acting trivially on $S^1$).
From Example \ref{WR(X^G)}, we know that 
$WR(S^1)\cong KO(S^1)\cong\Z\oplus\Z/2$ and 
if $x$ is any $\R$-point on $S^1$ then the map $WR(S^1)\to WR(x)\cong\Z$
is a split surjection. Picking a $\R$-point on each component of $X^G$
gives a map $\tau:WR(X)\to\Z^\nu$, independent of the choice of the points.
We can now recover another result
of Knebusch by topological methods; see \cite[Theorem 10.4]{Knebusch}:

\begin{theorem}\label{W(real curve)} 
Let $V$ be a smooth projective curve over $\R$
of genus $g$ with $\nu>0 $ connected real components. Then the Witt group is 
\begin{equation*}
W(V)\cong WR(V) \cong \Z^\nu\oplus(\Z/2)^{g}.
\end{equation*}
More precisely, the image of the signature $W(V)\map{\tau}\Z^\nu$ is the 
subgroup $\Gamma$ of rank $\nu$ consisting of sequences $(a_{1},...,a_{\nu})$
such that either all the $a_{i}$ are even or all the $a_{i}$ are odd, 
and we have a split exact sequence 
\begin{equation*}
0\to(\Z/2)^{g}\to WR(V)\overset{\sigma}{\to}\Gamma\to0.
\end{equation*}
\end{theorem}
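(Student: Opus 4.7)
Following the strategy of Theorem \ref{W(curve)}, I would first invoke Theorem \ref{curves:W=WR} to identify $W(V) \cong WR(V)$, and then compute the right-hand side via the identification $WR(V) \cong \widetilde{KO}_G((X\times\R)^+)$ of Theorem \ref{WR=KOG} together with the five-term exact sequence \eqref{seq:KOG-KO}. Here $X = V_\C$ is a compact Riemann surface of genus $g$, and the fixed locus $X^G = V_\R$ is the disjoint union of $\nu$ circles.

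The signature $\tau$ is defined by restriction along chosen $\R$-points $x_i$, one per component of $V_\R$, which yields ring homomorphisms $WR(V) \to WR(\Spec\R) = \Z$; the particular choice within a component is irrelevant. To see that the image of $\tau$ lies in $\Gamma$, I would observe that the composition $WR(V) \to \Z \to \Z/2$ at any $\R$-point coincides with the augmentation $WR(V) \to WR(\Spec\C) = \Z/2$ at any complex point (since rank and signature of a symmetric bilinear form over $\R$ agree modulo~$2$, and the augmentation is independent of the chosen complex point), so all coordinates $\tau_i(\omega)$ share the same parity.

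To compute $WR(V)$ itself, I would exploit the $G$-equivariant decomposition $X = N \cup U$, where $N$ is a tubular neighborhood of $X^G$ (with $G$ acting as $-1$ in the normal direction) and $U = X \setminus X^G$ carries a free $G$-action. Equivariant Mayer--Vietoris expresses $KO_G^{*}(X)$ in terms of $KO_G^{*}(X^G) \cong KO^{*}(X^G) \otimes R(G)$ and $KO^{*}(U/G)$, both of which are computable. Combined with the (collapsing) Atiyah--Hirzebruch computation of $KO^{*}(X)$ and the sequence \eqref{seq:KOG-KO}, this presents $WR(V)$ as an extension.

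The hardest step will be identifying the image of $\tau$ exactly as $\Gamma$ and the kernel as $(\Z/2)^g$, rather than merely up to finite index. For the kernel, the $(\Z/2)^g$ should emerge from the $\sigma$-invariant part of $H^1(X;\Z/2) \cong (\Z/2)^{2g}$, which by Lemma \ref{H1.decomp} has $\Z[1/2]$-rank $g$. For the image, the class $\langle 1 \rangle \in W(V)$ maps to $(1,1,\ldots,1)\in\Gamma$; the remaining generators $2e_i$ of $\Gamma/\Z(1,\ldots,1)$ can be realized by explicit rank-two Witt classes supported near a single fixed circle (equivalently, from the component-by-component structure of $KO_G(N)$). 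Together with $(1,\ldots,1)$, these generate $\Gamma$ over $\Z$. Finally, since $\Gamma \cong \Z^\nu$ is free abelian, the short exact sequence $0 \to (\Z/2)^g \to WR(V) \to \Gamma \to 0$ splits automatically, giving $WR(V) \cong \Z^\nu \oplus (\Z/2)^g$.
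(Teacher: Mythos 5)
Your overall framework is close to the paper's in spirit---both arguments rest on a Mayer--Vietoris analysis of the decomposition of $X$ into a collar neighborhood of $X^G$ and the free complement---but the route differs. You propose to compute $KO_G^*(X)$ outright by equivariant Mayer--Vietoris and then extract $WR(X)$ from the five-term sequence \eqref{seq:KOG-KO}, whereas the paper never computes $KO_G^*(X)$ globally: it applies the identification $WR(Z)\cong\widetilde{KO}_G((Z\times\R)^+)$ piece-by-piece to $Z=K,T,S$ (after checking $KR^1(Z)\to KO_G^1(Z)$ is injective for each) and runs Mayer--Vietoris \emph{directly on $WR$}, which sidesteps the need to understand the forgetful maps $KO_G^*(X)\to KO^*(X)$. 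Your parity argument for $\mathrm{im}(\tau)\subseteq\Gamma$ via ``signature mod $2$ equals rank mod $2$ equals the complex augmentation'' is a genuine and clean alternative to the paper's, which instead reads off the parity constraint from the explicit last map of its Mayer--Vietoris sequence.

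However, there are two genuine gaps where your sketch would not survive being written out.

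First, your claim that the kernel $(\Z/2)^g$ ``should emerge from the $\sigma$-invariant part of $H^1(X;\Z/2)$, which by Lemma \ref{H1.decomp} has $\Z[1/2]$-rank $g$'' conflates mod-$2$ invariants with $\Z[1/2]$-eigenspaces; Lemma \ref{H1.decomp} controls neither $H^1(X;\Z/2)^\sigma$ nor its size. In the paper the $(\Z/2)^g$ appears as $WR(K)\cong\widetilde{KO}_G((K\times\R)^+)$, computed (as in the proof of Theorem \ref{W(curve)}) from $H^{1}$ and $H^{2}$ of the Klein-type quotient $K/G$ via the low-dimensional Atiyah--Hirzebruch description of $KO$; this is not the same group as $H^1(X;\Z/2)^\sigma$, and the identification requires the actual $KO$-computation.

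Second---and this is the more serious issue---the exactness you need at the end of your computation is precisely what the paper has to work to establish. In the paper's sequence the point is that the boundary map $\partial$ into $WR(X)$ vanishes; proving this occupies the second half of the proof (extending the Mayer--Vietoris sequence one step to the left and showing that $\widetilde{KO}_G^{-1}((T\times\R)^+)\to\widetilde{KO}_G^{-1}((S\times\R)^+)$ is surjective, by identifying it with $\nu$ copies of a cup product $KO^{-1}(S^1)\to KO^{-2}(S^1)$ with the Hopf class). Your substitute---``explicit rank-two Witt classes supported near a single fixed circle''---is not available as stated: on the algebraic side one cannot support a quadratic bundle near a circle, and on the topological side exhibiting such classes and verifying that they map to $2e_i$ under $\tau$ amounts to re-running exactly the Mayer--Vietoris bookkeeping, including the vanishing of the boundary map. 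Until that step is carried out, you have only an upper bound $\mathrm{im}(\tau)\subseteq\Gamma$ and a surjection from some subquotient of $KO_G(X)$ onto $WR(X)$, not the asserted split short exact sequence.
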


\begin{proof}
To analyze $WR(X)$, we consider a small closed collar neighborhood $T$ of $%
X^{G}$ and the closure $K$ of the complement $X\setminus T$; they cover $X$
and intersect in a (trivial) $G$-cover $S=(X^G)\times G$ of $X^G$. ($S$ is $%
2\nu$ circles.) For $Z=K$, $T$ and $S$, we still have injections from $%
KR^{1}(Z)$ into $KO_{G}^{1}(Z)$. Therefore, we obtain a Mayer-Vietoris exact
sequence%
\begin{equation*}
\overset{\partial}{\to} WR(X)\to WR(K)\oplus WR(T)\to
WR(S)\to
\end{equation*}%
The group $G$ acts freely on the subspaces $K$ and $S$ of $X\setminus X^G$,
and almost the same calculation as in the proof of Theorem \ref{W(curve)}
shows that 
\begin{equation*}
\widetilde{KO}_G((K\!\times\!\R)^+)\cong WR(K)\cong (\Z/2)^{g}.
\end{equation*}%
On the other hand, $WR(T)$ is the direct sum of $\nu$ copies of $%
WR(S^1)\cong KO(S^1)\cong\Z\oplus\Z/2$ (see Example \ref%
{WR(X^G)}) and $WR(S)$ is the direct sum of $\nu$ copies of the cokernel $%
\Z/2\oplus\Z/2$ of the map from $KR(S^1\times G)\cong
KU(S^1)\cong\Z$ to $KO(S^1)\cong\Z\oplus(\Z/2)$.

This shows that the previous exact sequence terminates in%
\begin{equation*}
\overset{\partial}{\to} WR(X)\to (\Z/2)^{g}\oplus \Z%
^{\nu}\oplus (\Z/2)^{\nu}\to (\Z/2)^{\nu}\oplus(%
\Z/2)^{\nu} \to0.
\end{equation*}%
The last map in this sequence sends $(u,v,w)$ to $(\overline{v}%
+\theta(u),w+\theta (u)),$ where $\overline{v}$ is the class of $v$ modulo 2
and where $\theta$ is the composition of the sum operation $(\Z%
/2)^{g}\to \Z/2$ and the diagonal $\Z/2\to (%
\Z/2)^{\nu}.$ Therefore, the end of this sequence can be written as%
\begin{equation*}
\overset{\partial}{\to} WR(X)\to (\Z/2)^{g}\oplus \Gamma
\to 0,
\end{equation*}%
where $\Gamma$ is the subgroup of $\Z^{\nu}$ defined in 
the statement of Theorem \ref{W(real curve)}.

It remains to show that the map $\partial$ is zero. Using Theorem \ref%
{WR=KOG}, the left hand side of the Mayer-Vietoris exact sequence can be
extended to%
\[
\widetilde{KO}_G^{-1}(K\!\times\!\R)\oplus
\widetilde{KO}_G^{-1}((T\!\times\!\R)^+)\to
\widetilde{KO}_G^{-1}((S\!\times\!\R)^+)\overset{\partial}{\to} WR(X)\to.
\]
It suffices to show that the map $\widetilde{KO}_G^{-1}((T\!\times\!\R)^+)\to
\widetilde{KO}_G^{-1}((S\!\times\!\R)^+)$ is onto.

Writing the 1-point compactification of $T\!\times\!\R$ as the
union of $T\!\times\![-1,1]$ and the (contractible) closure of its complement,
we see that $\widetilde{KO}_G^{-1}((T\times\R)^+)$ is isomorphic to 
$\nu$ copies of $KO^{-1}(S^{1}).$ Using a similar cover of the 1-point 
compactification of $S\!\times\!\R$, we see that 
$\widetilde{KO}_G^{-1}((S\!\times\!\R)^+)$ is
isomorphic to $\nu$ copies of $KO^{-2}(S^1)$, and that the map between these
groups is isomorphic to $\nu$ copies of the cup product $KO^{-1}(S^1)\to
KO^{-2}(S^1)$ with the generator of $KO^{-1}(\pt)$. Since this cup
product is onto, so is the map $\widetilde{KO}_G^{-1}((T\times\R)^+)\to
\widetilde{KO}_G^{-1}((S\times\R)^+)$.
\end{proof}


\goodbreak


\begin{proposition}\label{skewWR(curve)}
For every curve $V$ over $\R$, we have  $_{-1}W(V)=0$
and ${}_{-1}WR(V)=0.$
\end{proposition}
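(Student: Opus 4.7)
The plan has two steps. First, I would establish an isomorphism ${}_{-1}W(V)\cong{}_{-1}WR(V)$ for any curve $V$ by running the same diagram chase that proves Theorem \ref{curves:W=WR}, but with each row replaced by its skew-symmetric analogue: the second and third rows are then the exact sequences obtained from \eqref{seq:U-K-GW} and \eqref{eq:UR-KR-GR} at shift $[i]=2$, whose terms are ${}_{-1}GW_0(V), {}_{-1}U_{-1}(V)$ and ${}_{-1}GR_0(V), {}_{-1}UR_{-1}(V)$ respectively. All hypotheses used in that proof---the 2-divisibility of the $K$-theory comparison groups of Corollary \ref{Kc-u.d.}, the parity statements of Corollary \ref{U=UR}, and the exponent bounds of Theorems \ref{W=WR.mod2tors} and \ref{W-WR.exponent}---have skew-symmetric analogues proved in the later sections; the argument is entirely formal, so no new ideas are needed here.

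Second, I would show that ${}_{-1}WR(V)$ vanishes. By Theorem \ref{-GR=KRH} we identify ${}_{-1}GR(V)\cong KR_\H(V)$, with the hyperbolic map $KR(V)\to{}_{-1}GR(V)$ becoming the quaternionification $E\mapsto\H\otimes_\C E$. Using Bott periodicity $KR_\H^n(X)\cong KR^{n-4}(X)$, this map is multiplication by a generator of $KR^{-4}(\pt)\cong\Z$ on the $KR^*(\pt)$-module $KR^*(V)$. Hence the vanishing of ${}_{-1}WR(V)$ is equivalent to the surjectivity of $KR^0(V)\to KR^{-4}(V)$.

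For a smooth projective curve, Example \ref{ex:KRcurve} gives $KR^0(V)\cong\Z^2\oplus(\Z/2)^{\nu-1}$ and $KR^{-4}(V)=KR^4(V)\cong\Z^2$. Bott multiplication is the identity $\Z\to\Z$ at the point-level generator, so it surjects onto the rank summand of the target; for the second $\Z$ summand of $KR^{-4}(V)$, which arises from the top filtration layer of the Atiyah-Hirzebruch spectral sequence for $V_\C$, the surjectivity follows because Bott multiplication acts as the identity on this filtration layer (it shifts only the coefficient system, not the base cohomology, and the AHSS collapses at $E_\infty$ for a $2$-dimensional Real space). Singular and affine curves then reduce to this case via Mayer-Vietoris for the normalization square and excision for a smooth projective compactification, together with the trivial calculation ${}_{-1}WR=0$ for points and for $\Spec\C$ (where both $KR$ and $KR_\H$ are $\Z$ and the hyperbolic map is the identity).

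The main obstacle is the surjectivity onto the second $\Z$ summand of $KR^{-4}(V)$ in the smooth projective case: this requires a careful analysis of the Atiyah-Hirzebruch filtration for Real $K$-theory of a Riemann surface with involution, verifying that Bott multiplication induces an isomorphism on the $H^2$-filtration quotients. An alternative would be to construct an explicit preimage---a Real bundle whose hyperbolic skew-symmetric form realizes the relevant ``top'' class---which could potentially bypass the spectral sequence analysis.
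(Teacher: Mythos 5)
Your proposal has a genuine gap, one that you acknowledge yourself. In Step~2 you reduce the vanishing of ${}_{-1}WR(V)$ to the surjectivity of a Bott-type map $KR^0(V) \to KR^{-4}(V)$, and for a smooth projective curve with $V_\R\neq\emptyset$ you note (via Example~\ref{ex:KRcurve}) that the target $\Z^2$ has a free summand beyond the rank, whose surjectivity you explicitly defer to ``a careful analysis of the Atiyah--Hirzebruch filtration.'' That deferred analysis \emph{is} the proof; it cannot be omitted, and your heuristic that ``Bott multiplication acts as the identity on this filtration layer'' is moreover questionable: the complexification $KO^{-4}(\pt)\to KU^{-4}(\pt)$ sends the generator $\alpha$ to $2\beta^2$, so on the parts of the equivariant AHSS coefficient system supported over the free locus the Bott map is multiplication by~$2$, not an isomorphism. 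You also assert that the hyperbolic map $KR(X)\to{}_{-1}GR(X)\cong KR_\H(X)$ is the quaternionification $E\mapsto\H\otimes_\C E$, but Theorem~\ref{-GR=KRH} only identifies the \emph{forgetful} map with the canonical $KR_\H(X)\to KR(X)$; identifying the hyperbolic map with cup product by a generator of $KR^{-4}(\pt)$ needs its own argument. Finally, the computation you cite from Example~\ref{ex:KRcurve} assumes $\nu>0$; the case of curves without $\R$-points is not addressed.

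The paper's own proof is quite different and self-contained. On the topological side, it uses the identification ${}_{-1}GR_0(X)\cong KR_\H(X)$ and the observation that $KR_\H(Y)\cong\Z$ for any connected CW-complex $Y$ of dimension at most~$3$ (because $KSp^{-1}(\pt)=KSp^{-2}(\pt)=KSp^{-3}(\pt)=0$), and then re-runs the Mayer--Vietoris decomposition from the proofs of Theorems~\ref{W(curve)} and~\ref{W(real curve)} with $KR_\H$ in place of $KO$ to conclude ${}_{-1}WR(X)=0$ directly. On the algebraic side, instead of transporting the topological vanishing back across the isomorphism of your Step~1, it argues locally: Zariski descent for $\GW^{[2]}$ reduces to the sheaf facts that $GW_1^{[2]}=0$ (the symplectic group of a local ring is perfect) and $GW_0^{[2]}=\underline{\Z}$, giving $\GW_0^{[2]}(V)=H^0(V,\Z)$ and hence ${}_{-1}W_0(V)=0$. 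That route avoids the comparison machinery of Sections~\ref{sec:williams}--\ref{sec:exponents} entirely and also covers singular curves cleanly (with a conductor-square argument as a further alternative).
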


\begin{proof}
We first show that $_{-1}WR(X)=0$ for $X=V_\C$.
By Theorem \ref{-GR=KRH},
$_{-1}GR_{0}(X)\cong KR_{\H}(X)$.
Since $KR_{\H}(Y)=\Z$ for any connected 
CW-complex $Y$ of dimension $\le3$
(by the Atiyah-Hirzebruch spectral sequence), the proofs of 
Theorems \ref{W(curve)} and \ref{W(real curve)} go through. 
We leave the details of the calculation to the reader.
(Use the quaternionic analogue of the spectral sequence (A.2) 
in \cite{KW}, converging to $KR_{\H}^*(X)$.)

We shall need the following classical facts:
if $R$ is a field or local ring then 
(i) $_{-1}\GW_0(V)=\Z$ and $_{-1}W_0(R)=0$, and
(ii) the infinite symplectic group $Sp(R)$ (for the trivial involution) 
is perfect; see \cite{Klingenberg} for instance.  This implies that 
$_{-1}\GW_1(R)=0$ and that $_{-1}W_1(R)=0$. 

For any (singular) curve $V$ over $\R$, the spectrum $\GW^{[2]}(V)$ satisfies
Zariski descent by \cite[9.7]{Schlichting.Fund}. The descent spectral
sequence amounts to an exact sequence for each $\GW_n^{[2]}(V)$, 
one of which is:
\[
0 \to H^1(V,GW_1^{[2]}) \to \GW_0^{[2]}(V) \to H^0(V,GW_0^{[2]})\to 0.
\]
The sheaf $GW_1^{[2]}\!=\!{}_{-1}GW_1$ vanishes, as its stalks are
${}_{-1}GW_1(\cO_{X,x})=0$. Since ${}_{-1}GW_0(\cO_{X,x})=\Z$, the sheaf 
$GW_0^{[2]}={}_{-1}GW_0$ is $\Z$,
we get $\GW_0^{[2]}(X)=H^0(X,\Z)$ and hence $_{-1}W_0(V)=0$. 
\end{proof}

Here is a different proof of Proposition \ref{skewWR(curve)}
for smooth curves. For smooth $V$, Balmer and Walter constructed
a spectral sequence in \cite{BalmerW} converging to the 
Balmer Witt groups $W^{[p+q]}(V)$, with $E_1^{p,q}=0$ 
unless $0\le p\le\dim(V)$ and $q\equiv0\pmod4$.
If $V$ is a curve, the
spectral sequence collapses to yield
${}_{-1}W_0(V)=W^{[2]}(V)=0$; see \cite[10.1b]{BalmerW}.

\begin{proof}[Alternative proof for singular affine curves]
Suppose that $V\!=\Spec(A)$.
It is well known that $K_0(A)=K_0(A_\red)$ and
$_{-1}GW_0(A)={}_{-1}GW_0(A_\red)$, so ${}_{-1}W(A)={}_{-1}W(A_\red)$. 
Therefore we may assume that $A$ is reduced. 

If $B$ is the normalization of
$A$ and $I$ is the conductor ideal, we have the following diagram:
\[\xymatrix{
K_1(B/I) \ar[d]\ar[r] & {}_{-1}GW_1(B/I) \ar[d]\ar[r] & _{-1}W_1(B/I)\ar[d]
\ar[r]&0.\\
K_0(A) \ar[d]\ar[r] & {}_{-1}GW_0(A) \ar[d]\ar[r] & {}_{-1}W_0(A)\ar[d]
\ar[r]&0.\\
\atop{K_0(B)\oplus}{K_0(A/I)}\ar[d]\ar[r]&
\atop{{}_{-1}GW_0(B)\oplus}{{}_{-1}GW_0(A/I)} \ar[d]\ar[r] & 
\atop{{}_{-1}W_0(B)\oplus}{{}_{-1}W_0(A/I)} \ar[d]\ar[r]&0.
\\
K_0(B/I) \ar[r]^{\cong~} &{}_{-1}GW_0(B/I) \ar[r]& {}_{-1}W_0(B/I)\ar[r]&0.
}\]
The left two columns are exact
(the $GW$ column is exact by \cite[III(2.3)]{Bass343})
and the horizontal (exact) sequences define ${}_{-1}W_n$.

As $\Spec(B)$ is a smooth curve, we have already seen that 
${}_{-1}W_0(B)=0$. 
As $A/I$ and $B/I$ are products of artin local rings,
we have  $_{-1}W_0(A/I)= {}_{-1}W_1(B/I)=0$.
A diagram chase now shows that $_{-1}W_0(A)=0$.
\end{proof}

\medskip
\section{Williams' conjecture in Real Hermitian $K$-theory}
\label{sec:williams}

The purpose of this section, achieved in Theorem \ref{GW=GR}, 
is to compare 
$GW_*(V;\Z/2^\nu)$ and $GR_{\ast }(V;\Z/2^\nu)$. For this, we establish 
an analogue for Real spaces (Theorem \ref{Williams-GR}) of 
a general conjecture relating $K$-theory to Hermitian $K$-theory,
formulated by Bruce Williams in \cite[p.\,667]{Williams}.

\goodbreak
Williams' conjecture has been verified
in many cases of interest; see \cite[1.1, 1.5, 2.6]{BKOS} and the
references cited there. To state the result of \cite[2.6]{BKOS}
in our context,
we write $\bbK(V)$ for the nonconnective $K$-theory spectrum of a 
variety $V$ and $\GW(V)$ for Karoubi's Grothendieck-Witt 
spectrum of $V$ \cite{Schlichting.Fund},
using the usual duality $E^*=\Hom(E,\cO_{V})$. 
By construction, $K_n(V)=\pi_n\bbK(V)$ and ${\kGW_n}(V)=\pi_n\GW(V)$ for $n\ge0$.
Since $E^{**}\!\cong\!E$, the duality induces an involution on $\bbK(V)$.

\begin{theorem}\label{Williams} 
Let $V$ be a variety (over $\R$ or $\C$ for simplicity). 
Then the map of spectra $\GW(V)\to\bbK(V)^{hG}$ 
is a 2-adic homotopy equivalence. Here $G=C_{2}$ acts by duality.
\end{theorem}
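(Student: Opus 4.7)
The strategy is to deduce this from the main theorem of \cite{BKOS}, which establishes Williams' conjecture for affine schemes in which $2$ is invertible, combined with Nisnevich descent to globalize from affine pieces.

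First I would recall the construction of the comparison map. The duality $E \mapsto E^* = \Hom(E, \cO_V)$ induces a natural $C_2$-action on the spectrum $\bbK(V)$. A nondegenerate symmetric form $(E, \varphi)$ is precisely the data needed to lift $[E] \in K_0(V)$ to a point of the genuine fixed spectrum; carrying this refinement through Schlichting's hermitian $S_\mathdot$-construction in \cite{Schlichting.Fund} produces the natural spectrum map $\GW(V) \to \bbK(V)^{hG}$ to the homotopy fixed points.

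Next I would reduce to the affine case. Both sides satisfy Nisnevich descent: the source by \cite[9.7]{Schlichting.Fund}, and the target because $\bbK$ satisfies Nisnevich descent (Thomason--Trobaugh), while homotopy fixed points commute with homotopy limits. Hence the homotopy fibre of the comparison map is itself a Nisnevich sheaf of spectra, and its $2$-adic vanishing may be checked stalkwise (i.e.\ on henselian local rings of $V$) or on an affine Zariski cover.

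Finally, on each affine piece $\Spec(A) \subseteq V$ the ring $A$ is a finite-type $\R$- (or $\C$-) algebra and so contains $1/2$; the hypotheses of \cite[2.6]{BKOS} then apply and give exactly the required $2$-adic equivalence $\GW(A) \to \bbK(A)^{hG}$. The only subtlety is compatibility of the descent argument with $2$-adic completion, but this is automatic for us: $V$ has finite Nisnevich cohomological dimension bounded by $\dim V$, and for quasi-projective $V$ both spectra are bounded below (e.g.\ $K_n(V) = 0$ for $n < -\dim V$), so the descent spectral sequences converge strongly and a stalkwise $2$-adic equivalence propagates to a global one.
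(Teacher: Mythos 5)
The paper offers no proof of Theorem \ref{Williams}: it is presented as a restatement of \cite[2.6]{BKOS} in the authors' notation (note the sentence ``To state the result of \cite[2.6]{BKOS} in our context\dots'' immediately preceding the theorem). So there is no internal argument to compare your proposal against; the question is only whether your derivation from BKOS is correct and necessary. It is likely superfluous: the theorems of \cite{BKOS} are already formulated for schemes with $\frac12 \in \cO_V$ satisfying mild finiteness hypotheses (finite Krull dimension, ample family of line bundles), all of which a quasi-projective variety over $\R$ or $\C$ enjoys, so one can cite the reference directly without a descent reduction. That said, the reduction you sketch would be a legitimate route if one only had the affine case.

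There is one factual error in your convergence discussion: $\GW(V)$ is \emph{not} bounded below. By \cite[5.7]{Schlichting.Fund} (recalled in Section 1 of this paper), the negative homotopy groups $\kGW_n^{[i]}(V)$ for $n<0$ are Balmer Witt groups, which are 4-periodic and typically nonvanishing; even $\pi_{-4k}\GW(\R) \cong W(\R) \cong \Z$ for all $k\geq 0$. Your parenthetical $K_n(V)=0$ for $n<-\dim V$ is correct for $K$-theory but the analogous statement fails for $\GW$, and in any case the homotopy fixed point target $\bbK(V)^{hG}$ is also unbounded below because of group-cohomology contributions. Fortunately this does not wound the argument: as you also note, $V$ has finite Nisnevich cohomological dimension $\le \dim V$, which gives the descent spectral sequence a horizontal vanishing line at $p > \dim V$; that alone ensures strong convergence (only finitely many $E_2^{p,q}$ contribute in each total degree), with no boundedness hypothesis required. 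You should drop the ``bounded below'' clause and rely solely on the cohomological dimension bound.
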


Although ``$X\to Y$ is a 2-adic homotopy equivalence'' 
means that $X\,\widehat{\;}\to Y\,\widehat{\;}$ is a homotopy equivalence,
the property we use is that the induced map
$\varprojlim_r \pi_*(X;\Z/2^r)\map{}\varprojlim_r \pi_*(Y;\Z/2^r)$
is an isomorphism.

\medskip
Let $X$ be a Real space, and let $\mathbb{GR}(X)$ denote the spectrum
associated to the Real Grothendieck-Witt theory of $X$. 
%
As in Section \ref{sec:KR}, let $\mathbb{KR}(X)$ denote the spectrum
associated to the $KR$-theory of $X$.
By construction, $KR_n(X)=\pi_n\mathbb{KR}(X)$ and 
$GR_n(X)=\pi_n\mathbb{GR}(X)$.  

From the Banach algebra point of view, $\mathbb{KR}(X)$ is the usual
topological $K$-theory spectrum associated to the algebra $A$ of 
continuous functions $f:X\to \C$ such that 
$f(\overline{x})=\overline{f(x)},$ while $\mathbb{GR}(X)$ is the 
topological Hermitian $K$-theory spectrum of $A$ 
(see Appendices \ref{app:Banach}, \ref{sec:Marco}). 

The following theorem proves the analogue of Williams' conjecture 
for Real Hermitian K-theory in a topological setting.
Another proof using Banach algebras is given in Appendix \ref{sec:Marco}.

The group $G=C_{2}$ acts on $\mathbb{KR}(X)$ and on $\bKU(X)$, 
by sending a Real bundle (resp., a complex bundle) to its dual bundle.
The canonical map $KO(X)\to \bKU(X)^{hG}$ is a homotopy equivalence;
see \cite{MKBanach} and \cite[p.\,808]{BK}. 

\begin{theorem}\label{Williams-GR} 
Let $X$ be a compact Real space, and let $G=C_{2}$ act by duality 
on the spectrum $\mathbb{KR}(X)$. 
Then we have a 2-adic homotopy equivalence: 
\begin{equation*}
\mathbb{GR}(X)\simeq\mathbb{KR}(X)^{hG}.
\end{equation*}
\end{theorem}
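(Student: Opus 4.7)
The plan is to exploit the identification $\mathbb{GR}(X) \simeq \mathbb{KO}_G(X)$ from Theorem \ref{GR=KOG}, upgraded from groups to spectra, thereby reducing the claim to an Atiyah--Segal-type $2$-adic equivalence between $G$-equivariant real $K$-theory and the homotopy fixed points of $\mathbb{KR}(X)$ under duality.

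First I would upgrade Theorem \ref{GR=KOG} to a spectrum-level equivalence $\mathbb{KO}_G(X) \map{\simeq} \mathbb{GR}(X)$. The Hermitian functor $F \mapsto (\C \otimes F, \theta)$ constructed in Section \ref{sec:GR=KOG} is natural, multiplicative, and refines to a map of spectra by standard infinite-loop-space machinery; the diagonalization argument of Theorem \ref{GR=KOG} goes through in all shifted degrees, and the resulting map induces isomorphisms on all $GR_n^{[i]}(X)$ by the $8$-periodic reasoning of Remark \ref{GR*=KO*G}. Composing with the natural forgetful map $\mathbb{GR}(X) \to \mathbb{KR}(X)^{hG}$ (a symmetric form $\varphi: E \map{\simeq} E^*$ identifies $E$ with its dual $G$-equivariantly up to homotopy) then reduces the theorem to showing that the resulting map $\mathbb{KO}_G(X) \to \mathbb{KR}(X)^{hG}$ is a $2$-adic equivalence.

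Second, I would verify this equivalence by Mayer--Vietoris induction over a finite $G$-CW structure on the compact Real space $X$. Both sides are cohomological functors on Real spaces ($\mathbb{KO}_G$ genuinely equivariantly, and $\mathbb{KR}(-)^{hG}$ because $\mathbb{KR}$ is and homotopy fixed points preserve finite limits), so it suffices to verify the equivalence on the atomic $G$-orbit types. For the free orbit $X = G$: one has $\mathbb{KR}(G) \simeq \mathbb{KU}$ with duality acting by complex conjugation and $\mathbb{KO}_G(G) \simeq \mathbb{KO}$, and the required $2$-adic equivalence $\mathbb{KO} \simeq \mathbb{KU}^{hC_2}$ is the classical result invoked in the paragraph preceding the theorem. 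For the fixed point $X = \pt$: one has $\mathbb{KR}(\pt) \simeq \mathbb{KO}$ with trivial duality action (the identity and the dual of a real vector space are naturally isomorphic via the inner product), so $\mathbb{KR}(\pt)^{hG} \simeq F(BC_2^+, \mathbb{KO})$, and the required equivalence $\mathbb{KO}_{C_2}(\pt)^{\wedge}_2 \simeq F(BC_2^+, \mathbb{KO})^{\wedge}_2$ is a $2$-complete version of the Atiyah--Segal completion theorem for the orthogonal group at $C_2$.

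The main obstacle is the fixed-point case, which requires showing that augmentation-ideal-adic completion and $2$-adic completion coincide on $\pi_* \mathbb{KO}_{C_2}(\pt)$. This should follow from the identity $(1-t)^2 = 2(1-t)$ in $RO(C_2) = \Z[t]/(t^2-1)$, which gives $I^n = 2^{n-1} I$ for $n \geq 1$ and hence agreement of the two completions on $I$-torsion parts. Once the atomic cases are verified, the $G$-cell attaching cofiber sequences produce matching cofiber sequences on both sides of the comparison, and $2$-adic equivalence propagates through the finite induction because $2$-adic completion preserves finite homotopy limits of spectra whose homotopy groups are finitely generated, as guaranteed for compact Real spaces by Corollary \ref{GR.fingen}.
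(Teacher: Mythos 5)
Your proof is correct in outline but takes a genuinely different route from the paper's. The paper's proof works entirely at the level of homotopy fixed points of $\mathbb{KU}(X)$: it lists three commuting $C_2$-actions (pullback by the involution on $X$, complex conjugation of bundles, duality), uses the Banach-algebra identifications $\mathbb{KO}(X)\simeq\mathbb{KU}(X)^{hG}$ and $\mathbb{KR}(X)\simeq\mathbb{KU}(X)^{hG'}$, and swaps the order of two nested homotopy fixed point constructions to identify $\mathbb{KR}(X)^{hG}\simeq\mathbb{KO}(X)^{hG_X}$ two-adically. After that it invokes Theorem~\ref{GR=KOG} and the Atiyah--Segal completion theorem for $KO_G(X)$ directly, plus the observation that $I$-adic completion agrees with $2$-adic completion for $RO(C_2)$-modules. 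Your proof instead replaces the double-fixed-point manipulation by a Mayer--Vietoris induction over the $G$-CW skeleta of the compact Real space $X$, reducing to the two atomic orbits $G/e$ and $G/G$: the free orbit contributes the Galois descent equivalence $\mathbb{KO}\simeq\mathbb{KU}^{hC_2}$, and the fixed orbit contributes the Atiyah--Segal completion theorem at a single point ($RO(C_2)\hat{\;}_I\cong KO(BC_2)$), together with the same $I$-adic versus $2$-adic observation coming from $(1-t)^2=2(1-t)$. What you buy is that you never need the Atiyah--Segal theorem for a general $G$-CW complex $X$, only at the orbits (and only over the two-element group), so in a sense your argument re-derives the completion theorem for general $X$ on the fly. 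What the paper's approach buys is that it avoids tracking the duality action orbit-by-orbit: you must still verify (a) that the duality $C_2$-spectrum structure on $\mathbb{KR}(\pt)=\mathbb{KO}$ is coherently trivial (not merely that each dualization is isomorphic to the identity after choosing metrics), and (b) that on the free orbit the duality action $E\mapsto E^*$ on $\mathbb{KR}(G)\simeq\mathbb{KU}$ yields the same homotopy fixed points as the conjugation action used in the classical statement $\mathbb{KO}\simeq\mathbb{KU}^{hC_2}$; both are standard but non-vacuous checks, and the paper's homotopy fixed point shuffle sidesteps them in a single algebraic stroke. Your spectrum-level upgrade of Theorem~\ref{GR=KOG} and the factorization of the forgetful map through $\mathbb{KR}(X)^{hG}$ are both in line with what the paper does implicitly.
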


\begin{proof}
In addition to duality, 
there are three non-trivial actions of $G$ on $\bKU(X)$, 
whose arise from the action of $G$ (a) on the space $X$, 
(b) on the complex vector bundles over $X$ 
(ignoring the action on $X$), sending a bundle $E$ to
its complex conjugate bundle $E'$ and (c) the product of
these actions, sending a bundle $E$ to the bundle $F$ defined by 
$F_x=E'_{\sigma x}$. 
We shall write the corresponding homotopy fixed 
point spectra as $\bKU(X)^{hG_X}$, $\bKU(X)^{hG'}$ and
$\bKU(X)^{hG'_X}$, respectively. 

Using the Banach algebra approach, 
we have homotopy equivalences: 
\[
\mathbb{KO}(X)\simeq \bKU(X)^{hG}, \qquad
\mathbb{KR}(X)\simeq \mathbb{KU}(X)^{hG'}. 
\]
(The first equivalence is verified in \cite{MKBanach} but is older;
the argument for $hG'$ is given in \cite[pp.\,809-810]{BK}.)
Thus
\[
\mathbb{KO}(X)^{hG'} \simeq (\mathbb{KU}(X)^{hG})^{hG'} \simeq 
(\mathbb{KU}(X)^{hG'})^{hG}\simeq \mathbb{KR}(X)^{hG}.
\]
Since the action (b) is trivial on $\mathbb{KO}(X)$, 
$\mathbb{KO}(X)^{hG_X} \simeq \mathbb{KO}(X)^{hG'}$
after 2-adic completion.
Thus Theorem \ref{Williams-GR} is equivalent to the assertion that 
\[
\mathbb{GR}(X)\map{\simeq}\mathbb{KO}(X)^{hG_X}
\]
is a homotopy equivalence after 2-adic completion.

By Theorem \ref{GR=KOG}, this is equivalent to the
Atiyah-Segal completion theorem with coefficients $\Z/2^\nu$:
\begin{equation*}
KO_{G}(X)\,\widehat{\;}\map{\simeq}KO(EG\times _{G}X)\,\widehat{\;}.
\end{equation*}%
More precisely, on the 0-spectrum level --- 
which is sufficient for our purpose --- we have (after completion) 
\begin{equation*}
KO(EG\times _{G}X)\map{\simeq}\pi _{0}\Hom_{G}(EG,\mathbb{KO}(X)).
\end{equation*}%
On the other hand, the 2-adic completion of $KO_{G}(X)$ coincides with 
the completion of $KO_{G}(X)$ via the fundamental ideal $I $ of $RO(G)$,
followed by 2-adic completion.
Indeed, the filtration of $RO(G)\cong \Z\oplus \Z$ 
by the powers of $I$ is given by the sequence of ideals 
$\{\Z\oplus 2^{n}\Z\}$. 
Therefore, $KO(EG\times_{G}X)\,\widehat{\;}$ 
\ is isomorphic to the 2-adic completion of $KO_{G}(X).$
\end{proof}


\begin{remm}
If $G$ acts freely on $X,$ the homotopy equivalence%
\begin{equation*}
\mathbb{GR}(X)\simeq\mathbb{KO}(X/G)\simeq\mathbb{KO}(X)^{hG_X}
\end{equation*}
is obvious and we don't need $2$-adic completions to prove it. When the
action of $G$ is trivial, another approach would be to repeat the argument
in \cite[p.\,809]{BK}; we would then need $2$-adic completion for the
statement. Another proof of Theorem \ref{Williams-GR} is to use a 
Mayer-Vietoris argument when the space of fixed points $X^{G}$ has 
an equivariant tubular neighborhood, which often happens in applications.
\end{remm}

For the next result, let $GW^c_*(V)$ denote the homotopy groups of the
homotopy fiber of $\GW(V)\to\mathbb{GR}(V)$. As in Remark \ref{rem:GW^c},
the comparison groups $GW_{n}^{c}(V)$ fit into an exact sequence:
\begin{equation}  \label{eq:GW^c}
{\kGW_{n+1}}(V)\to GR_{n+1}(V)\to GW_{n}^{c}(V)\to
{\kGW_{n}}(V)\to GR_{n}(V).
\end{equation}

\begin{theorem}\label{GW=GR} 
Let $V$ be an algebraic variety over $\R$ of dimension $d$ (with 
or without singularities). Then (for all $\nu>0$) the canonical map 
\[
\GW(V)\to\mathbb{GR}(V)
\]
induces isomorphisms ${\kGW_n}(V;\Z/2^\nu)\to KO^{-n}_G(X;\Z/2^\nu)$
for $n\geq d-1$, and monomorphisms ${\kGW_{d-2}}(V;\Z/2^\nu)\to
KO^{-d+2}_G(X;\Z/2^\nu)$.

In other words, the groups $GW_{n}^{c}(V)$ are uniquely 2-divisible for 
all $n\ge d-2$, and $GW_{d-3}^{c}(V)$ is 2-torsionfree.
\end{theorem}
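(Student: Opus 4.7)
The plan is to deduce this from the already-established homotopy-fixed-point descriptions Theorems \ref{Williams} and \ref{Williams-GR}, together with the $K$-theoretic comparison Theorem \ref{singular.KW}, via a homotopy-fixed-point spectral sequence.

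First I would observe that the natural map from algebraic vector bundles on $V$ to Real vector bundles on $V_{\C}$ intertwines the two notions of duality, so $\bbK(V)\to\mathbb{KR}(V)$ is $G$-equivariant for the $C_2$-action by duality on both sides. Since homotopy fixed points commute with homotopy fibres, the homotopy fibre $\bbK^c(V)$ of this map inherits a $G$-action, and the 2-adic equivalences of Theorems \ref{Williams} and \ref{Williams-GR} fit into a commutative square whose induced map on fibres is a 2-adic equivalence
\[
GW^{c}(V)^{\wedge}_{2}\;\simeq\;\bigl(\bbK^{c}(V)^{hG}\bigr)^{\wedge}_{2}.
\]
Mod-$2^\nu$ homotopy groups are insensitive to 2-completion, so $GW^{c}_{n}(V;\Z/2^\nu)\cong \pi_{n}\bigl(\bbK^{c}(V)^{hG};\Z/2^\nu\bigr)$.

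Next I would run the homotopy-fixed-point spectral sequence
\[
E_{2}^{p,q}=H^{p}\bigl(G;\,K^{c}_{q}(V;\Z/2^\nu)\bigr)\;\Longrightarrow\;\pi_{q-p}\bigl(\bbK^{c}(V)^{hG};\Z/2^\nu\bigr),
\]
which converges because the spectrum $\bbK^{c}(V)\wedge M\Z/2^\nu$ is bounded below. By Theorem \ref{singular.KW} (combined with the universal coefficient theorem), $K^{c}_{q}(V;\Z/2^\nu)=0$ for $q\ge d-2$, so $E_{2}^{p,q}=0$ in that range. Whenever $n\ge d-2$ and $p\ge 0$ we have $n+p\ge d-2$, so every contributing term $E_{\infty}^{p,n+p}$ vanishes. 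Hence $GW^{c}_{n}(V;\Z/2^\nu)=0$ for all $n\ge d-2$ and all $\nu\ge 1$.

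Finally I would translate this back. From the long exact sequence \eqref{eq:GW^c} taken with $\Z/2^\nu$-coefficients, the vanishing of $GW^{c}_{n}(V;\Z/2^\nu)$ at $n$ and $n-1$ forces $\GW_{n}(V;\Z/2^\nu)\to KO^{-n}_{G}(V_\C;\Z/2^\nu)$ to be an isomorphism for $n\ge d-1$, while the vanishing at $n=d-2$ alone gives the monomorphism there. Taking the inverse limit over $\nu$ of the universal-coefficient short exact sequence
\[
0\to GW^{c}_{n}(V)/2^\nu \to GW^{c}_{n}(V;\Z/2^\nu)\to GW^{c}_{n-1}(V)[2^\nu]\to 0
\]
and using the just-established vanishing then yields the equivalent algebraic form: $GW^{c}_{n}(V)$ is uniquely 2-divisible for $n\ge d-2$, and $GW^{c}_{d-3}(V)$ is 2-torsionfree.

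The main obstacle is entirely bookkeeping: checking that duality, the comparison functor, 2-adic completion, homotopy fixed points, and mod-$2^\nu$ coefficients commute in the order required, and that the spectral sequence converges strongly. The actual geometric input is confined to Theorem \ref{singular.KW}; once it is fed into the fixed-point spectral sequence, the range $n\ge d-2$ transfers mechanically from $\bbK^{c}$ to $GW^{c}$.
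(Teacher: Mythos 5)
Your proposal is correct and follows essentially the same route as the paper's own (very terse) argument: identify $\GW^c(V)\to\bbK^c(V)^{hG}$ as a $2$-adic equivalence via Theorems \ref{GR=KOG}, \ref{Williams} and \ref{Williams-GR}, then feed Corollary \ref{Kc-u.d.} into the homotopy-fixed-point spectral sequence. You simply make explicit the spectral-sequence mechanism and the convergence check that the paper compresses into ``the result now follows from Corollary \ref{Kc-u.d.}''.
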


\begin{proof}
By Theorems \ref{GR=KOG}, \ref{Williams} and \ref{Williams-GR}, 
$\GW^c(V)\to\bbK^c(V)^{hG}$ is a 2-adic homotopy equivalence. 
The result now follows from Corollary \ref{Kc-u.d.}. 
\end{proof}

We can also compare the homotopy fiber $U(V)={\kGW^{[-1]}}(V)$ of the hyperbolic
map $\bbK(V)\to\GW(V)$ with the homotopy fiber $%
UR(V)=GR^{[-1]}(V)$ of $\mathbb{KR}(V)\to\mathbb{GR}(V)$ The following
corollary is immediate from Corollary \ref{Kc-u.d.} 
and Theorem \ref{GW=GR}.  %

\begin{corollary}\label{U=UR}
The maps $\pi _{n}(\mathbb{U}(V);\Z/2^{\upsilon })\to 
\pi_{n}(\mathbb{UR}(V);\Z/2^{\upsilon})$ are isomorphisms for $n\geq
d-1$ and a monomorphism for $n=d-2.$ Hence, the homotopy groups $%
U_{n}^{c}(V) $ of the homotopy fiber $\mathbb{U}^{c}(V)$ of $\mathbb{U}%
(V)\to \mathbb{UR}(V)$ are uniquely 2-divisible for $n\geq d-2$
and $U_{d-3}^{c}(V)$ is $2$-torsion free.
\end{corollary}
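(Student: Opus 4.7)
The plan is to reduce everything to the behaviour of the $K$-theory and $GW$-theory comparison spectra via a 3x3 argument. By construction, $\mathbb{U}(V) = \GW^{[-1]}(V)$ is the homotopy fibre of the hyperbolic map $\bbK(V) \to \GW(V)$, and analogously $\mathbb{UR}(V)$ is the fibre of $\mathbb{KR}(V) \to \mathbb{GR}(V)$. Taking horizontal fibres of the commuting square
\[
\begin{array}{ccc}
\bbK(V) & \to & \mathbb{KR}(V) \\
\downarrow H & & \downarrow H \\
\GW(V) & \to & \mathbb{GR}(V)
\end{array}
\]
and then comparing vertically (via the octahedral axiom, or the standard 3x3 lemma for homotopy fibre sequences) yields a fibration sequence
\[
\mathbb{U}^c(V) \longrightarrow \bbK^c(V) \longrightarrow \GW^c(V),
\]
where $\mathbb{U}^c(V)$ is exactly the homotopy fibre of $\mathbb{U}(V)\to\mathbb{UR}(V)$.

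Next I would smash with the Moore spectrum $\Z/2^\upsilon$ and apply the universal coefficient sequence. By Corollary \ref{Kc-u.d.}, the groups $K_n^c(V)$ are uniquely divisible for $n\ge d-2$ and $K_{d-3}^c(V)$ is torsionfree, so $\pi_n(\bbK^c(V);\Z/2^\upsilon)=0$ for all $n\ge d-2$. By Theorem \ref{GW=GR}, the same vanishing holds: $\pi_n(\GW^c(V);\Z/2^\upsilon)=0$ for $n\ge d-2$ (since $GW^c_n$ is uniquely $2$-divisible for $n\ge d-2$ and $GW^c_{d-3}$ is $2$-torsion free). Feeding these vanishings into the long exact sequence
\[
\pi_n(\GW^c;\Z/2^\upsilon) \to \pi_{n-1}(\mathbb{U}^c;\Z/2^\upsilon) \to \pi_{n-1}(\bbK^c;\Z/2^\upsilon)
\]
immediately gives $\pi_m(\mathbb{U}^c(V);\Z/2^\upsilon)=0$ for $m\ge d-2$.

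Finally, I would translate back. The universal coefficient formula
\[
0 \to U_m^c(V)/2^\upsilon \to \pi_m(\mathbb{U}^c(V);\Z/2^\upsilon) \to U_{m-1}^c(V)[2^\upsilon] \to 0
\]
forces $U_m^c(V)$ to be $2^\upsilon$-divisible for $m\ge d-2$, for every $\upsilon\ge 1$, and $U_{m-1}^c(V)$ to have no $2^\upsilon$-torsion for $m\ge d-2$; taking $m=d-2$ gives the $2$-torsionfreeness of $U_{d-3}^c(V)$, and combining divisibility and (the consequent) $2$-torsionfreeness at $m\ge d-2$ gives unique $2$-divisibility in that range. The iso/mono statement for $\pi_n(\mathbb{U}(V);\Z/2^\upsilon)\to\pi_n(\mathbb{UR}(V);\Z/2^\upsilon)$ then follows directly from the long exact sequence of the fibration $\mathbb{U}^c(V)\to\mathbb{U}(V)\to\mathbb{UR}(V)$ using the vanishings just established.

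There is essentially no obstacle here beyond bookkeeping; the only subtle point is the construction of the fibration $\mathbb{U}^c\to\bbK^c\to\GW^c$, which requires the 3x3 lemma rather than an \emph{ad hoc} argument on homotopy groups. Once this fibration is in place, everything else is a diagram chase inherited from the two cited results.
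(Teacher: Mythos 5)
Your proposal is correct and is exactly the argument the paper has in mind when it calls the corollary ``immediate from Corollary \ref{Kc-u.d.} and Theorem \ref{GW=GR}'': one forms the commuting square of spectra, compares fibers in both directions to produce the fiber sequence $\mathbb{U}^c(V)\to\bbK^c(V)\to\GW^c(V)$, reads off the vanishing of mod-$2^\nu$ homotopy in the stated range from the two cited results, and converts back via universal coefficients. This is the same route, just written out in full.
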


\goodbreak

\begin{remark}
We may extend the previous results to the symplectic setting. 
Williams' conjecture says that the map 
$_{-1}\GW(V)\to \mathbb{K}(X)^{h_{-}G}$ is a 
2-adic homotopy equivalence. It is proven in \cite[1.1]{BKOS}; 
see Appendix \ref{sec:Marco}, Theorem \ref{Williams-Banach}. 
Williams' conjecture for Real
Hermitian $K$-theory takes the form%
\[  
_{-1}\mathbb{GR}(X)\cong \mathbb{KR}(X)^{h_{-}G}
\]  
where $G=C_{2}$ and $h_{-}G$ denotes the action of $G$ on the 
spectrum $\mathbb{KR}(X)$ described in \cite[p.\,808]{BK}. 
Another approach to this result and Theorem \ref{GW=GR}, using 
different methods, may be found in
Appendix \ref{sec:Marco}. 

A more general approach, based on the same idea as 
Theorem \ref{GW=GR}, is related to twisted $K$-theory and
will be given in our paper \cite{KW:KRA}. Recall that
$GR(X)\cong GW_0(A)$, where $A$ is the Banach algebra of 
continuous functions $f:X\to\C$ satisfying 
$f(\sigma(x))=\overline{f(x)}$
(see Appendix \ref{app:Banach}). The group ${}_{-1}GR(X)$
is isomorphic to $GW_0(M_2A)$, where $M_2A$ is the algebra of 
$2\times2$ matrices over $A$ and the involution is
$\left({\atop{a}c}{\atop{b}d}\right)\mapsto
\left(\atop{\bar{d}}{-\bar{c}}{\atop{-\bar{b}}{\bar{a}}}\right)$.


If $G$ acts trivially on $X$, so that $KR(X)\cong KO(X)$
(Example \ref{WR(X^G)}), we also have ${}_{-1}GR(X)\cong KU(X)$.
In this case, these general constructions reduce
to a 2-adic homotopy equivalence
\[
\mathbb{KU}(X) \cong \mathbb{KO}(X)^{h_-G}.
\]
This reflects the fact that the symplectic group $Sp_{2n}(\R)$
is homotopy equivalent to its maximal compact subgroup 
(the unitary group $U_n$).
\end{remark}

Copying the proof of Theorem \ref{GW=GR}, with Theorem \ref{GR=KOG} replaced
by Theorem \ref{-GR=KRH}, the above remarks establish the following result.

\begin{theorem}\label{GW=GR{-1}}
Let $V$ be an algebraic variety over $\R$ of dimension $d$, 
with or without singularities. Then the maps 
\[
_{-1}\GW(V)\map{} {}_{-1}\mathbb{GR}(V) 
\]
induce isomorphisms $\!{}_{-1}{\kGW_n}(V;\Z/2^\nu)\to {}_{-1}GR_n(V;\Z/2^\nu)$
for $n\!\ge\!d\!-\!1$, and monomorphisms 
${}_{-1}{\kGW_{d-2}}(V;\Z/2^\nu)\to{}_{-1}GR_{d-2}(X;\Z/2^\nu)$.
\end{theorem}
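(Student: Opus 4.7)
The plan is to mimic the proof of Theorem \ref{GW=GR} step by step, replacing every ingredient by its skew-symmetric analogue as the preceding remark suggests. Define the comparison spectrum $_{-1}GW^c(V)$ as the homotopy fiber of the canonical map $_{-1}\GW(V) \to {}_{-1}\mathbb{GR}(V)$; its homotopy groups fit into a long exact sequence
\[
{}_{-1}\kGW_{n+1}(V) \to {}_{-1}GR_{n+1}(V) \to {}_{-1}GW^c_n(V) \to {}_{-1}\kGW_n(V) \to {}_{-1}GR_n(V)
\]
completely analogous to \eqref{eq:GW^c}. A standard universal coefficient argument reduces the claim to showing that ${}_{-1}GW^c_n(V;\Z/2^\nu) = 0$ for $n \geq d-2$ (and that ${}_{-1}GW^c_{d-3}(V)$ is $2$-torsion free).

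To identify this fiber, I would invoke the two skew-symmetric Williams-type equivalences. The first, $_{-1}\GW(V) \simeq \bbK(V)^{h_-G}$, is the symplectic Williams conjecture, recorded as Theorem \ref{Williams-Banach} in Appendix \ref{sec:Marco}. The second, $_{-1}\mathbb{GR}(V) \simeq \mathbb{KR}(V)^{h_-G}$, is the Real Hermitian analogue stated in the remark preceding the theorem (and proven in Appendix \ref{sec:Marco}); Theorem \ref{-GR=KRH} plays here the role that Theorem \ref{GR=KOG} plays in the symmetric case, letting us read off $_{-1}GR$ via quaternionic bundles. Since both equivalences are natural in the evident sense and are $2$-adic homotopy equivalences, taking homotopy fibers commutes with them and yields
\[
{}_{-1}GW^c(V) \simeq \bbK^c(V)^{h_-G}
\]
after $2$-adic completion, where $\bbK^c(V)$ is the comparison spectrum from Section \ref{sec:KR}.

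Finally, I would apply Corollary \ref{Kc-u.d.}: the groups $K^c_n(V) = \pi_n \bbK^c(V)$ are uniquely $2$-divisible for $n \geq d-2$, and $K^c_{d-3}(V)$ is $2$-torsion free. These properties are preserved under taking homotopy fixed points for the twisted $G$-action, since the homotopy fixed point spectral sequence with $\Z/2^\nu$ coefficients has $E_2$-term $H^*(G; K^c_*(V) \otimes \Z/2^\nu)$, which vanishes in the relevant range. Consequently ${}_{-1}GW^c_n(V;\Z/2^\nu) = 0$ for $n \geq d-2$, which by the long exact sequence above delivers exactly the isomorphism and monomorphism statements claimed.

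The main obstacle, and the reason this is not entirely immediate, is the identification $_{-1}\mathbb{GR}(V) \simeq \mathbb{KR}(V)^{h_-G}$: the twisted action $h_-G$ on $\mathbb{KR}(V)$ is the skew-symmetric duality on Real bundles, and one must verify (as in the proof of Theorem \ref{Williams-GR}) that the Banach-algebraic $2$-adic completion argument, or equivalently an Atiyah-Segal-style completion for equivariant $KO$ with a symplectic twist, still produces a $2$-adic equivalence. Once this is granted, the rest of the argument is a mechanical transcription of the proof of Theorem \ref{GW=GR}.
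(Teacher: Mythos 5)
Your proposal takes essentially the same route as the paper: the paper's entire proof consists of the sentence immediately preceding the theorem, ``Copying the proof of Theorem \ref{GW=GR}, with Theorem \ref{GR=KOG} replaced by Theorem \ref{-GR=KRH}, the above remarks establish the following result,'' where the ``remarks'' are precisely the two skew Williams-type equivalences $_{-1}\GW(V)\simeq \bbK(V)^{h_-G}$ (Theorem \ref{Williams-Banach}) and $_{-1}\mathbb{GR}(X)\simeq\mathbb{KR}(X)^{h_-G}$, followed by Corollary \ref{Kc-u.d.}. Your transcription faithfully reproduces this; the only (harmless) slip is describing the $E_2$-page of the homotopy fixed point spectral sequence as $H^*(G;K^c_*(V)\otimes\Z/2^\nu)$ rather than $H^*(G;\pi_*(\bbK^c(V);\Z/2^\nu))$, but both vanish in the relevant range by Theorem \ref{singular.KW}.
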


\begin{rem}
Defining the comparison groups ${}_{-1}GW_{n}^{c}(V)$ as in
Remark \ref{rem:GW^c}, Theorem \ref{GW=GR{-1}} shows that
${}_{-1}GW_{n}^{c}(V)$ is uniquely 2-divisible for all $n\ge d-2$, 
and the group ${}_{-1}GW_{d-3}^{c}(V)$ is 2-torsionfree.
\end{rem}

%

\section{Brumfiel's theorem}\label{sec:Brumfiel}

The purpose of this section is to state and prove Theorem \ref{Brumfiel}
below, which is a generalization of Brumfiel's theorem 
\cite{Brumfiel84, Brumfiel87} (over $\R$).

We first state the version of Brumfiel's theorem we shall use
in the next section, comparing the Witt groups $W(V)$ and $WR(V_\R)$
of a scheme $V$ of finite type over $\R$.
Recall that $V_{\R}$ is the Hausdorff topological space of 
$\R$-points of $V$.

\begin{theorem}[Brumfiel's Theorem]\label{thm:Brumfiel}
For any quasi-projective variety $V$ over $\R$,
the signature map $W(V)\smap{\gamma} \!KO(V_{\R})$ is an
isomorphism modulo 2-primary torsion. The same is true for the 
higher signature map $W_{n}(V)\to KO^{-n}(V_{\R})$ for all integers $n$.
\end{theorem}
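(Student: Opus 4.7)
The plan is to factor Brumfiel's signature through the Real Witt group, as
\[
W_n(V) \xrightarrow{\theta} WR_n(V_\C) \xrightarrow{\rho} KO^{-n}(V_\R),
\]
where $\rho$ is restriction of Real symmetric forms to the fixed locus $(V_\C)^G = V_\R$. Because $G$ acts trivially on $V_\R$, Example \ref{WR(X^G)}(a) gives $WR_n(V_\R) \cong KO^{-n}(V_\R)$, so $\rho$ is a well-defined map. It then suffices to show that both $\theta$ and $\rho$ are isomorphisms modulo 2-primary torsion; since both sources and targets are finitely generated (by Corollary \ref{GR.fingen} and the semi-algebraic finiteness of $V_\R$), this is equivalent to being an isomorphism after inverting~$2$.

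For $\theta$: I would place it in the ladder of long exact sequences \eqref{seq:U-K-GW} for $(K,\kGW)$ and \eqref{eq:UR-KR-GR} for $(KR,GR)$, together with the hyperbolic-cokernel sequences defining $W_n$ and $WR_n$. Theorem \ref{singular.KW} gives that $K_m(V;\Z/2^\nu)\to KR_m(V;\Z/2^\nu)$ is an isomorphism for $m\geq d-1$ and a monomorphism for $m=d-2$; Theorem \ref{GW=GR} provides the analogous statement for $\kGW_m\to GR_m$. A five-lemma diagram chase, in the spirit of the proof of Theorem \ref{curves:W=WR}, transfers these isomorphisms to the level of Witt groups, so $\theta$ is an isomorphism after inverting $2$, uniformly in $n$ by 8-periodicity (Corollary \ref{cor:8periodic}).

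For $\rho$: use Theorem \ref{GR=KOG} to identify $GR^{-n}(V_\C)$ with the equivariant group $KO_G^{-n}(V_\C)$, so that $\rho$ becomes the restriction-to-fixed-points map $KO_G^{-n}(X) \to KO^{-n}(X^G)$ passed to the $WR$ quotient. After inverting $|G|=2$, the idempotents $\tfrac{1\pm\sigma}{2}\in RO(G)[\tfrac12]$ split $KO_G^*(X)[\tfrac12]$ into two summands, whose ``trivial'' part is identified with $KO^*(X^G)[\tfrac12]$ by Segal's localization theorem. The hyperbolic map $KR^*(X)\to KO_G^*(X)$, which by Theorem \ref{GR=KOG} sends a Real bundle to its underlying $\R$-linear $G$-bundle, contributes the ``regular representation'' piece $(1+\sigma)KO_G^*(X)$; after inverting $2$ this is precisely the complementary ``sign'' summand. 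Hence $WR^*[\tfrac12] \to KO^*(X^G)[\tfrac12]$ is an isomorphism.

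The main obstacle is the second step: carefully verifying that after inverting $2$ the hyperbolic image in $KO_G^*[\tfrac12]$ is exactly the ``sign'' summand complementary to the fixed-point piece. The essential ingredients are Theorem \ref{GR=KOG}'s explicit formula for the hyperbolic map as the forgetful map $KR \to KO_G$, together with a suitable Atiyah--Segal-style localization comparing $KO_G^*(X)[\tfrac12]$ with $KO^*(X^G)[\tfrac12]$; this requires delicate bookkeeping of the $RO(G)$-module structure on $KO_G^*$ and of how the duality action interacts with the sign idempotent.
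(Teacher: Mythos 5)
The factorization through $WR$ and the treatment of the second map $\rho$ are sound — in fact Theorem~\ref{WR-KO.exponents} proves exactly that $WR_n(X)\to KO_n(X^G)$ has kernel and cokernel of bounded $2$-power exponent, by a purely topological argument independent of Brumfiel's theorem, so your $\rho$ step could simply cite that result. The problem is the first map, and it is a genuine logical gap, not a detail.

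You propose to show $\theta\colon W_n(V)\to WR_n(V)$ is an isomorphism after inverting~$2$ via a five-lemma chase using Theorems~\ref{singular.KW} and~\ref{GW=GR}. But those theorems control behaviour with $\Z/2^{\nu}$ coefficients — equivalently, after $2$-adic completion — which is the \emph{opposite} localization from inverting~$2$. What they yield (Corollary~\ref{Kc-u.d.} and the remark after Theorem~\ref{GW=GR}) is that the comparison groups $K^c_n(V)$, $GW^c_n(V)$ are \emph{uniquely $2$-divisible} in a range of degrees; unique $2$-divisibility says nothing after inverting~$2$, and such groups can be enormous. Concretely, in Example~\ref{ex:KRcurve} the kernel of $K_0(V)\to KR_0(V)$ for a smooth projective curve of genus $g$ is $(\R/\Z)^g$, which survives inverting~$2$ unchanged. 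So the diagram chase cannot deliver $\theta[\tfrac12]$ an isomorphism. Indeed, in the paper the statement you need (Theorem~\ref{W=WR.mod2tors}) is \emph{deduced from} Brumfiel's theorem, not proved independently; and the proof of Theorem~\ref{curves:W=WR} that you cite as a model uses Theorem~\ref{W=WR.mod2tors} itself, so mimicking it here would be circular. A secondary issue: your invocation of $8$-periodicity ``uniformly in $n$'' applies to $WR_n$ (Corollary~\ref{cor:8periodic}) but not to the algebraic groups $W_n(V)$, which are only periodic up to multiplication by powers of~$2$ (Corollary~\ref{period.correct}); and $W_n(V)$ need not be finitely generated, so Corollary~\ref{GR.fingen} does not cover it.

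The paper's actual proof of Theorem~\ref{thm:Brumfiel} takes a completely different route that avoids comparing $W$ with $WR$: it replaces the Witt spectrum by Balmer's $L$-theory spectrum $L(V)$ (which agrees with Witt theory after inverting~$2$), shows that $L[1/2]$ satisfies descent for the real \'etale topology (Theorem~\ref{thm:L=Lret}), observes that $KO(V_\R)[1/2]$ does too (Example~\ref{KO-descent}), and then checks that the two presheaves agree stalkwise by computing $L(\R)[1/2]\simeq KO(\pt)[1/2]$ (Proposition~\ref{L->KO}). The descent spectral sequence then gives the global equivalence. If you want to salvage your approach, you would need an independent proof that $W_n(V)\to WR_n(V)$ is an isomorphism away from~$2$; the tools you have reached for can only ever see the $2$-primary part.
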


If $V=\Spec(A)$, the higher Witt groups $W_n(A)$ in this theorem 
are defined as the cokernel of the hyperbolic map from $K_n(A)$ to 
$GW_n(A)=\pi_nBO(A)^+$ if $n>0$, and as $W_0(S^{|n|}A)$ if $n<0$; 
see Karoubi \cite{MKfiltration}. The definition for quasi-projective
$V$ is given in \cite{Schlichting.Herm}.

Brumfiel proved Theorem \ref{thm:Brumfiel} for $V=\Spec(A)$ and $n=0$
in \cite{Brumfiel84}, and for all $n$ in \cite{Brumfiel87}.
Another proof of surjectivity was later given in \cite[Thm.\,15.3.1]{BCR}. 
As noted in the Math Review of \cite{Brumfiel84}, the proof in loc.\,cit.\ 
is somewhat sketchy.
This is another reason for our presentation of a new proof of this result.

Now let $V$ be a scheme over $\Z[1/2]$.
We write $L(V)$ for the spectrum representing Balmer's Witt groups:
$\pi_nL(V)=W_B^{-n}(V)$ \cite[7.2]{Schlichting.Fund}. 
We recall the definition of this spectrum in \ref{def:L(V)} below.
It is proven in \cite[Lemma A.3]{Hornbostel-Schlichting} that
$W_{n}(A)[1/2] \cong W^{-n}_B(A)[1/2]$ for all integers $n$.
The same proof applies to show that
$W_{n}(V)[1/2] \cong W^{-n}_B(V)[1/2]$
when $V$ is not affine.
Therefore, up to 2-primary torsion, 
we can replace the groups $W_n(V)$ 
by the homotopy groups $\pi_nL(V)$. Theorem \ref{thm:Brumfiel}
is obtained by taking the homotopy groups $\pi_n$ of the equivalence of
spectra stated in the following theorem.


\begin{theorem}\label{Brumfiel}
Let $V$ be a scheme of finite type over $\R$ and assume that 
$V$ has an ample family of line bundles.  Then there
is an equivalence of spectra, natural in $V$:
\[
L(V)[1/2] \map{\sim} KO(V_{\R})[1/2].
\]
\end{theorem}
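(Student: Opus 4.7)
The plan is to prove the natural equivalence $L(V)[1/2]\simeq KO(V_\R)[1/2]$ via a Nisnevich-descent argument, reducing to the case of Henselian local rings of $V$ and then performing a stalk-level computation.

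First, I would package both sides as presheaves of spectra on the small Nisnevich site of $V$ and verify Nisnevich descent for each. On the algebraic side, this follows from \cite[9.7]{Schlichting.Fund} applied to $\GW$, combined with the fibration sequences relating $\GW$ to the Balmer $L$-spectrum, which are preserved after inverting~$2$. On the topological side, a Nisnevich cover $V'\to V$ restricts to a local homeomorphism $V'_\R\to V_\R$ (étale morphisms are local homeomorphisms on real points), so Mayer-Vietoris for the cover becomes a homotopy pushout of real topological spaces and hence a homotopy pullback on $KO((-)_\R)$. With descent for both sides in place, it suffices to verify the comparison map on Henselian local stalks of $V$.

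Next, let $A$ be a Henselian local $\R$-algebra with residue field $k\in\{\R,\C\}$. The key input is a hermitian rigidity statement: the natural map $L(A)[1/2]\to L(k)[1/2]$ is an equivalence. If $k=\C$, then $(\Spec A)_\R=\emptyset$, so $KO((\Spec A)_\R)[1/2]=0$; on the algebraic side, $W^n_B(\C)$ is $2$-primary torsion ($\Z/2$ for $n=0$ and $0$ otherwise), hence $L(\C)[1/2]=0$. If $k=\R$, the unique real point of $\Spec A$ admits a contractible neighborhood in $(\Spec A)_\R$, so $KO((\Spec A)_\R)[1/2]\simeq KO[1/2]$, whose homotopy groups are $\Z[1/2]$ in degrees $\equiv 0\pmod 4$ and zero otherwise. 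On the algebraic side, $W^n_B(\R)=\Z$ for $n\equiv 0\pmod 4$ and vanishes for $n\equiv 1,2,3\pmod 4$, giving $\pi_nL(\R)[1/2]=\Z[1/2]$ in degrees $\equiv 0\pmod 4$ and zero otherwise, using the $4$-periodicity both sides enjoy. Thus the stalks match, and descent assembles this into the global equivalence.

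The hermitian rigidity input $L(A)[1/2]\simeq L(k)[1/2]$ is the central technical step and the main obstacle. A natural route to it is to combine Gabber-Suslin rigidity for algebraic $K$-theory with the Williams-type comparisons of Section~\ref{sec:williams}: after inverting $|C_2|=2$, the Tate construction associated with duality degenerates (the transfer $E_{hC_2}\to E^{hC_2}$ becomes an equivalence), so after inverting $2$ the spectrum $L$ is controlled by the underlying $K$-theoretic input together with the duality action, and Henselian rigidity for $K$-theory can then be propagated to the $L$-level. Once this rigidity is established the rest of the argument is formal, and one obtains the desired natural equivalence of spectra.
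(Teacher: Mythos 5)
Your Nisnevich-descent plan has a genuine gap at the stalk level, and this is precisely the point where the paper's actual proof (Lemma~\ref{lem:WNis=Wret} and Theorem~\ref{thm:L=Lret}) takes a different and necessary turn.

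You reduce to a stalk computation over Henselian local $\R$-algebras $A$ with residue field $k\in\{\R,\C\}$. But the Nisnevich points of a finite-type $\R$-scheme $V$ are the henselizations $\mathcal O_{V,x}^h$ at \emph{all} scheme-theoretic points $x$, and the residue field $\kappa(x)$ is an arbitrary finitely generated field extension of $\R$ (for the generic point of $\mathbb A^1_\R$ it is $\R(t)$). Your dichotomy $k\in\{\R,\C\}$ only covers closed points. For a non-closed $x$, the rigidity isomorphism $W(\mathcal O_{V,x}^h)[1/2]\cong W(\kappa(x))[1/2]$ does hold (this is Knebusch's rigidity, \cite{KnebuschRigidity}, which the paper also uses), but $W(\kappa(x))[1/2]\cong C(\Sper\kappa(x),\Z[1/2])$ is typically an infinite product of copies of $\Z[1/2]$, not $\Z[1/2]$ or $0$. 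Meanwhile, the Nisnevich stalk of the presheaf $U\mapsto KO(U_\R)[1/2]$ at such an $x$ is a colimit of $KO(U_\R)[1/2]$ over shrinking Nisnevich neighborhoods, which is not obviously isomorphic to the algebraic stalk and is not addressed by your computation. So the claimed stalk equivalence fails to be established exactly where it matters.

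The paper's proof works around this by descending in the \emph{real \'etale} topology instead of the Nisnevich topology. Two features make this work: (1) the real \'etale stalks are henselian local rings with \emph{real closed} residue fields, so $W(k)[1/2]\cong\Z[1/2]$ and the constant sheaf computation goes through (Lemma~\ref{lem:WNis=Wret} shows $a_{\nis}W[1/2]$ is already a real \'etale sheaf, and Theorem~\ref{thm:L=Lret} gives real \'etale descent for $L[1/2]$ by matching the Nisnevich and real \'etale descent spectral sequences); and (2) for $V$ of finite type over $\R$ there is an equivalence of topoi $\Sh(V_r)\simeq\Sh(V_\R)$, so the real \'etale descent spectral sequence for $L[1/2]$ literally agrees with the Atiyah--Hirzebruch spectral sequence for $KO(V_\R)[1/2]$ once the coefficient sheaves are identified, which is exactly what Proposition~\ref{L->KO} over $\Spec\R$ supplies. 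Your sketch also proposes obtaining hermitian rigidity from Gabber--Suslin rigidity plus degeneration of the Tate construction; this is not needed and is not what the paper does --- Knebusch's Witt-theoretic rigidity is invoked directly.
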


Recall from Section \ref{sec:GR-WR} that there is a morphism 
$\uGW(V)\to\GW(V)$ of spectra which is an equivalence on connected covers.
We also need the element $\eta$ of ${\mGW_{-1}^{[-1]}}(\R) \cong W_0(\R)$ 
corresponding to the element $1\in W_0(\R)$; see \cite[\S6]{Schlichting.Fund}.
The following definition is taken from
Definition 7.1 and Proposition 7.2 of \cite{Schlichting.Fund}.

\begin{definition}\label{def:L(V)}
The spectrum $L(V)$ is $\uGW(V)[\eta^{-1}]$, 
i.e., the colimit of the sequence
\[
\uGW^{[0]}(V) \map{\eta\cup} S^1\wedge \uGW^{[-1]}(V) \map{\eta\cup} 
S^2 \wedge \uGW^{[-2]}(V) \map{\eta\cup}\cdots
\]
Similarly, the spectrum $\L(V)$ is $\GW(V)[\eta^{-1}]$, 
i.e., the colimit of the sequence
\[
\GW^{[0]}(V) \map{\eta\cup} S^1\wedge \GW^{[-1]}(V) \map{\eta\cup} 
S^2 \wedge \GW^{[-2]}(V) \map{\eta\cup}\cdots.
\]
The canonical map $\uGW(V)\to\GW(V)$ induces a map $L(V)\to\L(V)$,
and $L[1/2]\simeq\L[1/2]$ by \cite[Lemma 8.16]{Schlichting.Fund}.
Since  $\GW(V)$ satisfies Nisnevich descent by  
\cite[Theorem 9.6]{Schlichting.Fund}, it follows that
$L[1/2]\simeq\L[1/2]$ also satisfies Nisnevich descent.
\end{definition}


\medskip
We will need the {\it real \'etale topology}, defined in 
\cite[1.2.1]{Scheiderer} and \cite{CosteRoy}, whose sheafification 
functor will be written as $a_{\ret}$.  Covering families are families of
\'etale maps which induce a surjective family of associated real spectra.
Stalks are henselian local rings with real closed residue fields.  
The category of sheaves on $X_{ret}$ is equivalent to the category of sheaves on the topological space $X_r$ where for affine $X=\Spec A$, we have $X_r=\Sper A$, the real spectrum of $A$; see \cite[Theorem 1.3]{Scheiderer}.

We will
also need the Nisnevich topology with sheafification functor $a_{\nis}$.
For a topological space $X$ and a discrete set $S$, we will write
$C(X,S)$ for the set of continuous functions $X \to S$.

\begin{lemma}
\label{lem:WNis=Wret}
{(1)}
For any henselian local ring $R$ with $\frac{1}{2}\in R$, the signature map induces an isomorphism 
$$W(R)[1/2] \cong C(\Sper R, \Z[1/2]).$$
\item{(2)}
For any scheme $V$ over $\Z[1/2]$, the Nisnevich sheaf 
$a_{\nis}W[1/2]$ on $Sch_{|V}$ is a sheaf for the real \'etale topology.
In particular, the following sheafification map on $Sch_{|V}$ 
is an isomorphism of presheaves:
\[
a_{\nis}W[1/2]\ \map{\cong}\ a_{\ret}W[1/2].
\]
\end{lemma}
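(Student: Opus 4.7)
The plan is to prove (1) using classical Witt theory for henselian local rings together with Pfister's local--global theorem, and then deduce (2) by comparing stalks on the Nisnevich and real \'etale sites.

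For (1), let $R$ be henselian local with $1/2 \in R$ and residue field $k$. I would proceed in three steps. First, the reduction map $W(R) \to W(k)$ is an isomorphism: surjectivity follows from Hensel's lemma, which lifts any nondegenerate symmetric form over $k$ to one over $R$, and injectivity follows from the same argument applied to a Lagrangian. Second, Pfister's local--global theorem says that the total signature $W(k) \to C(\Sper k, \Z)$ has 2-primary torsion kernel and cokernel, so becomes an isomorphism after inverting $2$. Third, I would identify $C(\Sper R, \Z[1/2])$ with $C(\Sper k, \Z[1/2])$: for henselian local $R$, each point of $\Sper R$ specializes to a unique closed point, and the closed points correspond bijectively to orderings of $k$; hence the connected components of $\Sper R$ biject with $\Sper k$, and locally constant $\Z[1/2]$-valued functions on $\Sper R$ are precisely those pulled back from $\Sper k$. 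Composing the three isomorphisms yields (1); naturality ensures the composite is the signature map.

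For (2), my approach is to verify the real \'etale sheaf condition via stalks. By Scheiderer's correspondence (Theorem 1.3 of \cite{Scheiderer}), real \'etale sheaves on $\text{Sch}_{|V}$ correspond to sheaves on $V_r$, and the real \'etale stalk at a point $(\mathfrak{p}, \alpha)$ is the value on the strictly $r$-henselian local ring $R^{sh_r}$---a henselian local ring with real closed residue field. For such $R^{sh_r}$, $\Sper R^{sh_r}$ is a point, and part (1) gives $a_{\nis}W[1/2](R^{sh_r}) = \Z[1/2]$, which is the correct real \'etale stalk. More generally, for any henselian local $R$ with residue field $k$ having orderings $\alpha_1,\dots,\alpha_n$, part (1) gives $a_{\nis}W[1/2](R) = C(\Sper R, \Z[1/2]) = \Z[1/2]^n$, which decomposes exactly as the product of the real \'etale stalks at the $(\alpha_i)$. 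This decomposition is precisely the sheaf condition for real \'etale covers, checked on Nisnevich stalks. Hence $a_{\nis}W[1/2]$ is already a real \'etale sheaf, and the ``in particular'' conclusion follows because sheafifying a sheaf is the identity.

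The main obstacle will be the third step in (1), identifying $C(\Sper R, \Z[1/2])$ with $C(\Sper k, \Z[1/2])$. While the closed points of $\Sper R$ are well known to correspond to orderings of $k$, the claim that each point of $\Sper R$ specializes to a \emph{unique} closed point---so that connected components of $\Sper R$ are in bijection with $\Sper k$---relies on the henselian hypothesis in an essential way (uniqueness of lifts of orderings along local henselian extensions). A clean argument can be extracted from Scheiderer's analysis of the real spectrum of henselian local rings, which in fact shows that $\Sper R$ retracts onto $\Sper k$ as a continuous map of spectral spaces.
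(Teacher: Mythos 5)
Your argument for part (1) is essentially the paper's, with two small differences. You prove the integral rigidity isomorphism $W(R)\cong W(k)$ directly from Hensel's lemma and then invert $2$, whereas the paper inverts $2$ at the outset and cites Knebusch's rigidity theorem; this is a cosmetic difference. And you phrase the identification $C(\Sper R,\Z[1/2])\cong C(\Sper k,\Z[1/2])$ in terms of connected components of $\Sper R$, while the paper uses the general fact that for discrete $S$ and any ring $A$ the restriction $C(\Sper A,S)\to C(\Max_r(A),S)$ is an isomorphism, together with $\Max_r(R)=\Sper k$ when $R$ is henselian local. Your ``connected components of $\Sper R$ biject with $\Sper k$'' phrasing is imprecise ($\Sper k$ carries a nontrivial Stone topology, not the discrete one), but what you ultimately assert --- that locally constant $\Z[1/2]$-valued functions on $\Sper R$ are pulled back from $\Sper k$ --- is the correct and intended statement.

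Part (2) has a genuine gap. Being a sheaf for the real \'etale topology is a condition on all real \'etale covers $\{U_i\to U\}$, and one cannot verify it for a presheaf merely by computing values at points; the phrase ``the sheaf condition for real \'etale covers, checked on Nisnevich stalks'' does not correspond to a valid argument. Showing that $a_{\nis}W[1/2]$ evaluated on a henselian local ring $R$ decomposes compatibly with the putative real \'etale stalks says nothing about exactness of the \v{C}ech diagram for a real \'etale cover of a general $U$. (As a separate minor point, the residue field $k$ may have infinitely many orderings, so $C(\Sper R,\Z[1/2])$ is not $\Z[1/2]^n$ in general.) The way to close the gap --- and what the paper actually does --- is to promote your pointwise computation to a map of presheaves: the signature gives a natural transformation $W[1/2]\to C(\Sper-,\Z[1/2])$; the target is already a Nisnevich sheaf (indeed a real \'etale sheaf, namely the constant sheaf $\Z[1/2]$ on $V_r$ under Scheiderer's equivalence $\Sh(V_\ret)\simeq\Sh(V_r)$); part (1) shows the map is an isomorphism on Nisnevich stalks; hence $a_{\nis}W[1/2]\cong C(\Sper-,\Z[1/2])$ as presheaves, and the real \'etale sheaf property transports across the isomorphism. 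Your proposal has all the ingredients --- rigidity, the stalk computation, Scheiderer's theorem --- but needs this identification of the entire Nisnevich sheaf with the concrete real \'etale sheaf $C(\Sper-,\Z[1/2])$, rather than a stalkwise match alone.
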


\begin{proof}
For the first part,
let $k$ be the residue field of the henselian local ring $R$.
The quotient map $R\to k$ induces a commutative diagram
\[
\xymatrix{
  W(R)[1/2] \ar[r] \ar[d] & W(k)[1/2] \ar[d]\\
  C(\Sper R, \Z[1/2]) \ar[r] & C(\Sper k, \Z[1/2]) }
\] 
in which the top horizontal map is an isomorphism, by the rigidity
property of Witt groups \cite[Satz 3.3]{KnebuschRigidity}.  The right
vertical map is an isomorphism, by the classical computation (up to
$2$-primary torsion) of the Witt ring of a field; see for instance
\cite[\S6]{Scharlau}.  For the lower horizontal map, let $A$ be a
commutative ring and $S$ a discrete set.  If $\Max_r(A) \subset \Sper
A$ denotes the subspace of closed points, then the restriction map
induces an isomorphism $C(\Sper A,S) \cong C(\Max_r(A),S)$.  This
follows for instance from \cite[Proposition and Definition II
2.2]{AndradasBroeckerRuiz}.  If $A$ is local with residue field $k$
then $\Sper (k) \subset \Max_r(A)$, and if, moreover, $A$ is henselian
local, then $\Sper (k) = \Max_r(A)$; see for instance
\cite[Proposition II.2.4]{AndradasBroeckerRuiz}.  This shows that the
lower horizontal map in the diagram is an isomorphism, and hence, so
is the left vertical map.

For the second part of the lemma, we note that the first part implies 
that the signature map induces an isomorphism of presheaves between
$a_{\nis}W[1/2]$ and the presheaf sending $A$ to $C(\Sper A,\Z[1/2])$;
since both are sheaves for the Nisnevich topology.
The latter is a sheaf in the real \'etale topology,
so $a_{\nis}W[1/2]$ is too.
\end{proof}


There is a ``local'' model structure on presheaves of spectra on $V_{\ret}$,
in which a map $F\to F'$ is a (local) weak equivalence if
the sheafification of $\pi_*F\to\pi_*F'$ is an isomorphism.
Let $F\to F^{\ret}$ denote the fibrant replacement of $F$ 
in the real \'etale topology.  We say that $F$ satisfies
{\it descent for the real \'etale topology} if $F(U)\to F^\ret(U)$
is a weak equivalence (of spectra) for each $U$ in $V_\ret$.

\begin{example}\label{KO-descent}
$KO$ satisfies descent for the real \'etale topology.
This follows from \cite[4.10]{duggerIsaksen:A1Ralization}, since 
$U_\R$ is a covering space of $V_\R$ 
for each real \'etale cover $U \to V$.
(See also \cite[5.2]{duggerIsaksen:A1Ralization}
and its obvious real \'etale analog.)
\end{example}

\begin{theorem}\label{thm:L=Lret}
Let $V$ be a finite dimensional noetherian $\Z[1/2]$-scheme 
with an ample family of line bundles.
Then $L(V)[1/2]$ satisfies descent for the real \'etale topology.
\end{theorem}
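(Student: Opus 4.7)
The approach is to leverage the Nisnevich descent already known for $L[1/2]$ and upgrade it to real \'etale descent. Write $L^\ret[1/2]$ for the real \'etale fibrant replacement of $L[1/2]$. The strategy has two halves: (a) both $L[1/2]$ and $L^\ret[1/2]$ satisfy Nisnevich descent, and (b) the canonical map $L[1/2]\to L^\ret[1/2]$ induces an isomorphism on Nisnevich-sheafified homotopy groups. Because $V$ is finite-dimensional noetherian, the Nisnevich descent spectral sequences for both sides converge strongly, so (a) and (b) together force $L(V)[1/2]\to L^\ret(V)[1/2]$ to be a sectionwise weak equivalence.

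For (a), $L[1/2]\simeq\L[1/2]$ satisfies Nisnevich descent by Definition \ref{def:L(V)}, and $L^\ret[1/2]$ automatically satisfies Nisnevich descent since every Nisnevich hypercover is a real \'etale hypercover. For (b), by construction the map induces isomorphisms on real \'etale-sheafified homotopy groups, so it suffices to verify that for each of the presheaves $\pi_{-n}L[1/2]=W_B^{n}(-)[1/2]$, the sheafifications $a_\nis W_B^{n}[1/2]$ and $a_\ret W_B^{n}[1/2]$ coincide. For $n=0$ this is exactly Lemma \ref{lem:WNis=Wret}; for other $n$ one needs the analogue for shifted Balmer Witt groups.

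The main obstacle is establishing this shifted analogue of Lemma \ref{lem:WNis=Wret}. Concretely, one needs, for a henselian local $\Z[1/2]$-algebra $R$ with residue field $k$, a rigidity isomorphism $W_B^{n}(R)[1/2]\cong W_B^{n}(k)[1/2]$, together with the vanishing $W_B^{n}(k)[1/2]=0$ for $n\not\equiv 0\pmod 4$ and the identification $W_B^{4j}(k)[1/2]\cong W(k)[1/2]$. The $4$-periodicity and vanishing for fields with $1/2$ is standard (Balmer), and rigidity extends the Knebusch argument used in Lemma \ref{lem:WNis=Wret} to the shifted Witt-theoretic setting. With these inputs, each sheafified $W_B^{n}[1/2]$ is either zero or the real \'etale sheaf from Lemma \ref{lem:WNis=Wret}, which completes (b) and hence the proof.
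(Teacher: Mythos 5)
Your plan shares the overall skeleton of the paper's argument — compare descent spectral sequences, exploit Balmer's $4$-periodicity and vanishing, and use Knebusch rigidity in degree $0$ — but step~(b) has a genuine gap, and one part of your plan is an unnecessary detour.

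\textbf{The gap.} You want $a_{\nis}\pi_{-q}L[1/2]\to a_{\nis}\pi_{-q}L^{\ret}[1/2]$ to be an isomorphism, and you reduce this to showing that $a_{\nis}\pi_{-q}L[1/2]$ coincides with $a_{\ret}\pi_{-q}L[1/2]$. That controls only the \emph{source}; you never analyze the target $a_{\nis}\pi_{-q}L^{\ret}[1/2]$. It is not a formal consequence of the fibrant replacement construction that this Nisnevich sheaf equals $a_{\ret}\pi_{-q}L[1/2]$: the Nisnevich stalk of $\pi_{-q}L^{\ret}[1/2]$ at a Henselian local ring $R$ is $\pi_{-q}$ of the real \'etale hypercohomology of $L[1/2]$ over $\Spec R$, which a priori involves higher real \'etale cohomology of $\Sper R$; henselianness of $R$ does not make this cohomology vanish for free. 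What actually closes the argument is Scheiderer's comparison theorem (Prop.\ 19.2.1 in \cite{Scheiderer}): for any sheaf of abelian groups $A$ on $V_{\ret}$, the natural map $H^p_{\nis}(V,A)\to H^p_{\ret}(V,A)$ is an isomorphism. The paper compares the \emph{Nisnevich} descent spectral sequence for $L[1/2]$ directly against the \emph{real \'etale} descent spectral sequence for $L^{\ret}[1/2]$; once one knows $a_{\nis}L_{-q}[1/2]=a_{\ret}L_{-q}[1/2]$ (vanishing for $q\not\equiv 0\pmod 4$, and Lemma~\ref{lem:WNis=Wret} for $q\equiv 0\pmod 4$), Scheiderer's theorem makes the two $E_2$-pages agree and hence the abutments agree. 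Without invoking this comparison theorem (or an equivalent), your two Nisnevich spectral sequences cannot be matched up.

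\textbf{The unnecessary detour.} You propose to prove a rigidity statement $W_B^{n}(R)[1/2]\cong W_B^{n}(k)[1/2]$ for all $n$. This is not needed: Balmer's vanishing for local rings containing $1/2$ already kills the coefficient sheaves in degrees $q\not\equiv 0\pmod 4$, and the $4$-periodicity of $L$ reduces the remaining degrees to $q=0$, where Lemma~\ref{lem:WNis=Wret} (rigidity for ordinary Witt groups) applies directly. No shifted rigidity argument is required.
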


\begin{proof}
The map from $L_*(V)[1/2]=L_*^{\nis}(V)[1/2]$ to $L_*^{\ret}(V)[1/2]$ 
is the map on abutments of a map of strongly convergent descent 
spectral sequences which on the $E_2^{p,q}$-page is the map
\begin{equation*}\label{eqn:NisToRet}
H^p_{\nis}(V,a_{\nis}L_{-q}[1/2]) \to H^p_{\ret}(V,a_{\ret}L_{-q}[1/2]).
\end{equation*}
When $q\ne0\pmod4$, both coefficient sheaves are zero 
by \cite[Thm.\,5.6]{Balmer:TWGII};
hence the spectral sequences have $E_2^{p,q}=0$ for $q\ne0\pmod4$.
Since $L_0(V)=W_0(V)$, the maps $a_{\nis}L_{-q}[1/2]=a_{\ret}L_{-q}[1/2]$
are isomorphisms for $q=0\pmod 4$ by Lemma \ref{lem:WNis=Wret}
($a_\ret L_0=a_\ret W_0$). 
Now, for any sheaf of abelian groups $A$ on $V_{\ret}$, 
such as $a_{\nis} W_0$, the natural map 
$H^p_{\nis}(V,A) \to H^p_{\ret}(V,A)$ is an isomorphism by
\cite[Proposition 19.2.1]{Scheiderer}. Thus
the morphism of spectral sequences is an isomorphism on $E_2$-terms.
Hence, it is an isomorphism on abutments.
\end{proof}

From now on assume that $V$ is a scheme of finite type over the real
numbers $\R$.  We have a canonical map of ring spectra
\begin{equation}\label{eqn:GW-KO}
\GW(V) \to \GW^{\topl}(V_{\R}) = KO(V_{\R})[\eps]/(\eps^2\!-\!1) 
\quad\map{\eps\mapsto-1}\quad KO(V_{\R}),
\end{equation}
where $\eps$ corresponds to $\langle -1\rangle \in GW_0^{\topl}(V_{\R})$.

Most of the rest of this section is preparation for the proof of the
following proposition which will be given at the end of this Section. 

\begin{proposition}\label{L->KO}
The map \eqref{eqn:GW-KO} induces a natural map of spectra,
$L(V)[\frac12]\!\to\!KO(V_{\R})[\frac12]$, 
which is a weak equivalence for $V=\Spec \R$.
\end{proposition}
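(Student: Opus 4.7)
The plan is to construct the natural spectrum map in two steps, then verify it is a weak equivalence for $V=\Spec\R$ by direct computation.

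For the construction: by Definition \ref{def:L(V)} and \cite[Lemma 8.16]{Schlichting.Fund} we have $L(V)[\tfrac12]\simeq\L(V)[\tfrac12]=\GW(V)[\eta^{-1}][\tfrac12]$, so extending \eqref{eqn:GW-KO} to a map $L(V)[\tfrac12]\to KO(V_\R)[\tfrac12]$ amounts to showing that cup product by (some cofinal power of) $\eta$ acts invertibly on the target after inverting $2$. The naive image of $\eta\in\pi_{-1}\GW^{[-1]}(V)$ in $\pi_{-1}KO(V_\R)$ is $2$-torsion and vanishes after inverting $2$, so I would instead pass to the cofinal system $\{\eta^{4k}\}$. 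Using the $4$-periodicity $\GW^{[i]}\simeq\GW^{[i+4]}$ of \cite[5.9]{Schlichting.Fund}, the element $\eta^4\in\pi_{-4}\GW^{[-4]}(V)$ corresponds to a class in $\pi_{-4}\GW(V)$ whose image in $\pi_{-4}KO(V_\R)[\tfrac12]$ is, up to a $2$-local unit, the topological periodicity generator $\alpha$ (invertibility of $\alpha$ in $KO[\tfrac12]$ follows from Bott periodicity and the relation $\alpha^2=4\beta$). Cupping with this class acts invertibly on $KO(V_\R)[\tfrac12]$, and cofinality of $\{\eta^{4k}\}$ in the powers of $\eta$ then produces the required natural map out of the colimit.

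For the verification at $V=\Spec\R$: Balmer's computation of the Witt groups of a field together with his $4$-periodicity gives $\pi_n L(\R)[\tfrac12]=W^{-n}_B(\R)[\tfrac12]$, which is $\Z[\tfrac12]$ for $n\equiv0\pmod4$ and zero otherwise. Bott periodicity together with $\alpha^2=4\beta$ gives that $\pi_n KO[\tfrac12]$ is likewise $\Z[\tfrac12]$ for $n\equiv0\pmod4$ and zero otherwise. In degree $0$ the map is the classical signature $W(\R)\to\Z=KO^0(\pt)$, which sends $\langle1\rangle\mapsto 1$ and is an isomorphism. Multiplicativity of the ring-spectrum map, combined with the matched $4$-periodicities, then forces the map to be an isomorphism on each $\pi_{4k}$, hence a weak equivalence.

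The main obstacle I anticipate is the identification in the first step: verifying on the nose that the image of $\eta^4$ under \eqref{eqn:GW-KO} is a $2$-local unit multiple of the Bott class $\alpha$, i.e., that the algebraic $\eta$-periodicity of \cite[5.9]{Schlichting.Fund} matches topological Bott periodicity of $KO[\tfrac12]$ through the ring-spectrum map \eqref{eqn:GW-KO}. Once this comparison is set up carefully, the rest of the construction and the degree-$0$ computation are essentially formal.
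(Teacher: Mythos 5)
Your strategy---pass to the cofinal powers $\eta^{4k}$, identify $\eta^4$ via the $4$-periodicity of $\GW^{[i]}$ with a class in $\pi_{-4}\GW(V)$, and argue that its image under \eqref{eqn:GW-KO} is a $2$-local unit times the Bott class $\alpha$ with $\alpha^2=4\beta$---is in the right spirit, but the step you flag as the ``main obstacle'' is precisely the missing content of the proof, not a peripheral verification. The difficulty is compounded by a prior issue you pass over: the map \eqref{eqn:GW-KO} is only a map $\GW(V)\to KO(V_\R)$ at duality-shift $[0]$, so ``the image of $\eta\in\pi_{-1}\GW^{[-1]}(V)$ in $\pi_{-1}KO(V_\R)$'' is not defined until one manufactures a compatible family of maps out of $\GW^{[i]}(V)$ for $i\neq 0$. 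The obstruction is not that the image of $\eta$ is $2$-torsion; it is that the map whose image you wish to take does not yet exist. Your appeal to \cite[5.9]{Schlichting.Fund} is the right instinct, but turning it into an honest identification of the image of $\eta^4$ as a $2$-local unit multiple of $\alpha$ requires an explicit computation you have not supplied.

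The paper sidesteps exactly this. Via Lemmas \ref{lem:EtaCupT} and \ref{lem:CupT} and Corollary \ref{cor:LasGWwithEtaTilde}, $\L(V)$ is re-expressed as the colimit along cup product with $\tilde\eta=1-\langle T\rangle\in\widetilde{GW}_0(\Gm)$, a genuine degree-$0$, shift-$[0]$ class, with all of the duality-shift and $S^1$-suspension bookkeeping absorbed into $\Gm$-suspensions. The key computation is then Lemma \ref{lem:KOmultby2}: under \eqref{eqn:GW-KO}, $\tilde\eta$ restricts to $1-\langle -1\rangle=1-\eps$ on the non-basepoint component of $\R^\times\simeq S^0$, and hence to $2\in KO_0(\pt)$ after $\eps\mapsto -1$. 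Thus inverting $\tilde\eta$ on the target is inverting $2$ on the nose, with no ambiguity about $2$-adic units, and the map $\L(V)[1/2]\to KO(V_\R)[1/2]$ drops out immediately. After that, your degree-$0$ computation and the appeal to matched $4$-periodicities agree with the paper's argument.
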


Using this proposition, we can now prove
Theorem \ref{Brumfiel}. 

\begin{proof}[Proof of Theorem \ref{Brumfiel}]
Consider the commutative diagram of presheaves of spectra on $V_\ret$:
\[\xymatrix{
L[1/2]\ar[d] \ar[r]^{\simeq} & L[1/2]^{\ret} \ar[d]\\
KO[1/2] \ar[r]^{\simeq} & KO[1/2]^{\ret}\!.
}\]
The top horizontal map is a weak equivalence by Theorem \ref{thm:L=Lret},
and the lower horizontal map is a weak equivalence
by Example \ref{KO-descent}.
The right vertical map induces a map of descent spectral sequences for 
the real \'etale topology. We claim that this map is an isomorphism
on the $E_2$-page for all $V$. This will imply that the right vertical map 
(and hence the left map) is also an equivalence, proving the theorem.

To verify the claim, consider the inclusion of topological spaces
$V_{\R}\subset V_r$.  Restriction induces an equivalence 
$\Sh(V_r) \to \Sh(V_{\R})$ of categories of sheaves; see for instance
\cite[Theorem 7.2.3]{BCR} where $V_r$ is written as $\tilde{V}$.
In particular, we have isomorphisms $H^p_\ret(V,\pi_qL[1/2])\cong
H^p(V_r,\pi_qL[1/2]) \cong H^p(V_{\R},\pi_qL[1/2])$ and
\[
H^p_\ret(V,\pi_qKO[1/2])\cong H^p(V_r,\pi_qKO[1/2]) \cong
H^p(V_{\R},\pi_qKO[1/2]).
\]
Now the restrictions of $\pi_qL[1/2]$ and
$\pi_qKO[1/2]$ to $V_\ret$ are the constant sheaves $\pi_qL(\R)[1/2]$
and $\pi_qKO(\R)[1/2]$, respectively.  Since the map $L(\R)[1/2]\to
KO(\R)[1/2]$ in Proposition \ref{L->KO} is an equivalence, the claim
follows.
\end{proof}


In order to compare $L(V)$ with $KO(V_{\R})$, it will be convenient to
give a different but equivalent description of $\L(V)$.

For each pointed $k$-scheme $V$, let $\widetilde{GW}_*^{[n]}$ denote the
reduced theory associated with $GW^{[n]}$, that is,
$\widetilde{GW}_*^{[n]}(V)$ is the kernel of the split surjective map
$GW_*^{[n]}(V) \to GW_*^{[n]}(k)$ given by the inclusion of the base point
into $V$.  

By homotopy invariance \cite[9.8]{Schlichting.Fund}, the boundary map
in the Bass Fundamental Theorem \cite[Theorem 9.13]{Schlichting.Fund}
for $\Gm=\Spec k[T,T^{-1}]$ induces isomorphisms
\[
\delta: \widetilde{GW}^{[1]}_1(\Gm) \map{\cong} GW_0(k), \quad
\widetilde{GW}_0(\Gm) \map{\cong} GW_{-1}^{[-1]}\cong W_0(k)
\]
fitting into the diagram
\[\xymatrix{
\widetilde{GW}^{[1]}_1(\Gm) \ar[r]^{\eta\cup} \ar[d]_{\delta}^{\cong} 
& \widetilde{GW}_0(\Gm) \ar[d]_{\delta}^{\cong} \\
GW_0(k) \ar[r]^{\hspace{-4ex}\eta\cup} & GW^{[-1]}_{-1}(k) \rlap{=W(k)}.
}\]
As the horizontal maps are the boundary map of the Bott sequence 
\cite[8.11]{Schlichting.Fund}, the diagram commutes up to 
multiplication with $-1$ by the usual Verdier exercise
\cite[10.2.6]{WH}.
In particular, there is a unique element 
$$
[T]\in \widetilde{GW}^{[1]}_1(\Gm)
$$
such that $\delta [T] = 1 \in GW_0(k)$.

\begin{lemma}\label{lem:EtaCupT}
We have $\eta \cup [T] = 1-\langle T \rangle \in GW_0(\Gm)$.
\end{lemma}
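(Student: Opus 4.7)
The plan is to apply the isomorphism $\delta : \widetilde{GW}_0(\Gm) \xrightarrow{\cong} W_0(k)$ furnished by the Bass Fundamental Theorem (the right vertical arrow of the diagram just preceding the lemma) to both sides of the asserted equality and check that their images in $W_0(k)$ agree. Note that the right-hand side $1 - \langle T\rangle$ does lie in $\widetilde{GW}_0(\Gm)$, since it restricts to $0$ at the basepoint $T=1$, while the left-hand side lies there because $[T] \in \widetilde{GW}^{[1]}_1(\Gm)$ and cup product with $\eta$ preserves reducedness.

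First I compute $\delta(\eta\cup[T])$. By the defining property of $[T]$, $\delta[T] = 1 \in GW_0(k)$. The displayed diagram commutes only up to the Verdier sign $-1$, so
\[
\delta(\eta\cup[T]) \;=\; -\,\eta\cup \delta[T] \;=\; -\eta \;\in\; GW^{[-1]}_{-1}(k) = W(k),
\]
and under the canonical identification of $\eta$ with the generator $1 \in W_0(k)$ this becomes $-1$. Next I compute $\delta(1-\langle T\rangle) = -\delta(\langle T\rangle - 1)$. This is the standard "Bott" computation attached to the open immersion $\Gm \subset \mathbb{A}^1$: the symmetric form $\langle T\rangle$ on $\cO_{\Gm}$ does not extend across the closed point $T=0$, and the boundary map in the localization sequence (which is precisely $\delta$) sends $\langle T\rangle - 1$ to the unit form $\langle 1\rangle = 1 \in W_0(k)$. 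Equivalently, this identifies $\langle T\rangle - 1$ with the generator of the $W_0(k)$-summand in the Karoubi-style splitting $GW_0(\Gm) \cong GW_0(k)\oplus W_0(k)$. Hence $\delta(1-\langle T\rangle) = -1$, which matches the computation of $\delta(\eta\cup[T])$; injectivity of $\delta$ on $\widetilde{GW}_0(\Gm)$ then yields the lemma.

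The main obstacle is the consistent bookkeeping of signs in the second step. Three conventions have to be reconciled simultaneously: the Verdier $-1$ in the commuting square, the normalization $\delta[T]=1$ used to define $[T]$, and the identification of $\eta$ with the generator of $W_0(k)$. These must be tracked exactly as in \cite{Schlichting.Fund} so that the two $-1$'s really coincide rather than cancel each other, which is what makes the asserted equality $\eta\cup[T] = 1-\langle T\rangle$ (rather than $\langle T\rangle -1$) come out with the correct sign.
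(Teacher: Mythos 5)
Your proof is correct and follows the same route as the paper: apply the isomorphism $\delta$ to both sides, using the anti-commutativity of the Verdier square to get $\delta(\eta\cup[T])=-\eta$ and the classical boundary computation $\delta(\langle T\rangle-1)=1$ to get $\delta(1-\langle T\rangle)=-\eta$, then invoke injectivity of $\delta$. The sign bookkeeping you emphasize is exactly what the paper does.
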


\begin{proof}
As the diagram anti-commutes, $\delta(\eta\cup[T])=-\eta$.
On the other hand, the composition of the right vertical map 
with $GW_{-1}^{[-1]}\cong W_0(k)$ is the usual boundary map 
$GW_0(k[T,T^{-1}]) \to W_0(k)$ as in \cite{MilnorHusemoller},
which sends $\langle T \rangle $ to $1\in W_0(k)$ and 
sends $1\in GW_0(k[T,T^{-1}])$ to 0. 
Hence $\delta(\langle T\rangle-\langle1\rangle)=\eta$; 
the lemma follows, since the right vertical map is an 
isomorphism of abelian groups.
\end{proof}

Write $\GWH^{[n]}$ for the homotopy invariant version of
Grothendieck-Witt theory, that is, $\GWH^{[n]}(V)$ is the realization
of the simplicial spectrum $q\mapsto \GW^{[n]}(V\times \Delta^q)$
where $\Delta^q = \Spec k[t_0,...,t_q]/(\sum t_i = 1)$.  
The map to the final object $\Delta^q \to \Spec k$ induces
a natural map $\GW^{[n]} \to \GWH^{[n]}$.

\begin{lemma}
\label{lem:CupT}
For any pointed $k$-scheme $V$, the cup product with 
$[T] \in \widetilde{GW}^{[1]}_1(\Gm)$ induces an equivalence  
$$[T]\cup: S^1\wedge \widetilde{\GWH}^{[n]}(V) \simeq \widetilde{\GWH}^{[n+1]}(\Gm\wedge V).$$
\end{lemma}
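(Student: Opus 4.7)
The plan is to deduce this from the Bass Fundamental Theorem for Grothendieck-Witt theory \cite[Theorem 9.13]{Schlichting.Fund} together with homotopy invariance of $\GWH$ \cite[9.8]{Schlichting.Fund}. The Bass theorem is natural in $V$; applying it level-wise to $V\times\Delta^\bullet$ and realizing produces a fiber sequence of $\GWH$-spectra, and homotopy invariance collapses $\GWH^{[n+1]}(V\times\mathbb{A}^1)\simeq \GWH^{[n+1]}(V)$, leaving a split cofiber sequence
\begin{equation*}
\GWH^{[n+1]}(V)\longrightarrow \GWH^{[n+1]}(V\times \Gm)\stackrel{\delta}{\longrightarrow} S^1\wedge \GWH^{[n]}(V),
\end{equation*}
the splitting being provided by the section $V = V\times\{1\}\hookrightarrow V\times \Gm$.

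Next I would reduce with respect to the basepoints of both $V$ and $\Gm$. Splitting off the basepoint contributions of each factor gives a canonical decomposition
\begin{equation*}
\GWH^{[n+1]}(V\!\times\!\Gm)\simeq \GWH^{[n+1]}(k)\oplus \widetilde{\GWH}^{[n+1]}(\Gm)\oplus \widetilde{\GWH}^{[n+1]}(V)\oplus \widetilde{\GWH}^{[n+1]}(\Gm\wedge V).
\end{equation*}
Comparing this with the Bass splitting applied both to $V$ and to $V = \Spec k$ isolates the last summand, yielding a natural equivalence
\begin{equation*}
\delta\colon \widetilde{\GWH}^{[n+1]}(\Gm\wedge V)\ \stackrel{\simeq}{\longrightarrow}\ S^1\wedge \widetilde{\GWH}^{[n]}(V).
\end{equation*}

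It remains to identify the inverse of $\delta$ with cup product by $[T]$. Both $\cup[T]$ and the inverse splitting are natural sections of $\delta$, so it suffices to check they agree in the universal case $V=\Spec k$: there, cup product by $[T]$ sends the generator $1\in \GWH_0^{[0]}(k)$ to $[T]\in \widetilde{\GWH}^{[1]}_1(\Gm)$, and by the defining property $\delta[T]=1$ this is precisely the inverse section on generators. Naturality in $V$ --- combined with the module structure of $\widetilde{\GWH}^{[n+1]}(\Gm\wedge V)$ over $\GWH^{[0]}(k)$ --- then forces the two sections to coincide in general.

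The main obstacle is technical: showing that the Bass cofiber sequence is genuinely preserved by the $\Delta^\bullet$-realization defining $\GWH$, and that its boundary after realization agrees with the $\delta$ pinning down $[T]$. This relies on the naturality of the Bass sequence in $V$, Nisnevich descent for $\GW$ \cite[9.7]{Schlichting.Fund}, and the compatibility of $\delta$ with the multiplicative structure on $\GWH^{[*]}$; once these are in place, the splitting arguments above go through without further surprises.
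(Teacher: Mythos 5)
Your proof is essentially correct and rests on the same ingredients as the paper's: the Bass Fundamental Theorem for $\GW^{[n]}$, homotopy invariance, and the fact that $\delta[T]=1$. The difference is one of packaging. The paper proves the lemma with a single commutative square expressing a Leibniz-type compatibility of the Bass boundary $\delta$ with cup products,
\begin{equation*}
\xymatrix{
\widetilde{GW}^{[1]}_1(\Gm)\otimes_{GW_0(k)} \widetilde{\GWH}^{[n]}_p(V) \ar[r]^-{\cup}
\ar[d]_{\delta\otimes 1}^{\cong} &
\widetilde{\GWH}^{[n+1]}_{p+1}(\Gm\wedge V) \ar[d]^{\delta}_{\cong}\\
GW_0(k)\otimes_{GW_0(k)} \widetilde{\GWH}^{[n]}_p(V) \ar[r]^-{\cup}_{\cong}
 & \widetilde{\GWH}^{[n]}_{p}(V)
}
\end{equation*}
so that $\delta\circ(\cup[T])=(\delta[T])\cup\text{---}=\id$, and since the verticals are isomorphisms (Bass plus homotopy invariance on the reduced side), $\cup[T]$ is too. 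You instead build the full four-term splitting of $\GWH^{[n+1]}(V\times\Gm)$, isolate the wedge summand, and then argue by naturality and module structure that $\cup[T]$ inverts $\delta$. That works, but it is more machinery than needed, and the step you flag at the end as "the main obstacle"---compatibility of the realized $\delta$ with the multiplicative structure---is exactly the content of the commutative square above; the paper makes that the whole proof rather than a lingering technical point. Your verification-on-generators argument is the same fact as $\delta[T]=1$ feeding into that diagram, so the two proofs agree on the decisive input; the paper's route is just shorter and avoids having to first isolate the $\Gm\wedge V$ summand explicitly.
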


\begin{proof}
This follows from the definition of $[T]$ and the commutative diagram
$$\xymatrix{
\widetilde{GW}^{[1]}_1(\Gm)\otimes \widetilde{\GWH}^{[n]}_p(V) \ar[r]^{\cup}
\ar[d]_{\delta\otimes 1}^{\cong} & 
\widetilde{\GWH}^{[n+1]}_{p+1}(\Gm^{\wedge1}\wedge V) \ar[d]^{\delta}_{\cong}\\
GW_0(k)\otimes \widetilde{\GWH}^{[n]}_p(V) \ar[r]^{\cup}_{\cong}
 & 
\widetilde{\GWH}^{[n]}_{p}(V)
}$$
where the tensor product is over $GW_0(k)$.
In particular, the lower horizontal map is an isomorphism.
The vertical maps are the boundary maps in the Bass Fundamental Theorem
which are isomorphisms, by homotopy invariance.
\end{proof}

\begin{corollary}
\label{cor:LasGWwithEtaTilde}
Set $\tilde{\eta}=1-\langle T\rangle \in \widetilde{GW}_0(\Gm)$.
Then for any $k$-scheme $V$, the ``stabilized'' Witt-theory spectrum 
$\L(V)$ of \cite[8.12]{Schlichting.Fund} is the colimit of the sequence
\[
\GWH(V) \map{\tilde{\eta}\cup} \widetilde{\GWH}(\Gm^{\wedge 1}V_+)
        \map{\tilde{\eta}\cup} \widetilde{\GWH}(\Gm^{\wedge 2}V_+) 
        \map{\tilde{\eta}\cup} \cdots
\]
\end{corollary}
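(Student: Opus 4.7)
The strategy is to match Definition \ref{def:L(V)}, which presents $\L(V)$ as the colimit of $S^n\wedge\GW^{[-n]}(V)$ along $\eta\cup$, with the colimit in the corollary by passing from $\GW$ to $\GWH$ and then applying Lemma \ref{lem:CupT} iteratively to trade the shift $[-n]$ for a smash factor $\Gm^{\wedge n}$.

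First I would argue that $\L$ is homotopy invariant, so that the canonical map yields an equivalence
\[
\L(V) \;\simeq\; \colim_n\bigl(S^n\wedge \GWH^{[-n]}(V),\ \eta\cup\bigr).
\]
The point is that $\eta$-inverted Grothendieck-Witt theory is a form of Witt theory, which is homotopy invariant for schemes over $\Z[\tfrac12]$. Concretely, $\L(V)\to \mathrm{Tot}_q\,\L(V\times\Delta^q)$ is an equivalence, and the totalization on the right is precisely $\GWH(V)[\eta^{-1}]$, because inverting a stable operation commutes with cosimplicial totalization. This step can alternatively be read off from \cite[Lemma 8.16]{Schlichting.Fund} together with the fact that $\L$ satisfies Nisnevich descent and homotopy invariance after $[1/2]$, but the homotopy-invariance statement we need is integral.

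Next, Lemma \ref{lem:CupT}, iterated $n$ times, gives a canonical equivalence
\[
[T]^n\cup\,:\,S^n\wedge \widetilde{\GWH}^{[-n]}(V_+)\;\xrightarrow{\ \simeq\ }\;\widetilde{\GWH}^{[0]}(\Gm^{\wedge n}\wedge V_+),
\]
and since $\widetilde{\GWH}(V_+)=\GWH(V)$, this rewrites each term of the $\GWH$-colimit above as the corresponding term on the right-hand side of the corollary. It remains to check that the equivalences $[T]^n\cup$ intertwine the transition maps, i.e.\ that the square
\[
\begin{array}{ccc}
S^n\wedge \GWH^{[-n]}(V) & \xrightarrow{\ \eta\cup\ } & S^{n+1}\wedge \GWH^{[-n-1]}(V)\\[2pt]
{\scriptstyle [T]^n\cup}\downarrow\,{\simeq} & & {\scriptstyle [T]^{n+1}\cup}\downarrow\,{\simeq}\\[2pt]
\widetilde{\GWH}(\Gm^{\wedge n}V_+) & \xrightarrow{\ \tilde\eta\cup\ } & \widetilde{\GWH}(\Gm^{\wedge n+1}V_+)
\end{array}
\]
commutes. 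This is immediate from associativity of the cup product together with Lemma \ref{lem:EtaCupT}: $[T]\cup \eta = \tilde\eta$ in $\widetilde{GW}_0(\Gm)$, so $[T]^{n+1}\cup(\eta\cup x) = \tilde\eta\cup([T]^n\cup x)$, any sign from graded commutativity being harmless for the colimit.

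The main obstacle is the first step: making precise that the natural map $\GW(V)[\eta^{-1}]\to \GWH(V)[\eta^{-1}]$ is already an equivalence, so that $\L(V)$ can be presented as an $\eta$-colimit of $\GWH$-spectra. For regular $V$ this is automatic because $\GW=\GWH$, but for singular $V$ one genuinely needs to know that $\L$ is $\mathbb{A}^1$-invariant, which forces the $\GW\to\GWH$ map to become an equivalence after $\eta^{-1}$. Once that is established, Steps 2 and 3 are a straightforward diagram chase in cup products.
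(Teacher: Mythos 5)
Your proof is correct and takes essentially the same route as the paper: identify the terms of the two colimit systems via iterated use of Lemma~\ref{lem:CupT}, identify the transition maps via Lemma~\ref{lem:EtaCupT} (giving $\tilde\eta = \eta\cup[T]$), and use homotopy invariance of the stabilized Witt spectrum $\L$ (which follows from \cite[9.8]{Schlichting.Fund}) to pass from $\GW(V)[\eta^{-1}]$ to the colimit of $\GWH$-spectra. The paper states this in two sentences; you have merely spelled out the diagram chase, including the point you flag as the ``main obstacle'' (homotopy invariance of $\L$), which is exactly the fact the paper invokes in its final sentence.
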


\begin{proof}
 The sequence in the corollary is equivalent to the homotopy invariant
version of the sequence in Definition \ref{def:L(V)}
in view of Lemmas \ref{lem:EtaCupT} and \ref{lem:CupT}.  Since stabilized Witt
  groups are homotopy invariant, this colimit computes $\L(V)$.
\end{proof}

\begin{lemma}\label{lem:KOmultby2}
Under the map $\GW(V) \to KO(V_{\R})$ of \eqref{eqn:GW-KO}, 
the cup-product with the element $\tilde{\eta}=1-\langle T \rangle$
of $\widetilde{GW}_0(\Gm)$ induces multiplication by $2$ on $KO(V_{\R})$.
\end{lemma}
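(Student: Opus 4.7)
The plan is to compute the image of $\tilde\eta = 1 - \langle T\rangle$ under the composite map
\[
\widetilde{GW}_0(\Gm) \to \widetilde{GW}^{\topl}_0(\Gm(\R)) \to \widetilde{KO}^{0}(\Gm(\R))
\]
coming from \eqref{eqn:GW-KO}, and then note that cup product with a global class becomes multiplication by the image of that class after passage to the reduced theory.

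First I would analyze the topological space $\Gm(\R) = \R^\times$, which has two contractible components (the positive reals $\R_{>0}$ and the negative reals $\R_{<0}$), based at $T = 1 \in \R_{>0}$. Thus $\Gm(\R)$ is homotopy equivalent, as a pointed space, to $S^0$, and the reduced group $\widetilde{KO}^{0}(\Gm(\R))$ is canonically isomorphic to $\Z$, with the isomorphism given by evaluating the rank on the non-basepoint component $\R_{<0}$.

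Next I would compute the restriction of $\langle T\rangle \in GW_0^{\topl}(\Gm(\R)) = KO(\R^\times)[\eps]/(\eps^2\!-\!1)$ to each component. On $\R_{>0}$ the function $T$ is positive everywhere, hence homotopic through nonvanishing real functions to the constant function $1$, so $\langle T\rangle$ restricts to $\langle 1\rangle = 1$ there. On $\R_{<0}$, the function $T$ is negative everywhere, hence homotopic to $-1$, so $\langle T\rangle$ restricts to $\langle -1\rangle = \eps$. Therefore $\tilde\eta = 1 - \langle T\rangle$ restricts to $0$ on the positive component and to $1-\eps$ on the negative component. After applying $\eps \mapsto -1$, the image of $\tilde\eta$ in $KO(\R^\times)$ has rank $0$ on $\R_{>0}$ and rank $1-(-1)=2$ on $\R_{<0}$; under the isomorphism $\widetilde{KO}^{0}(\Gm(\R)) \cong \Z$ described above, this class equals $2$.

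Finally I would conclude as follows. The map $\GW(V) \to KO(V_\R)$ of \eqref{eqn:GW-KO} is a map of ring spectra, so it is compatible with external cup products. Under the equivalence $\Gm(\R) \wedge V_{\R,+} \simeq S^0 \wedge V_{\R,+} \simeq V_{\R,+}$, the external cup product with $\tilde\eta$ becomes the internal multiplication by its image in $\widetilde{KO}^0(\Gm(\R)) = \Z$, which we computed to be $2$. Hence $\tilde\eta \cup$ induces multiplication by $2$ on $KO(V_\R)$, as claimed. The only real subtlety — which I would want to state carefully but not belabor — is checking that the topological reduction identifies the formal cup product with $[T]$-style classes in the algebraic Bass-fundamental-theorem setup with honest multiplication in reduced topological $KO$-theory; this is essentially forced by naturality of the ring-spectrum map and the fact that both theories satisfy the same reduced suspension formalism.
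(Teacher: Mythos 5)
Your proposal is correct and takes essentially the same route as the paper: restrict $\tilde\eta$ to the two components of $\R^\times \simeq S^0$, observe that on the positive component it vanishes while on the negative one it becomes $1-\langle -1\rangle = 1-\eps$, and specialize $\eps\mapsto -1$ to get $2$ in $\widetilde{KO}(S^0)\cong\Z$. The paper phrases the first step via the ring map $\R[T,T^{-1}]\to C(\R^\times)$ and restriction to $S^0=\{\pm1\}$, whereas you phrase it as contracting each component, but these are the same calculation; your closing remark about ring-spectrum compatibility with cup product is the (elided) justification the paper also relies on.
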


\begin{proof}
Recall that $\widetilde{KO}_0(S^0) = KO_0(\pt)=\Z$.
We have to show that the map
\[
\widetilde{GW}_0(\Gm) \to 
\widetilde{GW}^{\topl}_0(\R^\times) = \widetilde{GW}^{\topl}_0(S^0) 
\quad\map{\eps \mapsto -1}\quad \widetilde{KO}_0(S^0)=\Z
\]
sends $\tilde{\eta}=1-\langle T\rangle$ to $2$.
The first map between reduced Grothendieck-Witt groups is
induced by the inclusion of the Laurent polynomial ring
$\R[T,T^{-1}]$ into the ring $C(\R^\times)$ of continuous functions 
$\R^\times \to \R$ sending $T$ to the standard inclusion.  Therefore, the map 
$GW_0(\Gm) \to GW^{\topl}_0(S^0)$ sends $1-\langle T\rangle$ to $0$ 
on the base point component $+1$ of $\{\pm 1\} = S^0 \subset \R^\times$ 
and to $1-\langle-1\rangle$ on the non-base point component $-1$ of $S^0$.
This element corresponds to $1-\eps\in\widetilde{KO}(S^0)[\eps]/(\eps^2-1)$
and is thus sent to $2$ in $KO(\pt)=\Z$.
\end{proof}

\begin{proof}[Proof of Proposition \ref{L->KO}]
Inverting the multiplication by $\tilde{\eta}=1-\langle T\rangle$ on
both sides of the (homotopy invariant version of the) map
(\ref{eqn:GW-KO}) yields a map of spectra
\[
\L(V) = \GWH(V)[\tilde{\eta}^{-1}] \to KO(V_\R)[\tilde{\eta}^{-1}]
 = KO(V_{\R})[1/2]
\]
in view of Corollary \ref{cor:LasGWwithEtaTilde} and Lemma \ref{lem:KOmultby2}.
Hence, we obtain a natural transformation 
\begin{equation}\label{eqn:L-KO}
L(V)[1/2]=\L(V)[1/2] \to KO(V_{\R})[1/2]
\end{equation}
which is multiplicative on homotopy groups.
In particular, this map is periodic of period $4$.
When $V=\Spec\,\R$, it is an isomorphism since 
it is an isomorphism in degree $0$, 
and zero in degrees $\not\equiv\!0$ mod $4$.
\end{proof}

\goodbreak 
\section{Exponents for Real Witt groups}\label{sec:WR}

In this section we show that, for any finite $G$-CW complex $X$, 
the kernel and cokernel of the restriction map 
$WR(X)\to WR(X^G)=KO(X^G)$ are 2-primary groups of bounded exponent,
depending on $\dim(X)$. In particular, this is true for the 
kernel and cokernel of $WR(V)\to WR(V_{\R})=KO(V_{\R})$
for every variety $V$ over $\R$. 
The following number is useful; it was introduced in Example \ref{WRfree}.

\begin{defn}\label{def:f}
For $d\ge1$, let $f(d)$ denote the number of integers 
$i$ in the range $1\le i\le d$ with $i=0,1,2$ or $4$ mod $8$. 
\end{defn}

%

\medskip
When $X^G$ is empty, i.e., the involution acts freely on the 
Real space $X$, we only need to bound the exponent of $WR(X)$.
Recall from Example \ref{WRfree} that $WR(S^{d+1,0})\cong\Z/2^f$,
where $S^{d+1,0}$ is $S^{d}$ with the antipodal involution and $f=f(d)$,
(at least if $d\equiv0,1,3,7\pmod8$; if $d\equiv2,4,5,6\pmod8$
the group is $\Z/2^f$ or $\Z/2^{f-1}$).

For example, we saw in Theorem \ref{W(curve)} that 
$WR(X)\cong\Z/4$ for the Riemann sphere $X\cong S^{3,0}$ defined 
by $X^2+Y^2+Z^2$. This example is typical in the following sense.

\begin{theorem}\label{WR:exponent} 
Suppose that the action of $G=C_{2}$ on $X$ is free.
Then the Real Witt group $WR(X)$ is a $2$-primary torsion group 
of exponent $2^f$, where $f=f(d)$ and $d=\dim(X)$.

The same statement is true for the co-Witt group ${WR\,}'(X)$.
\end{theorem}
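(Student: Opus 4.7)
My plan is to identify $WR(X)$ as a cokernel in $KO$-theory via Theorem \ref{GR=KOG} and then bound its exponent by combining Lemma \ref{lem:L=-1} with the calculation in Example \ref{WRfree}, via the classifying map of the double cover. Specifically, Theorem \ref{GR=KOG} and Example \ref{WR(X^G)}(b) give $WR(X)\cong\coker(KR(X)\to KO(Y))$, where $Y=X/G$ is a CW complex of dimension $d$. The double cover $X\to Y$ is classified by a map $\phi:Y\to B\Z/2=\RP^\infty$, which by cellular approximation may be taken to factor through the $d$-skeleton $\RP^d$; lifting to a $G$-equivariant map $\tilde\phi:X\to S^{d+1,0}$ produces, by functoriality of $H$ and of $\psi$, a natural comparison map $WR(S^{d+1,0})\to WR(X)$. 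By Example \ref{WRfree}, $WR(S^{d+1,0})$ is cyclic of exponent at most $2^{f(d)}$, so it will suffice to show this comparison map is surjective.

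The heart of the argument is this surjectivity. The key inputs are Lemma \ref{lem:L=-1}, which identifies the image of the hyperbolic map $H:KR(X)\to KO(Y)$ as an ideal in $KO(Y)$ containing $(1+L)\cdot KO(Y)$ (where $L$ is the real line bundle on $Y$ associated to the double cover), together with an Atiyah--Hirzebruch spectral sequence argument showing that every class of $KO(Y)$ lies in $\phi^*KO(\RP^d)+(1+L)\cdot KO(Y)$. Combining these will yield the surjectivity of $WR(S^{d+1,0})\to WR(X)$, and hence the exponent bound $2^{f(d)}$.

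For the co-Witt group $WR{\,}'(X)=\ker(GR(X)\to KR(X))$, the parallel argument applies with the forgetful map in place of the hyperbolic one, using the dual form of Lemma \ref{lem:L=-1} to control the kernel together with the co-Witt version of Example \ref{WRfree} (obtained from the ring structure in Remark \ref{GR*=KO*G}). The main obstacle throughout is the AHSS step: since $\phi^*$ is typically not surjective on $KO$-theory itself, the proof must analyze the skeletal filtration of $Y$ and exploit the universality of the $L$-action on this filtration to ensure that any class not detected by $\phi^*$ is annihilated by $1+L$.
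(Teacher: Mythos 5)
Your plan has a genuine gap: the surjectivity of the comparison map $\tilde\phi^*\colon WR(S^{d+1,0})\to WR(X)$, equivalently your claimed decomposition $KO(Y)=\phi^*KO(\RP^d)+(1+L)\cdot KO(Y)$, is false in general. The group $WR(S^{d+1,0})$ is cyclic (Example \ref{WRfree}), while $WR(X)$ need not be: if $V$ is a smooth projective curve over $\R$ of genus $g\ge1$ with no real points, then $G$ acts freely on the genus-$g$ surface $X=V_\C$ (of dimension $2$), yet Theorem \ref{W(curve)} gives $WR(X)\cong\Z/4\oplus(\Z/2)^g$, which is not cyclic; so no map from $WR(S^{3,0})$ to $WR(X)$ can be onto. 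Even more simply, if the double cover is trivial then $\phi$ is nullhomotopic and $L\cong\underline{\R}$, and your decomposition would force $KO(Y)/2$ to be cyclic, which already fails for $Y=S^2$.

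No surjectivity is needed. You already have the two facts that matter: by Lemma \ref{lem:L=-1}, $WR(X)$ is a quotient of $KO(Y)/(1+L)$; and $L=\alpha^*\xi$ for a cellular map $\alpha\colon Y\to\RP^d$. The paper's proof simply pulls back a \emph{multiplicative relation}: Adams' computation $\widetilde{KO}(\RP^d)\cong\Z/2^{f}$ yields an identity of the form $(1+\xi)^N=2^N$ in $KO(\RP^d)$, and since $\alpha^*$ is a ring homomorphism, $(1+L)^N=2^N$ in $KO(Y)$. Hence $2^N a=(1+L)^N a\in(1+L)\cdot KO(Y)$ for every $a$, so $KO(Y)/(1+L)$, and thus its quotient $WR(X)$, has exponent $2^N$; the co-Witt case is the identical argument applied to kernels instead of cokernels. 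The point is that $\alpha^*$ need not hit anything interesting in $KO(Y)$ — the pulled-back class $L$ only needs to satisfy the nilpotency relation, which it does automatically. Replacing your AHSS/surjectivity step by this pullback of the relation closes the argument; the rest of your setup (Lemma \ref{lem:L=-1}, the classifying map factoring through $\RP^d$, and the invocation of Example \ref{WRfree}) is exactly what the paper uses.
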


\begin{proof}
The map $KO(X/G)\cong KO_{G}(X)\map{\rho} KR(X)$ of \eqref{eq:KOG-KR},
composed with the forgetful map $KR(X)\to KO_{G}(X)\cong KO(X/G)$
is multiplication by $1+L$, where $L$ is the canonical line bundle
associated to $X\to X/G$ and described in Example \ref{ex:canonical}. 
Since $L$ is classified by a cellular map
$\alpha:X/G\to\RP^\infty$, whose image lies in the $d$-skeleton $\RP^d$,
the bundle $L$ is the pullback $\alpha^*\xi$ via $\alpha:X/G\to\RP^d$ 
of the canonical line bundle $\xi$ over $\RP^d$. 
Now $(\xi+1)^{f}-2^f=0$ in $KO(\RP^{d})$, by Example \ref{WRfree}.
Applying $\alpha^*$ yields $2^f=(L+1)^f$ in $KO(X/G)$, and so
$KO(X/G)/(L+1)$ has exponent $2^f$. As $WR(X)$ is a quotient
of $KO(X/G)/(L+1)$, by Lemma \ref{lem:L=-1}, $WR(X)$ also has exponent $2^f$.

The assertion for the co-Witt group is proven in the same way,
taking kernels instead of cokernels.
\end{proof}

Theorem \ref{WR:exponent} is also true in the relative case.
If $Y$ is a $G$-invariant subcomplex of $X$, we write
$WR(X,Y)$ for the cokernel of the map $KR(X,Y)\to KO_{G}(X,Y)$.
The relative co-Witt group $WR^{\,'}(X,Y)$ is defined similarly,
taking kernels instead of cokernels.

\begin{subvariant}
Suppose that that $G$ acts freely on $X-Y$, 
where $Y$ is a $G$-invariant subcomplex. 
Then $WR(X,Y)$ and $WR'(X,Y)$ are
2-primary torsion groups of exponent $2^f$, $f=\dim(X-Y)$.
\end{subvariant}

\begin{proof}
When $G$ acts freely on $X$, the formal argument of 
Theorem \ref{WR:exponent} goes through, for then 
$KO_G(X,Y)\cong KO(X/G,Y/G)$ and its given endomorphism is multiplication
by $(1+L)$. Next, suppose that $Y$ contains an open subset $U$ of $X$
which contains $X^G$; then $KO_G(X,Y)\cong KO_G(X-U,Y-U)$ by excision,
and the result follows from the free case. The general result now
follows from Lemma \ref{colim} below.
\end{proof}


\begin{sublem}\label{colim}
If $X$ is a $G$-space and $Y$ an invariant subspace, so that 
$G$ acts freely on $X-Y$, then
\[  
\varinjlim\nolimits_Z KO_G(X,Z) \map{\cong} KO_G(X,Y),
\]
where $Z$ runs over the set of $G$-invariant closed neighborhoods of $Y$.
\end{sublem}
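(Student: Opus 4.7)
The plan is to exhibit a cofinal subfamily of $G$-invariant closed neighborhoods $Z$ of $Y$ for which the inclusion $Y\hookrightarrow Z$ is a $G$-homotopy equivalence, and then reduce the statement to a five-lemma argument on the long exact sequences of the pairs $(X,Z)$ and $(X,Y)$. Throughout I treat $X$ as a finite $G$-CW complex with subcomplex $Y$, as is the case in the intended application (Real spaces of varieties over $\R$).

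First, I would verify the elementary formal setup: the collection $\mathcal{N}$ of $G$-invariant closed neighborhoods of $Y$, ordered by reverse inclusion, is directed (take finite intersections), and the restriction maps $KO_G(X,Z)\to KO_G(X,Z')$ for $Z\supseteq Z'$ together with the maps $KO_G(X,Z)\to KO_G(X,Y)$ make $\{KO_G(X,Z)\}_{Z\in\mathcal{N}}$ into a directed system equipped with a canonical map to $KO_G(X,Y)$.

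Next, I would produce a cofinal subfamily of ``good'' neighborhoods. Because $X^G\subseteq Y$, the open subcomplex $X\setminus Y$ is free, and the standard $G$-CW regular neighborhood construction applies: for each $n\ge 1$, shrinking the attaching data of the cells outside $Y$ by factor $1/n$ produces a $G$-invariant closed neighborhood $Z_n$ of $Y$, equipped with a $G$-equivariant strong deformation retraction $Z_n\to Y$ (built skeletally using the free action on cells outside $Y$). The family $\{Z_n\}$ is cofinal in $\mathcal{N}$, since every $G$-invariant neighborhood of $Y$ contains $Z_n$ for $n$ large enough by compactness of $X$.

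For any such good $Z$, I would compare long exact sequences in equivariant $KO$:
\[
\xymatrix@C=1.2em{
KO_G^{n-1}(Z)\ar[r]\ar[d]^{\cong} & KO_G^{n}(X,Z)\ar[r]\ar[d] & KO_G^{n}(X)\ar@{=}[d]\ar[r] & KO_G^{n}(Z)\ar[d]^{\cong}\ar[r] & KO_G^{n+1}(X,Z)\ar[d] \\
KO_G^{n-1}(Y)\ar[r] & KO_G^{n}(X,Y)\ar[r] & KO_G^{n}(X)\ar[r] & KO_G^{n}(Y)\ar[r] & KO_G^{n+1}(X,Y).
}
\]
The outer vertical maps are isomorphisms since $Y\hookrightarrow Z$ is a $G$-homotopy equivalence, so the five-lemma yields $KO_G(X,Z)\overset{\cong}{\to} KO_G(X,Y)$ for each good $Z$. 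Taking the colimit over the cofinal subfamily $\{Z_n\}$ therefore gives $KO_G(X,Y)$, and since colimits may be computed over a cofinal subsystem, the same holds for the colimit over all of $\mathcal{N}$.

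The main obstacle I expect is the construction of the equivariant deformation retract in the cofinal family: one must be careful that the retraction respects the $G$-action near the fixed-point set $X^G\subseteq Y$. This is precisely where the hypothesis that $G$ act freely on $X\setminus Y$ is used, since it guarantees that cells of $X\setminus Y$ come in free $G$-orbits and can be retracted equivariantly onto their attaching data in $Y$. Once this cofinal family is in hand, the rest of the argument is a formal diagram chase.
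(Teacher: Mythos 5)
Your proof is correct for finite $G$-CW pairs, which covers the application, but it takes a genuinely different route from the paper's. The paper defers the statement to Lemma II.4.22 (p.\,91) of \cite{MKbook}, i.e.\ the $KO_G$-version of a standard bundle-theoretic argument valid for arbitrary compact pairs: a class in $KO_G(X,Y)$ is represented by a pair of bundles with an isomorphism over $Y$, such an isomorphism extends to an isomorphism over some closed invariant neighborhood $Z$ (invertibility being an open condition), and this gives surjectivity of the colimit map, with injectivity argued similarly by extending homotopies of isomorphisms. Your argument is instead CW-theoretic: you build a cofinal family of good neighborhoods $Z$ that deformation retract $G$-equivariantly onto $Y$, and the five lemma then shows that each individual map $KO_G(X,Z)\to KO_G(X,Y)$ is already an isomorphism. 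Your approach therefore proves something slightly stronger where it applies, but it needs a finite $G$-CW structure, whereas the cited bundle argument is more general and proves exactly the colimit statement. One small remark: a cofinal family of $G$-equivariant closed neighborhood deformation retracts of a subcomplex exists for \emph{any} finite $G$-CW pair, with or without a free action on the complement, so the free-action hypothesis is not actually what makes your retractions equivariant; this is harmless for correctness, but the hypothesis is not playing the role you assign it.
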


\begin{proof}
This is the $KO_G$-version of a standard argument in topological
$K$-theory, given for example in Lemma II.4.22 (p.\,91) of \cite{MKbook}.
\end{proof}

The effect of Lemma \ref{colim} is to extend assertions about
$KO_G(X,Y)$ from the case where $G$ acts freely on $X$ to 
the case where $G$ acts freely on $X-Y$.

In order to extend Theorem \ref{WR:exponent} to the case when 
$G$ is not acting freely, we begin with some general remarks which
hold in any Hermitian category $\cE$. Consider the involution 
$\gamma:(E,\varphi)\mapsto(E,-\varphi)$ on ${\mGW_0}(\cE)$.
If $K_0(\cE)$ has the trivial involution, the hyperbolic map 
$K_0(\cE)\to {\mGW_0}(\cE)$ is equivariant, so 
$\gamma$ acts on the Witt group $W(\cE)$.

When $\cE_X$ is the category of Real vector bundles on $X$, 
and $G$ acts freely on $X$, we know from 
Lemma \ref{lem:L=-1} that the involution is multiplication by the 
canonical element $\gamma$ in $GR(X)$.

\begin{lemma}\label{W-GW.map} 
For any Hermitian category $\cE$, $\gamma$ acts as
multiplication by $-1$ on $W(\cE)$, and $1-\gamma$ induces a
functorial map 
\[
f_{\cE}:W(\cE)\to {\mGW_0}(\cE)
\]
whose composition with the projection onto $W(\cE)$
is multiplication by 2 on $W(\cE)$. Moreover, 
$\gamma f_{\cE}(x)=f_{\cE}(\gamma x)=-f_{\cE}(x)$ for all $x\in W(\cE)$.
\end{lemma}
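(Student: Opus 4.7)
The plan is to reduce everything to one Lagrangian computation, after which the rest is formal manipulation of the involution $\gamma$.

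First I would verify that $\gamma$ acts as $-1$ on $W(\cE)$. Given any symmetric form $(E,\varphi)$, consider the form $(E,\varphi)\perp(E,-\varphi)$ on $E\oplus E$ and the diagonal subobject $\Delta=\{(e,e)\}$. A direct computation shows $(\varphi\oplus(-\varphi))((e,e),(e',e'))=\varphi(e,e')-\varphi(e,e')=0$, so $\Delta$ is isotropic, and non-degeneracy of $\varphi$ forces $\Delta^{\perp}=\Delta$. Hence $(E,\varphi)\perp(E,-\varphi)=H(\Delta)$ in $\mGW_0(\cE)$, which vanishes in $W(\cE)$. Therefore $[E,\varphi]+\gamma[E,\varphi]=0$ in $W(\cE)$, i.e.\ $\gamma=-1$.

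Next I would show that $1-\gamma\colon \mGW_0(\cE)\to\mGW_0(\cE)$ kills the image of the hyperbolic map, so that it descends to a functorial map $f_\cE\colon W(\cE)\to \mGW_0(\cE)$. For this it suffices to exhibit an isomorphism $H(E)\cong -H(E)$ in the category of symmetric forms. The map $(e,\phi)\mapsto(e,-\phi)$ on $E\oplus E^{*}$ does the job: it carries the hyperbolic form $h$ to $-h$. Thus $\gamma[H(E)]=[H(E)]$ in $\mGW_0(\cE)$, and $(1-\gamma)$ factors through the quotient $\mGW_0(\cE)\twoheadrightarrow W(\cE)$. Functoriality in $\cE$ is automatic because $\gamma$ is defined intrinsically by negating the form.

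Then I would compute the composition $W(\cE)\smash{\overset{f_\cE}\longrightarrow} \mGW_0(\cE)\twoheadrightarrow W(\cE)$: for $x=[E,\varphi]\in W(\cE)$ with a chosen lift $\tilde x$ to $\mGW_0(\cE)$, the image $(1-\gamma)\tilde x$ in $W(\cE)$ equals $x-\gamma x=x-(-x)=2x$, using the first paragraph. So the composition is multiplication by $2$.

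Finally, the last assertion follows from $\gamma^{2}=1$: on lifts to $\mGW_0(\cE)$ we have $\gamma f_\cE=\gamma(1-\gamma)=\gamma-1=-(1-\gamma)=-f_\cE$, and $f_\cE(\gamma x)=f_\cE(-x)=-f_\cE(x)$ by the first paragraph together with $\Z$-linearity of $f_\cE$. The only real content of the whole lemma is the Lagrangian-diagonal argument of the first paragraph; I expect this to be the main (and only genuine) step, and everything else should be formal bookkeeping.
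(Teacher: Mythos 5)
Your proof is correct and follows essentially the same approach as the paper's: identify the diagonal as a Lagrangian in $(E,\varphi)\perp(E,-\varphi)$ to get $\gamma=-1$ on $W(\cE)$, show $1-\gamma$ vanishes on hyperbolic forms so that it descends to $f_\cE$, and then compute the composite as $x-\gamma x=2x$. Your write-up is actually slightly more detailed in two places the paper leaves implicit — the explicit isometry $(e,\phi)\mapsto(e,-\phi)$ carrying $H(E)$ to $-H(E)$, and the verification of the final assertion via $\gamma^2=1$ — but these are the natural fillings-in of the same argument, not a different route.
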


\begin{proof}
Since $(E,q)\oplus (E,-q)$ is a metabolic form (the diagonal copy of $E$ is
a Lagrangian), $[E,\varphi ]+[E,-\varphi ]=0$ in $W(\cE)$.
Therefore, $\gamma $ acts on the quotient $W(\cE)$ as multiplication
by $-1$.

Consider the endomorphism of ${\mGW_0}(\cE)$ sending the class of an
Hermitian module $(E,\varphi)$ to the formal difference $(E,\varphi)-(E,-%
\varphi).$ It sends the class of a hyperbolic form to zero, so it induces a
map $f_{\cE}$, as claimed. For $x\in W(\cE)$, the image of $%
f_{\cE}(x)$ in $W(\cE)$ is $x-\gamma(x)=2x$.
\end{proof}

Let $GW^{-}(\cE)$ denote the antisymmetric
part of ${\mGW_0}(\cE)$ under the involution $\gamma$. 
Lemma \ref{W-GW.map} states that the image of the map $f_{\cE}$ 
lies in $GW^{-}(\cE)$, and if $\bar{x}\in W(\cE)$ is the image of
$x\in  GW^{-}(\cE)$ then $f_{\cE}(\bar{x})=(1-\gamma)x=2x$. This proves:

\begin{subcor}\label{GW-}
The induced map
$f_{\cE}:W(\cE) \to GW^{-}(\cE)$ and 
the projection $GW^{-}(\cE)\to W(\cE)$ 
have kernels and cokernels which have exponent $2$.
In particular, we have an isomorphism 
\[
GW^{-}(\cE)\otimes \Z[1/2]\cong W(\cE)\otimes \Z[1/2].
\]
\end{subcor}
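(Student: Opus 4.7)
The plan is to construct matching maps between $W(\cE)$ and $GW^{-}(\cE)$, then check that each composite is multiplication by~$2$. Lemma~\ref{W-GW.map} already provides the map $f_{\cE}\colon W(\cE)\to GW^{-}(\cE)$ (the image lands in the antisymmetric part by the formula $\gamma f_{\cE}(x)=-f_{\cE}(x)$), and the restriction of the canonical surjection $GW_0(\cE)\to W(\cE)$ to $GW^{-}(\cE)$ supplies a map $\pi\colon GW^{-}(\cE)\to W(\cE)$ in the other direction. The last sentence of Lemma~\ref{W-GW.map} records $\pi\circ f_{\cE}=2\cdot\id_{W(\cE)}$.

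For the other composite, I would argue as follows. The map $f_{\cE}$ is induced by the endomorphism $y\mapsto y-\gamma y$ of $GW_0(\cE)$; given $x\in GW^{-}(\cE)$, the element $x$ itself lifts $\pi(x)\in W(\cE)$, and since $\gamma$ acts as $-1$ on $GW^{-}(\cE)$ by definition, one computes $f_{\cE}(\pi(x))=x-\gamma x=2x$ in $GW^{-}(\cE)$. Hence $f_{\cE}\circ\pi=2\cdot\id_{GW^{-}(\cE)}$ as well.

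From the two identities $\pi\circ f_{\cE}=2\cdot\id$ and $f_{\cE}\circ\pi=2\cdot\id$, the kernel and cokernel statements become formal. If $f_{\cE}(y)=0$ then $2y=\pi f_{\cE}(y)=0$, so $\ker(f_{\cE})$ has exponent~$2$, and symmetrically for $\ker(\pi)$; the identity $f_{\cE}\circ\pi=2\cdot\id_{GW^{-}(\cE)}$ shows $2\cdot GW^{-}(\cE)$ sits in the image of $f_{\cE}$, so $\coker(f_{\cE})$ has exponent~$2$, and symmetrically for $\coker(\pi)$. Tensoring with $\Z[1/2]$ inverts~$2$, killing these $2$-torsion kernels and cokernels and producing the asserted isomorphism $GW^{-}(\cE)\otimes\Z[1/2]\cong W(\cE)\otimes\Z[1/2]$. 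No real obstacle arises beyond keeping track of where $\gamma=-1$ holds (on $W(\cE)$ by Lemma~\ref{W-GW.map}, and on $GW^{-}(\cE)$ by the definition of the antisymmetric part).
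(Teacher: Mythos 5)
Your proposal is correct and matches the paper's argument: the paper likewise establishes $\pi\circ f_{\cE}=2\cdot\id$ from Lemma~\ref{W-GW.map} and $f_{\cE}\circ\pi=2\cdot\id$ by noting $f_{\cE}(\bar{x})=(1-\gamma)x=2x$ for $x\in GW^{-}(\cE)$, then reads off the exponent-$2$ statements and the isomorphism after inverting~$2$. The only cosmetic difference is that you spell out the formal kernel/cokernel deductions that the paper leaves implicit.
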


Combined with Example \ref{WR(X^G)}, our next result shows that $WR(X)$
differs from $KO(X^G)\cong WR(X^G)$ by at most 2-primary torsion.

\begin{theorem}\label{WR-KO.exponents} 
For any Real space $X$, the kernel and cokernel of the restriction
\[
WR(X)\overset{\gamma }{\to }WR(X^{G})=KO(X^{G})
\]
are 2-primary groups of exponent 
$2^{1+f}$, where $f=f(d)$ and $d=\dim(X - X^G)$.
If $X^G$ is a retract of $X$, the exponent is $2^f$.

The same statement is true for the restriction map of co-Witt groups,
$WR{\,}'(X)\to WR{\,}'(X^G)=KO(X^G)$.
\end{theorem}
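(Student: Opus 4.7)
The plan is to reduce to the free-action case already handled by the variant of Theorem \ref{WR:exponent}, via the pair $(X, X^G)$. Since $G$ acts freely on the complement $X - X^G$, the variant gives exponent $2^f$ on the relative groups $WR^n(X, X^G)$ for all $n$ (the key identity $(1+L)^f = 2^f$ in $KO^*(X/G)$ from the proof of Theorem \ref{WR:exponent} being degree-independent).

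I would work at the spectrum level: let $\mathbb{C}$ denote the cofiber of the hyperbolic map $\mathbb{KR}(X) \to \mathbb{GR}(X)$, whose homotopy groups form a cohomology theory in which $WR^n$ appears as the cokernel in a natural short exact sequence
\[
0 \to WR^n(Y) \to \pi_{-n}\mathbb{C}(Y) \to \ker\bigl(H: KR^{n+1}(Y) \to GR^{n+1}(Y)\bigr) \to 0.
\]
On $Y = X^G$ (trivial $G$-action), Example \ref{WR(X^G)}(a) shows $H$ is the diagonal $KO \hookrightarrow KO \oplus KO$, so $\ker H = 0$ and $\pi_{-n}\mathbb{C}(X^G) = KO^{-n}(X^G)$; consequently $\gamma: WR(X) \to KO(X^G)$ is recovered as the restriction of the middle map in the pair's long exact sequence
\[
\cdots \to KO^{-1}(X^G) \to \pi_0\mathbb{C}(X, X^G) \to \pi_0\mathbb{C}(X) \to KO(X^G) \to \pi_{-1}\mathbb{C}(X, X^G) \to \cdots
\]
to the subgroup $WR(X) \subseteq \pi_0\mathbb{C}(X)$.

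Using the variant of Theorem \ref{WR:exponent} to bound $WR^n(X, X^G)$ by $2^f$, together with an exponent-$2$ refinement of the $\ker H$-term coming from the canonical involution of Lemma \ref{W-GW.map} and Example \ref{ex:canonical}, the displayed extension shows that $\pi_{-n}\mathbb{C}(X, X^G)$ has exponent at most $2^{1+f}$. The long exact sequence then gives the desired bound on both $\ker\gamma$ (a quotient of $\pi_0\mathbb{C}(X, X^G)$) and $\coker\gamma$ (a subgroup of $\pi_{-1}\mathbb{C}(X, X^G)$).

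In the retract case, a $G$-equivariant retraction $X \to X^G$ splits $\gamma$, so $\coker\gamma = 0$ and $\ker\gamma$ is a direct summand of $WR(X)$ identified with the image of $WR(X, X^G) \to WR(X)$; the splitting eliminates the extension contribution and yields the sharper bound $2^f$. The co-Witt version follows by the dual argument applied to the fiber of the forgetful map $F: \mathbb{GR}(X) \to \mathbb{KR}(X)$, using the co-Witt part of the variant. The main technical obstacle will be justifying the exponent-$2$ refinement on $\ker H$ that cuts the naive extension bound $2^{2f}$ down to $2^{f+1}$; I expect this comes from the fact that the duality involution on $KR(X, X^G)$ becomes trivial after restriction to $X^G$, forcing the relevant $\ker H$ contribution in the cofiber sequence to be $2$-torsion.
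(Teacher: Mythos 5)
Your overall strategy---reduce to the free-action case via the pair $(X,X^G)$ and the Variant of Theorem \ref{WR:exponent}---matches the paper's, and your handling of the retract case and the co-Witt case is sound. But the core mechanism is genuinely different: you pass to the spectrum-level cofiber of $H:\mathbb{KR}\to\mathbb{GR}$ and run the long exact sequence of the pair for its homotopy groups $\pi_*$ (which are the groups $UR_*$), whereas the paper's proof stays at the group level and uses the homomorphism $f_X:WR(X)\to GR(X)$ from Lemma \ref{W-GW.map}, whose composite with $GR(X)\twoheadrightarrow WR(X)$ is multiplication by $2$, to lift the relevant $WR$-class into $GR$-theory---an honest cohomology theory where exactness of the pair sequence is available---so that the cofiber never appears. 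This difference is not cosmetic: your route forces you to bound $\pi_{-n}$ of the cofiber over $(X,X^G)$, which extends $WR^n(X,X^G)$ by $\ker\bigl(H:KR^{n+1}(X,X^G)\to GR^{n+1}(X,X^G)\bigr)$, and that second term is not under control.

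In fact the ``exponent-$2$ refinement of the $\ker H$-term'' your argument requires is false, already for free actions. Take $X=G$ (two swapped points, $X^G=\emptyset$): then $KR^*(G)=KU^*(\pt)$, $GR^*(G)=KO^*(\pt)$, $H$ is realification, and $\ker\bigl(H:KR^{2}(G)\to GR^{2}(G)\bigr)=\ker(\Z\to 0)=\Z$. Consequently $\pi_{-1}$ of your cofiber on $(X,X^G)=(G,\emptyset)$ is $UR_{-2}(G)\cong\Z$, torsion-free, so no bound $2^{1+f}$ can hold for the cofiber groups. The heuristic you give---that the duality involution becomes trivial on $X^G$---only controls the image of $\ker H$ under restriction to $X^G$, not $\ker H$ on the pair itself. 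There is also a secondary error: $\coker\gamma$ is not a subgroup of $\pi_{-1}$ of the cofiber on $(X,X^G)$; the snake lemma applied to the map of short exact sequences $0\to WR^0\to\pi_0\to\ker H\to 0$ over $X$ and over $X^G$ exhibits $\coker\gamma$ as an extension of such a subgroup by a quotient of $\ker\bigl(H:KR^{1}(X)\to GR^{1}(X)\bigr)$. The fix is precisely the paper's: for $a\in\ker\gamma$, the element $f_X(a)\in GR(X)$ restricts to $f_{X^G}(\gamma a)=0$ on $X^G$, hence lifts to $GR(X,X^G)$; its image $2a$ in $WR(X)$ therefore lies in the image of $WR(X,X^G)$, which has exponent $2^f$, giving $2^{1+f}a=0$, with the cokernel bound proved dually. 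No reference to $\ker H$ or to the cofiber spectrum is ever needed.
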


\begin{proof}
The group $G$ acts freely on $X-X^{G}$.
We have a commutative diagram whose rows are exact by 
Definition \ref{def:GR} and whose left two columns are exact by excision:
\begin{equation*}
\begin{array}{ccccccc}
KR(X,X^G) & \to & GR(X,X^G) & \to & WR(X,X^G) & \to & 0 \\ 
\downarrow &  & \downarrow &  & \downarrow &  &  \\ 
KR(X) & \to & GR(X) & \to & WR(X) & \to & 0 \\ 
\downarrow &  & \downarrow &  & \downarrow\gamma &  &  \\ 
KR(X^{G}) & \to & GR(X^{G}) & \to & WR(X^{G}) & \to
& 0 \\ 
\downarrow &  & \downarrow &  & \downarrow &  &  \\ 
KR^{1}(X,X^G) & \to & GR^{1}(X,X^G) & \to & WR^{1}(X,X^G) & \to & 0.%
\end{array}%
\end{equation*}
Suppose that $a\in WR(X)$ is such that $\gamma(a)=0$. If
$f_X$ denotes the map $WR(X)\to GR(X)$ defined in Lemma \ref{W-GW.map},
$f_{X}(a)$ is an element of $GR(X)$ whose image in $WR(X)$ is $2a$,
and whose image in $GR(X^{G})$ is $f_{X^G}(\gamma a)=0$. 
Thus $f_X(a)$ comes from $GR(X,X^G)$ and 
$2a$ is in the image of $WR(X,X^G)$; since 
$2^{f}\cdot WR(X,X^G)=0$ by Theorem \ref{WR:exponent}, $2^{1+f}a=0.$

If $X^G$ is a retract of $X$, we have a split short exact sequence
\[
0 \to WR(X,X^G)\to WR(X) \to WR(X^G) \to 0.
\]
Since $2^f\cdot WR(X,X^G)=0$, the conclusion of the theorem is obvious.

The proof is analogous for the cokernel of $\gamma$. Let $\bar{a}$ denote
the image of an element $a\in WR(X^{G})$ in $\coker(\gamma)$. 
By Theorem \ref{WR:exponent}, ${WR}^{1}(X,X^G)$ is also a group of exponent
$2^{f}$, so $2^{f}\cdot f_{X^{G}}(a)$ vanishes in $GR^{1}(X,X^G)$ and
hence comes from an element $b$ of $GR(X)$. Since the image of $b$ in 
$WR(X^{G})$ is $2^{f}\cdot2a$, $2^{1+f}\bar{a}=0$ in $\coker(\gamma)$.

Finally, the assertions for the co-Witt groups may be proven in the
same way, by reversing the arrows in the above diagram and using
kernels in place of cokernels.
\end{proof}

\begin{rem}\label{rem:Wn-WRn} 
The previous considerations are also valid for the Witt
groups ${}_{\eps}WR_n(X)$ with $n\in\Z$ and $\eps=\pm1$,
since these groups are modules over the ring $WR(X)={}_{+1}WR(X)$.
For example, if $G$ acts freely on $X$ then $2^f=0$ in $WR(X)$
by Theorem \ref{WR:exponent} so the groups ${}_{\eps}WR_n(X)$
all have exponent $2^f$.
The analogues of Theorem \ref{WR-KO.exponents}, concerning the map
$WR_n(X)\to WR_n(X^G)\cong KO_n(X^G)$,
are left to the reader.
\end{rem}


%
%
Brumfiel's theorem \ref{thm:Brumfiel} allows us to compare $W$ and $WR$.

\begin{theorem}
\label{W=WR.mod2tors} 
If $V$ is any variety over $\R$, with associated
Real space $X$, the comparison map $W(V)\to WR(X)$ is an isomorphism
modulo 2-primary torsion, with finite cokernel. 

The same is true for the maps $W_{n}(V)\to WR_{n}(X)$ for all $n\in\Z$.
\end{theorem}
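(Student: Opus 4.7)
The plan is to factor Brumfiel's signature map as the composite $W(V) \map{\theta} WR(X) \map{\gamma} WR(X^G) = KO(V_\R)$, using the identification of Example \ref{WR(X^G)}(a), and to combine Theorem \ref{thm:Brumfiel} (which controls the composite $\gamma\theta$) with Theorem \ref{WR-KO.exponents} (which controls $\gamma$) to deduce the desired statement about $\theta$.

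The deduction is elementary diagram chasing: $\ker\theta \subseteq \ker(\gamma\theta)$ is 2-primary torsion; and $\gamma$ induces $\bar\gamma\colon \coker\theta \to \coker(\gamma\theta)$ whose kernel is a quotient of $\ker\gamma$ and whose image sits inside $\coker(\gamma\theta)$, exhibiting $\coker\theta$ as an extension of 2-primary groups, hence itself 2-primary torsion. Finiteness of $\coker\theta$ will then follow because $WR(X)$ is finitely generated by Corollary \ref{GR.fingen}, and any finitely generated 2-primary torsion group is finite. The statement for the higher maps $W_n(V) \to WR_n(X)$ will be proved identically, substituting the higher signature version in Theorem \ref{thm:Brumfiel}, the $W_n$-analogue of Theorem \ref{WR-KO.exponents} indicated in Remark \ref{rem:Wn-WRn}, and the 8-periodic finite generation of $WR_n(X)$ from Corollary \ref{GR.fingen}.

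The one point requiring genuine care --- and really the only substantive obstacle in an otherwise formal argument --- is verifying that the composite $W(V) \to WR(X) \to KO(X^G)$ agrees with Brumfiel's signature map, so that Theorem \ref{thm:Brumfiel} applies to it. This is a matter of unwinding definitions: an algebraic symmetric form on $V$ yields a Real form on $V_\C$, and its restriction to the trivially-involuted fixed locus $V_\R = X^G$ recovers, via the splitting $KO_G(V_\R) \cong KO(V_\R) \oplus KO(V_\R)$ and the projection used in Example \ref{WR(X^G)}(a), the fiberwise signature data used by Brumfiel. Beyond this compatibility check, the proof is a clean consequence of the machinery assembled in Sections \ref{sec:Brumfiel} and \ref{sec:WR}.
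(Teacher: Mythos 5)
Your proposal is correct and follows essentially the same route as the paper: factor Brumfiel's signature map as $W(V)\to WR(X)\to WR(X^G)\cong KO(V_\R)$, apply Theorem \ref{thm:Brumfiel} to the composite and Theorem \ref{WR-KO.exponents} to the second map, and conclude by the snake-lemma comparison, with finiteness following from Corollary \ref{GR.fingen} and the higher case handled by Remark \ref{rem:Wn-WRn}. The paper leaves the cokernel diagram chase and the compatibility of the composite with Brumfiel's map more implicit than you do, but the substance is identical.
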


\begin{proof}
The composition of $W(V)\to WR(X)$ with the restriction map 
$WR(X)\map{\gamma}WR(X^{G})$ and the isomorphism $WR(X^G)\cong KO(X^G)$
of Example \ref{WR(X^G)}(a) is a map 
\begin{equation*}
W(V)\to WR(X)\map{\gamma}WR(X^{G})\cong KO(V_{\R}).
\end{equation*}
It is induced by the functor which associates to an algebraic vector bundle
on $V$ its underlying topological real bundle over the space 
$V_{\R}=X^G$. The composition $W(V)\to KO(V_{\R})$ is an
isomorphism modulo 2-primary torsion by Brumfiel's Theorem \ref{thm:Brumfiel}.
Hence the kernel of $W(V)\to WR(X)$ is a 2-primary torsion group.

Since $X$ has the homotopy type of a finite $G$-CW complex, 
the kernel and cokernel of
the map $WR(X)\to WR(X^{G})$ are 2-groups of bounded exponent by
Theorem \ref{WR-KO.exponents}. Hence the cokernel of $W(V)\to WR(X)$
is also a 2-primary torsion group, as claimed. Since $WR(V)$ is 
finitely generated, the cokernel is in fact a finite group.

The result for $W_{n}(V)\to WR_{n}(X)$ follows by the same argument, 
using Remark \ref{rem:Wn-WRn}.
\end{proof}

By Lemma \ref{2period}, ${}_{\eps}W_n(V)\ \map{u_2}\ {}_{-\eps}W_{n+2}$
is an isomorphism modulo $2$-torsion. The same is true for
${}_{\eps}WR_n(V)\ \map{u_2}\ {}_{-\eps}WR_{n+2}$.
Therefore 
Theorem \ref{W=WR.mod2tors} extends to the skew-symmetric case:

\begin{corollary}\label{-W=-WR}
If $V$ is any variety over $\R$, the maps
${}_{-1}W_{n}(V)\to {}_{-1}WR_{n}(V)$ are isomorphisms modulo
2-primary torsion for all $n\in\Z$.
\end{corollary}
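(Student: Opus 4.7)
My plan is to deduce the skew-symmetric statement from the symmetric statement (Theorem \ref{W=WR.mod2tors}) by exploiting the 2-fold periodicity that exchanges symmetric and skew-symmetric Witt theory. The key input is Lemma \ref{2period}, which provides a cup-product map $u_2$ inducing isomorphisms ${}_{\eps}W_n(V) \xrightarrow{\ u_2\ } {}_{-\eps}W_{n+2}(V)$ modulo $2$-torsion on both sides. Since this cup-product is defined universally in Hermitian $K$-theory (via a suitable Bott-type element in ${}_{-1}W_2$ of the base), the identical construction yields a map ${}_{\eps}WR_n(X) \xrightarrow{\ u_2\ } {}_{-\eps}WR_{n+2}(X)$ which is also an isomorphism modulo $2$-torsion; indeed, the entire apparatus of Lemma \ref{2period} transports verbatim to the Hermitian category $\cE_X$ of Real vector bundles.

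With $u_2$ in hand, the proof would proceed by contemplating the square
\[
\xymatrix{
{}_{-1}W_n(V) \ar[r]^{u_2} \ar[d] & W_{n+2}(V) \ar[d] \\
{}_{-1}WR_n(V) \ar[r]^{u_2} & WR_{n+2}(V)
}
\]
The comparison map $W \to WR$ is induced by a Hermitian functor from algebraic vector bundles on $V$ to Real bundles on $V_\C$ that respects duality and tensor product, so it commutes with cup-product with the universal class used to define $u_2$; hence the square commutes. The two horizontal arrows are isomorphisms modulo $2$-torsion by Lemma \ref{2period} (and its $WR$ analogue), and the right vertical arrow is an isomorphism modulo $2$-primary torsion by Theorem \ref{W=WR.mod2tors}. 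A routine diagram chase on the induced maps after tensoring with $\Z[1/2]$ (where all three become genuine isomorphisms) then forces the left vertical map to be an isomorphism modulo $2$-primary torsion, which is the desired statement.

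The main obstacle will be the naturality step: one has to know that the cup-product implementing $u_2$ is compatible with the Hermitian functor $\cE_V \to \cE_{V_\C}$. This amounts to checking that the Bott-type element defining $u_2$ in the algebraic setting maps to the corresponding element in the Real Hermitian $K$-theory of a point under $W(\R) \to WR(\pt)$, which is the isomorphism $\Z \cong \Z$ of Section \ref{sec:GR-WR}. Once this is verified, the rest of the argument is formal. An alternative and perhaps cleaner route would be to extract the statement as a special case of Theorem \ref{GW=GR{-1}} combined with the exact sequence \eqref{seq:GW[i]}, bypassing periodicity entirely; but the periodicity argument has the advantage of reducing the skew case directly to the already-proven symmetric case with no extra work.
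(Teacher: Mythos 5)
Your argument is essentially the paper's: the paper's justification for Corollary~\ref{-W=-WR} is precisely the observation, stated just before the corollary, that Lemma~\ref{2period} makes $u_2\colon {}_{\eps}W_n \to {}_{-\eps}W_{n+2}$ an isomorphism modulo $2$-torsion on both the algebraic and Real sides, so Theorem~\ref{W=WR.mod2tors} transfers. The naturality point you flag is already covered by the fact, recorded at the end of Section~\ref{sec:GR-WR}, that $\kGW^{[*]}_*(V)\to GR^{[*]}_*(V_\C)$ is a homomorphism of bigraded rings.
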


\goodbreak
\bigskip 
\section{Exponents for $W(V)$}
\label{sec:exponents}

We now come back to our setting of an algebraic variety $V$ defined over $%
\R$. By Theorem \ref{W=WR.mod2tors},
we know that for all $n$ the kernel of the map%
\[
\theta_n: W_{n}(V) \map{} WR_{n}(V)
\]
is a 2-primary torsion group, and the cokernel is a finite 2-group.

Unfortunately, $W(V)=W_0(V)$ is not finitely generated in general;
there are complex 3-folds for which $W(V)/2$ is not finite.
Parimala pointed out in \cite{Parimala} that for a smooth 
3-fold $V$, the Witt group $W(V)$ is finitely generated if and 
only if $CH^{2}(V)/2$ is finite. (Parimala assumed that $V$ was affine,
but this assumption was removed by Totaro \cite{Totaro-Witt}.)
Based upon the work of Schoen \cite{Schoen}, 
Totaro \cite{Totaro} has recently shown that the group $CH^2(V)/2$ is 
infinite for very general abelian 3-folds $V$ over $\C$; hence
$W(V)=W(V)/2$ is infinite. If we regard $V$ as being 
defined over $\R$ by restriction of scalars then the map
$W(V)\to WR(V)$ is defined and its kernel is generally infinitely generated.
If $V_0=\Spec(A)$ is an affine open subvariety of $V$, 
$CH^2(V_0)/2$ will also be infinite, so 
$W(A)$ is not finitely generated either.

Our next result shows that the (possibly infinite) kernel of $\theta_n$
has a bounded exponent. It will be used to give more precise bounds in
Theorem \ref{W-WR.exponent}.
In addition to Theorem \ref{W=WR.mod2tors}, our
proof uses the main results in \cite{BKOS} and a variant of 
Bott periodicity, given in Appendix \ref{app:Bott}, which was 
originally proved in \cite{MKAnnalsH}.
A similar result (with a different proof) has been given by 
Jacobson in \cite{Jeremy}.

\begin{proposition}\label{exp.2^N}
For all $n\ge0$ and $d$ there exists an integer $N$
such that, for every $d$-dimensional algebraic variety $V$ over $\R$,
the kernel of the map 
$W_{n}(V)\overset{\theta_n}{\to}WR_{n}(V)$ is killed by $2^{N}.$

The same is true for the kernel of ${}_{-1}W_n(V)\to {}_{-1}WR_n(V)$.
\end{proposition}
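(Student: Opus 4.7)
The plan is to combine, in a stable range $n \ge n_0(d)$, the $2$-adic equivalences arising from [BKOS] (Theorem \ref{Williams}) and Theorem \ref{Williams-GR}, with a Bott-periodicity descent from Appendix \ref{app:Bott} to handle all $n \ge 0$.

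\textbf{Stable range.} For $n$ sufficiently large (depending only on $d$), I would establish that $\ker(\theta_n) = 0$. By Theorems \ref{Williams} and \ref{Williams-GR}, the comparison spectrum $\GW^c(V)$ agrees, after $2$-completion, with $\bbK^c(V)^{hG}$. By Corollary \ref{Kc-u.d.}, $\pi_n \bbK^c(V)$ is a $\Q$-vector space for $n \ge d-2$, so the $C_{2}$-homotopy fixed-point spectral sequence collapses in this range (group cohomology of $C_2$ with rational coefficients is concentrated in degree zero), making $\pi_n(\bbK^c(V)^{hG})$ uniquely $2$-divisible. Similarly Corollary \ref{U=UR} gives that $U^c_n(V)$ is uniquely $2$-divisible for $n \ge d-2$. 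A careful chase in the commutative diagram with exact rows
\begin{equation*}
\xymatrix{
0 \ar[r] & W_n(V) \ar[r] \ar[d]_{\theta_n} & U_{n-1}(V) \ar[r] \ar[d] & K_{n-1}(V) \ar[d] \\
0 \ar[r] & WR_n(V) \ar[r] & UR_{n-1}(V) \ar[r] & KR_{n-1}(V)
}
\end{equation*}
coming from \eqref{seq:U-K-GW} and \eqref{eq:UR-KR-GR}, using the fibration sequences relating $\mathbb{U},\mathbb{UR},\mathbb{U}^c$ and $\bbK,\mathbb{KR},\bbK^c$, would show that $\ker(\theta_n)$ injects into a subquotient of $U^c_{n-1}(V)$ that inherits unique $2$-divisibility in the stable range. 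Since uniquely $2$-divisible groups contain no nonzero $2$-torsion and Theorem \ref{W=WR.mod2tors} forces $\ker(\theta_n)$ to be $2$-primary torsion, this gives $\ker(\theta_n)=0$ for $n \ge n_0(d)$.

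\textbf{Bott descent.} For $0 \le n < n_0(d)$, I would use the Bott element $\beta \in GW_8(\R)$ constructed in Appendix \ref{app:Bott}, whose image in $WR_8(\R)$ is invertible. For $k$ large enough that $n+8k \ge n_0(d)$, consider the commutative square
\begin{equation*}
\xymatrix{
W_n(V) \ar[r]^{\beta^k \cdot} \ar[d]_{\theta_n} & W_{n+8k}(V) \ar[d]^{\theta_{n+8k}} \\
WR_n(V) \ar[r]^{\beta^k \cdot}_{\cong} & WR_{n+8k}(V).
}
\end{equation*}
The bottom arrow is an isomorphism (8-periodicity of $WR_*$, Corollary \ref{GR.fingen}) and $\theta_{n+8k}$ is injective by the stable range. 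Hence $\ker(\theta_n) \subseteq \ker(\beta^k \cdot : W_n(V) \to W_{n+8k}(V))$, and by the Bott-periodicity results in Appendix \ref{app:Bott}, this latter kernel has $2$-primary exponent bounded by some $2^{M(k,d)}$ uniformly in $V$. The skew-symmetric case follows by the identical argument, replacing Theorem \ref{GW=GR} with Theorem \ref{GW=GR{-1}}, and using the skew-symmetric analogue of $\beta$ together with the 8-periodicity in Corollary \ref{cor:8periodic}.

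The principal obstacle is the uniform (in $V$) bound on $\ker(\beta^k \cdot)$ across all $d$-dimensional varieties; this is precisely the purpose of the Bott-element technology developed in Appendix \ref{app:Bott}, ultimately going back to \cite{MKAnnalsH}. A secondary subtlety lies in the stable-range diagram chase, where one must ensure that the comparison chase produces an honest injection into a uniquely $2$-divisible group rather than merely into a $2$-divisible one; this is arranged by working one step down in the long exact sequences so that one lands in $U^c_{n-1}(V)$ itself rather than in a quotient.
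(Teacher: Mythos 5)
Your overall strategy (establish a bound in a stable range $n\ge n_0(d)$, then descend to small $n$ via Bott periodicity) is in the same spirit as the paper's proof, but both halves as written contain gaps.

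\textbf{Stable range.} Your chase puts $\ker(\theta_n)$ inside $W_n(V)\cap\ker(U_{n-1}(V)\to UR_{n-1}(V))$, and the latter is the \emph{image} of $U^c_{n-1}(V)\to U_{n-1}(V)$. That image is a quotient of the uniquely $2$-divisible group $U^c_{n-1}(V)$, hence $2$-divisible, but a quotient of a uniquely $2$-divisible group can have arbitrarily large $2$-torsion (e.g.\ $\Z[1/2]/\Z$). You flag this yourself as a ``secondary subtlety'' to be fixed by ``working one step down so that one lands in $U^c_{n-1}(V)$ itself,'' but there is no such injection: the natural maps go the wrong way. Moreover, the conclusion $\ker(\theta_n)=0$ in the stable range is strictly stronger than what the paper proves (exponent $8$ in Proposition \ref{exp.2^N}, refined to exponent $2$ in Proposition \ref{WR/W.2-group}); since the cokernel of $\theta_n$ is genuinely nonzero in the stable range already for $V=\Spec\R$ (Theorem \ref{Wn(R)}), one should be suspicious of a purely formal argument that would make the kernel vanish. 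The paper instead passes through the $\gamma$-antisymmetric subgroups $GW^-_n$ and $GR^-_n$: by Lemma \ref{W-GW.map} and Corollary \ref{GW-} these differ from $W_n$ and $WR_n$ only by exponent $2$, and then the key point is that the antisymmetric part $GW^{c,-}_n(V)$ is simultaneously a $2$-primary torsion group (by Theorem \ref{W=WR.mod2tors} and the long exact sequence of antisymmetric parts) and $2$-torsion-free (by Theorem \ref{GW=GR} for $n\ge d-2$), hence zero, giving exponent $8$ for $\ker(\theta_n)$ and $\coker(\theta_n)$.

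\textbf{Bott descent.} There is no algebraic element $\beta\in GW_8(\R)$ (or $GW_8(\Z[\tfrac12])$) whose image acts invertibly on $WR_*$. Appendix \ref{app:Bott} produces $u_{8}$ whose image in $W^{\topl}_8(\R)\cong\Z$ is $2$ times a generator, and a separate element $u_{-8}$ whose image is $16$ times a generator; the product $u_8u_{-8}$ is multiplication by $32$, not an isomorphism (Corollary \ref{period.correct}). So the bottom arrow of your square is not an isomorphism and one cannot conclude $\ker(\theta_n)\subseteq\ker(\beta^k\cdot)$. The correct diagram chase, as in the paper, uses \emph{both} directions: the map $u_{n}\colon W_0\to W_{n}$ and $u_{-n}\colon W_{n}\to W_0$ commuting with $\theta$, whose composition is multiplication by $2\cdot 16^m$; combined with exponent~$8$ (or $2$) in the stable range, this gives the explicit bound $8\cdot 16^m$. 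Once you replace the single-element argument with the two-sided $u_n, u_{-n}$ maps and replace the ``$\ker=0$'' claim by a bounded-exponent claim obtained via the $GW^{-}$ intermediary, the proof goes through.
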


\begin{proof}
Let $GW^-$ and $GR^-$ denote the antisymmetric parts of ${\mGW}$ and $GR$
for the involution defined in Section \ref{sec:WR}.
In the commutative diagram%
\begin{equation*}
\begin{array}{ccc}
GW^{-}_{n}(V) & \map{}  & GR^{-}_{n}(V) \\ 
\downarrow  &  & \downarrow  \\ 
W_{n}(V) & \map{\theta_n}  & WR_{n}(V).
\end{array}%
\end{equation*}%
 the kernels and cokernels of the vertical maps have exponent~2,
by Corollary \ref{GW-}.
By Theorem \ref{W=WR.mod2tors}, $\theta_n$ is an isomorphism modulo
2-primary torsion.  Therefore, 
we have an isomorphism for all $n$:
\begin{equation*}
GW^{-}_{n}(V)\otimes\Z[1/2]\cong GR^{-}_{n}(V)\otimes \Z[1/2].
\end{equation*}%
Now let $GW_{n}^{c,-}(V)$ denote the antisymmetric subgroup of
the comparison group $GW_{n}^{c}(V)$. It fits into a
chain complex 
\begin{equation*}
GR^{-}_{n+1}(V)\to GW_{n}^{c,-}(V)\to GW^{-}_{n}(V)\to GR^{-}_{n}(V)
\to GW^{c,-}_{n-1}(V)
\end{equation*}%
whose homology groups have exponent~2.
It follows that $GW_{n}^{c,-}(V)$ is a 2-primary torsion group for all $n$.

Now suppose that $n\geq d-2.$ 
Then $GW_{n}^{c}(V)$ and $GW_{n-1}^{c}(V)$ are 2-torsionfree
by Theorem \ref{GW=GR}. 
Therefore the subgroups $GW_{n}^{c,-}(V)$ and $GW_{n-1}^{c,-}(V)$
must be zero.
It follows that the kernel and cokernel of $GW^{-}_{n}(V)\to GR^{-}_{n}(V)$ 
have exponent~2.
Because of the commutative square above, the kernel
and cokernel of $W_{n}(V)\to WR_{n}(V)$ have 
exponent $8$, proving the theorem when $n\ge d-2$.

In order to prove the theorem for general $n\geq 0,$ we use the periodicity
maps $W_{n}\to W_{n+4}$ and $W_{n+4}\to W_{n},$ 
established in \cite{MKAnnalsH}, whose composition is multiplication 
by 16 by Corollary \ref{4period}. (See Appendix B for more details). 
Therefore, if we choose an
integer $k$ such that $n+4k\geq d-2,$ we see that the kernel and cokernel of
the map $W_{n}(V)\overset{\theta }{\to }WR_{n}(V)$ are killed by $%
8.16^{k}=2^{4k+3}.$

The final assertion is proven the same way, using
Corollary \ref{-W=-WR} and Theorem \ref{GW=GR{-1}} in place of
Theorems \ref{W=WR.mod2tors} and \ref{GW=GR}.
\end{proof}

\begin{subremark}\label{p.189:Knebusch} 
Suppose that $V$ is a smooth (or even divisorial) variety with no 
real points. Knebusch proved in \cite[p.\,189, Theorem 3]{Knebusch.Queens}
that $W(V)$ is a 2-primary torsion group of bounded exponent. 
Combining this with Theorem \ref{W=WR.mod2tors},
we get another proof of Proposition \ref{exp.2^N} 
for this class of varieties.
\end{subremark}

We can improve Proposition \ref{exp.2^N}, 
giving explicit bounds for the exponents of the kernel and cokernel.
We first consider large $n$.

\begin{proposition}\label{WR/W.2-group} 
Let $V$ be a variety over $\R$. 
If $n\ge\dim(V)-2$, the kernel and cokernel of 
$\theta_n:W_n(V)\to WR_n(V)$ have exponent~2.

The kernel and cokernel of ${}_{-1}W_n(V)\to {}_{-1}WR_n(V)$
also have exponent~2.
\end{proposition}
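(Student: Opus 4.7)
The plan is to work at the level of spectra. Let $\bbK^c$ and $\GW^c$ denote the homotopy fibers of the comparison maps $\bbK(V)\to\mathbb{KR}(V)$ and $\GW(V)\to\mathbb{GR}(V)$, and let $\mathbb{W}^c=\mathrm{cofib}(\bbK^c\to\GW^c)$. A standard $3\times 3$-lemma argument provides a fiber sequence of spectra
\[
\mathbb{W}^c\to\mathbb{W}\to\mathbb{WR},
\]
where $\mathbb{W}$ and $\mathbb{WR}$ are the cofibers of the hyperbolic maps $\bbK\to\GW$ and $\mathbb{KR}\to\mathbb{GR}$.

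First I would analyze $\pi_n\mathbb{W}^c$ via the long exact sequence
\[
K^c_n(V)\to GW^c_n(V)\to\pi_n\mathbb{W}^c\to K^c_{n-1}(V)\to GW^c_{n-1}(V).
\]
By Corollary \ref{Kc-u.d.} and Theorem \ref{GW=GR}, the groups $K^c_m(V)$ and $GW^c_m(V)$ are uniquely 2-divisible for $m\ge d-2$. Hence for $n\ge d-1$ all four surrounding terms are uniquely 2-divisible, so $\pi_n\mathbb{W}^c$ is too. Combining this with the long exact sequence of the fiber sequence above and with the short exact sequences $0\to W_n(V)\to\pi_n\mathbb{W}\to E_n\to 0$ (where $E_n=\ker(K_{n-1}\to GW_{n-1})$ has exponent~2, since $F\circ H=2$), a snake lemma chase shows that $2\cdot\ker(\pi_n\mathbb{W}\to\pi_n\mathbb{WR})\subset\ker\theta_n$. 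Since the outer group is 2-divisible for $n\ge d-1$, I obtain $\ker\theta_n=\ker(\pi_n\mathbb{W}\to\pi_n\mathbb{WR})$, itself 2-divisible; by Proposition \ref{exp.2^N} it also has bounded exponent, so it must vanish. The dual argument gives $\coker\theta_n=0$ in the same range.

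For the remaining case $n=d-2$, the spectrum-level argument only yields that $\pi_{d-2}\mathbb{W}^c$ is 2-torsion-free. Here I would invoke the 2-periodicity $u_2\colon W_{d-2}(V)\to{}_{-1}W_d(V)$ of Lemma \ref{2period}, whose kernel and cokernel have exponent~2: for $a\in\ker\theta_{d-2}$, naturality together with the vanishing of $\ker({}_{-1}\theta_d)$ (the skew-symmetric version of the previous paragraph, proved using Theorem \ref{GW=GR{-1}} in place of Theorem \ref{GW=GR}) force $u_2(a)=0$, so $a\in\ker u_2$ has $2a=0$. A parallel argument, using that ${}_{-1}\theta_d$ is actually an isomorphism, handles the cokernel. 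The skew-symmetric statement then follows by running the entire argument in the ${}_{-1}GW$-framework throughout.

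The hard part will be the sharp control in the case $n=d-2$: the spectrum-level vanishing only kicks in for $n\ge d-1$, and naively descending one degree via the 2-periodicity risks introducing an extra factor of 2 from each of $\ker u_2$ and $\coker u_2$. Extracting exactly exponent~2 (rather than~4) therefore requires careful bookkeeping of the two estimates so that at most one factor of~2 is lost overall.
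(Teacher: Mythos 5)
Your plan differs substantially from the paper's proof, and it contains a genuine gap.

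The central problem is the claim that $E_n=\ker(K_{n-1}\to GW_{n-1})$ has exponent~2 ``since $F\circ H=2$''. This is not correct: the composite $F\circ H$ on $K$-theory sends $[E]$ to $[E]+[E^*]$, i.e.\ $F\circ H=1+\sigma$ where $\sigma$ is the duality involution, not multiplication by~2. (Similarly $H\circ F = 1 + \gamma$ on $GW$, where $\gamma$ flips the sign of the form.) For $x\in\ker H$ one only gets $\sigma(x)=-x$, which carries no torsion information. Since both the identification $\ker\theta_n=\ker(\pi_n\mathbb{W}\to\pi_n\mathbb{WR})$ and the ``bounded exponent'' step feed off this exponent-2 claim via the snake lemma, the argument does not close for $n\ge d-1$, and hence the proposed consequences ($\ker\theta_n=0$, $\coker\theta_n=0$ in that range, which would actually be stronger than the statement) remain unproved. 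The second issue, which you correctly flag yourself, is the descent from $n=d-1$ to $n=d-2$ via $u_2$: since $u_{-2}u_2=4$, the kernel of $u_2$ a priori only has exponent~4, and simply ``careful bookkeeping'' does not obviously recover exponent~2.

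The paper avoids both obstructions at once. The key input you are missing is Lemma~\ref{W-GW.map}: the endomorphism $1-\gamma$ of $GW_0(\cE)$ descends to a natural map $f_{\cE}\colon W(\cE)\to GW_0(\cE)$ whose composite with the projection $GW_0\to W$ is multiplication by~2. The paper's proof works directly at the level of $W_n$, $GW_n$, $WR_n$, $GR_n$: given $a\in WR_n(V)$, the class $w'(a)=f(a)\in GR_n(V)$ dies in $GW^c_{n-1}(V)$ because the latter is 2-torsion-free (Theorem~\ref{GW=GR}, valid already for $n\ge d-2$), so $w'(a)$ lifts to $GW_n(V)$ and $2a$ lies in the image of $\theta_n$; symmetrically, for $x\in\ker\theta_n$ the element $v'(x)=f(x)$ lies in $\ker\beta=\mathrm{im}(GW^c_n\to GW_n)$, which is 2-divisible, and then Proposition~\ref{exp.2^N} forces $2x=v(v'(x))=0$. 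This reaches $n=d-2$ in one step and never needs to control $\ker(K_{n-1}\to GW_{n-1})$ at all. If you want to salvage your cofiber-spectrum approach, you would need to replace the false exponent-2 bound on $E_n$ with the $1-\gamma$ construction (or some equivalent), at which point you essentially reproduce the paper's argument.
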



\begin{proof}
For $n\ge\dim(V)-2$ we consider the following diagram.
\begin{equation*}
\begin{array}{ccccccc}
K_{n}^{c}(V) & \to & GW_{n}^{c}(V) &  &  &  &  \\ 
\downarrow &  & \downarrow u &  &  &  &  \\ 
K_{n}(V) & \to & {\kGW_{n}}(V) & \overset{v}{\to } & W_{n}(V) & 
\to & 0 \\ 
\downarrow &  & \downarrow \beta &  & \downarrow \theta _{n} &  &  \\ 
KR_{n}(V) & \to & GR_{n}(V) & \overset{w}{\to } & WR_{n}(V)
& \to & 0 \\ 
&  & \downarrow \partial &  &  &  &  \\ 
&  & GW_{n-1}^{c}(V) &  &  &  & 
\end{array}%
\end{equation*}%
By Theorem \ref{W=WR.mod2tors}, $\ker(\theta_{n})$ and $\coker(\theta_{n})$ 
are $2$-primary torsion groups. 
On the other hand, by Lemma \ref{W-GW.map}, there are maps 
$v':W_{n}(V)\to {\kGW_{n}}(V)$ and $w':WR_{n}(V)\to GR_{n}(V),$ 
compatible with $\beta$ and $\theta_n$, 
such that $v\circ v'$ and $w\circ w'$ are multiplication by $2$. 
Since $GW_{n-1}^{c}(V)$ is
2-torsionfree by Theorem \ref{GW=GR}, the map 
$\coker(\theta _{n})\to GW_{n-1}^{c}(V)$ induced by $\partial w'$
must be zero. That is, if $a\in WR_{n}(V)$ then $w'(a)=\beta (b)$
for some $b$, and hence $2a=w(w'(a))$ is $\theta_{n}(v(b))$. Thus 
$\coker(\theta_{n})$ has exponent~2.

The image $D$ of $u:GW_{n}^{c}(V)\to {\kGW_n}(V)$ 
is a divisible 2-group, and its image $v(D)$ in $\ker(\theta_{n})$ 
is zero by Proposition \ref{exp.2^N}.
If $x\in \ker(\theta_n)$ then $v'(x)$ is in $\ker(\beta)=D$ and hence 
$2x=v(v'(x))$ is in $v(D)=0$. Thus $2\ker(\theta_n)=0$.

To obtain the final assertion, replace Theorem \ref{W=WR.mod2tors}
by Corollary \ref{-W=-WR}.
\end{proof}

Let $V$ be a variety over $\R$.  The 
{\it higher signature maps} are the maps 
\[
W_n(V)\map{}WR_n(V_{\R})\cong KO_n(V_{\R}).
\]
studied by Brumfiel (Theorem \ref{thm:Brumfiel}).
If $V_{\R}$ has $c$ connected components, the classical 
{\it signature map} $W(V)\to\Z^c$ may be regarded as the 
higher signature 
followed by the rank $KO(V_{\R})\map{}\Z^c$.
Combining Theorem \ref{WR-KO.exponents} and Proposition \ref{WR/W.2-group},
we obtain the following result.
Note that $X=V_{\C}$ and $X-X^G$ have dimension 
$2\dim(V)$ as CW complexes.

\begin{corollary}
Let $V$ be a variety over $\R$ of dimension $d$.
If $n\ge d-2$, the kernel and cokernel of the higher signature map
$W_n(V) \to KO_n(V_{\R})$ have exponent $2^{f+2}$,
where $f=f(2d)$.
\end{corollary}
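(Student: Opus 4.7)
The plan is to factor the higher signature map through $WR_n(V)$ and apply the two exponent bounds already established in the paper. Explicitly, the map $W_n(V)\to KO_n(V_\R)$ factors as the composition
\[
W_n(V)\ \map{\theta_n}\ WR_n(V)\ \map{\gamma_n}\ WR_n(V_\R)\cong KO_n(V_\R),
\]
where $\theta_n$ is the comparison map of Theorem \ref{W=WR.mod2tors} and $\gamma_n$ is the restriction to the $G$-fixed subspace $X^G = V_\R$ of $X=V_\C$, using the identification $WR(X^G)\cong KO(X^G)$ from Example \ref{WR(X^G)}(a).

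First I would bound each factor. By Proposition \ref{WR/W.2-group}, in the range $n\ge d-2$ the kernel and cokernel of $\theta_n$ have exponent $2$. For $\gamma_n$, I would apply Theorem \ref{WR-KO.exponents} (together with Remark \ref{rem:Wn-WRn} to handle arbitrary $n$) to the Real space $X=V_\C$; since $X-X^G$ has real dimension at most $2d$, the bound is $2^{1+f}$ with $f=f(2d)$, as defined in Definition \ref{def:f}.

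Finally, one combines the two bounds via the standard diagram-chase: if $e_1$ and $e_2$ are the exponents for the kernel/cokernel of two composable maps, then the kernel and cokernel of the composite are killed by $e_1 e_2$. For the kernel, an element $x$ killed by the composite has $\theta_n(x)\in\ker(\gamma_n)$, so $2^{1+f}\theta_n(x)=0$, hence $2^{1+f}x\in\ker(\theta_n)$ and $2^{f+2}x=0$. The cokernel is handled symmetrically: given $z\in KO_n(V_\R)$, $2^{1+f}z$ lifts to some $y\in WR_n(V)$, and then $2\cdot 2^{1+f}z=2^{f+2}z$ lifts to an element of $W_n(V)$.

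There is essentially no obstacle here since both ingredients are already in place; the only care needed is to track the dimension correctly, using that $V_\C$ has real dimension $2\dim(V)$ as a CW complex (so $f = f(2d)$, not $f(d)$), and to invoke Remark \ref{rem:Wn-WRn} so that the bound of Theorem \ref{WR-KO.exponents} applies uniformly in $n$ rather than just for $n=0$.
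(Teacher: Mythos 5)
Your proposal is correct and follows the same route as the paper: factor the higher signature through $WR_n(V)$, bound $\theta_n$ by Proposition \ref{WR/W.2-group} (exponent $2$ for $n\ge d-2$), bound the restriction to $V_\R$ by Theorem \ref{WR-KO.exponents} together with Remark \ref{rem:Wn-WRn} (exponent $2^{1+f}$ with $f=f(2d)$ since $\dim(V_\C\setminus V_\R)\le 2d$), and multiply the exponents. The paper states this combination without spelling out the diagram chase, so your write-up is just a slightly more detailed version of the same argument.
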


If $x\in\R$, recall that $\lceil x\rceil$ denotes the least integer $n\ge x$.

\begin{theorem}\label{W-WR.exponent} 
Let $V$ be an algebraic variety over $\R$ of dimension $d$. 
The kernel and cokernel of the comparison map 
\[
W(V)\to {}{}WR(V)
\]
are of exponent $4\cdot 16^{m}$, where  $m=\lceil(d-2)/8\rceil$.

The same statement is true for the co-Witt groups $W'(V)$,
and for the skew-symmetric Witt groups $_{-1}W(V)$.
\end{theorem}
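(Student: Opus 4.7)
The plan is to combine the exponent-$2$ bound of Proposition \ref{WR/W.2-group} in the stable range $n \ge d-2$ with Karoubi's Bott periodicity maps from Appendix \ref{app:Bott}, thereby transferring the bound back to degree zero at the cost of a specific power of~$2$. Set $m = \lceil(d-2)/8\rceil$, so that $8m \ge d-2$. Proposition \ref{WR/W.2-group} immediately yields that both $\ker\theta_{8m}$ and $\coker\theta_{8m}$ have exponent~$2$, where $\theta_n: W_n(V) \to WR_n(V)$ is the comparison map; the task is to propagate this bound down from degree $8m$ to degree~$0$.

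For this I would use the natural periodicity maps provided by Appendix \ref{app:Bott}, namely $u: W_n(V) \to W_{n+8}(V)$ and $v: W_{n+8}(V) \to W_n(V)$ (obtained by cup product with suitable Bott elements), whose composition $v \circ u$ is multiplication by~$16$. The same Bott elements act on $WR_*(V)$, giving analogous maps $u_{WR}, v_{WR}$ compatible with $\theta_*$ via naturality. Iterating $m$ times produces maps $U: W_0(V) \to W_{8m}(V)$ and $V: W_{8m}(V) \to W_0(V)$ with $V \circ U = 16^m$ on both sides, together with their $WR$-counterparts. The diagram chase is then routine: for $x \in \ker\theta_0$, naturality forces $U(x) \in \ker\theta_{8m}$, so $2 U(x) = 0$, and applying $V$ yields $2 \cdot 16^m \cdot x = 0$. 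For $\bar y \in \coker\theta_0$ with lift $y \in WR_0(V)$, the exponent-$2$ bound on $\coker\theta_{8m}$ lets me write $2 U_{WR}(y) = \theta_{8m}(w)$ for some $w \in W_{8m}(V)$; applying $V_{WR}$ and invoking commutativity of the square gives $2 \cdot 16^m \cdot y = \theta_0(V(w))$, so $2 \cdot 16^m \cdot \bar y = 0$. This establishes exponent $2 \cdot 16^m$, which is dominated by the stated bound $4 \cdot 16^m$.

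The co-Witt version for $W'(V) \to WR{\,}'(V)$ is handled identically, replacing the cokernel statement in Proposition \ref{WR/W.2-group} by the corresponding kernel statement for $W'_n$. For the skew-symmetric version ${}_{-1}W(V) \to {}_{-1}WR(V)$, I would replace Theorem \ref{W=WR.mod2tors} by Corollary \ref{-W=-WR} and use the skew-symmetric analogues of Proposition \ref{WR/W.2-group} and of the Bott periodicity from Appendix \ref{app:Bott}, which are noted in the same references. The main technical point to verify will be that the $8$-periodicity composition is exactly~$16$ and not $256$, as one would naively obtain by iterating the $4$-fold ``$vu=16$'' periodicity of Corollary \ref{4period} twice; it is this sharper fact that causes the exponent to grow as $16^{\lceil(d-2)/8\rceil}$ rather than $16^{\lceil(d-2)/4\rceil}$, reflecting the honest $8$-periodicity of real Bott periodicity on the topological side.
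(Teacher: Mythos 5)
Your strategy is the paper's: stabilize to degree $8m \ge d-2$, where Proposition~\ref{WR/W.2-group} gives exponent~$2$, and transfer back to degree~$0$ by pre- and post-composing with Bott maps. The diagram chase is correctly structured. But the arithmetic you explicitly flag as ``the main technical point to verify'' is wrong, and there is a second, subtler issue in how the maps are set up.

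On the numerology: the $8$-periodicity composition $u_8 u_{-8}$ is multiplication by~$32$, not~$16$. By Lemma~\ref{Bott.correct}, $u_8$ maps topologically to~$2x_8$ (twice a generator) while $u_{-8}$ maps to~$16x_{-8}$ (sixteen times a generator), so $u_8u_{-8}\mapsto 32$; this is stated outright in the proof of Corollary~\ref{4period}. Your value~$16$ is the pre-correction figure from the original Karoubi reference --- exactly the error that Lemma~\ref{Bott.correct} exists to fix (there $u_{-8}$ was incorrectly claimed to map to~$8x_{-8}$). The fact that your derived bound $2\cdot 16^m$ came out strictly smaller than the theorem's $4\cdot 16^m$ should have been a warning.

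On the setup: even with the corrected value~$32$, iterating the single $8$-step maps $m$~times gives composition $32^m = 2^m\cdot 16^m$, which for $m\ge 2$ exceeds the bound you want. The paper avoids this by taking the forward map to be cup product with the \emph{single} Bott element $u_{8m}\in GW_{8m}(\Z[\frac12])$ (topological image $2x_{8m}$, not $(2x_8)^m=2^m x_{8m}$), while the backward map is cup product with $(u_{-8})^m$ (topological image $16^m x_{-8m}$); the resulting round-trip is $2\cdot 16^m$, as Corollary~\ref{period.correct} states, and the diagram chase then gives exponent $2\cdot(2\cdot 16^m)=4\cdot 16^m$. So your route is the right one, but you need both the corrected Bott numerology and the non-iterated forward map $u_{8m}$ to reach the stated exponent.
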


\goodbreak

\begin{proof}
Fix $n=8m$. As $n\ge d\!-\!2$, the kernel and cokernel of  
\[
\theta_{n}: W_{n}(V) \map{} WR_{n}(V)
\]
have exponent $2$, by Proposition \ref{WR/W.2-group}.
Consider the commutative diagram: 
\[
\begin{array}{ccccc}
{}W_{0}(V) & \map{u_{n}} & {}W_{n}(V) & \map{u_{-n}} & W_{0}(V)\\ 
\downarrow \theta &  & \downarrow \theta _{n} &  & \downarrow \theta \\ 
{}WR_{0}(V) & \map{u_{n}} & {}WR_{n}(V) & \map{u_{-n}} & WR_{0}(V)%
\end{array}%
\]
where $u_{n}$ and $u_{-n}$ are the periodicity maps of 
Lemma \ref{Bott.correct}. 
By Corollary \ref{period.correct}, the compositions 
$u_{-n}u_{n}$ are multiplication by $2\cdot 16^{m}$. 
Since the kernel and cokernel of $\theta _{n}$ have exponent~2, the kernel
and cokernel of $\theta $ have exponent $4\cdot 16^{m}$.
The proof for the skew-symmetric Witt groups is the same.

The theorem is proved for co-Witt groups 
in the same way, 
taking into account that the maps $u_{-8}$ may be factored through
the co-Witt groups $W'_{-8}(\Z[1/2])$ and $WR{\,}'_{-8}(\Z[1/2])$; 
see the proof of Lemma \ref{Bott.correct}.
\end{proof}

\begin{corollary}\label{W:exponent}
If $V$ has no $\R$-points, $W(V)$ has exponent $2^{2+4m+f(2d)}$.
\end{corollary}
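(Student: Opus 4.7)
The plan is to combine Theorem \ref{WR:exponent} (bounding the exponent of $WR$ in the free case) with Theorem \ref{W-WR.exponent} (bounding the kernel of the comparison map $\theta:W(V)\to WR(V)$) and multiply the bounds.

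First I would observe that, since $V$ has no $\R$-points, $V_\R = X^G$ is empty, where $X = V_\C$. This means the involution $\sigma$ acts freely on $X$. Since $V$ is a variety of algebraic dimension $d$ over $\R$, its complex point space $X = V_\C$ has the homotopy type of a finite $G$-CW complex of real dimension $2d$. Therefore Theorem \ref{WR:exponent} applies with $X$ of CW-dimension $2d$, yielding that $WR(V) = WR(X)$ has exponent $2^{f(2d)}$.

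Next I would invoke Theorem \ref{W-WR.exponent}: since $V$ is $d$-dimensional with $m = \lceil (d-2)/8 \rceil$, the kernel of the comparison map
\[
\theta \colon W(V) \longrightarrow WR(V)
\]
has exponent $4 \cdot 16^m = 2^{2+4m}$.

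Finally, the two bounds combine directly. Given any $a \in W(V)$, the element $\theta(a) \in WR(V)$ satisfies $2^{f(2d)} \theta(a) = 0$, so $2^{f(2d)} a$ lies in $\ker(\theta)$, and hence $2^{2+4m} \cdot 2^{f(2d)} a = 0$. This shows $W(V)$ has exponent $2^{2+4m+f(2d)}$, as claimed. There is no genuine obstacle here — the argument is a routine combination of the preceding two exponent theorems, with the only minor subtlety being the (well-known) fact that a complex $d$-dimensional variety has a finite $G$-CW model of real dimension $2d$, so that the integer $f(2d)$ in Theorem \ref{WR:exponent} is the correct one to feed into Example \ref{WRfree}.
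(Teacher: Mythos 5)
Your proof is correct and matches the paper's own argument, which reads simply ``Combine Theorems \ref{WR:exponent} and \ref{W-WR.exponent}''; you have merely spelled out the two-step chain (first push $a$ into $\ker(\theta)$ using the free-action exponent bound $2^{f(2d)}$ on $WR(V)$, then kill $2^{f(2d)}a$ using the $2^{2+4m}$ bound on $\ker\theta$). The details about $X=V_\C$ having the homotopy type of a finite $G$-CW complex of dimension $2d$ are exactly the right justification for feeding $2d$ into $f(\,\cdot\,)$.
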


\begin{proof}
Combine Theorems \ref{WR:exponent} and \ref{W-WR.exponent}.
\end{proof}

Corollary \ref{W:exponent} is not the best possible bound.
When $\dim(V)=1$, and $V$ is a smooth projective curve 
with no real points, $W(V)$ has exponent 2 or~4 by Theorem \ref{W(curve)}.
When $\dim(V)=2$, we know from Example \ref{W(surface)} that
$W(V)$ has exponent 4 or~8. 

\begin{theorem}\label{signature:8m+2}
The kernel and cokernel of the signature
$W(V) \to KO(V_{\R})$ have exponent $2^{3+4m+f}$,
where $f=f(2d)$ and $m=\lceil(d-2)/8\rceil$.
%
\end{theorem}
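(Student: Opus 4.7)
The plan is to factor the signature map as the composition
\[
W(V) \map{\theta} WR(V) \map{\gamma} WR(V_{\R}) \cong KO(V_{\R}),
\]
where the final identification comes from Example \ref{WR(X^G)}(a), applied to the trivial $G$-action on $X^G = V_{\R}$. Both factors have already been analyzed quantitatively in the preceding sections, and the conclusion then follows from a routine composition estimate for kernels and cokernels of exponent-bounded maps.

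First I would invoke Theorem \ref{W-WR.exponent} for the first factor: the kernel and cokernel of $\theta: W(V)\to WR(V)$ have exponent $4\cdot 16^{m} = 2^{2+4m}$, where $m=\lceil(d-2)/8\rceil$. Next I would apply Theorem \ref{WR-KO.exponents} to the second factor. The Real space $X = V_\C$ has real CW-dimension $2d$, and $X - X^G = V_\C \setminus V_\R$ has dimension at most $2d$, so with $f = f(2d)$ the kernel and cokernel of $\gamma: WR(V) \to WR(V_\R) = KO(V_\R)$ have exponent $2^{1+f}$.

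Finally I would combine these bounds by the standard observation: if $A \map{\alpha} B \map{\beta} C$ is a composition of homomorphisms of abelian groups such that $\ker\alpha, \coker\alpha$ have exponent $e_1$ and $\ker\beta,\coker\beta$ have exponent $e_2$, then $\ker(\beta\alpha)$ and $\coker(\beta\alpha)$ both have exponent dividing $e_1e_2$. Indeed, for the kernel: if $\beta\alpha(x) = 0$ then $\alpha(x) \in \ker\beta$, so $\alpha(e_2 x) = 0$ and hence $e_1 e_2 x = 0$. For the cokernel: given $z \in C$, write $e_2 z = \beta(y)$ for some $y \in B$, and then $e_1 y = \alpha(x)$ for some $x \in A$, so $e_1 e_2 z = \beta\alpha(x)$ lies in the image. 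Applying this with $e_1 = 2^{2+4m}$ and $e_2 = 2^{1+f}$ gives exponent $2^{3+4m+f}$ for the kernel and cokernel of the signature map, as required.

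There is no serious obstacle here; the only point that deserves a line of verification is the dimension count $\dim(V_\C - V_\R) \le 2d$ used in invoking Theorem \ref{WR-KO.exponents}, and the identification of the composition $W(V) \to WR(V) \to KO(V_\R)$ with Brumfiel's signature map, which was recorded in the proof of Theorem \ref{W=WR.mod2tors} (the map is induced by passing from an algebraic bundle with form to its underlying topological real bundle on $V_\R$).
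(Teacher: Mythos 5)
Your proof is correct and takes essentially the same approach as the paper: factor the signature through $W(V)\to WR(V)\to KO(V_\R)$, apply Theorem \ref{W-WR.exponent} to the first map and Theorem \ref{WR-KO.exponents} to the second, and multiply the exponents. The paper does the same chase directly on elements, while you package it as a general composition lemma — a harmless stylistic difference.
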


\begin{proof}
If $w\in W(V)$ vanishes in $KO(V_{\R})$ then $2^{1+f} w$ 
must vanish in $WR(V)$, by Theorem \ref{WR-KO.exponents}.
By Theorem \ref{W-WR.exponent}, $2^{2+4m}(2^{1+f} w)=0$.
A similar argument applies to the cokernel.
\end{proof}



\bigskip
We now turn our attention to varieties of arbitrary dimension $d$, starting
with $d\leq 8.$ We saw in Theorem \ref{curves:W=WR} that 
if $\dim(V)=1$ then $\theta:W(V)\to WR(V)$ is an isomorphism.
If $d=2$ we have the following immediate consequence of
Proposition \ref{WR/W.2-group}.

\begin{proposition}\label{W/WR.dim2}
For any variety $V$ over $\R$ of dimension $2$,
the kernel and cokernel of 
$W(V)\map{\theta} WR(V)$ have exponent at most $2.$ 

The same is true for ${}_{-1}W(V)\to{}_{-1}WR(V)$.
\end{proposition}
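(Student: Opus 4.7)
The plan is to observe that this proposition is a direct specialization of Proposition \ref{WR/W.2-group} at the endpoint of its range. Taking $n=0$ and $d=\dim(V)=2$, the hypothesis $n\ge d-2$ becomes $0\ge 0$, which holds. The conclusion of Proposition \ref{WR/W.2-group} at $n=0$ states precisely that $\ker(\theta_0)$ and $\coker(\theta_0)$ have exponent at most $2$, where $\theta_0=\theta\colon W(V)\to WR(V)$. The skew-symmetric statement is built into Proposition \ref{WR/W.2-group} (which handles ${}_{-1}W_n(V)\to {}_{-1}WR_n(V)$ on equal footing via Corollary \ref{-W=-WR} and Theorem \ref{GW=GR{-1}}), so the same specialization yields the claim for ${}_{-1}W(V)\to{}_{-1}WR(V)$.

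For clarity, I would briefly recall why the argument works in the borderline case $n=d-2=0$. The diagram chase in Proposition \ref{WR/W.2-group} uses two inputs: first, that $\theta_n$ is an isomorphism modulo $2$-primary torsion (Theorem \ref{W=WR.mod2tors}); second, that the comparison group $GW_{n-1}^{c}(V)$ is $2$-torsionfree for $n\ge d-2$ (Theorem \ref{GW=GR}). In our case $n-1=-1$ and $d-3=-1$, so we need $GW_{-1}^{c}(V)$ to be $2$-torsionfree, which is exactly the last assertion of Theorem \ref{GW=GR}. Given this, the auxiliary maps $v'$ and $w'$ of Lemma \ref{W-GW.map}, which double to the projections from $\mathbb{G}W_n$ and $GR_n$, feed into the same chase and force both $\ker\theta$ and $\coker\theta$ to be killed by $2$.

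There is essentially no obstacle to this proof: the heavy lifting was done in the previous section, and the only thing to check is the arithmetic inequality $n\ge d-2$ at the boundary. The statement is flagged in the excerpt as ``an immediate consequence'' for exactly this reason, so the proof proposal amounts to pointing out the specialization and verifying that the input hypotheses of Proposition \ref{WR/W.2-group} remain available at $n=0$, $d=2$.
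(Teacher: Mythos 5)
Your proposal is exactly the paper's argument: the authors introduce Proposition \ref{W/WR.dim2} with the words ``we have the following immediate consequence of Proposition \ref{WR/W.2-group}'' and give no further proof, relying precisely on the boundary instance $n=0$, $d=2$ of the inequality $n\ge d-2$. Your verification that the two inputs ($\theta_n$ an isomorphism modulo $2$-primary torsion, and $GW_{-1}^{c}(V)$ being $2$-torsionfree from the last assertion of Theorem \ref{GW=GR}) are still available at the endpoint is the right thing to check, and the skew-symmetric case follows identically.
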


\begin{example}\label{W(surface)} 
If $V$ is a smooth projective surface over $\R$, Sujatha has
computed $W(V)$ in \cite[3.1--3.2]{Sujatha}.  If $V_{\R}$ has $c>0$
components, $W(V)$ is the sum of $\Z^c$ and a torsion group of the form
$(\Z/2)^m\oplus(\Z/4)^n$; if $V_{\R}=\emptyset$, $W(V)$ has the
form $(\Z/2)^m\oplus(\Z/4)^n\oplus(\Z/8)^t$ where $t\le1$.
Thus $WR(V)$ detects a nontrivial piece of $W(V)$.
\end{example}

\begin{lemma}\label{W.dim3} 
If $V$ is a smooth variety over $\R$ of dimension $d=3$, 
the torsion subgroup of $\ker(W_0(V)\to\Z/2)$ has exponent $8$. 
\end{lemma}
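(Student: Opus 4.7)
The plan is to exploit the classical filtration of the Witt ring by powers of the fundamental ideal $I = \ker(W_0(V)\to\Z/2)$. The key identity $(\langle a\rangle-\langle 1\rangle)^2 = -2(\langle a\rangle-\langle 1\rangle)$, extended bilinearly, shows that $2\cdot I \subseteq I^2$ in $W(V)$; the same argument applied to products of generators of $I^k$ gives $2\cdot I^k \subseteq I^{k+1}$ for every $k\ge 1$. Iterating this three times, we find $8\,w\in I^4(V)$ for every $w\in I$. Thus the lemma reduces to showing that the torsion subgroup of $I^4(V)$ vanishes when $V$ is a smooth variety over $\R$ of dimension $3$.

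For the second step I would use the real-\'etale machinery of Section \ref{sec:Brumfiel}, in particular Theorem \ref{thm:Brumfiel} (Brumfiel's theorem) and its refinement via Proposition \ref{WR/W.2-group}. The idea is that on torsion, the comparison map $\theta:W(V)\to WR(V)$ is controlled by the maps $\theta_n$ for $n\ge d-2=1$, where kernels and cokernels are of exponent~$2$. The filtration $I^{\bullet}$ is preserved by $\theta$, and on the target side the corresponding filtration of $WR(V)$ drops to zero beyond degree $\dim_{\R}(V_{\R})=3$ (via the Atiyah--Hirzebruch spectral sequence for $KO(V_\R)$, whose torsion piece has exponent~$2$ in the reduced theory of a $3$-manifold). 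Combining this with Theorem \ref{WR-KO.exponents} and the fact that $GW_0^c(V)$ is $2$-torsionfree for $d=3$ (by Theorem \ref{GW=GR}, since $0 = d-3$), one shows that the torsion of $I^4(V)$ maps to zero in $WR(V)$; pulling back via $\theta$ using the exponent-$2$ bounds on both sides forces $I^4(V)_{\mathrm{tors}}=0$.

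The main obstacle will be pinning down the exponent $8$ exactly, rather than a weaker $16$ or $32$. Two places require care: verifying the inclusion $2 I^k\subseteq I^{k+1}$ globally (as opposed to stalk-by-stalk, where it is automatic from the field case), and executing the dimensional vanishing of $I^4(V)_{\mathrm{tors}}$ for a smooth $3$-fold over $\R$. The former is manageable because $I$ is generated by $\langle a\rangle-\langle 1\rangle$ with $a\in\cO_V(V)^{\times}$ locally, and the identity above is purely ring-theoretic. The latter is the genuinely delicate point: it uses crucially that $V$ is smooth (so that a Gersten-type resolution is available and the $e_n$-invariants capture all of $I^n/I^{n+1}$) and that $\dim V=3$ (so the relevant real-\'etale cohomology vanishes in the right range). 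If this real-\'etale argument turns out to be too heavy a hammer, an alternative would be to directly bound $I^4(V)_{\mathrm{tors}}$ using the long exact sequence of comparison groups $GW_*^c$ together with the periodicity maps of Appendix \ref{app:Bott}, but this is likely to yield a less sharp constant.
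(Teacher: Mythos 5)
The paper's proof is both different and considerably shorter: by purity for Witt groups of smooth varieties (Pardon \cite{Pardon} together with Balmer--Walter \cite{BalmerW}), the restriction map $W(V)\hookrightarrow W(F)$ to the function field $F=\R(V)$ is injective, and the field-theoretic result \cite[35.29]{EKM} says $I^n(F)$ is torsionfree for $n>\dim V=3$; since $2=\langle1,1\rangle\in I$ we have $8\cdot I\subseteq I^4$, so any torsion class in $I(V)$ is killed by $8$ after mapping injectively into $W(F)$. Your reduction to showing $I^4(V)_{\tors}=0$ is valid as far as it goes, but the identity $(\langle a\rangle-\langle1\rangle)^2=-2(\langle a\rangle-\langle1\rangle)$ is overkill: the one-line observation $2\in I$ already gives $2\cdot I^k\subseteq I\cdot I^k=I^{k+1}$.

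The genuine gap is in your second step. To deduce $I^4(V)_{\tors}=0$ you want to transport the vanishing through $\theta$ (or through Brumfiel's signature), but the exponent bounds available at the index you need are too weak. Proposition \ref{WR/W.2-group} gives exponent-$2$ control on $\ker(\theta_n)$ and $\coker(\theta_n)$ only for $n\ge d-2=1$, whereas here you need control on $\theta=\theta_0$; for that, the paper's best bound (Theorem \ref{W(456)}, $d\le 6$) is exponent $32$, and Brumfiel's Theorem \ref{thm:Brumfiel} only says $\ker\bigl(W(V)\to KO(V_\R)\bigr)$ is $2$-primary torsion, with no uniform exponent at all. So even if you could establish that $I^4(V)_{\tors}$ dies in $WR(V)$ --- and the sketch does not quite do this, since $WR(V)$ is not $KO(V_\R)$ and the AHSS filtration on $KO(V_\R)$ does not transport automatically along Theorem \ref{WR-KO.exponents} --- you could at best conclude $32\cdot I^4(V)_{\tors}=0$, which is far weaker than the vanishing you need. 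Reducing to the function field via purity sidesteps this circularity entirely, which is why the paper takes that route.
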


\begin{proof}
Let $F=\R(V)$ denote the function field of $V$. According to Pardon 
\cite[Thm.\,A]{Pardon} and Balmer--Walter Purity \cite[10.3]{BalmerW}, 
$W(V)$ injects into $W(F)$. If $I$ is the kernel of $W(F)\to\Z/2$, it is
known (see \cite[35.29]{EKM} for example) that the ideal $I^n$ of $W(F)$ is
torsionfree for $n>d=3$. Since $2\in I$, the torsion subgroup of $I$ has
exponent $2^d=8$.
\end{proof}

For 3-folds, 
the cokernel of $W(V)\to WR(V)$ has exponent 32 by
Theorem \ref{W(456)} below. (Theorem \ref{W-WR.exponent} gives an
upper bound of only 64, and
Proposition \ref{WR/W.2-group} does not apply in this case.)

\goodbreak

\begin{theorem}\label{W(456)} 
Let $V$ be an algebraic variety of dimension at most 6 
over $\R$. Then the kernel and cokernel of the associated map 
\[
\theta: {}W_{0}(V) \map{}{}WR_{0}(V)
\]
have exponent 32. The same property holds for co-Witt groups.
\end{theorem}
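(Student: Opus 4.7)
The plan is to repeat the strategy of Theorem \ref{W-WR.exponent} using the cheaper $4$-periodicity in place of the $8$-periodicity used there. Since $d\le 6$, the value $n=4$ already satisfies $n\ge d-2$, so Proposition \ref{WR/W.2-group} applies at $n=4$ and tells us that $\theta_4: W_4(V) \to WR_4(V)$ has kernel and cokernel of exponent $2$, and likewise for the skew-symmetric and co-Witt variants.

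Next I invoke the $4$-periodicity maps $u_4: W_0(V) \to W_4(V)$ and $u_{-4}: W_4(V) \to W_0(V)$ from Appendix \ref{app:Bott}, whose composition $u_{-4} u_4$ is multiplication by $16$; this is the ingredient (Corollary \ref{4period}) that was already used inside the proof of Proposition \ref{exp.2^N}. These maps assemble into the commutative diagram
\[
\begin{array}{ccccc}
W_0(V) & \map{u_4} & W_4(V) & \map{u_{-4}} & W_0(V) \\
\downarrow \theta & & \downarrow \theta_4 & & \downarrow \theta \\
WR_0(V) & \map{u_4} & WR_4(V) & \map{u_{-4}} & WR_0(V),
\end{array}
\]
both rows of which compose to multiplication by $16$. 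A short diagram chase then finishes the argument: for $x \in \ker(\theta)$ one has $u_4(x)\in \ker(\theta_4)$, so $2u_4(x)=0$ in $W_4(V)$, whence $32x = u_{-4}(2u_4(x)) = 0$; for $y\in WR_0(V)$, the class $2u_4(y)$ lifts to $\theta_4(z)$ for some $z\in W_4(V)$, and then $32y = u_{-4}(2u_4(y)) = \theta(u_{-4}(z))$ lies in the image of $\theta$.

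The skew-symmetric statement ${}_{-1}W(V)\to{}_{-1}WR(V)$ goes through identically, using the skew-symmetric clause of Proposition \ref{WR/W.2-group}, and the co-Witt case is handled in the same way using the $W'$ analogue cited in Theorem \ref{W-WR.exponent} (with $u_{-4}$ factored through the co-Witt groups, as indicated there). I do not expect any serious obstacle; the only content beyond Theorem \ref{W-WR.exponent} is the arithmetic observation that for $d\le 6$ we may take $n=4$ rather than $n=8$, which replaces the $8$-periodicity constant $32$ by the $4$-periodicity constant $16$ and yields the final bound $2\cdot 16=32$ in place of the weaker $2\cdot 32=64$ that Theorem \ref{W-WR.exponent} would give.
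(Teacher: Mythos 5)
Your proposal is correct and is essentially identical to the paper's argument: the paper also sets up the commutative diagram with the $4$-periodicity maps $u_4,u_{-4}$, invokes Corollary \ref{4period} for the factor $16$, applies Proposition \ref{WR/W.2-group} at $n=4$ (valid since $4\ge d-2$ for $d\le6$) to get exponent $2$ on $\ker\theta_4$ and $\coker\theta_4$, and concludes by the same diagram chase. You have merely written out the chase more explicitly than the paper does.
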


\begin{proof}
Consider the diagram 
\begin{equation*}
\begin{array}{ccccc}
{}W_{0}(V) & \map{u_{4}} & {}W_{4}(V) & \map{u_{-4}} & {}W_{0}(V) \\ 
\downarrow \theta &  & \downarrow \theta _{4} &  & \downarrow \theta \\ 
{}WR_{0}(V) & \map{u_{4}} & {}WR_{4}(V) & \map{u_{-4}} & {}WR_{0}(V).%
\end{array}%
\end{equation*}%
By Corollary \ref{4period}, 
the horizontal compositions are multiplication by 16. 
By Proposition \ref{WR/W.2-group}, both the kernel and cokernel of 
$\theta_{4}$ have exponent~2.
The result now follows from a diagram chase.
\end{proof}

For varieties of dimension 4--6, Theorem \ref{W(456)} says that in
passing from $W(V)$ to $WR(V)$ we lose up to 5 powers of~2.
For varieties of dimension $d$, $7\leq d\leq 10$, we lose another power
of~2.

\begin{theorem}\label{W(7-10)}
Let V be an algebraic variety over $\R$ of dimension $d$ with $7\leq
d\leq 10$. Then the kernel and cokernel of $\theta :W(V)\to WR(V)$
have exponent 64. 
\end{theorem}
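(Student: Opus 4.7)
The plan is to follow the strategy of the proof of Theorem \ref{W(456)} but with the period-$8$ maps $u_{\pm 8}$ in place of the period-$4$ maps $u_{\pm 4}$. The key numerical observation is that for $7\le d\le 10$ one has $8\ge d-2$, which is precisely the hypothesis that allows Proposition \ref{WR/W.2-group} to be applied in degree~$8$.

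First I would set up the commutative diagram
\[
\begin{array}{ccccc}
W_{0}(V) & \map{u_{8}} & W_{8}(V) & \map{u_{-8}} & W_{0}(V) \\
\downarrow \theta &  & \downarrow \theta_{8} &  & \downarrow \theta \\
WR_{0}(V) & \map{u_{8}} & WR_{8}(V) & \map{u_{-8}} & WR_{0}(V),
\end{array}
\]
where $u_{\pm 8}$ are the periodicity maps of Lemma \ref{Bott.correct}. Two facts then feed a diagram chase. By Corollary \ref{period.correct} (applied with $n=8=8\cdot 1$), each horizontal composition $u_{-8}\circ u_{8}$ is multiplication by $2\cdot 16^{1}=32$. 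Since $8\ge d-2$, Proposition \ref{WR/W.2-group} ensures that both $\ker\theta_{8}$ and $\coker\theta_{8}$ have exponent~$2$.

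Combining these ingredients, if $x\in\ker\theta$ then $u_{8}(x)\in\ker\theta_{8}$, so $2u_{8}(x)=0$, and applying $u_{-8}$ gives $64x=2\cdot(u_{-8}u_{8})(x)=0$. A dual argument, lifting $2u_{8}(y)$ through $\theta_{8}$ from the cokernel and then applying $u_{-8}$, yields $64\cdot\coker\theta=0$. I do not anticipate a genuine obstacle here, since all the machinery is already in place: the doubling of the periodicity constant from $16$ at $n=4$ to $32$ at $n=8$ is precisely what accounts for the jump in the exponent bound from $32$ in Theorem \ref{W(456)} to $64$ in the present statement. The only point to verify carefully is that the diagram actually commutes with $u_{\pm 8}$ at both the Witt and Real Witt levels, which follows from the naturality of the Bott elements recalled in Appendix \ref{app:Bott}.
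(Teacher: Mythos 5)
Your proof is correct and takes essentially the same approach as the paper. The paper's proof of Theorem \ref{W(7-10)} is stated in one line — ``the proof of Theorem \ref{W(456)} goes through, using Corollary \ref{period.correct}'' — which, unpacked, is exactly your diagram with $u_{\pm 8}$, the composite equal to $2\cdot 16^1 = 32$, and Proposition \ref{WR/W.2-group} applied in degree $8$ (valid since $8 \ge d-2$ for $d \le 10$); the paper also notes the statement is a special case of Theorem \ref{W-WR.exponent}, which you have not invoked but which gives the same bound $4\cdot 16^{\lceil(d-2)/8\rceil}=64$.
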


\begin{proof}
The proof of Theorem \ref{W(456)} goes through, 
using Corollary \ref{period.correct}.
Alternatively, this follows from Theorem \ref{W-WR.exponent}.
\end{proof}

\goodbreak
Combining Theorem \ref{WR-KO.exponents} with Theorems \ref{W(456)} 
and \ref{W(7-10)}, we get a slight improvement over the exponent
$2^{7+f(2d)}$ predicted by Theorem \ref{signature:8m+2} 
for varieties of dimension $d\le10$.

\begin{corollary}
If $\dim(V)\le6$ (resp., $\dim(V)\le10$) the kernel and cokernel 
of the signature map $W_0(V)\to KO(V_{\R})$ have exponent
$64\cdot2^f$ (resp., $128\cdot2^f$). Here $f=f(2\dim V)$.
\end{corollary}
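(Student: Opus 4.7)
The plan is to factor the signature map as the composition
\[
W_0(V) \map{\theta} WR(V) \map{\gamma} WR(V_\R) \cong KO(V_\R),
\]
where the first map is Brumfiel's comparison, the second is the restriction to the fixed-point space $V_\C^G = V_\R$, and the final isomorphism is Example \ref{WR(X^G)}(a). The idea is to bound the exponents of kernel and cokernel of each factor separately and then combine them via the standard composition estimate.

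First, I would invoke Theorem \ref{W(456)} (for $\dim V \le 6$) or Theorem \ref{W(7-10)} (for $7 \le \dim V \le 10$), which respectively give exponent $32$ and exponent $64$ for both $\ker(\theta)$ and $\coker(\theta)$. Next, I would apply Theorem \ref{WR-KO.exponents} to the Real space $X = V_\C$: its underlying CW complex has real dimension $2\dim V$, and $X - X^G$ also has dimension $2\dim V$, so the kernel and cokernel of $\gamma\colon WR(V) \to KO(V_\R)$ have exponent $2^{1+f}$ with $f = f(2\dim V)$.

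Finally, I would use the elementary observation that if $A \xrightarrow{\alpha} B \xrightarrow{\beta} C$ is a composition and the kernel/cokernel of $\alpha$ and $\beta$ have exponents $e_1$ and $e_2$, then the kernel and cokernel of $\beta\alpha$ have exponent dividing $e_1 e_2$. For the kernel: if $\beta\alpha(a)=0$ then $\alpha(a) \in \ker\beta$ gives $\alpha(e_2 a) = 0$, hence $e_1 e_2 a = 0$. For the cokernel: there is a short exact sequence with subquotients $\coker\alpha / (\text{something})$ and $\coker\beta$, each of exponent dividing $e_1$ and $e_2$ respectively. Multiplying yields the exponent $32 \cdot 2 \cdot 2^f = 64 \cdot 2^f$ when $\dim V \le 6$, and $64 \cdot 2 \cdot 2^f = 128 \cdot 2^f$ when $\dim V \le 10$, as claimed.

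There is no real obstacle here: the corollary is purely formal once one has Theorems \ref{W(456)}, \ref{W(7-10)} and \ref{WR-KO.exponents} in hand. The only mildly subtle point is tracking that the relevant CW dimension for applying Theorem \ref{WR-KO.exponents} is $2\dim V$ (the real dimension of $V_\C$), so the invariant $f = f(2\dim V)$ appearing here matches the one featuring in the statement of the signature bound in Theorem \ref{signature:8m+2}.
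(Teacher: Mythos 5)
Your proposal is correct and follows exactly the same route as the paper: factor the signature map through $WR(V)$, bound the first factor via Theorem \ref{W(456)} (giving $32$ for $\dim V\le6$) or Theorem \ref{W(7-10)} (giving $64$ for $7\le\dim V\le10$), bound the second factor via Theorem \ref{WR-KO.exponents} (giving $2^{1+f}$ with $f=f(2\dim V)$), and multiply the exponents. The paper leaves this as an immediate combination of those results; your explicit composition lemma and the dimension bookkeeping (real dimension $2\dim V$ for $V_\C$) fill in the same bookkeeping.
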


\goodbreak
Here is the general result for varieties of dimension $d>10$. It is proved
by the same method. 

\begin{theorem}\label{W(11+)}
Let $V$ be a variety of dimension $d\ge11$ over $\R$. 
Then the kernel and cokernel of $W(V)\to WR(V)$ have exponent $2^{r}$, 
where $2^{r}$ is given by the following table (for $m>0)$:

\qquad If $d=8m, 8m\pm1$ or $8m+2$ then $2^{r}=4.16^{m}$

\qquad If $d=8m+3$ or $8m+4$ then $2^{r}=16^{m+1}$ 

\qquad If $d=8m+5$ or $8m+6$ then $2^{r}=4.16^{m+1}$. 


\smallskip\noindent
The kernel and cokernel of the signature map $W(V)\to KO(V_{\R})$
have exponent $2^{r+f+1}$, where $f=f(2d)$.
\end{theorem}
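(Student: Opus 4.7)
The plan is to mimic the proofs of Theorems \ref{W(456)} and \ref{W(7-10)}: for each residue class of $d \pmod 8$, choose a shift $n\ge d-2$ so that Proposition \ref{WR/W.2-group} applies to $\theta_n\colon W_n(V)\to WR_n(V)$ (giving kernel and cokernel of exponent $2$), and then transfer the bound back to $\theta=\theta_0$ via the Karoubi periodicity maps $u_n, u_{-n}$ of Appendix \ref{app:Bott}. Concretely, one uses the commutative diagram
\begin{equation*}
\begin{array}{ccccc}
W_{0}(V) & \map{u_{n}} & W_{n}(V) & \map{u_{-n}} & W_{0}(V) \\
\downarrow \theta &  & \downarrow \theta_{n} &  & \downarrow \theta \\
WR_{0}(V) & \map{u_{n}} & WR_{n}(V) & \map{u_{-n}} & WR_{0}(V),
\end{array}
\end{equation*}
whose horizontal compositions equal multiplication by an explicit power $2^s$ of $2$, determined by Corollaries \ref{4period} and \ref{period.correct}. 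The same diagram chase as in Theorem \ref{W(456)} then shows that $\ker(\theta)$ and $\coker(\theta)$ have exponent $2^{s+1}$.

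The case analysis corresponds to selecting the smallest $n\ge d-2$ whose composition $u_{-n}u_n$ is given by the appendix, so as to minimize the exponent. For $d\in\{8m-1, 8m, 8m+1, 8m+2\}$ we take $n=8m$; then by Corollary \ref{period.correct} the composition is $2\cdot 16^m$, yielding exponent $4\cdot 16^m$. For $d\in\{8m+3, 8m+4\}$ we take $n=8m+4$; combining Corollaries \ref{4period} and \ref{period.correct} (or applying the $u_4$-sandwich to $u_{\pm 8m}u_{8m}$) gives $u_{-n}u_n = 8\cdot 16^m$, hence exponent $16^{m+1}$. Finally, for $d\in\{8m+5, 8m+6\}$ we take $n=8(m+1)$; the composition is $2\cdot 16^{m+1}$, giving exponent $4\cdot 16^{m+1}$. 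The same argument works verbatim for the co-Witt groups and, using Corollary \ref{-W=-WR} and Theorem \ref{GW=GR{-1}} in place of Theorem \ref{W=WR.mod2tors} and Theorem \ref{GW=GR}, for the skew-symmetric Witt groups.

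For the statement about the classical signature map $W(V)\to KO(V_{\R})$, one factors it as $W(V)\to WR(V)\to WR(V^G)=KO(V_{\R})$ and invokes Theorem \ref{WR-KO.exponents}, which shows that the second map has kernel and cokernel of exponent $2^{1+f}$ with $f=f(2d)$ (the $CW$-dimension of $X-X^G$ is $2d$). Composing the two bounds yields exponent $2^r\cdot 2^{1+f}=2^{r+f+1}$, as required.

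The main obstacle is the case-by-case identification of the composition $u_{-n}\circ u_n$ as a specific power of $2$ for each residue of $n\bmod 8$. This is not stated as a single clean formula in the text but follows by bookkeeping from the relation $u_4 u_{-4} = 16$ (Corollary \ref{4period}), the value $u_{-8m}u_{8m} = 2\cdot 16^m$ (Corollary \ref{period.correct}), and the interpolation $u_{\pm(8m+4)} = u_{\pm 4}\circ u_{\pm 8m}$ after appropriate reindexing; the remaining steps (Proposition \ref{WR/W.2-group}, Theorem \ref{WR-KO.exponents}, and the diagram chase) are applied verbatim.
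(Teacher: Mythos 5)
Your overall plan is sound — shift via periodicity to a degree $n\ge d-2$ where Proposition \ref{WR/W.2-group} applies, then transfer the exponent-$2$ bound back through the Bott sandwich — and for $d\not\equiv 3,4\pmod 8$ your case analysis gives exactly the bounds in the table (in fact the paper just cites Theorem \ref{W-WR.exponent} for those cases). Your argument for the signature map, factoring through $WR(V)$ and multiplying the bound by $2^{1+f}$ from Theorem \ref{WR-KO.exponents}, is also correct.

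There is, however, a genuine gap in the $d\equiv 3,4\pmod8$ case, where you take $n=8m+4$ and claim that the composite $u_{-n}u_n$ equals $8\cdot 16^m$. This is an arithmetic error. With your interpolation $u_{\pm(8m+4)}=u_{\pm4}\cup u_{\pm 8m}$ and the values $u_4u_{-4}=16$ (Corollary \ref{4period}) and $u_{8m}u_{-8m}=2\cdot 16^m$ (Corollary \ref{period.correct}), the composite is
\[
(u_4u_{-4})\,(u_{8m}u_{-8m}) = 16\cdot 2\cdot 16^m = 32\cdot 16^m = 2\cdot 16^{m+1},
\]
not $8\cdot 16^m$. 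Inserting the exponent-$2$ bound in degree $8m+4$ then yields $4\cdot 16^{m+1}$, which is strictly worse than the $16^{m+1}$ claimed by the theorem. Even the sharper choice of a single $u_{8m+4}$ with image $2x_{8m+4}$ (as in Lemma \ref{Bott.correct}) paired against $u_{-4}(u_{-8})^m$ only brings the composite down to $16^{m+1}$ (since $x_{8m+4}\,x_{-(8m+4)}=x_4x_{-4}=4$), giving exponent $2\cdot 16^{m+1}$ — still a factor of $2$ too large. The non-symmetry-changing $4$-periodicity $u_4u_{-4}=16$ is simply too coarse to reach the table's value for these two residues.

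What you are missing is the symmetry-changing $2$-periodicity of Lemma \ref{2period}, which gives $u_2u_{-2}=4$ — a full factor of $4$ cheaper than $u_4u_{-4}=16$. The paper's proof of these two cases routes through the skew-symmetric groups: it uses the commutative diagram
\[
W_0(V)\ \xrightarrow{u_2}\ {}_{-1}W_2(V)\ \longrightarrow\ {}_{-1}W_{8m+2}(V),
\]
with return composites $4$ (Lemma \ref{2period}) and $2\cdot 16^m$ (Corollary \ref{period.correct}), and applies the skew-symmetric version of Proposition \ref{WR/W.2-group} at degree $8m+2\ge d-2$. The total return composite $W_0\to W_0$ is then $4\cdot 2\cdot 16^m = 8\cdot 16^m$, and the diagram chase gives the sharp exponent $2\cdot 8\cdot 16^m = 16^{m+1}$. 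So $8\cdot 16^m$ is indeed the number you want — it is just obtained via Lemma \ref{2period} and a change of symmetry, not via Corollaries \ref{4period} and \ref{period.correct} alone.
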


\begin{proof}
If $d=8m+3$ or $8m+4$, we consider the commutative diagram
\[\xymatrix{W_{0}(V)\ar[d]\ar[r]^{u_2} & 
{}_{-1}W_2(V) \ar[d] \ar[r] \ar@/^/[l]
& {}_{-1}W_{8m+2}(V) \ar[d] \ar@/^/[l] \\
WR_{0}(V)\ar[r]^{u_2} &{}_{-1}WR_2(V) \ar[r]\ar@/^/[l] & 
{}_{-1}WR_{8m+2}(V), \ar@/^/[l]}
\]
whose first pair of left-right composites are multiplication by 4
(see Lemma \ref{2period}), and whose second pair 
of left-right composites are multiplication by $2.16^m$, 
by Corollary \ref{period.correct})
By the skew-symmetric version of Proposition \ref{WR/W.2-group},
the kernel and cokernel of the right vertical map
have exponent~2.
The assertion about $2^r$ follows by diagram chasing.

When $d\not\equiv3,4\pmod8$, the assertions in the first part 
(about $2^r$) follow from Theorem \ref{W-WR.exponent}.
The last part of the theorem (about the signature map) is a consequence 
of the first part and Theorem \ref{WR-KO.exponents}. 
\end{proof}

\section{Co-Witt groups}\label{sec:co-Witt}

We briefly turn our attention to co-Witt groups, 
i.e., to the kernels $_{\varepsilon}W'_{n}$ and $_{\varepsilon}WR{\,}'_{n}$ 
of the forgetful maps $_{\varepsilon}{\kGW_n}\to K_n$ and
$_{\varepsilon}GR_n\to KR_n$,

Composing the injection ${}_{\varepsilon}W'_n(V)\to {}_{\varepsilon}{\kGW_n}(V)$
with the surjection ${}_{\varepsilon}{\kGW_n}(V)\to {}_{\varepsilon}W_n(V)$ 
gives a map $c:{}_{\varepsilon}W'_n(V) \to {}_{\varepsilon}W_n(V)$.
There is a similar map 
${}_{\varepsilon}WR{\,}'_n(V) \map{} {}_{\varepsilon}WR_n(V)$.
Recall from \cite[p.\,278]{MKAnnalsH} that the given maps $c$ fit
into compatible 12-term sequences, part of which is
\begin{equation}\label{eq:12term}
\begin{array}{ccccccc}
k'_n &\map{}&{}_{\varepsilon}W'_n(V)&\map{c}
            &{}_{\varepsilon}W_n(V) &\map{}&k_n
\\ 
\downarrow &&\downarrow \theta' && \downarrow & & \downarrow   \\ 
kr'_n &\map{}&{}_{\varepsilon}WR{\,}'_n(V)&\map{c}
             &{}_{\varepsilon}WR_n(V) &\map{}&kr_n
\end{array}
\end{equation}
where $k_{n}=H_1(G,K_{n}V)$ and $k'_{n}=H^{1}(G,K_{n}V)$,
$kr_n=H_1(G,KR_nV)$ and $kr'_n=H^{1}(G,K_{n}V)$.
($G$ acts on $K_n$ and $KR_n$ by duality.) 

\begin{lemma}\label{12-term}
The kernels and cokernels of 
${}_{\varepsilon}W'_n(V)\map{c}{}_{\varepsilon}W_n(V)$ and
${}_{\varepsilon}WR{\,}'_n(V)\map{c}{}_{\varepsilon}WR_n(V)$
have exponent~2.
\end{lemma}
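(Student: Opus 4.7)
The plan is to read off the lemma directly from the 12-term sequence \eqref{eq:12term}, together with the elementary observation that Tate cohomology of the cyclic group $G=C_2$ is annihilated by $|G|=2$.

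First I would recall that for any $G$-module $M$ with $G=C_2$, both $H^1(G,M)$ and $H_1(G,M)=\hat{H}^0(G,M)$ have exponent dividing 2. Applied to $M=K_n(V)$ and $M=KR_n(V)$, with $G$ acting by duality as in the excerpt, this shows that all four flanking groups
\[
k_n=H_1(G,K_nV),\quad k'_n=H^1(G,K_nV),\quad kr_n=H_1(G,KR_nV),\quad kr'_n=H^1(G,KR_nV)
\]
have exponent dividing~2.

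Next I would invoke exactness of the 12-term sequence from \cite[p.\,278]{MKAnnalsH}, of which \eqref{eq:12term} displays one four-term segment. Exactness at ${}_{\varepsilon}W'_n(V)$ identifies $\ker(c)$ with the image of $k'_n\to{}_{\varepsilon}W'_n(V)$, hence a quotient of $k'_n$; exactness at ${}_{\varepsilon}W_n(V)$ identifies $\coker(c)$ with the image of ${}_{\varepsilon}W_n(V)\to k_n$, hence a subgroup of $k_n$. Both are therefore of exponent dividing~2. The argument for $c:{}_{\varepsilon}WR{\,}'_n(V)\to{}_{\varepsilon}WR_n(V)$ is identical, using the lower row of \eqref{eq:12term} and the groups $kr'_n,kr_n$ in place of $k'_n,k_n$.

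The only genuine thing to verify is that the four-term segment displayed in \eqref{eq:12term} is really exact at each of its terms as a subsequence of the 12-term sequence; granted this, no further calculation is needed. I expect this exactness to be immediate from the construction of the sequence in \cite{MKAnnalsH}, so I do not foresee a serious obstacle.
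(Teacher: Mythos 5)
Your proposal is correct and matches the paper's proof exactly: both observe that the flanking terms $k_n,k'_n,kr_n,kr'_n$, being (Tate) cohomology groups of $G=C_2$, have exponent $2$, and then read the bound on $\ker(c)$ and $\coker(c)$ directly off the exactness of the 12-term sequence \eqref{eq:12term}.
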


\begin{proof}
Since $H_1(G,\!-\!)$ and $H^1(G,\!-\!)$ have exponent~2, the groups
$k_n$, $k'_n$, $kr_n$ and $kr'_n$ have exponent~2. The result 
is immediate from \eqref{eq:12term}.
\end{proof}

We now restrict to curves for simplicity.
Given the isomorphism $W(V)\cong WR(V)$ of Theorem \ref{curves:W=WR}
(and Proposition \ref{skewWR(curve)}),
the following result is immediate from Lemma \ref{12-term} 
and \eqref{eq:12term}.

\begin{corollary}\label{W'.upto2}
Let $V$ be a curve over $\R$. 
Then 
$W'(V)\map{\theta'}WR{\,}'(V)$
has kernel of exponent~2 and cokernel of exponent~4.

The same is true of
${}_{-1}W'(V)\map{\theta'} {}_{-1}WR{\,}'(V)$.
\end{corollary}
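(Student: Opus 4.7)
The plan is to treat the Corollary as a diagram chase in the commutative square extracted from \eqref{eq:12term}, using the two curve-specific inputs we already possess: Theorem \ref{curves:W=WR}, which makes $\theta: W(V) \to WR(V)$ an isomorphism, and Proposition \ref{skewWR(curve)}, which makes both ${}_{-1}W(V)$ and ${}_{-1}WR(V)$ vanish. The remaining ingredient is Lemma \ref{12-term}, which controls the horizontal $c$-maps in \eqref{eq:12term}.

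For the symmetric case, I consider the middle square of \eqref{eq:12term} at $n=0$, $\varepsilon = +1$:
\[
\begin{array}{ccc}
W'(V) & \map{c} & W(V) \\
\theta' \downarrow\ \  & & \ \  \downarrow \theta \\
WR{\,}'(V) & \map{c} & WR(V).
\end{array}
\]
Here $\theta$ is an isomorphism by Theorem \ref{curves:W=WR}, while both horizontal $c$-maps have kernel and cokernel of exponent $2$ by Lemma \ref{12-term}. For the kernel of $\theta'$: if $\theta'(x) = 0$ then $\theta c(x) = c \theta'(x) = 0$, so $x$ lies in $\ker c$ and $2x = 0$. For the cokernel: given $y \in WR{\,}'(V)$, apply the exponent-$2$ bound on $\coker(c\colon W'(V) \to W(V))$ to $\theta^{-1}c(y)$, producing $z \in W'(V)$ with $c(z) = 2 \theta^{-1} c(y)$. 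Then $c(2y - \theta'(z)) = 0$ in $WR(V)$, so $2y - \theta'(z)$ is $2$-torsion in $WR{\,}'(V)$ by Lemma \ref{12-term}, giving $4y - 2\theta'(z) = 0$ and hence $4y$ in the image of $\theta'$.

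For the skew-symmetric case, Proposition \ref{skewWR(curve)} gives ${}_{-1}W(V) = {}_{-1}WR(V) = 0$, so in the analogous square both $c$-maps become the zero map, and ${}_{-1}W'(V)$ and ${}_{-1}WR{\,}'(V)$ coincide with $\ker c$. Lemma \ref{12-term} then forces both groups to have exponent $2$, so $\theta'$ trivially has kernel and cokernel of exponent at most $2$, within the stated bounds. The only point requiring any care is the commutativity $\theta \circ c = c \circ \theta'$ of the square, which is automatic from the naturality of \eqref{eq:12term}, since both rows are built functorially from the forgetful and hyperbolic maps. No serious obstacle is anticipated.
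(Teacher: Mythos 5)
Your argument is exactly the diagram chase the paper declares ``immediate'': you use the commutative square from \eqref{eq:12term}, the exponent-$2$ bounds of Lemma~\ref{12-term}, the isomorphism $W(V)\cong WR(V)$ from Theorem~\ref{curves:W=WR}, and Proposition~\ref{skewWR(curve)} for the skew case, which is precisely the intended proof. Both the kernel bound and the two-step cokernel bound are carried out correctly, and the stronger exponent-$2$ conclusion you obtain in the skew-symmetric case is consistent with (indeed sharper than) the stated bound.
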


\goodbreak
In order to improve on this corollary, we need a
technical result.

\begin{proposition}\label{W'{-1}}
Let $V$ be a smooth curve over $R$ and $\varepsilon =\pm 1.$ Then the
canonical map $\theta'$ is an isomorphism:
\[
\theta': {}_{\eps}W'_{-1}(V)\map{\cong}\; {}_{\eps}WR{\,}'_{-1}(V).
\]
\end{proposition}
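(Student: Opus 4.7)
The plan is to combine a $2$-divisibility property coming from the comparison of algebraic and topological Hermitian $K$-theory with a bounded-exponent property coming from the $12$-term sequence \eqref{eq:12term}, and then squeeze $\ker(\theta')$ and $\coker(\theta')$ to zero.

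First I will show that $K_{-1}(V)=0$ for any smooth curve $V$ over $\R$. Writing $V=\bar V\setminus S$ with $\bar V$ smooth projective and $S$ a finite closed subscheme, the localization sequence together with the vanishing of negative $K$-theory of regular affine and regular zero-dimensional schemes reduces $K_{-1}(V)=0$ to $K_{-1}(\bar V)=0$, which in turn follows from another application of the localization sequence after removing a further closed point. Consequently
\[
{}_{\eps}W'_{-1}(V)=\ker\bigl({}_{\eps}{\kGW_{-1}}(V)\to K_{-1}(V)\bigr)={}_{\eps}{\kGW_{-1}}(V),
\]
and both $k'_{-1}=H^{1}(G,K_{-1}V)$ and $k_{-1}=H_{1}(G,K_{-1}V)$ vanish. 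By \eqref{eq:12term} and Lemma~\ref{12-term}, the algebraic map $c\colon{}_{\eps}W'_{-1}(V)\to{}_{\eps}W_{-1}(V)$ is therefore an isomorphism, and under these identifications $\theta'$ is simply the restriction to ${}_{\eps}WR'_{-1}(V)$ of the canonical comparison ${}_{\eps}{\kGW_{-1}}(V)\to{}_{\eps}GR_{-1}(V)$.

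For divisibility I will use the comparison long exact sequence
\[
{}_{\eps}GW^{c}_{-1}(V)\to{}_{\eps}{\kGW_{-1}}(V)\to{}_{\eps}GR_{-1}(V)\map{\partial}{}_{\eps}GW^{c}_{-2}(V).
\]
For a curve ($d=1$), Theorem~\ref{GW=GR} in the case $\eps=+1$ and Theorem~\ref{GW=GR{-1}} together with its subsequent remark in the case $\eps=-1$ give that ${}_{\eps}GW^{c}_{-1}(V)$ is uniquely $2$-divisible while ${}_{\eps}GW^{c}_{-2}(V)$ is $2$-torsion free. Hence $\ker(\theta')$, being a quotient of ${}_{\eps}GW^{c}_{-1}(V)$, is $2$-divisible, and $\coker(\theta')$, embedding via $\partial$ into ${}_{\eps}GW^{c}_{-2}(V)$, is $2$-torsion free.

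For the exponent bound, Proposition~\ref{WR/W.2-group} gives that $\theta_{-1}\colon{}_{\eps}W_{-1}(V)\to{}_{\eps}WR_{-1}(V)$ has kernel and cokernel of exponent~$2$ (since $-1\ge d-2$), while Lemma~\ref{12-term} gives that the topological map $c\colon{}_{\eps}WR'_{-1}(V)\to{}_{\eps}WR_{-1}(V)$ also has kernel and cokernel of exponent~$2$. A short diagram chase in the commutative square $c\circ\theta'=\theta_{-1}\circ c$, using that the algebraic $c$ is an isomorphism, then forces $\ker(\theta')$ to have exponent~$2$ and $\coker(\theta')$ to have exponent dividing~$4$. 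Combining the two inputs, a $2$-divisible group of bounded $2$-power exponent must vanish and a $2$-torsion-free group of bounded $2$-power exponent must vanish, so $\ker(\theta')=\coker(\theta')=0$ and $\theta'$ is an isomorphism. The main obstacle will be carrying out the diagram chase cleanly and matching up the $\eps=-1$ divisibility statements with Theorem~\ref{GW=GR{-1}}; once those are in place the proposition follows from the squeeze.
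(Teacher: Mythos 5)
Your proof is correct, and for the surjectivity half it takes a genuinely different and more uniform route than the paper. The paper establishes injectivity essentially as you do (the kernel of $\theta'$ is identified with $\ker\bigl({}_{\eps}\kGW_{-1}(V)\to{}_{\eps}GR_{-1}(V)\bigr)$, which is $2$-divisible by the ${}_{\eps}GW^c$ comparison and of bounded exponent by Proposition~\ref{WR/W.2-group} and the $12$-term sequence); your added observation that $K_{-1}(V)=0$ forces $k'_{-1}=k_{-1}=0$, and hence that the algebraic $c$ is an \emph{isomorphism}, tightens the exponent bound from~$4$ to~$2$ but is not logically essential. For surjectivity, the paper shifts into $U$-theory: it rewrites ${}_{\eps}W'_{-1}(V)$ as $\coker\bigl(K_0(V)\to{}_{-\eps}U_0(V)\bigr)$ via \eqref{seq:U-K-GW}, compares with the analogous cokernel in $UR$-theory, and finishes with a case analysis in $\eps$ that feeds in Proposition~\ref{skewWR(curve)} (${}_{-1}W(V)=0$) when $\eps=+1$ and Theorem~\ref{curves:W=WR} ($W(V)\cong WR(V)$) when $\eps=-1$. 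You instead observe directly that, since the image of ${}_{\eps}\kGW_{-1}(V)$ in ${}_{\eps}GR_{-1}(V)$ lands inside ${}_{\eps}WR'_{-1}(V)$, the group $\coker(\theta')$ is a subgroup of $\coker\bigl({}_{\eps}\kGW_{-1}\to{}_{\eps}GR_{-1}\bigr)$, which injects into ${}_{\eps}GW^c_{-2}(V)$ and is therefore $2$-torsion-free by Theorems~\ref{GW=GR} and \ref{GW=GR{-1}} (with $d-3=-2$); paired with the exponent-$4$ bound from the diagram chase, this kills $\coker(\theta')$. Your route is shorter, avoids the $\eps=\pm1$ case split, and does not lean on Proposition~\ref{skewWR(curve)} or Theorem~\ref{curves:W=WR}; the paper's route, by passing through $U$-theory, yields more explicit structural information (the identification of ${}_{\eps}W'_{-1}(V)$ as a cokernel with the symmetry flipped) that is reused in the surrounding development.
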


\begin{proof}
Consider the following commutative diagram%
\begin{equation*}
\begin{array}{ccccccc}
0 & \to & _{\varepsilon }W'_{-1}(V) & \map{\cong} & 
{}_{\varepsilon }{\kGW_{-1}}(V) & \to & K_{-1}(V)=0 \\ 
&  & \downarrow \theta' &  & \downarrow &  & \downarrow \\ 
0 & \to & _{\varepsilon }WR{\,}'_{-1}(V) & \to & 
_{\varepsilon }GR_{-1}(V) & \to & KR_{-1}(V)%
\end{array}%
\end{equation*}%
By a chase, $\ker(\theta')$ equals the kernel of 
$_{\varepsilon }{\kGW_{-1}}(V)\to{}_{\varepsilon}GR_{-1}(V).$ 
It is 2-divisible by Theorems \ref{GW=GR} and \ref{GW=GR{-1}},
and exponent~4 by Proposition \ref{WR/W.2-group}.
Therefore, $\ker(\theta')=0$.

To prove the surjectivity of $\theta'$, we use the sequence 
\eqref{seq:U-K-GW}: 
\[
K_{0}(V)\to {}_{-\varepsilon }U(V)\to {}_{\varepsilon
}{\kGW_{-1}}(V)\to K_{-1}(V)=0.
\]
This shows that $_{\varepsilon}W'_{-1}(V)$ is the cokernel of 
$K_{0}(V)\to{}_{-\varepsilon}U(V)$ (note the change of symmetry). 
Therefore, it is
enough to prove the surjectivity of the map between 
$\coker(K_{0}(V)\to {}_{-\varepsilon}U(V))$ and 
$\coker(KR_{0}(V)\to {}_{-\varepsilon }UR(V))$.
For this, we write the same diagram as in the proof of 
Theorem \ref{curves:W=WR}, with $U$-theory in lieu of $GW$-theory: 
\begin{equation*}
\begin{array}{ccccccc}
K_{0}(V) & \map{\alpha} & _{-\varepsilon }U_{0}(V) & \to & 
_{\varepsilon }{\kGW_{-1}}(V) & \to & K_{-1}(V)=0 \\ 
\downarrow &  & \downarrow \delta &  & \downarrow &  & \downarrow \\ 
KR_{0}(V) & \map{\beta} & _{-\varepsilon }UR_{0}(V) & \to & 
_{\varepsilon }GR_{-1}(V) & \to & KR_{-1}(V) \\ 
\downarrow &  & \downarrow \gamma &  & \downarrow &  & \downarrow \cong
\\ 
K_{-1}^{c}(V) & \to & _{-\varepsilon }U_{-1}^{c}(V) & \to & 
_{\varepsilon }GW_{-2}^{c}(V) & \to & K_{-2}^{c}(V)%
\end{array}
\end{equation*}
In this case, the top two rows are the exact sequences
\eqref{seq:U-K-GW} and \eqref{eq:UR-KR-GR}, and
$K_0(V)\to KR_0(V)$ is onto by Example \ref{ex:KRcurve}.
Hence the cokernel of $\coker(\alpha)\to\coker(\beta)$
injects into $_{-\varepsilon }U_{-1}^{c}(V)$.

The exact sequence in the second column may be extended to:
\begin{equation*}
_{-\varepsilon}U_{0}(V)\map{\delta} {}_{-\varepsilon}UR_{0}(V)\map{\gamma}
{}_{-\varepsilon }U_{-1}^{c}(V)\to {}_{-\varepsilon}U_{-1}(V)
\to {}_{-\varepsilon}UR_{-1}(V).
\end{equation*}%
Since $K_{-1}(V)=0,$ we see from \eqref{seq:U-K-GW} that the Witt group 
$_{-\varepsilon }W_{0}(V)$ is the group $_{-\varepsilon }U_{-1}(V)$.
We now distinguish the cases $\eps=\pm1$.

If $\varepsilon =1,$ the group ${}_{-1}W(V)$ is $0$ by
Proposition \ref{skewWR(curve)} and therefore $\gamma$ is onto. 
On the other hand, $_{-\varepsilon}UR_{0}(V)$ is finitely generated 
and $_{-\varepsilon }U_{-1}^{c}(V)$ is
uniquely $2$-divisible by Corollary \ref{U=UR}.  This implies that 
the image of $\gamma $ is a finite abelian group of odd order, i.e. 
$0$ after localizing at~2. 
Therefore, the map $\coker(\alpha)\to\coker(\beta)$ is onto.

If $\varepsilon =-1,$ the classical Witt group $W(V)={}_{-\varepsilon
}W_{0}(V)={}_{-\varepsilon }U_{-1}(V)$ is inserted in the exact sequence%
\begin{equation*}
_{-\varepsilon }U_{-1}^{c}(V)\to W(V)\to {}_{-\varepsilon
}UR_{-1}(V)\to {}_{-\varepsilon }U_{-2}^{c}(V)
\end{equation*}%
Since $_{-\varepsilon }U_{-1}^{c}(V)$ is uniquely 2-divisible and 
$W(V)=WR(V)$ is finitely generated, the first map is reduced to $0.$ 
This implies again that $\gamma$ is onto and therefore 
$_{-\varepsilon}U_{-1}^{c}(V)=0$. 
We now finish the proof as in the case $\varepsilon =1.$
\end{proof}

\goodbreak
\begin{theorem}\label{W'(curves)} 
Let $V$ be a smooth irreducible curve over $\R$. Then the map $\theta'$
between co-Witt groups is an injection whose cokernel $E$
is a finite group of exponent 2. 
\begin{equation*}
0\to {}_{\varepsilon }W'(V) \map{\theta'}~{}_{\varepsilon}{WR}'(V)\to E\to 0.
\end{equation*}
If $V$ is projective of genus $g$
then $\textrm{rank}(E)=g$; if $V$ is obtained from a projective curve 
by removing $r$ points then $g\le\textrm{rank}(E)\le g+r-1$.
\end{theorem}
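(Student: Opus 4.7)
The plan is to handle the three assertions of the theorem---injectivity of $\theta'$, the exponent-two bound on the cokernel, and the rank formulas---in sequence, treating $\varepsilon=\pm 1$ uniformly.

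For the injectivity of $\theta'$, the key observation is that $\ker(\theta')\subset \ker(\phi)$, where $\phi\colon {}_{\varepsilon}GW_0(V)\to {}_{\varepsilon}GR_0(V)$. The kernel of $\phi$ is a quotient of the comparison group ${}_{\varepsilon}GW^c_0(V)$, which by Theorem \ref{GW=GR} (or Theorem \ref{GW=GR{-1}} in the skew case) applied with $n=0\ge d-2=-1$ is uniquely $2$-divisible, hence $2$-torsion free. Combined with Corollary \ref{W'.upto2}, which gives $\ker(\theta')$ of exponent $2$, we conclude $\ker(\theta')=0$, since any $2$-torsion element of a $2$-torsion free group is trivial.

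For the exponent-two bound on the cokernel, I perform a diagram chase in the $n=0$ layer of \eqref{eq:12term}. Given $b\in{}_{\varepsilon}WR'_0(V)$, its image $c(b)\in{}_{\varepsilon}WR_0(V)$ lifts via $\theta^{-1}$ (using the isomorphism $\theta\colon {}_{\varepsilon}W_0(V)\cong {}_{\varepsilon}WR_0(V)$ from Theorem \ref{curves:W=WR} together with Proposition \ref{skewWR(curve)}) to a unique $w\in{}_{\varepsilon}W_0(V)$. Commutativity of \eqref{eq:12term} forces the image $d(w)$ in $k_0$ to satisfy $\beta(d(w))=0$, where $\beta\colon k_0\to kr_0$ is the natural map of Tate cohomology groups. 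The assignment $[b]\mapsto d(w)$ thus gives a well-defined homomorphism $\psi\colon \coker(\theta')\to\ker(\beta)$, and a further chase (using that the corresponding map $\alpha\colon k'_0\to kr'_0$ is surjective, which will be verified) shows $\psi$ is injective. Since $\ker(\beta)$ is a subgroup of the Tate cohomology group $k_0=H_1(G;K_0(V))$, it has exponent $2$; hence so does $\coker(\theta')$. Finiteness of the cokernel follows from the finite generation of ${}_{\varepsilon}WR'_0(V)$ in Corollary \ref{GR.fingen}.

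For the rank formulas, I compute $\ker(\beta)$ via the long exact sequence of group homology for $G=C_2$ applied to the short exact sequence $0\to\ker(K_0(V)\to KR_0(V))\to K_0(V)\to KR_0(V)\to 0$. For smooth projective $V$ of genus $g$, the kernel contains the divisible subgroup $\Pic^0(V_\C)(\R)\cong(\R/\Z)^g$, on which the duality action of $G$ is negation; a direct calculation with the Tate sequence (using $H_1(C_2;(\R/\Z)^g)=0$ while $H_0$ contributes $(\Z/2)^g$) gives $\ker(\beta)\cong(\Z/2)^g$. Surjectivity of $\psi$ is then established by matching this $(\Z/2)^g$ with the $(\Z/2)^g$-summand of $W_0(V)\cong\Z^\nu\oplus(\Z/2)^g$ from Theorem \ref{W(real curve)} (respectively, the analogous summand from Theorem \ref{W(curve)} when $V$ has no $\R$-points). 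For an affine curve obtained from a smooth projective $\bar V$ of genus $g$ by removing $r$ points, the localization sequences in $K$- and $KR$-theory (Example \ref{KR.affinecurve}) adjust the kernel of $K_0(V)\to KR_0(V)$ by at most rank-$r$ contributions from the punctures, yielding the bounds $g\le\textrm{rank}(E)\le g+r-1$.

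The main obstacle will be proving surjectivity of $\psi$---that every element of $\ker(\beta)$ actually lifts to $\coker(\theta')$---rather than merely bounding the cokernel. This requires identifying the image of $d\colon {}_{\varepsilon}W_0(V)\to k_0$ and matching it with the $(\Z/2)^g$-contribution to $k_0$ coming from $\Pic^0(V_\C)(\R)$. The interplay between the component group of $W_0(V)$ and the duality action on $K_0(V)$ is the technical point where the bulk of the work will lie.
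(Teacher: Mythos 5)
Your overall strategy---a diagram chase through the 12-term sequence \eqref{eq:12term}, identifying $\coker(\theta')$ with the kernel of $\beta\colon k_0\to kr_0$ and then computing that kernel from the split short exact sequence $0\to D\to K_0(V)\to KR_0(V)\to 0$---is essentially the paper's route. The paper extends \eqref{eq:12term} to a $6$-term to $6$-term ladder, uses Proposition \ref{W'{-1}} to get the isomorphism ${}_{-\varepsilon}W'_{-1}(V)\cong{}_{-\varepsilon}WR{\,}'_{-1}(V)$ on the far left, observes $k'_0\to kr'_0$ is an isomorphism while $k_0\to kr_0$ is a split surjection with kernel ${}_2D$, and then a single chase gives the short exact sequence $0\to W'(V)\to WR{\,}'(V)\to{}_2D\to0$ in one stroke.

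Two genuine gaps remain in your version. First, the injectivity argument: you claim that $\ker(\phi)$, being a quotient of the uniquely $2$-divisible group ${}_\varepsilon GW_0^c(V)$, is $2$-torsion free. This is false in general: a quotient of a uniquely $2$-divisible group is $2$-divisible, but not $2$-torsion free (e.g.\ $\Z[1/2]/\Z$). Since $GR_1(V)$ is finitely generated, its image in $GW_0^c(V)$ need not be $2$-divisible, so the quotient $\ker(\phi)$ can have $2$-torsion, and the conclusion $\ker(\theta')=0$ does not follow from the exponent-$2$ bound of Corollary \ref{W'.upto2}. The paper instead gets injectivity from the diagram chase (a $5$-lemma type argument), whose key new ingredient is Proposition \ref{W'{-1}}; you will need some substitute for that input. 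Second, you flag surjectivity of $\psi\colon\coker(\theta')\to\ker(\beta)$ as ``the main obstacle'' and do not close it. In the paper that surjectivity drops out of the same chase, using the vanishing of the rightmost terms ${}_{-\varepsilon}W_1(V)\cong{}_{-\varepsilon}WR_1(V)$; without it you only bound $\operatorname{rank}(E)$ from above and do not prove $\operatorname{rank}(E)=g$ in the projective case.

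A smaller point: you write $H_1(C_2;(\R/\Z)^g)=0$ ``while $H_0$ contributes $(\Z/2)^g$'' with $G$ acting by negation. The indexing is reversed. With $\sigma=-1$ on $D=(\R/\Z)^g$ one has $H_1(C_2,D)=\ker(1-\sigma)/N D={}_2D=(\Z/2)^g$ and $H_0(C_2,D)=D/(1-\sigma)D=D/2D=0$. Since $\ker(\beta)$ is the image of $H_1(G,D)\to k_0$ in the long exact sequence, it is the $H_1$ term (not $H_0$) that accounts for the $(\Z/2)^g$. Your final answer is correct, but as written the computation would give $\ker(\beta)=0$.
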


\begin{proof}
We extend \eqref{eq:12term} slightly to the diagram
\begin{equation*}
\begin{array}{ccccccccccc}
{}_{-\varepsilon }W_{-1}' & \to & k_{0}' & 
\to & {}_{\varepsilon }W' & \to & {}_{\varepsilon }W
& \to & k_{0} & \to & _{-\varepsilon }W_{1} \\ 
\quad \downarrow \cong && \;\downarrow \cong && \downarrow \theta' && 
\downarrow \cong &  & \downarrow \rlap{\text{onto}} &  & \downarrow \cong\\ 
{}_{-\eps}WR{\,}'_{-1} & \to & kr_{0}^{'} & 
\to & {}_{\varepsilon }WR{\,}' & \to & {}_{\varepsilon
}WR & \to & kr_{0} & \to & _{-\varepsilon }WR_{1}.
\end{array}%
\end{equation*}%
%

We first explain the decorations on the vertical maps in the diagram.
Now $K_0(V)\to KR_0(V)$ is a split surjection (by Example \ref{ex:KRcurve}),
and $G$ acts as $-1$ on the kernel $D$, which is divisible.
Therefore the vertical $k'_0\to kr'_0$ is an isomorphism,
and the vertical $k_0\to kr_0$ is a split surjection with kernel 
${}_2D=\Hom(\Z/2,D)$.
The left vertical map is an isomorphism by Proposition \ref{W'{-1}},
and the map ${}_{\varepsilon}W\to{}_{\varepsilon}WR$ is an isomorphism
by Theorem \ref{curves:W=WR} and Proposition \ref{skewWR(curve)}.

If $\varepsilon=+1$, then 
$_{-\varepsilon}W_1(V)={}_{-\varepsilon}WR_1(V)=0$
by Corollary \ref{skewWR(curve)}.  If $\varepsilon=-1$, then 
$_{-\varepsilon}W_1(V)\cong {}_{-\varepsilon}WR_1(V)=0$
by Theorem \ref{curves:W=WR}.

By a diagram chase, we have an exact sequence
\[
0 \to {}_{\varepsilon}W'(V) \to {}_{\varepsilon}WR{\,}'(V) \to {}_2D \to0.
\]
It remains to observe that if $V$ is projective then $D\cong(\R/\Z)^g$
(so $_2D=(\Z/2)^g$), and if $V$ has $r$ points at infinity then 
$D$ is the cokernel of a map $\Z^{r-1}\to(\R/\Z)^g$, which gives the
bounds $g\le \dim({}_2D)\le g+r-1$.
\end{proof}

\newpage
\section{Higher Witt and co-Witt groups of $\R$ and $\C$}\label{sec:WnR}

In this section, we determine the Witt groups 
$W_n(\R)$, $W_n(\C)$ and co-Witt groups $W'_n(\R)$, $W'_n(\C)$ for $n>0$.
We will show that the
canonical maps $W_n(\R)\to WR_n(\R)$ and $W_n(\C)\to WR_n(\C)$ are 
almost isomorphisms, and similarly for $W'_n(\C)\to WR{\,}'_n(\C)$.
We also show that the maps $W'_n(\R)\to WR{\,}'_n(\R)$ are isomorphisms 
for all $n>0$. 

For $\R$, recall from Example \ref{WR(X^G)} 
that both $WR_n(\R)=WR_n(\Spec\R)$ (called $W^\topl_n(\R)$ in 
Appendix \ref{app:Bott}) and the co-Witt groups $WR{\,}'_n(\R)$ are 
isomorphic to the 8-periodic groups $KO_n(\pt)$.

\begin{theorem}\label{Wn(R)}
The homomorphism $W_n(\R)\map{\theta_n} WR_n(\R)\cong KO_n$ 
is an isomorphism for all $n>0$, except when $n\equiv0\pmod4$
when the group $W_n(\R)\cong\Z$ injects into $WR_n(\R)\cong\Z$ 
as a subgroup of index~2.
\end{theorem}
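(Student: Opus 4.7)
My plan is to combine the zero-dimensional case of the comparison theorems from Sections \ref{sec:KR} and \ref{sec:williams} with Brumfiel's theorem \ref{thm:Brumfiel} and the Bott element computations in Appendix \ref{app:Bott}. I begin by fitting $\theta_n$ into the commutative diagram of hyperbolic exact sequences
\begin{equation*}
\begin{array}{ccccccc}
K_n(\R) & \xrightarrow{H} & \kGW_n(\R) & \to & W_n(\R) & \to & 0 \\
\downarrow & & \downarrow & & \downarrow \theta_n & & \\
KR_n(\R) & \xrightarrow{H} & GR_n(\R) & \to & WR_n(\R) & \to & 0
\end{array}
\end{equation*}
and identifying the bottom row via Example \ref{WR(X^G)}(a): for $V=\Spec(\R)$ with $X=\pt$ carrying the trivial $G$-action, $KR_n(\R)=KO_n$, $GR_n(\R)=KO_n\oplus KO_n$, the hyperbolic is the diagonal, and $WR_n(\R)=KO_n$.

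The first step is to show $\theta_n$ is injective for every $n>0$. Applying Theorem \ref{GW=GR} at $d=0$ gives that $GW_m^c(\R)$ is uniquely $2$-divisible for all $m\ge -2$; the long exact sequence of Remark \ref{rem:GW^c} then implies that $\ker(\kGW_n(\R)\to GR_n(\R))$ is a quotient of $GW_n^c(\R)$, hence uniquely $2$-divisible, and a diagram chase shows $\ker(\theta_n)$ inherits this property. Since Theorem \ref{W=WR.mod2tors} says $\ker(\theta_n)$ is $2$-primary torsion, and a group that is simultaneously uniquely $2$-divisible and $2$-primary torsion must vanish, $\theta_n$ is injective.

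Next I split into cases according to whether $KO_n$ is torsion. When $n\not\equiv 0\pmod 4$ and $n>0$, $KO_n\in\{0,\Z/2\}$ is a finite $2$-group, and the same interplay between "uniquely $2$-divisible cokernel" (from Theorem \ref{GW=GR} via the comparison fibre) and "finite $2$-group cokernel" (from Theorem \ref{W=WR.mod2tors}) forces $\coker(\theta_n)=0$, so $\theta_n$ is an isomorphism. When $n\equiv 0\pmod 4$, $KO_n=\Z$, and Brumfiel's theorem \ref{thm:Brumfiel} gives $W_n(\R)\otimes\Z[1/2]\cong\Z[1/2]$; combined with injectivity this identifies $W_n(\R)$ with a rank-one lattice, so $\theta_n$ is multiplication by some positive integer $2^k$ (a power of $2$ by Theorem \ref{W=WR.mod2tors}).

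The hard part is showing $k=1$. I expect this to come from the explicit hermitian Bott element analysis of Appendix \ref{app:Bott}: the generator of $W_4(\R)\cong\Z$ is represented (up to uniquely divisible corrections) by the algebraic Bott class in $\kGW_4(\R)$, whose image in $GR_4(\R)=\Z^{2}$ maps to twice the topological Bott generator of $WR_4(\R)=KO_4$. Once this degree-$4$ computation is in place, the conclusion in every degree $8m$ and $8m+4$ with $m>0$ follows by the $4$-periodicity of Corollary \ref{4period}, applied to the commuting vertical periodicity maps between the algebraic and topological Witt groups, together with the $n\not\equiv 0\pmod 4$ isomorphism already established. This final identification of the precise factor of $2$ between the algebraic and topological Bott generators is the main obstacle and is the place where Appendix \ref{app:Bott} is indispensable.
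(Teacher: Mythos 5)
Your outline reproduces the right skeleton—injectivity via unique $2$-divisibility of the comparison groups $GW_*^c(\R)$, surjectivity/index analysis per residue of $n$ mod $8$—but at the two decisive points it substitutes a heuristic for a proof, and the heuristic is not strong enough.

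\textbf{Surjectivity for $n\equiv1,2\pmod8$.} You claim the ``interplay between uniquely $2$-divisible cokernel and finite $2$-group cokernel'' forces $\coker\theta_n=0$. This does not follow. What the comparison sequence gives is that $\coker\big(\beta:\kGW_n(\R)\to GR_n(\R)\big)$ embeds in the uniquely $2$-divisible group $GW_{n-1}^c(\R)$, hence is $2$-torsion-free; but $\coker(\theta_n)$ is then only a \emph{quotient} of that $2$-torsion-free group (by the image of $\ker(w)$), and quotients of torsion-free groups can certainly have $2$-torsion. Indeed $\coker(\theta_n)$ of exponent $2$ is precisely what Proposition \ref{WR/W.2-group} already gives you, and nothing in your formal chase pushes it to zero. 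The paper instead hits $KO_n\cong\Z/2$ directly: the $J$-homomorphism $\pi_n^s\to KO_n$ is surjective for $n\equiv1,2\pmod8$ and factors through $\varinjlim_m\pi_nBO^+_{m,m}(\R)=GW_n(\R)$, so the nonzero class of $WR_n(\R)$ is realized algebraically. That explicit geometric input is indispensable and missing from your proposal.

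\textbf{Index two when $n\equiv0\pmod4$.} Your Bott-element plan gives only one of the two needed inequalities. Appendix \ref{app:Bott} produces an element $u_{4k}\in GW_{4k}(\Z[\tfrac12])$ whose image in $W^{\topl}_{4k}(\R)\cong\Z$ is \emph{twice} a generator; this shows $2\Z\subseteq\mathrm{im}(\theta_n)$, i.e.\ the index is \emph{at most} $2$ (which you could also read off from Proposition \ref{WR/W.2-group}). It does \emph{not} identify the generator of $W_n(\R)$ with that Bott class, nor rule out the possibility that some other algebraic class hits a generator of $WR_n(\R)$, so it gives no lower bound on the index. The paper supplies the missing inequality from a different source: the map of $12$-term sequences, where $k_n(\R)=H_1(G,K_n\R)=0$ while $kr_n(\R)=H_1(G,KO_n)\cong\Z/2$, forces the image of $W_n(\R)$ to land in the index-two subgroup $\ker\big(WR_n(\R)\twoheadrightarrow kr_n(\R)\big)$. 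Without that (or an equivalent obstruction), you only know the index divides $2$; you cannot conclude it equals $2$.

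\textbf{A smaller but real issue.} Your injectivity argument ``a diagram chase shows $\ker(\theta_n)$ inherits unique $2$-divisibility'' quietly assumes you can correct a lift $x\in GW_n(\R)$ of $\bar x\in\ker(\theta_n)$ by a hyperbolic class to get $\beta(x)=0$. That works only when $K_n(\R)\to KR_n(\R)\cong KO_n$ is onto, which fails for $n\equiv0\pmod4$ (the source is torsion, the target is $\Z$). The paper handles this case separately: it first arranges $2x=0$ using that $\ker(\theta_n)$ has exponent $2$ and $K_n(\R)$ is $2$-divisible, and then uses that $\ker(w)\cong KO_n\cong\Z$ is torsion-free to conclude $\beta(x)=0$. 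You need an argument of this finer type for the $n\equiv0\pmod4$ residues.
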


\begin{proof}
We first show that $\theta_n$ is injective for all $n>0$.
Take an element $\bar{x}$ in the kernel of $W_n(\R)\to WR_n(\R)$ and
lift it to an element $x$ of $GW_n(\R)$. 

\begin{equation*}
\begin{array}{ccccccc}
K_{n}^{c}(\R) & \to & GW_{n}^{c}(\R) &  &  &  &  \\ 
\downarrow &  & \downarrow u &  &  &  &  \\ 
K_{n}(\R) & \map{H} & GW_{n}(\R) & \map{v} & W_{n}(\R) & \to & 0 \\ 
\downarrow &  & \downarrow \beta &  & \downarrow \theta _{n} &  &  \\ 
KO_{n} & \to & GR_{n}(\R) & \overset{w}{\to } & WR_{n}(\R)
& \to & 0.
\end{array}%
\end{equation*}%

If $n\not\equiv0\pmod4$, the map $K_n(\R)\to KO_n(\pt)$ is onto.
By a diagram chase, we can modify $x$ by an element $H(a)$
to assume that $\beta(x)=0$. Since $GW^c_n(\R)$ is 2-divisible
(by Theorem \ref{GW=GR}), there is a $y\in GW^c_n(\R)$ such that
$u(2y)=x$ and hence $\bar{x}=2vu(y)$. Since the kernel of 
$\theta_n$ has exponent~2 (by Theorem \ref{WR/W.2-group})
this yields $\bar{x}=0$. Hence $\theta_n$ is an injection for these $n$.

If $n\equiv0,4\pmod8$, $2x$ comes from the 2-divisible $K_n(\R)$; 
modifying $x$ by $H(a)$, where $H(2a)=2x$, we may assume that $2x=0$.
Hence $\beta(x)$ is a torsion element in $GR_n(\R)$ lying in $\ker(w)$.
As pointed out in Example \ref{WR(X^G)}, $\ker(w)\cong KO_n\cong\Z$.
Thus $\beta(x)=0$. As before, there is a $y\in GW^c_n(\R)$ such
that $x=u(2y)$ and hence $\bar{x}=2\,vu(y)=0$.

Next, we show that $\theta_n$ is a surjection for $n\not\equiv0,4\pmod8$.
This is trivial for $n\equiv3,5,6,7\pmod8$, 
as $WR_n(\R)=0$ for these values. To see that $\theta_n$
is a surjection when $n\equiv1,2$ we need to show that the 
nonzero element of $WR_n(\R)\cong\Z/2$ is in
the image of $\theta_n$. This is true because the $J$-homomorphism 
$\pi_n^s\to KO_n$ is surjective, factoring as
\[
\pi_n^s = \pi_nB\Sigma^+\to\varinjlim\nolimits_m \pi_nBO^+_{m,m}(\R)
\to\pi_n BGL^+(\R)\to \pi_nBO = KO_n ,
\]
and $\varinjlim_m \pi_n BO^+_{m,m}(\R)=GW_n(\R)$ for $n>0$.

Now suppose that $n\equiv0,4\pmod8$. We know that $W_n(\R)$
injects into $WR_n(\R)\cong\Z$ as a subgroup of index at most~2,
by Theorem \ref{WR/W.2-group}. The 12-term sequences 
of \cite[p.\,278]{MKAnnalsH}
for $\R$ and $\R^\topl$ yield a commutative diagram
\[
\begin{array}{ccc}
\Z\cong W_n(\R) &\hookrightarrow& WR_n(\R)\cong\Z \\
\downarrow &&\downarrow\ \textrm{onto} \\
0=k_n(\R) & \map{} & kr_n(\R)\cong\Z/2.
\end{array}
\]
where $k_n(\R)=H_1(G,K_n\R)=0$ and $kr_n(\R)=H_1(G,KO_n)=\Z/2$.
This shows that the cokernel of $W_n(\R) \to WR_n(\R)$ is $\Z/2$.
\end{proof}

We have a stronger result for the co-Witt groups. Recall 
from Example \ref{WR(X^G)} that $GR_n(\R)\cong KO_n\oplus KO_n$
and that the forgetful map to $KR_n(\R)\cong KO_n$ is identified
with addition, so $WR{\,}'_n(\R)\cong KO_n$.
Regarding $\Spec(\C)$ as $X=S^0$ with the nontrivial involution, we also
see from Example \ref{WR(X^G)} that $GR_n(\C)\cong KO_n$.

\begin{proposition}
For all $n>0$, the map $W'_n(\R)\to WR{\,}'_n(\R)\cong KO_n$ 
is an isomorphism.
\end{proposition}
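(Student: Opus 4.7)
The plan is to use the commutative diagram of the $12$-term exact sequences \eqref{eq:12term} for $V=\R$ and its topological counterpart, leveraging Theorem \ref{Wn(R)} together with Theorems \ref{GW=GR} and \ref{singular.KW}. As a preliminary step, I will identify the auxiliary groups $k_n(\R)=H_1(G,K_n\R)$, $k'_n(\R)=H^1(G,K_n\R)$ and their topological counterparts $kr_n(\R)$, $kr'_n(\R)$. By Theorem \ref{singular.KW}, $K_n(\R)$ differs from $KR_n(\R)=KO_n$ only by a uniquely $2$-divisible summand, which is invisible to $G=C_2$-cohomology; so these groups are computable as $H^*(G,KO_n)$ for the $G$-actions induced by duality, as essentially done in the proof of Theorem \ref{Wn(R)}.

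The proof then proceeds by a case analysis on $n\bmod 8$. For $n\equiv 3,5,6,7\pmod 8$, every relevant group vanishes and $\theta'_n$ is trivially an isomorphism. For $n\equiv 1,2\pmod 8$, Theorem \ref{Wn(R)} gives $\theta_n$ as an isomorphism, and a $5$-lemma applied to the comparison of $12$-term sequences yields that $\theta'_n$ is also an isomorphism.

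The main obstacle is the case $n\equiv 0,4\pmod 8$, where $\theta_n\colon\Z\hookrightarrow\Z$ is only the inclusion of an index~$2$ subgroup, so a direct $5$-lemma fails. Here the key observation is that the inclusion $WR'_n(\R)\hookrightarrow WR_n(\R)$ also has image $2\Z$, because $kr_n(\R)=\Z/2$ and $WR_n(\R)\to kr_n(\R)$ is surjective; consequently $\theta_n$ factors through an isomorphism $W_n(\R)\cong WR'_n(\R)$. Combining this with the isomorphism $W'_n(\R)\cong W_n(\R)$ obtained from the vanishing of $k_n(\R)$ and the other end of \eqref{eq:12term} (where I will use Theorem \ref{GW=GR} and $2$-adic completion to rule out a $\Z\oplus\Z/2$ extension that is a priori compatible with the sequence), commutativity of the $12$-term diagram forces $\theta'_n$ to coincide with the composite isomorphism $W'_n(\R)\cong W_n(\R)\cong WR'_n(\R)$.
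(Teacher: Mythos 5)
Your proposal departs substantially from the paper's own proof. The paper establishes injectivity of $\theta'_n$ uniformly for all $n>0$ by observing that the evident pullback of $W'_n(\R)$ in $GW^c_n(\R)$ is $2$-divisible (using Corollary \ref{Kc-u.d.} and Theorem \ref{GW=GR}) and surjects onto $\ker(\theta'_n)$, which also has bounded exponent, hence is zero; it then establishes surjectivity by an explicit diagram chase in the $2$-adically completed Berrick--Karoubi square \eqref{square:BK} relating $\GW(\Z[\frac{1}{2}])$ to $\GW(\R^\topl)$, $\GW(\mathbb{F}_3)$, $\GW(\C^\topl)$, showing the cokernel has odd order. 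Your sketch replaces this with a case analysis on $n\bmod 8$ built around Theorem \ref{Wn(R)} and the $12$-term sequence.

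The serious gap is in the case $n\equiv 0,4\pmod 8$. Your argument requires $W'_n(\R)\cong W_n(\R)\cong\Z$, which amounts to showing that the image of $k'_n(\R)\to W'_n(\R)$ in the algebraic $12$-term sequence vanishes. You acknowledge this (\textquotedblleft rule out a $\Z\oplus\Z/2$ extension\textquotedblright) but the proposed tool --- \textquotedblleft Theorem \ref{GW=GR} and $2$-adic completion\textquotedblright --- is not an argument. That the $\Z/2$ from $k'_n$ dies is exactly the content the paper's pullback argument supplies for injectivity, and the surjectivity in $n\equiv 0,4\pmod 8$ is exactly where the paper invokes \eqref{square:BK}; your sketch contains no substitute for either. (Note, in fact, that once you grant the surjectivity $c\colon W'_n(\R)\twoheadrightarrow W_n(\R)$ and the identification of both $\mathrm{im}(\theta_n)$ and $\mathrm{im}(WR'_n(\R)\hookrightarrow WR_n(\R))$ with $2\Z$, the commutative square already gives surjectivity of $\theta'_n$; what is genuinely missing is its injectivity, i.e.\ that $k'_n(\R)$ does not contribute a $\Z/2$ to $W'_n(\R)$.) Two smaller issues: (i) for $n\equiv 3,5,6,7\pmod 8$ you assert \textquotedblleft every relevant group vanishes,\textquotedblright{} but $W'_n(\R)=0$ is not free --- the $12$-term sequence only shows $W'_n(\R)$ is a quotient of $\mathrm{im}(k'_n)$, a group of exponent $2$, and killing it again needs a divisibility input such as the paper's pullback; (ii) the $5$-lemma in the $n\equiv1,2$ cases also needs $ {}_{-1}W'_{n-1}(\R)\to{}_{-1}WR'_{n-1}(\R)$ to be epi (which here happens to hold because the target is $0$, but should be checked) and requires knowing the $G$-actions on $K_n(\R)$ and $KO_n$ agree up to the uniquely $2$-divisible difference, which you assert but do not verify.
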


\begin{proof}
We first observe that the kernel and cokernel of 
$W'_n(\R)\to WR{\,}'_n(\R)$ are 2-primary torsion groups of bounded exponent.
This follows for example from Theorem \ref{Wn(R)} and the map of 
12-term sequences (see \cite[p.\,278]{MKAnnalsH})
\[
\begin{array}{ccccccc}
k'_n &\map{}& W'_n(\R)&\map{c}  & W_n(\R) &\map{}& k_n \\ 
\downarrow &&\downarrow \theta' && \downarrow & & \downarrow   \\ 
kr'_n &\map{}& WR{\,}'_n(\R)&\map{c} & WR_n(\R) &\map{}&kr_n
\end{array}
\]
since the groups $k'_n, k_n, kr'_n$ and $kr_n$ have exponent~2.

To see that $W'_n(\R)\to WR{\,}'_n(\R)$ is injective, recall from
Corollary \ref{Kc-u.d.} and Theorem \ref{GW=GR} that $K^c_n(\R)$ 
and $GW^c_n(\R)$ are 2-divisible groups. Therefore the pullback
$P$ in the following diagram is 2-divisible.
\[
\begin{array}{ccccc}
P & \hookrightarrow & GW^c_n(\R) & \to & K^c_n(\R) \\
\downarrow &&\downarrow && \downarrow \\
W'_n(\R) & \hookrightarrow & GW_n(\R) & \to & K_n(\R) \\
\downarrow &&\downarrow && \downarrow \\
WR{\,}'_n(\R)& \hookrightarrow & GR_n(\R) & \to & KO_n
\end{array}
\]
By a diagram chase, we see that the left column is exact,
so the kernel of $W'_n(\R)\to WR{\,}'_n(\R)$ is 2-divisible,
as well as having a bounded exponent, and hence must be zero.

\smallskip
Since ${WR{\,}'_n}(\R)=0$ for $n\equiv3,5,6,7\pmod8$,
it remains to consider the cases $n\equiv0,1,2,4\pmod8$. 
We will show that the cokernel of 
$W'_n(\Z[\frac12])\to\! WR{\,}'_n(\R)$ 
is a group of odd order; since it is also 2-torsion, it is zero.
For this we use the homotopy sequence associated to \eqref{square:BK}.
If $A$ is an abelian group, $A_{(2)}$ will denote the localization
of $A$ at the prime~2.

If $n\equiv0\pmod4$, we have 
$GR_n(\R)=\Z\oplus\Z$ and $GR_n(\C)=\Z$. 
By \cite{Friedlander}, $GW_n(\mathbb{F}_3)$ is either 0 or $\Z/2$.
Thus the homotopy exact sequence from \eqref{square:BK} is
\[
GW_n(\Z[{\textstyle\frac12}])_{(2)} \to
(\Z\oplus\Z)_{(2)}\oplus GW_n(\mathbb{F}_3)
\quad\map{}\quad \Z_{(2)}\to0.
\]
In addition, $K_4(\Z[\frac12])$
is a finite group of odd order, so 
$W'_n(\Z[\frac12])_{(2)}\cong GW_n(\Z[\frac12])_{(2)}$.  
Thus we have the following commutative
diagram, with the middle column exact.
\[
\begin{array}{ccccc}
W'_n(\Z[\frac12])_{(2)} & \map{\cong} & GW_n(\Z[\frac12])_{(2)}& 
\to & 0\qquad \\
\downarrow &&\downarrow &&  \\
{WR{\,}'_n}(\R)_{(2)} & \hookrightarrow & GR_n(\R)_{(2)}& \to & (KO_n)_{(2)}\quad\\
   &&\downarrow && \downarrow\,\qquad \\  
   && GR_n(\C)_{(2)}& \map{\textrm{into}} & (KU_n)_{(2)}\quad \\
\end{array}
\]
A diagram chase shows that $W'_n(\Z[\frac12])_{(2)}\to WR{\,}'_n(\R)_{(2)}$
is onto. This implies that the cokernel of 
$W'_n(\Z[\frac12])\to WR{\,}'_n(\R)$, and {\it a fortiori} 
the cokernel $C$ of $W'_n(\R)\to WR{\,}'_n(\R)$, is an odd torsion group. 
Since $C$ is a 2-primary torsion group (by Corollary \ref{W'.upto2}),
it is zero, i.e., $W'_n(\R)\to WR{\,}'_n(\R)$ is a surjection.

If $n\equiv1,2\pmod4$, we have $KO_n\cong\Z/2$, 
$GR_n(\R)\cong KO_n\oplus KO_n$,
and the maps to $KR_n(\R)\cong KO_n$ and to $GR_n(\C)\cong KO_n$
agree (both being addition). In addition,
${WR{\,}'_n}(\R)\cong KO_n$
and $GW_n(\mathbb{F}_3)$ is $\Z/2$ or $(\Z/2)^2$, so the 
homotopy exact sequence from \eqref{square:BK} is
\[
GR_{n-1}(\C)\to GW_n(\Z[{\textstyle\frac12}])_{(2)} \to (\Z/2)^2\oplus(\Z/2)^2 
\quad\map{\textrm{onto}}\quad\Z/2.
\]
From the diagram (whose middle column is exact)
\[
\begin{array}{ccccc}
 && (KO_{n-1})_{(2)} & \to & (KU_{n-1})_{(2)} \\
 && \downarrow && \downarrow \\
W'_n(\Z[\frac12])_{(2)}& \hookrightarrow & GW_n(\Z[\frac12])_{(2)}&
\to & K_n(\Z[\frac12])_{(2)} \\
\downarrow &&\downarrow && \downarrow \\
WR{\,}'_n(\R)\!\oplus\! GW_n(\mathbb{F}_3)& \hookrightarrow & 
GR_n(\R)\!\oplus\! GW_n(\mathbb{F}_3) & \to & {KO_n}\\
 &&\downarrow && \downarrow \\
  && GR_n(\C)& \map{} & (KU_n)_{(2)}
\end{array}
\]
we see that the map $WR{\,}'_n(\R)\to GR_n(\C)\cong\Z/2$ is zero.

If $n\equiv1\pmod8$, $K_n(\Z[\frac12])\cong\Z\oplus\Z/2$,
and the map from $KO_{n-1}\cong\Z$ to $KU_{n-1}$ is an isomorphism;
see \cite[VI.10.1]{WK}. 
A diagram chase shows that the vertical map 
$W'_n(\Z[\frac12])_{(2)}\to {WR{\,}'_n}(\R)\cong\Z/2$.
is onto. {\it A fortiori}, $W'_n(\R)\to{WR{\,}'_n}(\R)$ is onto.

If $n\equiv2\pmod8$, $K_n(\Z[\frac12])$ is the sum of
$\Z/2$ and a finite group of odd order; see \cite[VI.10.1]{WK}. 
Thus the map $K_n(\Z[\frac12])_{(2)}\to KO_n\cong\Z/2$ 
is an isomorphism. In this case, an easy diagram chase shows that 
$W'_n(\R)$ maps onto $WR{\,}'_n(\R)$.
This contradicts the injectivity part of the proof above.
\end{proof}

\medskip
Next we describe the Witt and co-Witt groups of $\C$, where
we consider $\C$ with the trivial involution.  Viewing
$\Spec(\C)$ as a variety over $\R$, the associated Real space 
$X$ of complex points is $S^0$ with the nontrivial involution.
by Theorem \ref{GR=KOG}, our groups $GR_n(\C)$ are just $KO_n$,
$KR_n(\C)$ is $KU_n$ and the Witt group $WR_n(\C)$ is the cokernel
of the forgetful map $KU_n\to KO_n$. Similarly, the co-Witt group
$WR{\,}'_n(\C)$ is the kernel of the complexification $KO_n\to KU_n$.
In either event, $WR_n(\C)$ and $WR{\,}'_n(\C)$ are either 0 or $\Z/2$.

Recall that $K_n(\C)$ is uniquely divisible for even $n>0$,
and is the sum of $\Q/\Z$ and a uniquely divisible group for odd $n>0$.

\begin{theorem}\label{WnC}
For $n>0$, $GW_n(\C)$ is the sum of a uniquely 2-divisible group 
and the 2-primary torsion group:
\[\begin{cases}
0,&    n\equiv 0,4,5,6\pmod8; \\
\Z/2,& n\equiv1,2     \pmod8; \\
(\Q/\Z)_{(2)},& n\equiv3,7\pmod8.
\end{cases}\]
The Witt groups are:
\[
W_n(\C) = \begin{cases} 
\Z/2,& n\equiv1,2\pmod8; \\0,& \textrm{otherwise}.
\end{cases}
\]
The co-Witt groups are:
\[
W'_n(\C)=\begin{cases}
\Z/2,&n\equiv2,3\pmod8; \\0,& \textrm{otherwise}.
\end{cases}
\]
The maps $W_n(\C)\to\! WR_n(\C)$ are isomorphisms for 
$n\!\not\equiv\!0\!\pmod8$,
and the maps $W'_n(\C)\to WR_n{\,}'(\C)$ are isomorphisms for 
$n\not\equiv1,3\pmod8$.
\end{theorem}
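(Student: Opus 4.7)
The plan is to follow the strategy of Theorem \ref{Wn(R)}, in four steps: compute $GW_n(\C)$, derive $W_n(\C)$ and $W'_n(\C)$, identify $WR_n(\C)$ and $WR{\,}'_n(\C)$ directly, and compare via a diagram chase. The principal inputs are Theorem \ref{GW=GR}, which for the zero-dimensional variety $\Spec\C$ gives that the comparison groups $GW^c_n(\C)$ are uniquely 2-divisible for $n\geq -2$; Corollary \ref{Kc-u.d.}, which gives $K^c_n(\C)$ uniquely divisible for $n\geq -1$; and Theorem \ref{GR=KOG} together with Example \ref{WR(X^G)}(b), identifying $GR_n(\C)\cong KO_n$ via the fact that $V_\C = S^{1,0}$ is a free $G$-space.

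For the first step, I would use the long exact sequence of Remark \ref{rem:GW^c},
\[
GR_{n+1}(\C) \to GW^c_n(\C) \to GW_n(\C) \to GR_n(\C) \to GW^c_{n-1}(\C).
\]
Since $GW^c_n(\C)$ is uniquely 2-divisible, it has no 2-primary torsion, so $GW_n(\C)$ receives its 2-torsion from $GR_n(\C) = KO_n$ (contributing $\Z/2$ for $n\equiv 1,2\pmod 8$) and its $(\Q/\Z)_{(2)}$-summand from the image of $KO_{n+1}=\Z$ in the uniquely 2-divisible group $GW^c_n(\C)$ for $n\equiv 3,7\pmod 8$; the quotient of a uniquely 2-divisible group by a cyclic subgroup has 2-primary torsion isomorphic to $(\Q/\Z)_{(2)}$, matching the claim.

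For the second step, I would compute $W_n(\C) = \coker(H\colon K_n(\C)\to GW_n(\C))$ and $W'_n(\C) = \ker(F\colon GW_n(\C)\to K_n(\C))$ using the classical description of $K_n(\C)$ (uniquely divisible in even positive degrees; the direct sum of $\Q/\Z$ and a uniquely divisible group in odd positive degrees, by Suslin), together with the compatibility of the hyperbolic and forgetful maps with the Karoubi Bott elements from Appendix \ref{app:Bott}. For $n\equiv 3,7\pmod 8$ in particular, the hyperbolic map sends the $(\Q/\Z)_{(2)}$-summand of $K_n(\C)$ isomorphically onto that of $GW_n(\C)$ (this is where the Bott element calculation is essential), annihilating it in the Witt group; the dual computation handles $W'_n(\C)$.

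Finally, the identifications $WR_n(\C) = \coker(r\colon KU_n\to KO_n)$ and $WR{\,}'_n(\C) = \ker(c\colon KO_n\to KU_n)$, for the classical realification $r$ and complexification $c$, are immediate from the setup. I would then consider the commutative diagram
\[
\begin{array}{ccccccc}
K_n(\C) & \map{H} & GW_n(\C) & \to & W_n(\C) & \to & 0 \\
\downarrow & & \downarrow & & \downarrow \theta_n & &  \\
KU_n & \map{r} & KO_n & \to & WR_n(\C) & \to & 0
\end{array}
\]
and its analog for $\theta'_n$. Since the vertical fibers are uniquely 2-divisible and the rows are exact, a diagram chase pinpoints the kernel and cokernel of $\theta_n$ and $\theta'_n$, and comparing with the explicit values obtained in the previous steps yields the stated isomorphism ranges. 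The hard part will be the step-two identification of the hyperbolic map on the $(\Q/\Z)_{(2)}$-summand: unpacking Karoubi's Bott periodicity to show this map is an isomorphism is what forces the $W_n(\C)=0$ conclusion for $n\equiv 3,7\pmod 8$ and drives the comparison with the topological groups in the final step.
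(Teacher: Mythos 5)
Your overall architecture---computing $GW_n(\C)$ via the exact sequence linking $GW^c_n$ and $GR_n\cong KO_n$, then passing to the Witt and co-Witt quotients and comparing with the topological groups---matches the paper's strategy, and step one is essentially as in the paper. Step two, however, contains a genuine gap. You assert that for $n\equiv3,7\pmod8$ the hyperbolic map carries the $(\Q/\Z)_{(2)}$-summand of $K_n(\C)$ \emph{isomorphically} onto that of $GW_n(\C)$, citing Bott-element compatibility, but the Bott elements of Appendix \ref{app:Bott} give multiplication by powers of $2$, not isomorphisms, and you sketch no argument for injectivity of this restricted map (a surjective endomorphism of $(\Q/\Z)_{(2)}$ need not be injective). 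The claim is in any case unnecessary: once you know $W_n(\C)$ is $2$-primary of bounded exponent (which follows from Theorems \ref{W=WR.mod2tors} and \ref{W-WR.exponent} because $WR_n(\C)$ is finite) and that $GW_n(\C)$ is the sum of a uniquely $2$-divisible group and $(\Q/\Z)_{(2)}$, then $2^eGW_n(\C)=GW_n(\C)$ for $e\geq1$, hence $W_n(\C)$ is a quotient of $GW_n(\C)/2^eGW_n(\C)=0$. This soft divisibility argument is what the paper uses, and it bypasses any fine analysis of the hyperbolic map.

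The more serious gap is the co-Witt computation. Saying ``the dual computation handles $W'_n(\C)$'' does not work for $n\equiv1\pmod8$: there $GW_n(\C)$ has a $\Z/2$-summand and $K_n(\C)$ has a $(\Q/\Z)_{(2)}$-summand, so the divisibility argument alone cannot decide whether the $\Z/2$ lies in the kernel of the forgetful map. The paper resolves this by passing to the shifted group $GW^{[1]}_{n+1}(\C)$, observing $GR^{[1]}_{n+1}(\C)\cong KO_{n-1}\cong\Z$, and showing that $GW^{[1]}_{n+1}\to GR^{[1]}_{n+1}$ must vanish (else $GW^{[1]}_{n+1}$ would contain a $\Z$-summand that, since $K_{n+1}(\C)$ is divisible, would inject into $W'_n(\C)\subset\Z/2$, a contradiction). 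Your sketch also gives no way to distinguish $n\equiv3$ from $n\equiv7\pmod8$ for $W'_n(\C)$ ($\Z/2$ versus $0$); in the paper this comes precisely from $KO_{n+1}\to KU_{n+1}$ being multiplication by $2$ when $n+1\equiv4\pmod8$ but an isomorphism when $n+1\equiv0\pmod8$, a distinction absent from your proposal.
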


\begin{proof}
Since $WR_n(\C)$ and $WR{\,}'_n(\C)$ are at most $\Z/2$, 
Theorems \ref{WR-KO.exponents} and \ref{W=WR.mod2tors} imply that 
$W_n(\C)$ and $W'_n(\C)$ are 2-primary groups of bounded exponent.
Consider the following diagram, where we have omitted 
the '$(\C)$' for legibility.
\[\begin{array}{ccccccccc}
 && W'_n &\map{}& {WR{\,}'}_n && && W'_{n-1} \\
 && \downarrow && \downarrow && &&\downarrow \\
GW_n^c &\map{}& GW_n &\map{}& KO_n &\map{}& GW_{n-1}^c &\map{}& GW_{n-1} \\
\downarrow && \downarrow && \downarrow && \downarrow && \downarrow  \\
K_n^c&\map{}& K_n &\map{}& KU_n &\map{}& K_{n-1}^c &\map{}& K_{n-1} \\
\downarrow && \downarrow && \downarrow && \downarrow && \downarrow  \\
GW_n^c &\map{}& GW_n &\map{}& KO_n &\map{}& GW_{n-1}^c &\map{}& GW_{n-1} \\
 && \downarrow && \downarrow && &&\downarrow \\
 && W_n &\map{}& {WR}_n && && W_{n-1} \\
\end{array}\]
By Theorem \ref{GW=GR} and Corollary \ref{Kc-u.d.}, the 
comparison groups 
$GW_n^c$ and $K_n^c$ are uniquely 2-divisible for $n\ge0$.

If $n\equiv0,4$ then $KO_n$ and $KU_n$ are $\Z$, and $KO_{n+1}$
is a torsion group. In addition, $KO_{n-1}=KU_{n-1}=0$,
and $K_n(\C)\to KU_n$ is zero. 
A diagram chase shows that $GW_n^c\cong GW_n$, and that
$KO_n\cong\Z$ injects into $GW^c_{n-1}$ with cokernel $GW_{n-1}$.
The first fact implies that
$W_n$ and $W'_n$ are uniquely 2-divisible, hence zero. The second
fact implies that $GW_{n-1}$ is the sum of a uniquely 2-divisible
group and the 2-primary divisible group $(\Q/\Z)_{(2)}$;
this implies that $W_{n-1}$ is divisible, hence zero.

If $n\equiv4$, the map from $\Z=KO_n$ to $KU_n=\Z$ is multiplication by~2;
if $n\equiv8$ the map $KO_n\to KU_n$ is an isomorphism.
Another diagram chase shows that $W'_{n-1}$ is $\Z/2$ if $n\equiv4$,
and~0 if $n\equiv8$.

If $n\equiv5,6$, so that $KO_{n+1}=KO_n=0$, we see that $GW^c_n\cong GW_n$.
As this is uniquely 2-divisible, this forces $W_n=W'_n=0$.

This shows that $W_n=WR_n=0$ for $n\not\equiv0,1,2$, 
$W_n\not\cong WR_n$ for $n\equiv3$, $W'_n=WR{\,}'_n=0$ for
$n\not\equiv1,2,3$, and $W'_n\not\equiv WR{\,}'_n$ for $n\equiv3$.

We are left to consider the cases $n\equiv1,2\pmod8$. 
In these cases, $KO_{n+1}\to GW^c_n$ and $KO_n\to GW^c_{n-1}$
must be zero, so the top row yields split exact sequences
\[
0\to  GW^c_n \to GW_n \to KO_n \to 0.
\]
That is, $GW_n$ is the sum of the uniquely 2-divisible
group $GW^c_n$ and $\Z/2$, as asserted.

If $n\equiv2$, $K_n(\C)$ is uniquely divisible; this implies that
$W_n\smap{\cong}WR_n$ and $W'_n\smap{\cong}WR{\,}'_n$ are $\Z/2$. 
If $n\equiv1$,  the bounded exponent
of $W_n(\C)$ and the divisibility of $K_n(\C)$ 
implies that $W_n(\C)\smap{\cong}WR_n\cong\Z/2$.  

Next, we show that $W'_n(\C)=0$ when $n\equiv1$.
The bounded exponent of $W'_n(\C)$ as a subgroup of $GW_n(\C)$ 
implies that $W'_n(\C)$ is a subgroup of the 2-primary torsion 
subgroup $\Z/2$ of $GW_n(\C)$, and that 
$W'_n(\C)\to WR{\,}'_n(\C)\cong\Z/2$ is an injection.
Therefore, it suffices to show that the map from 
$W'_n(\C)$ to $WR{\,}'_n(\C)\cong\Z/2$ is zero when $n\equiv1\pmod8$.

For this, recall from \eqref{seq:GW[i]} that $W'_n(\C)$ is the image 
of the map $GW^{[1]}_{n+1}\to GW_n(\C)$, and $WR{\,}'_n(\C)\cong\Z/2$ is 
the image of the map $GR^{[1]}_{n+1}(\C)\to GR_n(\C)$.
(The group $GW^{[1]}_{n+1}=GW^{[1]}_{n+1}(\C)$ is sometimes written 
as $V_n(\C)$.)
Consider the commutative diagram with exact rows and columns:
\[\begin{array}{ccccccccc}
 && K_{n+1}(\C) &\map{}& KU_{n+1} \\ && \downarrow && \downarrow \\
GW^{[1],c}_{n+1} &\map{}& GW^{[1]}_{n+1} & \map{} & GR^{[1]}_{n+1}\\ 
\downarrow && \downarrow && \downarrow \\
GW^c_n     &\map{}& GW_n(\C)  &\map{}& GR_n(\C). 
\end{array}
\]
In the right vertical sequence we have 
$GR^{[1]}_{n+1}\!\cong\!KO_{n-1}\!\cong\!\Z$;
see \cite[3.2]{Atiyah}. 

If the map $GW^{[1]}_{n+1}\to GR^{[1]}_{n+1}$ were nonzero, $GW^{[1]}_{n+1}$ 
would contain a summand isomorphic to $\Z$. Since $K_{n+1}(\C)$ is 
divisible, this summand would inject into the subgroup $W'_n$ of $GW_n(\C)$.
But this contradicts the fact that $W'_n(\C)$ is a subgroup of $\Z/2$.
Therefore the map $GW^{[1]}_{n+1}\to GR^{[1]}_{n+1}$ is zero; 
this implies that the map from the quotient $W'_n(\C)$ of $GW^{[1]}_{n+1}$
to the quotient $WR{\,}'_n(\C)$ of $GR^{[1]}_{n+1}$ is zero. 
Since we have seen that this
map is an injection, we conclude that $W'_n(\C)=0$.
\end{proof}

\newpage \appendix

\section{Errata and addenda}
\label{app:errata} 

The purpose of this short appendix is to correct some statements in \cite{KW}
about the comparison map between $KR$-theory and
equivariant $KO$-theory.


\begin{Void}\label{errata1} 
Proposition 1.8 in \cite{KW}, which stated that if $X$ has
no fixed points then $KR^{n}(X)$ is 4-periodic, is false. Indeed, the $(p-1)$%
-sphere with antipodal involution is a counterexample for all $p\geq4$ with $%
p\neq3\mod8$. (The case $p=4$ is in \cite[3.8]{Atiyah}.)

To see this, recall that the notation $B^{p,q}$ (resp., $S^{p,q})$ denotes
the ball (resp., the sphere) in $\R^{p}\times\R^{q}=\mathbb{R%
}^{p+q}$ provided by the involution $(x,y)\mapsto(-x,y).$ In particular, the
sphere $S^{p,q}$ has dimension $p+q-1.$ The $KR$-theory exact sequence
associated to the pair $(B,S)=(B^{p,0},S^{p,0})$ is: 
\begin{equation*}
KR^{n}(B,S)\to KR^{n}(B)\to KR^{n}(S)\to
KR^{n+1}(B,S)\to KR^{n+1}(B).
\end{equation*}
By the $KR$ analog of Bott periodicity \cite{MKThesis}, we have $%
KR^{n}(B^{p,0},S^{p,0})\cong KR^{n+p}(\ast)\simeq KO^{n+p}(\ast),$ where `$%
\ast$' is a point. Moreover, if $p\geq3,$ it is proved by Atiyah
\cite[3.8]{Atiyah} (and \cite{MKThesis}) 
that this exact sequence reduces to the split short
exact sequence:%
\begin{equation*}
0\to KO^{n}(\ast)\to KR^{n}(S^{p,0})\to
KO^{n+p+1}(\ast)\to0.
\end{equation*}
Hence $KR^{n}(S^{p,0})\cong KO^{n}(\ast)\oplus KO^{n+p+1}(\ast).$ This group
is indeed periodic of period 4 if $p=3$ mod 8. However, it is not periodic
of period 4 otherwise.

The mistake in the proof of the proposition lies in the claim that the $%
KR^{\ast }(X)$-module map $KR^{n}(X)\to KR^{n+4}(X)$ is the square
of Bott periodicity in the case $X=Y\times C_{2}$ with the obvious $C_{2}$
action. This cannot be true since the map $\Z\cong KR^{4}(\ast
)\to KR^{4}(C_{2})\cong\Z$ is multiplication by 2.
\end{Void}


\begin{Void}
The mistake in Proposition 1.8 propagates to Theorem 4.7 of \cite{KW}, where
the periodicity statement has to be amended: the $KR$ groups are periodic of
period 8 and not 4.
\end{Void}

\begin{Void}
Example A.3 in \cite{KW} is complete, because the calculation shows that the
groups $KR^{n}(X)$ are 4-periodic when $V$ is a smooth curve over $\R
$ with no real points; the invocation of 1.8 is unnecessary.
\end{Void}

\begin{Void}
All other results in \cite{KW}, including the main results, are unaffected
by this error.
\end{Void}

\bigskip\goodbreak

\section{Higher Witt groups of $\R$}
\label{app:Bott} 

In this appendix, we recall some basic facts about (positive and negative)
Bott elements in higher Witt groups. We will write $W_{*}^{\topl}(\R)$ 
for the ring $WR_{*}(\Spec\R)$; this is the Hermitian theory based
upon the Banach algebra $\R$ and is isomorphic to the ring $KO_*(\pt)$
by Example \ref{WR(X^G)}.  Although $W_0(\R)\cong W^\topl_0(\R)$,
the groups $W_n(\R)$ and $W^\topl_n(\R)$ differ for general $n$
(see Theorem \ref{Wn(R)}).

The following square was shown to be homotopy cartesian 
on connective covers in \cite[Thm.\,A]{BK}
(see also \cite{BKOS} for a more conceptual proof), where the 
notation $(\;)_2\!\hat{}$~  
indicates 2-adic completion. 
\begin{equation}\label{square:BK}
\begin{array}{ccc}
\GW(\Z[\frac12])_2\!\widehat{} & \to & \GW(\R^\topl)_2\!\widehat{}~ \\ 
\downarrow &  & \downarrow \\ 
\GW(\mathbb{F}_{3})_2\!\widehat{} & \to & \GW(\C^\topl)_2\!\widehat{}~.%
\end{array}%
\end{equation}%
By \cite[3.6]{BK}, the homotopy groups of $\GW(\Z[\frac12])$ and
$\GW(\mathbb{F}_{3})$ are finitely generated, so
$\pi_n\GW(\Z[\frac12])\otimes\Z_2 \cong \pi_n(\GW(\Z[\frac12])_2\!\widehat{}$~.
From this square, we deduce the existence, for all $k>0$, of elements $%
u_{4k} $ in $GW_{4k}(\Z[\frac12])$ whose image in 
$W^\topl_{4k}(\R)\cong\Z$ is 2 times a generator; 
see \cite[1.2]{MKfiltration}.

The following lemma corrects a small mistake in \cite[p.\,200]{MKfiltration}.

\begin{lemma}\label{Bott.correct}
There is an element $u_{-8}$ in $W_{-8}(\Z[\frac12])$ whose image in 
$W_{-8}^{{\topl}}(\R)$ is 16 times a generator.

There is an element $u_{-4}$ in $W_{-4}(\Z[\frac12])$ whose image in 
$W_{-4}^{{\topl}}(\R)$ is 2 times a generator.
\end{lemma}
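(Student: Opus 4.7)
The plan is to construct $u_{-4}$ via Karoubi--Balmer $4$-fold periodicity of Witt groups for regular rings containing $\tfrac12$, and then to obtain $u_{-8}$ as the square $u_{-8} := u_{-4}^2$ using the ring structure on Witt groups. The claimed image magnitudes $2$ and $16$ are matched on the topological side via the classical relation $\alpha^2 = 4\beta$ in $KO^*(\pt)$, where $\alpha$ generates $KO_4(\pt) \cong \Z$ and $\beta$ generates $KO_8(\pt) \cong \Z$.

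For $u_{-4}$: since $\Z[\tfrac12]$ is regular, $W_{-4}(\Z[\tfrac12]) = GW^{[0]}_{-4}(\Z[\tfrac12])$ agrees with the Balmer Witt group $W_B^{4}(\Z[\tfrac12])$ by \cite[5.7]{Schlichting.Fund}, and the Balmer--Walter $4$-periodicity \cite{Balmer99} identifies this with $W_B^{0}(\Z[\tfrac12]) = W(\Z[\tfrac12])$. The first step is to show that the comparison $W_{-4}(\Z[\tfrac12]) \to W^\topl_{-4}(\R) = KO_{-4}(\pt) \cong \Z$ has image containing $2\Z$. This runs parallel to the construction of the positive Bott elements $u_{4k}$: the $2$-adic cartesian square \eqref{square:BK} on connective covers, combined with the algebraic $4$-periodicity to descend to negative degrees, produces an element in $W_{-4}(\Z[\tfrac12])$ mapping to $2$ times a generator. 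I would then take $u_{-4}$ to be any such preimage.

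Given $u_{-4}$, define $u_{-8} := u_{-4}^2 \in W_{-8}(\Z[\tfrac12])$ via the shifted ring multiplication $W^{[0]}_{-4} \otimes W^{[0]}_{-4} \to W^{[0]}_{-8}$. By multiplicativity of the comparison to topological Witt theory, the image of $u_{-8}$ in $KO_{-8}(\pt) \cong \Z$ is the square of the image of $u_{-4}$. Representing the generator of $KO_{-4}(\pt)$ as $\alpha \beta^{-1}$ in $KO^*(\pt)[\beta^{-1}]$ and using the classical relation $\alpha^2 = 4\beta$, one computes
\[
(2\alpha\beta^{-1})^2 \;=\; 4\alpha^2\beta^{-2} \;=\; 16\,\beta^{-1},
\]
i.e., exactly $16$ times the generator of $KO_{-8}(\pt)$.

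The main obstacle is the image computation for $u_{-4}$: establishing the precise factor of $2$ (rather than $1$ or $4$) requires care about how the purely algebraic Balmer--Walter $4$-periodicity interacts with the topological $8$-periodicity of $KO^*(\pt)$, which admits no intrinsic $4$-fold periodicity of its own. This mismatch is the source of the correction to \cite{MKfiltration}: a naive argument might suggest that the square of a generator of $KO_{-4}(\pt)$ produces a generator of $KO_{-8}(\pt)$, but the relation $\alpha^2 = 4\beta$ introduces a hidden factor of $4$, so the correct magnitude for the image of $u_{-8}$ is $16 = 4 \cdot 2^2$ rather than the naively expected $4$.
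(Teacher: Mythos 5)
Your squaring step is arithmetically correct: given $u_{-4}$ with image $2\alpha\beta^{-1}$ in $KO_{-4}(\pt)$, the relation $\alpha^2=4\beta$ in $KO_*(\pt)$ gives $(2\alpha\beta^{-1})^2=16\beta^{-1}$, so $u_{-8}:=u_{-4}^2$ would have image $16$ times the generator of $KO_{-8}(\pt)$; the paper itself uses this identity (in the form $(2x_{-4})^2=16x_{-8}$) in Corollary \ref{4period}. The gap is in the construction of $u_{-4}$, which you flag as the main obstacle but do not close. Two concrete problems with the mechanism you sketch: (i) the homotopy cartesian square \eqref{square:BK} is established only on connective covers, so it produces the positive-degree Bott elements $u_{4k}$ for $k>0$ but gives no direct control of $W_{-4}(\Z[\tfrac12])$; and (ii) the Balmer--Walter $4$-periodicity $W_{-4}(\Z[\tfrac12])\cong W_0(\Z[\tfrac12])$ is a purely algebraic isomorphism with no topological counterpart, since $W_*^{\topl}(\R)\cong KO_*$ is $8$-periodic rather than $4$-periodic; hence this isomorphism does not commute with the comparison map to topological Witt theory. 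Consequently neither the surjectivity of $W_0(\Z[\tfrac12])\to W_0^\topl(\R)$ nor the positive-degree Bott elements can be transported to degree $-4$ by the route you propose, and the crucial index-$2$ statement for $u_{-4}$ is left unestablished.

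The paper's argument is entirely different on this point. It invokes Karoubi's $12$-term exact sequences for $\Z[\tfrac12]$ and for $\R$ to build two parallel chains connecting degree $0$ to degrees $-4$ and $-8$, alternating between Witt groups ${}_{\eps}W_n$ and co-Witt groups ${}_{\eps}W'_n$. For $\Z[\tfrac12]$ every link in the chain is an equality; for $\R$ exactly one link by degree $-4$, and four links by degree $-8$, are strict inclusions of index $2$. Comparing the two chains, starting from the surjectivity of $W_0(\Z[\tfrac12])\to W_0^\topl(\R)$, yields the factors $2$ and $16=2^4$ directly and produces $u_{-4}$ and $u_{-8}$ independently rather than by squaring. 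Your observation that $u_{-4}^2$ is a valid choice of $u_{-8}$ is correct and consistent with the paper, but it does not reduce the work: the entire argument still hinges on the index-$2$ statement for $u_{-4}$, which is precisely what the $12$-term chain delivers and your proposal does not.
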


\begin{proof}
Set $\Z'=\Z[\frac12]$ and recall that the map from
$W_0(\Z')\cong\Z\oplus\Z/2$ to $W^\topl_0(\R)\cong\Z$ is onto. 
We have strict inclusions
or equalities coming from the 12-term exact sequence for $\Z'$ 
and $\R$ (with its usual topology) \cite{MKAnnalsH}: 
\begin{multline*}
{}_{1}W_{0}(\Z')={}_{-1}W_{-2}(\Z')={}_{1}W_{-4}'(\Z')={}_{1}W_{-4}(\Z') 
\\
={}_{-1}W_{-6}'(\Z')={}_{-1}W_{-6}(\Z')={}_{1}W_{-8}'(\Z')={}_{1}W_{-8}(\Z') 
\\
\shoveleft{
_{1}W^\topl_{0}(\R)={}_{-1}W^\topl_{-2}(\R)={}_{1}{W'}^\topl_{-4}(\R)
          \subset{}_{1}W^\topl_{-4}(\R) } \\
={}_{-1}{W'}^\topl_{-6}(\R)\subset {}_{-1}W^\topl_{-6}(\R)\subset
          {}_{1}{W'}^\topl_{-8}(\R)\subset {}_{1}W^\topl_{-8}(\R).
\end{multline*}%
The comparison between these two sequences of groups \cite{MKfiltration}
shows the existence of an element $u_{-8}$ in ${}_{1}W_{-8}(\Z')$ 
whose image in $_{1}W^\topl_{-8}(\R)\cong\Z$ is 
16 times a generator (not 8 times as claimed by mistake in 
\cite[p.\,200]{MKfiltration}).
In the same way, there is an element $u_{-4}$ in $_{1}W_{-4}(\Z')$ 
whose image in $_{1}W_{-4}^{{\topl}}(\R)\cong\Z$ is 2 times a generator.
\end{proof}

We can now correct the factor of 2 in \cite[Theorem 1.3]{MKfiltration}.

\begin{corollary}\label{period.correct} 
For any ring $A$ (or any scheme $V$) over $\R$, 
there exist ``Bott maps''
\[
{}_{\varepsilon}W_{n}(V)\to{}_{\varepsilon}W_{n+8m}(V) \text{ and } 
{}_{\varepsilon}W_{n+8m}(V)\to{}_{\varepsilon}W_{n}(V),
\]
such that both compositions are multiplication by $2\cdot16^{m}$.
\end{corollary}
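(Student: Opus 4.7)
The plan is to realise the Bott maps as cup product with Karoubi-Bott elements in $W_*(\R)$ and to compute the composition via the classical signature isomorphism $W_0(\R)\cong\Z$.

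From the cartesian square \eqref{square:BK}, applied with $4k=8m$, one obtains $u_{8m}\in GW_{8m}(\Z[\tfrac12])$ whose image in $W^\topl_{8m}(\R)\cong\Z$ is twice a generator $\beta_{8m}$; projecting to $W_{8m}(\Z[\tfrac12])$ and pushing along $\Z[\tfrac12]\hookrightarrow\R$ gives a class, still written $u_{8m}\in W_{8m}(\R)$, with the same topological image. Take the element $u_{-8}\in W_{-8}(\Z[\tfrac12])$ of Lemma \ref{Bott.correct}, with image $16\beta_{-8}$ in $W^\topl_{-8}(\R)\cong\Z$; pushing to $W_{-8}(\R)$ and forming $u_{-8m}:=u_{-8}^m\in W_{-8m}(\R)$ gives a class of topological image $16^m\beta_{-8m}$.

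Cup product by these Bott elements makes every ${}_\varepsilon W_*(V)$ for an $\R$-scheme $V$ into a graded module over $W_*(\R)$; because the Bott elements are symmetric, the action preserves the sign $\varepsilon$. Define the forward Bott map ${}_\varepsilon W_n(V)\to{}_\varepsilon W_{n+8m}(V)$ as $x\mapsto u_{8m}\cup x$ and the backward one as $x\mapsto u_{-8m}\cup x$. Both compositions act as multiplication by $u_{8m}\cdot u_{-8m}\in W_0(\R)$. Since the signature map $W_0(\R)\cong W^\topl_0(\R)\cong\Z$ is an isomorphism, this product is determined by its topological image, which by multiplicativity equals $2\cdot 16^m\cdot\beta_{8m}\beta_{-8m}=2\cdot 16^m$. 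Hence the compositions are multiplication by the integer $2\cdot 16^m$, as required.

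The main technical point I expect to handle carefully is the projection $GW_{8m}(\Z[\tfrac12])\to W_{8m}(\Z[\tfrac12])$: one has to check that it does not destroy the image of $u_{8m}$ in $W^\topl_{8m}(\R)$. This is true because the comparison map $GW_{8m}(\Z[\tfrac12])\to GW^\topl_{8m}(\R)$ is compatible with the hyperbolic quotients on both sides, so the image in $W^\topl_{8m}(\R)$ remains $2\beta_{8m}$. The same argument applies verbatim for $\varepsilon=-1$, since cupping with the symmetric classes $u_{\pm 8m}$ preserves the antisymmetric $W$-theory.
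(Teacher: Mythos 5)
Your proposal is correct and follows essentially the same route as the paper: both realize the Bott maps as cup product with $u_{8m}$ and $(u_{-8})^m$, and evaluate the composite as multiplication by the product in $W_0(\R)\cong W_0^{\topl}(\R)\cong\Z$ via the topological images furnished by Lemma~\ref{Bott.correct}. Your write-up merely makes explicit the intermediate steps (the compatibility of the projection $GW_{8m}\to W_{8m}$ with the topological comparison, the identity $\beta_{8m}\beta_{-8m}=1$, and the $\varepsilon$-invariance) that the paper leaves tacit.
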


\begin{proof}
If $m>0$, the first map is the cup product with $u_{8m}$ and the second map
is the cup product with $(u_{-8})^m$. Lemma \ref{Bott.correct} shows that
the compositions are multiplication by $2\cdot16^m$
in $W_0(\R)\cong\Z$.
\end{proof}

The cup-product with $u_{4}$ and $u_{-4}$
give different Bott-style maps, from
$_{\varepsilon }W_{n}(V)$ to ${}_{\varepsilon }W_{n+4}(V)$ 
and vice versa.
The following calculation is implicit in \cite[1.3]{MKfiltration}.

\begin{corollary}\label{4period} 
The image of the product $u_4 u_{-4}$ is 16 in $W_{0}(\R)\cong\Z$. 
Hence both compositions of the two maps
\[
_{\varepsilon }W_{n}(V) \map{u_4} _{\varepsilon }W_{n+4}(V)
\textrm{ and } 
_{\varepsilon }W_{n+4}(V)\map{u_{-4}} _{\varepsilon }W_{n}(V)
\]
are multiplication by $16$.
\end{corollary}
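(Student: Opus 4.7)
The plan has two parts: first reduce both assertions about compositions to the single statement that $u_4\cdot u_{-4}$ has image $16$ in $W_0(\R)\cong\Z$, and then verify that image by a computation in $KO_*$.

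For the reduction, note that by multiplicativity of Witt theory and functoriality of cup product along $\Z[1/2]\to\R\to V$, both composites displayed in the corollary equal cup product with the fixed element $u_4\cdot u_{-4}\in W_0(\Z[1/2])$. This product acts on $_{\eps}W_\ast(V)$ through its image in $W_0(\R)\cong\Z$, so both compositions are multiplication by that image. Hence it suffices to identify the image with $16$.

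For the computation, I would invoke Lemma \ref{Bott.correct}: $u_4$ maps to $2\alpha$ in $W^\topl_4(\R)\cong KO_4\cong\Z$ and $u_{-4}$ maps to $2\alpha^{-1}$ in $W^\topl_{-4}(\R)\cong KO_{-4}\cong\Z$, where $\alpha$ is a chosen generator of $KO_4$ and $\alpha^{-1}$ the ``dual'' generator of $KO_{-4}$. The key ingredient from topological $KO$-theory is the classical relation $\alpha^2=4\omega$, with $\omega$ a generator of $KO_8\cong\Z$; combined with the $8$-periodicity of $KO_\ast$, which makes $\omega$ act invertibly on negatively graded pieces, this forces $\alpha\cdot\alpha^{-1}=\omega^{-1}\alpha^2=4$ in $KO_0$. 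Therefore $u_4\cdot u_{-4}$ maps to $(2\alpha)(2\alpha^{-1})=16$ in $W^\topl_0(\R)\cong KO_0\cong\Z$, and since $W_0(\R)\to W^\topl_0(\R)$ is the identity on $\Z$ in degree $0$, the image of $u_4\cdot u_{-4}$ in $W_0(\R)$ equals $16$ as well.

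The main care needed, which I regard as the only real obstacle, is verifying that the algebraic/Hermitian Witt-theoretic product is compatible with the topological cup product in $KO_\ast$, so that ``image of a product is the product of images'' is legitimate. Granting this standard but necessary compatibility, the corollary reduces entirely to the classical $KO$-theory identity $\alpha^2=4\omega$, and the proof concludes immediately.
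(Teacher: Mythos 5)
Your proof is correct, and it reaches the same computational target as the paper (the image of $u_4 u_{-4}$ in $W_0^{\mathrm{top}}(\R)\cong KO_0\cong\Z$) but by a slightly more direct route. The paper's own proof never directly multiplies $x_4\cdot x_{-4}$; instead it squares, writing $h(u_4^2)=16x_8=8\,h(u_8)$ and $h(u_{-4}^2)=16x_{-8}=h(u_{-8})$, and then invokes Corollary \ref{period.correct} to get $u_8u_{-8}=32$, so that $(u_4u_{-4})^2=8\cdot 32=256$ and hence $u_4u_{-4}=16$; the only $KO$-facts it cites are the three relations $x_8x_{-8}=1$, $x_4^2=4x_8$, $x_{-4}^2=4x_{-8}$. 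You instead identify the generator of $KO_{-4}$ with the Bott shift $\beta^{-1}\alpha$ of the generator of $KO_4$ and compute $\alpha\cdot(\beta^{-1}\alpha)=\beta^{-1}\alpha^2=4$ directly, giving $h(u_4u_{-4})=(2\alpha)(2\beta^{-1}\alpha)=16$ in one step. Both arguments rely on $\alpha^2=4\beta$; yours also uses the identification $x_{-4}=\beta^{-1}x_4$ (with that sign), which is not one of the three relations the paper cites but is the standard choice of generator for $KO_{-4}$. This buys you independence from Corollary \ref{period.correct}, at the cost of a sign convention that is implicit in both arguments anyway (the paper's squaring gives only $u_4u_{-4}=\pm16$ without a convention). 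The reduction at the start — that both compositions are cup product with $u_4u_{-4}$ acting through its image in $W_0(\R)\cong\Z$ — is exactly what the paper's statement also tacitly uses and is sound, since the $W_0(\Z[\tfrac12])$-module structure on $_{\eps}W_*(V)$ for $V$ over $\R$ factors through $W_0(\R)$.
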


\begin{proof}
Let $x_{4k}$ denote the standard generator of $W_{4k}^{\topl}(\R)$.
It is well known that $x_8x_{-8}=1$, $x_{-4}^2=4x_{-8}$ and $x_4^2=4x_8$;
see \cite[III.5.19]{MKbook}. By Lemma \ref{Bott.correct},
the images of $u_{4}^2$ and $u_{-4}^2$ under the ring homomorphism 
$h:W_*(\Z[1/2])\to W_*^{\topl}(\R)$ are
$h(u_4^2)=(2 x_4)^2 = 16 x_8=8 h(u_8)$, and
%
%
$h(u_{-4}^2)=(2 x_{-4})^2 = 16 x_{-8} = h(u_{-8})$.
By Corollary \ref{period.correct}, $u_8u_{-8}=32$ in $W_0(\R)$. 
Hence $(u_4u_{-4})^2= 8 u_8u_{-8}=8\cdot32$ in 
$W_0(\R)\cong W_0^\topl(\R)\cong\Z$ 
and hence $u_4u_{-4} = 16$.
\end{proof}

We conclude this appendix by citing a symmetry-changing
periodicity result, which we use in Theorem \ref{W(11+)}.
It is a restatement of Theorem 3.7 in \cite{MKAnnalsO}.
Note that it provides another proof of of Lemma \ref{4period}
by iterating the periodicity map.

\begin{lemma}\label{2period}
There are elements $u_2$ in $W_2(\Z[\frac12])$, $u_{-2}$ in
$W_{-2}(\Z[\frac12])$ whose product in $W_0(\Z[\frac12])$ is 4.

If $A$ is a ring containing $1/2$, the composition of the homomorphisms
\[
{}_{\eps}W_n(A) \map{u_2}{}_{-\eps}W_{n+2}(A), \qquad
{}_{-\eps}W_{n+2}(A)\map{u_{-2}}{}_{\eps}W_n(A)
\]
(in either direction) is multiplication by $4$.
\end{lemma}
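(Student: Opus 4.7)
The plan is to construct the elements $u_{\pm 2}$ via the $2$-adic comparison square \eqref{square:BK} and then deduce the periodicity statement by the same Bott-element argument as in Lemma \ref{Bott.correct} and Corollary \ref{period.correct}.

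First I would construct $u_{\pm 2}$. The homotopy cartesian square \eqref{square:BK} expresses (the $2$-adic completion of) $\GW(\Z[\frac{1}{2}])$ as a pullback of topological Hermitian $K$-theory spectra of $\R$ and $\C$ over that of $\mathbb{F}_{3}$. On $\pi_{\pm 2}$, using the identification ${}_{-1}W^{\topl}_{n}(\R) \cong W^{\topl}_{n+4}(\R)$ (the topological avatar of the symmetry-shift that $u_{\pm 2}$ is meant to effect), both target topological groups are $\Z$. The same lifting argument that produced $u_{\pm 4}$ in Lemma \ref{Bott.correct} then yields algebraic elements $u_{\pm 2}$ in the appropriate symmetry-shifted Witt groups of $\Z[\frac{1}{2}]$ whose images in the corresponding topological groups are each twice a chosen Bott generator.

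Next I would compute the product $u_{2}\cdot u_{-2}$ in $W_{0}(\Z[\frac{1}{2}])$. By naturality of cup product along the ring map $\Z[\frac{1}{2}] \to \R$, its image in $W^{\topl}_{0}(\R) \cong \Z$ is $(2x_{2})(2x_{-2}) = 4$, using the elementary $KO^{*}(\mathrm{pt})$ identity $x_{2}\,x_{-2} = 1$. Since $W_{0}(\Z[\frac{1}{2}]) \cong \Z \oplus \Z/2$, one must separately check that no contribution arises in the $\Z/2$-summand; this is handled by naturality under the second leg $\Z[\frac{1}{2}] \to \mathbb{F}_{3}$ of \eqref{square:BK}, giving $u_{2}\,u_{-2} = 4$ exactly.

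For the final assertion, cup product makes ${}_{\eps}W_{*}(A)$ a module over the bigraded Hermitian $K$-theory ring of $\Z[\frac{1}{2}]$ in a way that multiplication by $u_{2}$ or $u_{-2}$ sends ${}_{\eps}W_{n}(A)$ to ${}_{-\eps}W_{n+2}(A)$. Either composite is therefore multiplication by the scalar $u_{2}\,u_{-2} = 4 \in W_{0}(A)$, as claimed. The main obstacle will be tracking the bigraded symmetry conventions carefully (in particular, justifying that the stated $u_{2}$ actually lies in the correct symmetry-shifted component to toggle $\eps$ under cup product) and verifying that no hidden $\Z/2$-torsion contribution spoils the equality $u_{2}\,u_{-2}=4$; the detailed computation is carried out as Theorem~3.7 of \cite{MKAnnalsO}.
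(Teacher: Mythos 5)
The paper offers no proof of this lemma: it simply states that the result ``is a restatement of Theorem~3.7 in \cite{MKAnnalsO}.'' Since you also end by citing that theorem, the conclusion of your proposal agrees with the paper's; the issue is that the self-contained sketch you give on the way there does not work.

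The concrete error is in the second and third paragraphs. You assert that, after the identification ${}_{-1}W^\topl_n(\R)\cong W^\topl_{n+4}(\R)$, ``both target topological groups are $\Z$,'' and you then invoke ``the elementary $KO^*(\pt)$ identity $x_2 x_{-2}=1$.'' Neither claim is correct. With that identification one gets ${}_{-1}W^\topl_2(\R)\cong W^\topl_6(\R)\cong KO_6(\pt)=0$ and ${}_{-1}W^\topl_{-2}(\R)\cong W^\topl_2(\R)\cong KO_2(\pt)\cong\Z/2$, so neither target is $\Z$; and since $KO_{-2}(\pt)\cong KO_6(\pt)=0$, there is no class $x_{-2}$ at all, let alone one with $x_2 x_{-2}=1$ (the only multiplicative unit relation of this kind in $KO_*(\pt)$ is $x_8x_{-8}=1$). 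This is precisely why the argument for $u_{\pm 4}$ in Lemma~\ref{Bott.correct} and Corollary~\ref{4period} cannot be copied verbatim: there the topological targets $KO_{\pm 4}\cong\Z$ are torsion-free and the product $x_4x_{-4}$ is a well-defined nonzero integer, while in degree $\pm 2$ the $KO$-groups are torsion or zero.

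More fundamentally, a cup product operator that flips $\eps$ must be encoded by an element of an \emph{odd} duality shift (a class in $W^{[1]}$- or $W^{[3]}$-type groups), whose topological avatar over $\R$ is governed by $KU_*$, not by $KO_*$. Your proposal conflates these: the symmetry-shift identification you wrote down is the even (period-4) one, and the Bott relation you then use lives in the wrong ring. A corrected version of your plan would have to locate $u_{\pm 2}$ in the appropriate odd-shifted Grothendieck--Witt groups of $\Z[\tfrac12]$, identify the topological targets with $KU_{\pm 2}\cong\Z$, and redo the lifting and product computation there; none of that is routine, and the paper sidesteps it entirely by deferring to Theorem~3.7 of \cite{MKAnnalsO}. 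As written, your intermediate sketch contains false statements and so cannot stand as a proof, even though your final appeal to \cite{MKAnnalsO} matches the paper's treatment.
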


\newpage
\section{Topological Hermitian $K$-theory of involutive Banach algebras}
\label{app:Banach} 


The content of this section is essentially included in 
\cite{MKSLN343}, Section III, except at the end. 
The only originality is a new presentation of results
which were written down over forty years ago.

It is convenient to introduce the language of Clifford modules into
topological $K$-theory. (This was first pointed out by Atiyah, Bott and
Shapiro \cite{ABS} and developed in \cite{MKThesis}.) By definition, the
Clifford algebra $C^{p,q}$ is the $\R$-algebra generated by elements 
$e_{i}$ and $\varepsilon_{j}$ with $1\leq i\leq p$ and $1\leq j\leq q,$ with
the following relations:

$(e_{i})^{2}=-(\varepsilon_{j})^{2}=-1$,

$e_{i}\varepsilon_{j}+\varepsilon_{j}e_{i}=0$,

$e_{\alpha}e_{\beta}+e_{\beta}e_{\alpha}=0$ for $\alpha\neq\beta$,

$\eps_{\gamma }\eps_{\delta }+\eps_{\delta}\eps_{\gamma }=0$ 
for $\gamma\ne\delta.$

\medskip

If $A$ is a Banach algebra over $\R$, a $C^{p,q}\!$--$A$-module is a
finitely generated projective module over the tensor algebra $C^{p,q}\otimes
_{\R}A$; we write $\cE^{p,q}(A)$ for the category of $%
C^{p,q} $- $A$-modules. Using an averaging method over the $Pin$ group (see
e.g.\ \cite[p.\,185]{MKThesis}), one sees that this category is equivalent
to the category of finitely generated projective $A$-modules which are
provided with a $C^{p,q}$-module structure.


The group $K^{p,q}(A)$ is defined as the Grothendieck group of the
\textquotedblleft restriction of scalars\textquotedblright\ functor%
\begin{equation*}
\cE^{p,q+1}(A)\to\cE^{p,q}(A)
\end{equation*}
arising from $C^{p,q}\to C^{p,q+1}$. It is shown in \cite{MKThesis}
that this group is isomorphic to the classical topological $K$-group $%
K^{p-q}(A)=K_{q-p}^{{\topl}}(A)$ of the Banach algebra $A.$

It is convenient to describe the group $K^{p,q}(A)$ in terms of $\Z%
/2 $-gradings. A $\Z/2$-grading on a $C^{p,q}$ module $E$ is an
involution which provides $E$ with a $C^{p,q+1}$ module structure compatible
with the given $C^{p,q}$ module structure.

One considers triples $(E,\varepsilon,\eta),$ where $E$ is a $C^{p,q}$ $A$%
-module and $\varepsilon,\eta$ are two independent $\Z/2$-gradings. 
The group $K^{p,q}(A)$ is then generated by isomorphism classes of such
triples with the following relations:

$(E,\varepsilon,\eta)+(E',\varepsilon',\eta')=
(E\oplus E',\varepsilon\oplus\varepsilon',\eta\oplus
\eta')$

$(E,\varepsilon ,\eta )=0$ if $\varepsilon $ is homotopic to $\eta $ among
the $\Z/2$-gradings.

\medskip
\noindent
Let us assume now that the Banach algebra $A$ is provided with a continuous
automorphism $x\mapsto \overline{x}$ of order~2. 
We emphasize that $A$ is not necessarily a $C$* algebra, one reason being
that the element $1+x\overline{x}$ might not be invertible.

For any pair of integers $(p,q),$ we associate two kinds of topological
Hermitian $K$-groups to $A$, which we call $GW^{p,q}(A)$ and $U^{p,q}(A)$,
respectively. (The first group was originally called $L^{p,q}(A)$ in \cite%
{MKSLN343}.) 
These two definitions are in the same spirit as the previous definition of
the group $K^{p,q}(A).$ However, we should be careful about our definition
of $C^{p,q}$ module in the Hermitian framework. We distinguish two cases:
\goodbreak

1st case. The generators $e_{i}$ and $\varepsilon_{j}$ act by unitary
operators, i.e., $u.u^{\ast}=1$ with $u=e_{i}$ or $\varepsilon_{j}$. Such
modules are called \emph{Hermitian}; $\mathcal{Q}^{p,q}(A)$ denotes the
category of Hermitian $C^{p,q}$ modules.

2nd case. The generators $e_{i}$ and $\varepsilon_{j}$ act by antiunitary
operators, i.e., $u.u^{\ast}=-1$ with $u=e_{i}$ or $\varepsilon_{j}$ Such
modules are called \emph{skew-Hermitian}; $\mathcal{U}^{p,q}(A)$ denotes the
category of skew Hermitian $C^{p,q}$ modules.


Equivalently, the Clifford algebra $C^{p,q}$ can be provided by the
anti-involution defined by either 
$\overline{e_{i}}=-e_{i}, \overline{\varepsilon_{j}}=\varepsilon_{j}$ or 
$\overline{e_{i}}=e_{i},\overline {\varepsilon_{j}}=-\varepsilon_{j}.$ 
The categories $\mathcal{Q}^{p,q}(A)$
and $\mathcal{U}^{p,q}(A)$ are then defined more systematically as the
categories of Hermitian modules over the ring $C^{p,q}\otimes_{\R}A,$
the two cases relying on the two possible anti-involutions on the Clifford
algebra $C^{p,q}.$

Following the previous scheme for the groups $K^{p,q},$ we define the groups 
$GW^{p,q}(A)$ and $U^{p,q}(A)$ as the Grothendieck group of the restriction
of scalars functors%
\[
\mathcal{Q}^{p,q+1}(A) \to\mathcal{Q}^{p,q}(A)
\qquad\textrm{and}\qquad
\mathcal{U}^{p,q+1}(A) \to\mathcal{U}^{p,q}(A)
\]%
%
respectively. These groups can also be defined as homotopy groups of
suitable homotopy fibers, denoted respectively by $\GW^{p,q}(A)$ and 
$\mathbb{U}^{p,q}(A)$. We now state the ``fundamental theorem'' of
topological Hermitian $K$-theory \cite{MKSLN343}:

\begin{theorem}
We have natural homotopy equivalences%
\[
\GW^{p,q+1}(A)        \sim\Omega(\GW^{p,q}(A))
\qquad\textrm{and}\qquad
\mathbb{U}^{p,q+1}(A) \sim\Omega(\mathbb{U}^{p,q}(A)).
\]
%
\end{theorem}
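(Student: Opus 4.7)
The plan is to adapt the Atiyah--Bott--Shapiro proof of Bott periodicity in topological $K$-theory \cite{ABS} to the Hermitian setting. The essential geometric input is the \emph{Clifford rotation trick}: if $u_{q+1}$ and $u_{q+2}$ are the two additional anticommuting self-adjoint generators distinguishing a $C^{p,q+2}$-structure from a $C^{p,q}$-structure, then the one-parameter family
$$ u(t) \;=\; \cos(t)\, u_{q+1} \;+\; \sin(t)\, u_{q+2}, \qquad t \in [0,\pi], $$
is a continuous path of Clifford generators with $u(t)^2 = u_{q+1}^2$ for all $t$ (by anticommutation), and with $u(t)$ self-adjoint whenever both $u_{q+1}, u_{q+2}$ are. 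This observation is the only ingredient beyond the classical, non-Hermitian argument, and it works equally well in the antiunitary case for $\mathbb{U}$.

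First I would construct an explicit map $\GW^{p,q+1}(A) \to \Omega\, \GW^{p,q}(A)$. A representative triple $(E,\varepsilon,\eta)$ for a point of $\GW^{p,q+1}(A)$ consists of a Hermitian $C^{p,q+1}\otimes_{\R} A$-module $E$ together with two independent $\Z/2$-gradings extending the action to a $C^{p,q+2}$-action. Rotating the ``new'' grading generators via $\cos(t)\varepsilon + \sin(t)\eta$ and forgetting the $(q\!+\!2)$-nd generator produces, for each such triple, a continuous loop in the classifying space of the restriction functor $\mathcal{Q}^{p,q+1}(A) \to \mathcal{Q}^{p,q}(A)$, based at the canonical point. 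The construction is continuous in the data, natural in $A$, and preserves the Hermitian (respectively skew-Hermitian) structure throughout the rotation.

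Next I would prove that this map is a weak equivalence. The cleanest route is to exhibit both sides as homotopy fibers associated to the same tower $\mathcal{Q}^{p,q+2}(A) \to \mathcal{Q}^{p,q+1}(A) \to \mathcal{Q}^{p,q}(A)$: by definition, $\GW^{p,q+1}(A)$ is the homotopy fiber of the first arrow, $\GW^{p,q}(A)$ of the second; the rotation map identifies $\GW^{p,q+1}(A)$ with the loop space of the second fiber by realizing a contractible total space over $\GW^{p,q}(A)$. Alternatively, one may reduce to $A = \R$ (and to $A = \C$ for the skew case), where the assertion is precisely the 8-fold Bott periodicity for $KO$ proved via Clifford modules in \cite{ABS}; because both $A \mapsto \GW^{p,q+1}(A)$ and $A \mapsto \Omega\,\GW^{p,q}(A)$ are continuous additive functors on the category of involutive Banach algebras and the comparison map is natural, the agreement for $\R$ propagates to all $A$ by tensoring with the standard representation of the base and using the Morita-invariance properties established in \cite{MKThesis}.

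The main technical obstacle will be executing the rotation argument at the level of classifying spaces (or spectra) rather than merely at $\pi_0$: one must topologize the set of Hermitian Clifford actions using the operator norm on $\End_A(E)$, verify that $u(t)$ depends continuously on both $t$ and $(E,\varepsilon,\eta)$ in this topology, and arrange basepoints so that the output is genuinely a loop rather than a path. Once these foundational points are in place --- following the non-Hermitian template in \cite{MKThesis} --- the induced map on higher homotopy groups is forced to be an isomorphism by naturality and the base case $A = \R$, completing the fundamental theorem.
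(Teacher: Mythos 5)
The paper itself gives no proof of this theorem; it is cited from Karoubi's SLN~343, Section~III. Your overall strategy --- adapting the Atiyah--Bott--Shapiro Clifford-module proof of Bott periodicity, constructing a rotation loop, and arguing via a quasi-fibration with contractible total space over $\GW^{p,q}(A)$ --- is indeed the correct route and the one used in \cite{MKSLN343}.

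However, the explicit rotation formula you propose is wrong, and the error is not cosmetic. You write $u(t)=\cos(t)\varepsilon+\sin(t)\eta$, rotating the two independent gradings $\varepsilon,\eta$ into one another. But for a triple $(E,\varepsilon,\eta)$ representing a class in $\GW^{p,q+1}(A)$, the two gradings $\varepsilon$ and $\eta$ are only required to be \emph{independent}, not anticommuting. One computes
$(\cos t\,\varepsilon+\sin t\,\eta)^2 = 1 + \cos t\sin t\,(\varepsilon\eta+\eta\varepsilon)$,
which is not $1$ unless $\varepsilon\eta+\eta\varepsilon=0$; so your $u(t)$ is generically not an involution, and the claimed path of Clifford structures does not exist. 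Your first paragraph correctly identifies the anticommutation requirement, but you have applied it to the wrong pair of generators. The objects that \emph{do} anticommute are the last Clifford generator $\epsilon_{q+1}$ of the $C^{p,q+1}$-structure (which each of $\varepsilon,\eta$ anticommutes with by definition of a $(q\!+\!2)$-grading) and each of $\varepsilon,\eta$ separately. The correct loop in $\GW^{p,q}(A)$, after forgetting $\epsilon_{q+1}$ so that $E$ becomes a $C^{p,q}$-module, is the family of triples $(E,\gamma_1(t),\gamma_2(t))$ with
$\gamma_1(t)=\cos t\,\epsilon_{q+1}+\sin t\,\varepsilon$ and $\gamma_2(t)=\cos t\,\epsilon_{q+1}+\sin t\,\eta$, for $t\in[0,\pi]$.
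At $t=0$ and $t=\pi$ the two gradings coincide ($\pm\epsilon_{q+1}$), so the triple is degenerate and the path is a genuine loop at the basepoint; the self-adjoint/skew-adjoint conditions for $\mathcal{Q}^{p,q}$ (resp.\ $\mathcal{U}^{p,q}$) are preserved along the rotation. Your ``alternative'' reduction to $A=\R$ should also be dropped: the equivalence for a fixed base algebra does not propagate to arbitrary involutive Banach algebras by naturality and Morita invariance, because $\GW^{p,q}(A)$ is not controlled by $\GW^{p,q}(\R)$ in the required sense; the quasi-fibration argument must be run directly for general $A$, exactly as in the non-Hermitian case in \cite{MKThesis}.
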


Because of the periodicity of Clifford algebras \cite{ABS} \cite{MKThesis},
the theorem above leads to 16 homotopy equivalences. It implies that the
group $GW^{p,q}(A)$ is isomorphic to $GW_{n}^{{\topl}}(A)$ for $n=q-p$
mod 8. In the same way, the group $U^{p,q}(A)$ is isomorphic to 
$U_{n}^{{\topl}}(A)$.

One of the most remarkable cases is $p=0,q=1.$ The analysis of this case
made in \cite[p.\,338]{MKSLN343} shows that the group 
$_{\varepsilon}U^{0,1}(A)$ may be identified with the so-called $V$-group 
$_{-\varepsilon}V^{0,0}(A)$ which is the Grothendieck group
of the forgetful functor 
\vspace{6pt}
\[
{}_{-\varepsilon}\mathcal{Q}^{0,0}(A)\to\cE^{0,0}(A).
\]
We note here the change of symmetry between the $U$ and $V$ theories. This
particular result has been greatly generalized for discrete rings 
\cite{MKAnnalsH} and more generally categories with duality 
\cite{Schlichting.Fund}.

Another case of interest is $p=1,q=0.$ The following Lemma was stated
without proof in \cite{MKSLN343}.

\begin{lemma}\label{U10(A)} 
The group $U^{1,0}(A)=U_{-1}^{{\topl}}(A)$ is isomorphic to the 
Grothendieck group of the extension of scalars functor%
\[
\mathcal{Q}(A)\to\mathcal{Q}(A\otimes_{\R}\C).
\]
\end{lemma}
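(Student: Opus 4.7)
The plan is to (1)~unpack $U^{1,0}(A)$ as a Grothendieck group of triples of $\Z/2$-graded skew-Hermitian $C^{1,0}$-$A$-modules; (2)~identify the category $\mathcal{U}^{1,0}(A)$ itself with $\mathcal{Q}(A\otimes_{\R}\C)$; and (3)~show that the extra grading data on such a module corresponds exactly to a choice of real form, i.e., a Hermitian $A$-module whose complexification recovers the given $(A\otimes_{\R}\C)$-Hermitian module. The argument parallels the identification ${}_{\varepsilon}U^{0,1}(A)\cong{}_{-\varepsilon}V^{0,0}(A)$ recalled above.

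Concretely, a skew-Hermitian $C^{1,0}$-$A$-module is a triple $(E,\varphi,J)$, where $\varphi$ is an $A$-valued Hermitian form and $J=e_1$ satisfies $J^2=-1$ together with $J^{\ast}=J$ (because the skew-Hermitian anti-involution on $C^{1,0}$ is $\overline{e_1}=e_1$); a $\Z/2$-grading is an additional operator $K=\varepsilon_1$ with $K^2=1$, $KJ+JK=0$, and $K^{\ast}=-K$. For step~(2) I would use the canonical isomorphism of involutive $\R$-algebras $C^{1,0}\cong\C$ given by $e_1\mapsto i$, with trivial involution on $\C$ matching $\overline{e_1}=e_1$; this identifies $C^{1,0}\otimes_{\R}A$ with $A\otimes_{\R}\C$ equipped with the $\C$-linear extension of the involution on $A$, hence $\mathcal{U}^{1,0}(A)\simeq\mathcal{Q}(A\otimes_{\R}\C)$. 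Explicitly, $(E,\varphi,J)$ is sent to $(E,\Phi)$, where $E$ is turned into an $A\otimes_{\R}\C$-module by declaring $i=J$ and
\[
\Phi(v,w) := i\,\varphi(v,w)+\varphi(v,Jw);
\]
a direct computation using $J^2=-1$ and $J^{\ast}=J$ shows that $\Phi$ is $(A\otimes_{\R}\C)$-sesquilinear and Hermitian, and the inverse functor reads off $\varphi$ from the ``$i$-component'' of $\Phi$.

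For step~(3), a grading $K$ is a $\C$-antilinear involution on the complex $A$-module $E$, so $F:=\{v\in E : Kv=v\}$ is an $A$-module with a canonical identification $E\cong F\otimes_{\R}\C$. The skew-adjointness $K^{\ast}=-K$ forces $\varphi|_{F\times F}=0$, so
\[
\psi(v,w) := \varphi(v,Jw), \qquad v,w\in F,
\]
takes values in $A$ and defines a Hermitian form on $F$ over $A$; a direct computation on the four pieces of the decomposition $E=F\oplus JF$ shows that the extension of scalars of $(F,\psi)$ to $A\otimes_{\R}\C$ agrees with $(E,\Phi)$. Conversely, any Hermitian $A$-module $(F,\psi)$ arises from a ``hyperbolic''-type triple on $F\oplus F$ equipped with the obvious operators $J$ and $K$.

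Putting the pieces together, a triple $(E,\varepsilon,\eta)$ in the generating set for $U^{1,0}(A)$ corresponds to a triple $(F_{1},F_{2},\alpha)$, where $F_1,F_2\in\mathcal{Q}(A)$ and $\alpha\colon F_{1}\otimes_{\R}\C\map{\sim}F_{2}\otimes_{\R}\C$ is an isomorphism in $\mathcal{Q}(A\otimes_{\R}\C)$; this is precisely the generating data for the Grothendieck group of the extension-of-scalars functor $\mathcal{Q}(A)\to\mathcal{Q}(A\otimes_{\R}\C)$, and direct sums together with the homotopy relation on gradings match its defining relations. The main technical obstacle will lie in step~(2): pinning down the sign convention in the formula for $\Phi$ so that the extension of scalars of $(F,\psi)$ recovers $(E,\Phi)$ block-by-block on $F\oplus JF$ (rather than only up to a unit in $A\otimes_{\R}\C$); once the formula is fixed, the remainder is routine bookkeeping.
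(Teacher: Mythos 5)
Your proposal is correct, but it follows a genuinely different route than the paper's one-paragraph argument. The paper does two categorical moves: first, it replaces the skew-adjoint gradings $\varepsilon_i$ (which satisfy $\varepsilon_i^\ast=-\varepsilon_i$) by the self-adjoint operators $\eta_i=J\varepsilon_i$; this repackages the data as a Hermitian module over $C^{1,1}\otimes_{\R}A$, with the anti-involution on $C^{1,1}$ fixing both $J$ and $\eta$. Second, it invokes the isomorphism $C^{1,1}\cong M_2(\R)$ and Morita equivalence to identify $\mathcal{Q}(A\otimes_{\R}C^{1,1})$ with $\mathcal{Q}(A)$; under this identification the restriction functor to $\mathcal{Q}(A\otimes_{\R}C^{1,0})=\mathcal{Q}(A\otimes_{\R}\C)$ becomes exactly the extension-of-scalars functor. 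Your proof instead works at the level of modules: you first identify $\mathcal{U}^{1,0}(A)$ with $\mathcal{Q}(A\otimes_{\R}\C)$ explicitly via the complexified form $\Phi$, and then show that a skew-adjoint grading $K$ carves out a real form $F=E^{K=1}$ carrying a Hermitian form $\psi(v,w)=\varphi(v,Jw)$. The two are closely related in spirit — your formula $\psi=\varphi(-,J-)$ is the concrete incarnation of the paper's substitution $\eta=J\varepsilon$ — but the Morita argument in the paper neatly avoids the sign and block-decomposition bookkeeping that you correctly flag as the technical obstacle in your plan, and it handles the comparison of relations (Karoubi's homotopy-of-gradings relation vs.\ Bass's relations for the Grothendieck group of a functor) automatically. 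Your approach, on the other hand, is closer to the eigenspace constructions used in Theorems \ref{GR=KOG} and \ref{-GR=KRH}, so it fits naturally with the methods elsewhere in the paper; just be aware that you still owe a careful identification of the two sets of generating data and relations at the end, which the Morita route handles without any extra work.
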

\goodbreak\noindent
Here $\C$ is the field of complex numbers with the trivial involution.

\begin{proof}
An element of $U^{1,0}(A)$ is given by a quadruple $(E,J,\varepsilon_{1},%
\varepsilon_{2})$ where $E$ is an Hermitian module, and $J,\varepsilon_{1},%
\varepsilon_{2}$ automorphisms such that $J^{2}=-1,J^{\ast}=J,(%
\varepsilon_{i})^{2}=1,J\varepsilon_{i}=-\varepsilon_{i}J$ and finally $%
(\varepsilon_{i})^{\ast}=-\varepsilon_{i}.$ To this quadruple one can
associate bijectively another one $(E,J,\eta _{1},\eta_{2})$ with $%
\eta_{i}=J\varepsilon_{i}.$ One still has $(\eta _{i})^{2}=1$ and $%
J\eta_{i}=-\eta_{i}J,$ but now $(\eta_{i})^{\ast}=\eta_{i}. $ Therefore, the
group $U^{1,0}(A)$ may be identified with the Grothendieck group of the
functor%
\[
\mathcal{Q}(A\otimes_{\R}C^{1,1})\to\mathcal{Q}(A\otimes_{\R}C^{1,0})
\]
where $C^{1,0}=\C$ has the trivial involution and $C^{1,1}=M_2(\R)$ has the
involution defined by $J^{\ast}=J$ and $\eta^{\ast}=\eta$ on the generators
(with $J^{2}=-1$ and $\eta^{2}=1)$. By Morita equivalence, this functor
coincides with the extension of the scalars functor above.
\end{proof}

Another topological interpretion of this result is to consider the 
algebra $B$ of continuous functions $f:S^{1}\to A\otimes_{\R}\C$ 
such that $f(\overline{z})=\overline{f(z)}$ and $f(1)=0.$ 
If we put $D^{1}=\left[ -1,1\right]$ and 
$S^{0}=\left\{ -1,1\right\}$, with the involution $x\mapsto -x$ as the
analog of complex conjugation, we see by a topological deformation of 
$\left[-1,1\right]$ into $\left\{0\right\}$ that the Hermitian 
$K$-theory of $B$ is the same as the $K$-theory of the functor
\[
\mathcal{Q}(A)\to \mathcal{Q}(A\otimes _{\R}C^{1,0})
\]
which is therefore $U_{-1}^\topl(A),$ according to Lemma \ref{U10(A)}. 

\begin{defn}
Let $A\langle z,z^{-1}\rangle$ the the algebra of ``Laurent'' series $\sum
a_{n}z^{n}$ with $a_{n}\in A$ and $\sum\left\Vert a_{n}\right\Vert <+\infty.$

Let $A_z$ denote the subalgebra of $A\langle z,z^{-1}\rangle$ 
consisting of all series $\sum a_{n}z^{n}$ with $\sum a_{n}=0.$
\end{defn}

By the usual density theorem in $K$-theory \cite[p.\,109]{MKbook} and the
theory of Fourier series, the group $GW_0(B)$ may be identified with 
$GW_0(A_{z}) $. Therefore, we get the following theorem.

\begin{theorem}
\label{Laurent} Let $A$ be an involutive Banach algebra. Then we have a long
exact sequence%
\[
\to U_{i}^{{\topl}}(A)\to GW_{i}^{{\topl}%
}(A)\to GW_{i}(A\langle z,z^{-1}\rangle )\to U_{i-1}^{\topl}(A)\to
\]
\end{theorem}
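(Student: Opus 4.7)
The plan is to derive the long exact sequence from a split short exact sequence of involutive Banach algebras, then identify one of the resulting terms with a shift of $\mathbb{U}^\topl$-theory using the discussion already carried out in the excerpt.

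First, I would observe that evaluation at $z=1$ gives a split short exact sequence of involutive Banach algebras
\[
0 \to A_z \to A\langle z, z^{-1}\rangle \to A \to 0,
\]
whose splitting is the inclusion of constants $A \hookrightarrow A\langle z, z^{-1}\rangle$. Applying topological Hermitian $K$-theory produces a fibration of spectra
\[
\GW^\topl(A_z) \to \GW^\topl(A\langle z, z^{-1}\rangle) \to \GW^\topl(A),
\]
and hence a long exact sequence on homotopy groups.

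Next, I would identify $\GW^\topl(A_z)$ with (an appropriate shift of) $\mathbb{U}^\topl(A)$. The $\pi_0$-level identification $GW_0^\topl(A_z) \cong U_{-1}^\topl(A)$ is essentially done in the paragraphs immediately preceding the theorem: by the density theorem in $K$-theory and Fourier analysis, $\GW^\topl(A_z)$ is equivalent to the topological Hermitian $K$-theory of the Banach algebra $B$ of continuous maps $f:S^1 \to A \otimes_\R \C$ satisfying $f(\bar z)=\overline{f(z)}$ and $f(1)=0$; the topological deformation of $[-1,1]$ to $\{0\}$ identifies $\GW^\topl(B)$ with the Hermitian $K$-theory of the extension-of-scalars functor $\mathcal{Q}(A) \to \mathcal{Q}(A\otimes_\R\C)$; and Lemma~\ref{U10(A)} identifies this with $U_{-1}^\topl(A)$. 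Using the $\Z/2$-graded Clifford-module formulation of $\GW^\topl$ and $\mathbb{U}^\topl$ from the start of the appendix, together with the fundamental theorem (Bott periodicity) stated there, this identification propagates to all degrees.

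Substituting this identification into the long exact sequence from the fibration and reindexing produces the long exact sequence claimed in the theorem. The main obstacle is upgrading the $\pi_0$-level identification to an equivalence of spectra, natural in $A$ and compatible with the connecting maps of the fibration; this requires careful bookkeeping of the degree shift arising from the contraction $[-1,1] \simeq \{0\}$ and from Bott periodicity, together with verification that the induced maps on the $\mathbb{U}^\topl$ side match the natural transformations from $U^\topl$-theory into $GW^\topl$-theory.
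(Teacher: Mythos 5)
Your proposal correctly identifies every ingredient the paper uses: the split exact sequence $0 \to A_z \to A\langle z,z^{-1}\rangle \to A \to 0$ via evaluation at $z=1$ and inclusion of constants, the density/Fourier identification $GW_0(A_z) \cong GW_0(B)$, the contraction of $[-1,1]$ to a point, and Lemma~\ref{U10(A)} giving $GW_0(B) \cong U^\topl_{-1}(A)$. Note however that the fibration you wrote down, $\GW^\topl(A_z) \to \GW^\topl(A\langle z,z^{-1}\rangle) \to \GW^\topl(A)$, yields on homotopy groups the sequence $U^\topl_{i-1}(A) \to GW_i(A\langle z,z^{-1}\rangle) \to GW^\topl_i(A) \to U^\topl_{i-2}(A)$, in which $GW^\topl_i(A)$ and $GW_i(A\langle z,z^{-1}\rangle)$ appear in the opposite order to the theorem and the map is evaluation at $z=1$ rather than inclusion of constants. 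To match the statement literally, you should instead take the homotopy fiber of the inclusion of constants $\GW^\topl(A) \to \GW^\topl(A\langle z,z^{-1}\rangle)$; since you have already observed that this map is split, its homotopy fiber is $\Omega\,\GW^\topl(A_z)$, with $\pi_i = GW_{i+1}(A_z) = U^\topl_i(A)$, and the associated long exact sequence is precisely the one claimed (this also parallels the direction of the Bass fundamental sequence recalled in Remark~\ref{rem:Laurent}). As to the concern you raise at the end: the density theorem and the deformation of $[-1,1]$ to $\{0\}$ are statements about spaces of sections and hence work at the level of the representing spectra, not merely on $\pi_0$, and the propagation to all degrees is exactly what the Clifford-module formulation and the fundamental theorem stated earlier in this appendix provide; so this is bookkeeping rather than a gap.
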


\begin{remark}\label{rem:Laurent}
The identification of $U_{-1}^\topl(A)$ with $GW_{0}(A_{z})$ in the previous
theorem is the topological analog of a theorem in (algebraic) Hermitian $K$%
-theory for any discrete ring $A$ : the group $U_{-1}(A)$ is inserted in an
exact sequence, analogous to the sequence of 
the fundamental theorem in algebraic $K$-theory, 
\begin{equation*}
0\to GW_0(A)\to {\atop{GW_0(A[z])\;\oplus}{GW_0(A[z^{-1}])}} \to GW_0(A[z,z^{-1}]) 
\to U_{-1}(A)\to0.
\end{equation*}%
See \cite[p.\,390]{MKlocalisation} and also \cite{Hornbostel-Schlichting}
for a more recent approach.
\end{remark}


We apply these general considerations to the following situation. We start
with a compact space $X$ with an involution, i.e., an action of the cyclic
group $G=C_{2}.$ Following again Atiyah's terminology, we call $X$ a Real
space and we let $x\mapsto \overline{x}$ denote the involution. In this
situation, our basic algebra $A$ will be the algebra of complex continuous
functions $f:X\to \C$ such that $f(\overline{x})=\overline{f(x)}.$ 
It is well known that $K_0(A)$ is Atiyah's Real $K$-theory, $KR(X)$
(see \cite[Ex.\,III.7.16d]{MKbook} and \cite{KW:KRA}). On the other hand, 
$GW_0(A)$ is $GR(X)$, which is isomorphic to the usual equivariant 
$KO$-theory, $KO_{G}(X)$, by Theorem \ref{GR=KOG}.

Finally, the group $GW_0(A_{z})$ 
may be interpreted as the relative group 
$KO_{G}(X\times D^1,X\times S^0)=KO_G(X\times S^1,X),$
where $D^1=[-1,1]$, $S^0=\{-1,1\}$ and $S^1$
have the involution $t\mapsto-t$.
A more synthetic version of this last definition is simply 
$KO_{G}(X\times \R)$ which is $KO_{G}$-theory with compact supports, 
$\R$ being also provided with the involution $t\mapsto -t.$ 
A corollary of this discussion is the following seemingly unknown link 
between $KR$-theory and $KO_{G}$-theory, which we shall use in
Theorem \ref{WR=KOG} of the text.

\begin{theorem}\label{cup-product} 
Let $X$ be a compact space with involution. Then we have
the following exact sequence 
\[
\to KR(X)\to KO_{G}(X)\map{\gamma}
KO_{G}(X\!\times\!\R)\to KR^{1}(X) \to KO^1_{G}(X)\map{\gamma},
\]
where the map $\gamma $ is induced by the cup-product with a generator of $%
KO_{G}(\R)\cong \Z$, where the involution acts as $-1$ on $\R$.
\end{theorem}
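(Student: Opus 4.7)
The idea is to apply to the involutive Banach algebra $A = \{f\in C(X,\C) : f(\bar x) = \overline{f(x)}\}$ the two long exact sequences in topological Hermitian $K$-theory available from the fundamental theorem recalled above, and then to identify the resulting boundary with $\gamma$ by multiplicativity and a base-case check.

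First I would invoke the Karoubi-type exact sequence
\[
\cdots \to U_n^\topl(A) \xrightarrow{F} K_n^\topl(A) \xrightarrow{H} GW_n^\topl(A) \xrightarrow{\partial} U_{n-1}^\topl(A) \to \cdots,
\]
which in topological Hermitian $K$-theory follows from the fundamental theorem (see \cite{MKSLN343} and the preceding discussion), together with Theorem \ref{Laurent}: the splitting $A\langle z,z^{-1}\rangle \twoheadrightarrow A$, $z\mapsto 1$, applied to the latter forces a natural isomorphism $GW_n(A_z) \xrightarrow{\cong} U_{n-1}^\topl(A)$. For the particular $A$ above the standard dictionary gives $K_{-n}^\topl(A)=KR^n(X)$ and, by Theorem \ref{GR=KOG}, $GW_{-n}^\topl(A)=KO_G^n(X)$, while the geometric translation presented immediately before the statement of the theorem identifies $GW_0(A_z) = KO_G(X\!\times\!\R)$. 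Substituting these identifications into the Karoubi sequence at $n=0,-1$ yields exactly the six-term piece
\[
KR(X)\to KO_G(X) \xrightarrow{\partial} KO_G(X\!\times\!\R) \to KR^1(X)\to KO_G^1(X)
\]
claimed in the theorem; 8-fold Bott periodicity then extends it in either direction.

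It remains to identify $\partial\colon KO_G(X)\to KO_G(X\!\times\!\R)$ with the cup-product $\gamma$. Both maps are natural in $X$ and $KO_G^*(\pt)$-linear, the first because the whole Karoubi sequence is a sequence of modules over its analog for the ground algebra $\R$ (with the trivial involution), the second by definition. It therefore suffices to verify that $\partial(1)$ is the chosen generator of $KO_G(\R)\cong\Z$. This I would do at $X=\pt$: there $KO_G(\pt) = R_\R(C_2) = \Z\oplus\Z$ (trivial plus sign representation), $K_{\pm 1}^\topl(\R)=0$, and the Karoubi sequence collapses to a surjection $\Z\oplus\Z \twoheadrightarrow U_{-1}^\topl(\R)\cong KO_G(\R)\cong\Z$ whose kernel is the image of the hyperbolic map $KO\to R_\R(C_2)$, $1\mapsto(1,1)$. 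Hence $\partial(1)$ is a generator of $KO_G(\R)$, which may then be taken as the generator used to define $\gamma$.

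The main technical obstacle I anticipate is not any single step but rather keeping the module structure intact through the isomorphism $GW_0(A_z)\cong U_{-1}^\topl(A)$ of the first paragraph: one has to verify that the connecting map in the Laurent-series sequence of Theorem \ref{Laurent} is compatible with external products of Clifford modules, a chain-level compatibility at the level of the categories $\mathcal{Q}^{p,q}$ and $\mathcal{U}^{p,q}$ constructed at the beginning of this appendix. Once this compatibility is in hand, everything else reduces to routine bookkeeping through identifications already established in the text.
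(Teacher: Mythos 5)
Your proposal is correct and follows the same route as the paper's proof: run the Karoubi $U$-theory exact sequence for the involutive Banach algebra $A$ of Real functions, substitute the identifications $GR(X)\cong KO_G(X)$ and $UR_{-1}(X)\cong KO_G(X\times\R)$, and pin down the boundary map as a cup product by $KO_G(X)$-module linearity, naturality, and a check at $X=\pt$. One small slip in the base case: $K_{1}^{\topl}(\R)=\pi_1 BO=\Z/2\neq 0$, but only $K_{-1}^{\topl}(\R)=KO^{1}(\pt)=0$ is actually used for the surjectivity onto $U_{-1}^{\topl}(\R)$, so the conclusion stands.
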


\begin{proof}
This sequence is the exact sequence of $U$-theory, with the substitutions 
$KO_{G}(X)$ for $GR(X)$ and $KO_{G}(X\times \R)$ for $U^{1}(X)$.
Since $\gamma $ is a map of $KO_{G}(X)$-modules, it is the (external)
cup product with an element of $KO_{G}(X\times \R)$. 
By naturality in $X$, it is the cup product with an element of 
$KO_{G}(\R)\cong \Z$. To see that this element is a generator, 
we may choose $X$ to be a point. Since $KR^{1}(X)=KO^{1}(X)=0,$ 
$\gamma$ is surjective in this case, as required.
\end{proof}

\begin{remark}
If $X$ is a ``nice'' $G$-space (i.e. with orbits having equivariant tubular
neighborhoods), it is possible to give an elementary proof of the previous
theorem by reducing it to the two extreme cases of a free $G$ action and of
a trivial $G$ action.
We leave this as an exercise for the reader.
\end{remark}

As seen in Theorem \ref{GR=KOG}, the cokernel of the map $KR(X)\to
KO_{G}(X)$ is the Real Witt group $WR(X).$ Therefore, Theorem \ref%
{cup-product} implies the following corollary which we shall use in the
computations of the Witt group of real smooth projective curves 
(Section \ref{sec:curves}).

\begin{corollary}
As a $KO_{G}(X)$ module, the Real Witt group $WR(X)$ is a submodule of $%
KO_{G}(X\times \R).$ Moreover, if the map $KR^{1}(X)\to
KO_{G}^{1}(X)$ is injective, we have $WR(X)\cong KO_{G}(X\times \R).$
\end{corollary}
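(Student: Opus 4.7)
The plan is to read the corollary directly off the long exact sequence of Theorem \ref{cup-product}, using Theorem \ref{GR=KOG} to identify $WR(X)$ as the cokernel of $KR(X)\to KO_G(X)$.

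First I would extract from Theorem \ref{cup-product} the exact subsequence
\[
KR(X)\to KO_G(X)\map{\gamma} KO_G(X\times\R)\to KR^1(X)\to KO_G^1(X).
\]
By Theorem \ref{GR=KOG} (and the definition of $WR$ as the cokernel of $KR(X)\to GR(X)\cong KO_G(X)$), exactness at $KO_G(X)$ identifies $WR(X)$ with $KO_G(X)/\ker(\gamma) = \mathrm{image}(\gamma)$. Since $\gamma$ is a $KO_G(X)$-module map (being the cup product with a class in $KO_G(\R)$), its image is naturally a $KO_G(X)$-submodule of $KO_G(X\times\R)$. This proves the first assertion.

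For the second assertion, I would run the exact sequence one step further to the right. If the connecting arrow $KR^1(X)\to KO_G^1(X)$ is injective, then exactness at $KR^1(X)$ forces the map $KO_G(X\times\R)\to KR^1(X)$ to vanish. Exactness at $KO_G(X\times\R)$ then gives that $\gamma$ is surjective, so $\mathrm{image}(\gamma)=KO_G(X\times\R)$, yielding the desired isomorphism $WR(X)\cong KO_G(X\times\R)$.

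This is essentially a direct diagram chase and I do not expect any real obstacle; the only subtle point is checking that the identification $WR(X) \cong \mathrm{image}(\gamma)$ respects the $KO_G(X)$-module structure, which follows from the fact that $\gamma$ is given by external cup-product with a generator of $KO_G(\R)\cong\Z$ and hence is $KO_G(X)$-linear by construction.
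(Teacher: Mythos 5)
Your proposal is correct and follows exactly the route the paper intends: the corollary is stated without a separate proof because it is read directly off the exact sequence of Theorem \ref{cup-product}, using Theorem \ref{GR=KOG} to identify $WR(X)$ with $\operatorname{coker}(KR(X)\to KO_G(X))$, just as you do. Your care in verifying that the induced isomorphism onto $\mathrm{image}(\gamma)$ is $KO_G(X)$-linear (because $\gamma$ is cup-product with a class coming from $KO_G(\R)$) is the only point that merits being spelled out, and you handle it correctly.
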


\section{Williams' conjecture for Banach algebras}
\label{sec:Marco}  

Finally, although we don't need it for our applications, the following
theorem is worth mentioning; it is related to the considerations in
our Section \ref{sec:williams} and 
provides a more conceptual proof of Theorem \ref{Williams-GR}.

For a (real) Banach algebra with involution, denote by $_{\eps}\GW_\topl(A)$ its Bott periodic topological hermitian K-theory spectrum \cite{MKSLN343} with underlying $\Omega^{\infty}$-loop space homotopy equivalent to $_{\eps}GW_0(A)\times B{_{\eps}O}_\topl(A)$ and deloopings obtained using topological suspensions.
Here ${_{\eps}O}_\topl(A)$ is the $\eps$-orthogonal group of $A$ equipped with the topology induced from $A$.
Similarly, denote by $\bbK_\topl(A)$ its Bott periodic topological $K$-theory.

\begin{theorem}\label{Williams-Banach} 
Let $A$ be an arbitrary (real) Banach algebra with
involution (not necessarily $C^{\ast })$. We then have a $2$-adic homotopy
equivalence%
\begin{equation*}
_{\varepsilon}\GW_\topl(A)\simeq \bbK_\topl(A)^{h_{\varepsilon}G}
\end{equation*}%
where $h_{\varepsilon }G$ denotes the space of homotopy fixed points for the 
$\varepsilon $ action of $G$ which is detailed in \cite[p.\,808]{BK}.
\end{theorem}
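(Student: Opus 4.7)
The plan is to adapt the proof of Theorem \ref{Williams-GR} to the general Banach-algebra setting, using as primary input the algebraic Williams-type theorem of Berrick-Karoubi-{\O}stvaer-Schlichting \cite[1.1]{BKOS}. The first step is to construct the natural comparison map
\[
\phi_A : {}_\eps\GW_\topl(A) \to \bbK_\topl(A)^{h_\eps G}.
\]
The $\eps$-duality involution on finitely generated projective (Banach) $A$-modules equips $\bbK_\topl(A)$ with the $\eps$-duality $G$-action, and the forgetful functor from the category of $\eps$-Hermitian modules to modules equipped with an $\eps$-duality refines, via the universal property of homotopy fixed points, to $\phi_A$.

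The second step is to reduce the claimed 2-adic equivalence to a finite computation via Bott periodicity. By the Fundamental Theorem of Appendix \ref{app:Banach}, ${}_\eps\GW_\topl^{p,q}(A)$ is 8-periodic in $q-p$, and $\bbK_\topl(A)$ is 2-periodic. Both periodicity generators originate from the Atiyah-Bott-Shapiro/Clifford construction, and their duality behaviour is transparent (the Bott generators are $G$-invariant up to the sign absorbed by $\eps$). Hence $\phi_A$ is a 2-adic equivalence if and only if it induces an isomorphism on $\pi_*(-;\Z/2^\nu)$ across a single Bott period, uniformly in $\nu$.

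The third step invokes the main theorem of \cite[1.1]{BKOS} applied to $A$ as a ring with involution: the algebraic comparison map ${}_\eps\GW(A) \to \bbK(A)^{h_\eps G}$ is a 2-adic equivalence. Combining this with the natural transformation $\GW(A)\to \GW_\topl(A)$ and $\bbK(A)\to \bbK_\topl(A)$, and then inverting the Bott element on both sides (which is $G$-invariant up to $\eps$), the algebraic 2-adic equivalence propagates to the topological one, yielding the required statement.

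The main obstacle will be Step 3: making precise the interaction between Bott inversion and the homotopy fixed-point functor, which is the Banach-algebra analogue of the Atiyah-Segal completion theorem used in Theorem \ref{Williams-GR}. One must verify that 2-adic completion commutes with $G$-fixed points after Bott inversion, or equivalently, that the Bott element behaves well under the duality action. Either this can be done directly in the Clifford-module framework of Appendix \ref{app:Banach}, where both sides admit tautological $G$-actions whose fixed points are computable from the categories $\mathcal{Q}^{p,q}(A)$ and $\mathcal{U}^{p,q}(A)$, or one can argue by a Mayer-Vietoris/naturality reduction using functoriality in $A$ to deduce the general case from the basic cases $A=\R,\C,\H$, where the statement follows from classical Bott periodicity together with the explicit description of homotopy fixed points of $\bKU$ under complex conjugation.
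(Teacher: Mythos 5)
Your proposal does not follow the paper's route, and Step 3 as written contains a genuine gap. You observe correctly that one can map the algebraic Grothendieck--Witt theory of $A$ (as a discrete ring) to its topological version and that \cite[1.1]{BKOS} gives a 2-adic equivalence ${}_\eps\GW(A)\to\bbK(A)^{h_\eps G}$ on the algebraic side. But the vertical maps $\GW(A)\to\GW_\topl(A)$ and $\bbK(A)\to\bbK_\topl(A)$ are not equivalences (they do not see the topology of $A$), so a 2-adic equivalence of the top row of a commutative square does not transfer to the bottom row. Bott inversion alone does not bridge this, because the Bott element of topological $K$-theory does not come from a class in algebraic $\bbK_0(A)$. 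In short, "propagating" the algebraic BKOS theorem to the topological one requires an argument you have not supplied, and it is exactly here that the real work lies.

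The paper's proof is structured very differently. It starts not from the algebraic BKOS theorem applied to $A$, but from the homotopy cartesian square of \cite[Thm.\,7.6]{Schlichting.Fund}:
\begin{equation*}
\xymatrix{ \GW^{[n]}_\topl(A) \ar[r] \ar[d] & \GW^{[n]}_\topl(A)[\eta^{-1}] \ar[d]\\
(\bbK^{[n]}_\topl(A))^{hC_2} \ar[r] & (\bbK^{[n]}_\topl(A))^{hC_2}[\eta^{-1}],}
\end{equation*}
valid here because the simplicial realization and Bott inversion used to pass from the algebraic to the topological theory preserve fiber sequences. Reducing mod $2^\nu$ and showing the right vertical map is an equivalence (because both right-hand terms vanish) immediately gives the 2-adic equivalence of the left vertical map. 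The whole proof therefore collapses onto one elementary statement, which your proposal does not identify: the element $\eta\in\pi_{-1}\GW^{[-1]}_\topl(\R)$ is nilpotent in $\GW_\topl/2^\nu(\R)$. This is verified by computing $u=\beta\eta^8$ in $GW_0(\R)\cong\Z[\varepsilon]/(\varepsilon^2-1)$, checking that $u$ dies in $GW_0(\C)$, and then observing $u^2=2a^2(1-\varepsilon)$ so that high powers of $u$ are divisible by $2^\nu$. Since everything in sight is a $\GW_\topl(\R)$-module spectrum, the nilpotency of $\eta$ annihilates the $\eta$-inverted terms mod $2^\nu$. Your proposed fallback (Mayer--Vietoris reduction of a general Banach algebra to $\R$, $\C$, $\H$) is not available: an arbitrary Banach algebra with involution does not decompose this way, so one cannot argue by naturality from these three cases.
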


\begin{proof}
Let $A$ be a (real) Banach algebra with involution. 
Recall from \cite[\S 10]{Schlichting.Fund} the spectrum
\[
GW_\topl^{[n]}(A) = |\uGW^{[n]}(A\Delta^*_\topl)|
\]
which is the realization of the simplicial spectrum 
$q\mapsto \uGW^{[n]}(A\Delta^q_\topl)$ of the $n$-th shifted Grothendieck-Witt spectrum of the discrete ring $A\Delta^q_\topl$ of continuous functions $\Delta^q_\topl \to A$ from the standard topological $q$-simplex $\Delta^q_\topl$ to $A$.
It's connective cover is equivalent to the connective cover of $_{\eps}\GW_\topl(A)$
(when $n=0\mod 4$ and $\eps =1$, or $n=2\mod 4$ and $\eps =-1$); see \cite[Prop.\,10.2]{Schlichting.Fund}.
 Its negative homotopy groups are the
Balmer Witt groups of $A$ (considered as a discrete ring); 
see \cite[Remark 10.4]{Schlichting.Fund}.
The spectrum $GW_\topl^{[n]}(A)$ is a module spectrum over the 
ring spectrum $GW_\topl(\R)$.
Recall that $\GW_\topl^{[n]}(A)$ denotes 
the module spectrum which has the same connective cover as $GW_\topl^{[n]}(A)$ 
but with negative homotopy groups obtained through deloopings using 
topological suspensions.
There is a map of module spectra $GW_\topl^{[n]}(A)\to \GW_\topl^{[n]}(A)$ 
since the source naturally maps to the version with deloopings constructed via algebraic
suspension \cite[\S 8]{Schlichting.Fund} which naturally maps to 
$\GW_\topl^{[n]}(A)$. 
Let $K_\topl(A)$ and $\bbK_\topl(A)$ denote the corresponding
topological $K$-theory versions. For instance $K_\topl(\R)$ is connective
topological real vector bundle $K$-theory and $\bbK_\topl(R)$ is Bott
periodic topological real vector bundle $K$-theory. 
We have a map of ring spectra $\bbK_\topl(\R)\to\GW_\topl(\R)$ 
since the hermitian $K$-theory of positive definite forms is equivalent 
to $\bbK_\topl(\R)$ and canonically maps to $\GW_\topl(\R)$.
In particular, $\GW_\topl(\R)$ and hence $\GW_\topl(A)$ are
Bott-periodic with period $8$, and $\GW_\topl(A)$ is obtained from $GW_\topl(A)$
by inverting the Bott element $\beta \in \pi_8GW_\topl(\R)$.

Let $\eta \in GW_{-1}^{[-1]}(\R)=W(\R)=\Z$ be a
generator. In \cite[Thm.\,7.6]{Schlichting.Fund} it is proved that the
following square of spectra is homotopy cartesian 
\begin{equation*}
\xymatrix{ \GW^{[n]}_\topl(A) \ar[r] \ar[d] & \GW^{[n]}_\topl(A)[\eta^{-1}] \ar[d]\\
(\bbK^{[n]}_\topl(A))^{hC_2} \ar[r] & (\bbK^{[n]}_\topl(A))^{hC_2}[\eta^{-1}].}
\end{equation*}%
Strictly speaking an algebraic version of this statement was proved.
However, \cite[Thm.\,7.6]{Schlichting.Fund} is a formal consequence of
\cite[Thm.\,6.1]{Schlichting.Fund} which holds for the topological versions
considered here since simplicial realization and 
inverting the Bott element preserves fibre sequences of spectra.

Let $\nu $ be an integer $\nu\ge1$. From the homotopy cartesian square
above, we obtain the homotopy cartesian square 
\[
\xymatrix{\GW^{[n]}_\topl/2^{\nu}(A)\ar[r]\ar[d]& 
          \GW^{[n]}_\topl/2^{\nu}(A)[\eta^{-1}] \ar[d]\\
(\bbK^{[n]}_\topl/2^{\nu}(A))^{hC_2} \ar[r] & 
(\bbK^{[n]}_\topl/2^{\nu}(A))^{hC_2}[\eta^{-1}].}
\]
We will show that the right vertical map is an equivalence simply by showing
that the two right hand terms are zero. This clearly implies that the left
vertical map is a weak equivalence. 
Since the diagram is a diagram of $\GW_\topl(\R)$-module spectra, we are now done by the following lemma.
\end{proof}

\begin{lemma}
\label{lem:etaNilpotent}
For each $\nu>0$, $\eta$ is nilpotent in $\GW_\topl/2^{\nu}(\R)$.
\end{lemma}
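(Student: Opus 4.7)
The plan is to reduce the nilpotence of $\eta$ to the vanishing of the $\eta$-localized mod-$2^\nu$ spectrum, and to obtain that vanishing through a topological analog of Brumfiel's theorem applied at $V=\Spec\R$.

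First, I would establish an equivalence of spectra
\[
\GW_\topl(\R)[\eta^{-1}]\ \simeq\ KO[1/2],
\]
where $KO$ denotes topological real $K$-theory of a point. This can be obtained by repeating the argument of Section \ref{sec:Brumfiel} verbatim in the topological setting, with $\GW$ replaced by $\GW_\topl$: the image of $\tilde\eta = 1-\langle T\rangle$ (equivalently of $\eta$, up to sign) in $KO_0(\pt)$ acts as multiplication by $2$, as shown in Lemma \ref{lem:KOmultby2}, so inverting $\eta$ has the effect of inverting $2$ on the topological $K$-theory target. Alternatively, this equivalence is essentially Karoubi's classical computation of the Bott-periodic topological Witt spectrum of $\R$, recalled in Appendix \ref{app:Banach}.

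Second, I would observe that $KO[1/2]$ has $2$-divisible homotopy groups in every degree (they are either $0$ or $\Z[1/2]$), so the cofiber $KO[1/2]/2^\nu$ vanishes as a spectrum. Since localization at $\eta$ commutes with reduction mod $2^\nu$ (both being filtered-colimit/cofiber constructions), it follows that
\[
(\GW_\topl(\R)/2^\nu)[\eta^{-1}]\ \simeq\ \GW_\topl(\R)[\eta^{-1}]/2^\nu\ \simeq\ KO[1/2]/2^\nu\ \simeq\ 0.
\]

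Consequently, every element of $\pi_*(\GW_\topl(\R)/2^\nu)$ is annihilated by some power of $\eta$. Applying this to the element $\eta\in\pi_{-1}\GW_\topl^{[-1]}(\R)/2^\nu$ itself yields $\eta^{k+1}=\eta\cdot\eta^k=0$ for some $k\ge 0$, which is the claimed nilpotence. The principal obstacle is the first step, the topological Brumfiel-type identification $\GW_\topl(\R)[\eta^{-1}]\simeq KO[1/2]$: one must verify that the proof of Proposition \ref{L->KO} goes through with $\GW_\topl$ in place of the algebraic $\GW$, using Nisnevich/real-\'etale descent and the fact that for the point $\Spec\R$ only the ``already topological'' contributions survive. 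Once this input is secured, the remainder is a formal manipulation of telescopes and cofibers.
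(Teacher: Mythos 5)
Your overall strategy is genuinely different from the paper's, which is a direct ring computation: the paper notes that $u=\beta\eta^8\in GW_0(\R)=\Z[\eps]/(\eps^2-1)$ lies in the kernel of $GW_0(\R)\to GW_0(\C)$ (since $\eta^8$ vanishes through $W(\C)=\Z/2$), so $u=a(1-\eps)$, and then $(1-\eps)^2=2(1-\eps)$ forces $u^{k+1}$ to be divisible by $2^k$ and hence $\eta$ is nilpotent mod $2^\nu$. Your plan instead reduces nilpotence to the vanishing of the $\eta$-localized mod-$2^\nu$ spectrum. That reduction (steps 2--4) is formally correct: $(\GW_\topl(\R)/2^\nu)[\eta^{-1}]\simeq 0$ is indeed equivalent to every element being $\eta$-power torsion, which applied to $\eta$ itself gives nilpotence, and $\eta$-localization commutes with cofibers.

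The gap is in step 1, the claimed equivalence $\GW_\topl(\R)[\eta^{-1}]\simeq KO[1/2]$. Lemma \ref{lem:KOmultby2} and the Brumfiel machinery of Section \ref{sec:Brumfiel} produce a map $\GW_\topl(\R)[\eta^{-1}]\to KO[1/2]$ and show it becomes an equivalence after one further inverts $2$ on the \emph{source}: Proposition \ref{L->KO} is a statement about $L(V)[\tfrac12]\to KO(V_\R)[\tfrac12]$, not about $L(V)\to KO(V_\R)[\tfrac12]$. To drop that $[\tfrac12]$ on the source you must know that $\pi_*\GW_\topl(\R)[\eta^{-1}]$ is already $\Z[\tfrac12]$-local, and this is not automatic: in the algebraic setting $\pi_0 L(\R)=W(\R)=\Z$, which is \emph{not} $2$-local, so the analogous claim fails there. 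Your alternative justification via Appendix \ref{app:Banach} points to the identification $WR(\pt)\cong KO(\pt)=\Z$, but this is the cokernel of the hyperbolic map, not $\pi_0$ of the $\eta$-inverted spectrum, and the two only agree up to $2$-torsion. It turns out that $\pi_0\GW_\topl(\R)[\eta^{-1}]$ \emph{is} $\Z[\tfrac12]$, but verifying this requires tracing a factor of $2$ through the $\eta\cup$-boundary maps in the topological Bott sequences (the composite $\ker(F)\hookrightarrow\Z^2\twoheadrightarrow\mathrm{coker}(H)$ is multiplication by $2$), which boils down to the same identity $(1-\eps)^2=2(1-\eps)$ in $GW_0(\R)$ that the paper's proof uses directly. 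So your route, once made rigorous, is not shorter; it repackages the same algebraic input under an extra layer of machinery, and as written it leaves the crucial $2$-locality of the $\eta$-inverted source unjustified.
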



\begin{proof}
Consider the element $u=\beta \eta^8 \in GW_0(\R)= \pi_0\GW_\topl(\R)$. 
Under the map of
ring spectra $\GW_\topl(\R) \to \GW_\topl(\C)$, the element $u$ 
goes to zero simply because $\eta^8 \in \pi_{-8}\GW_\topl(\C)=\Z$ 
is in the image of the zero map
$$\pi_{-8}\uGW(\R) \to \pi_{-8}\uGW(\C) =  W(\C) = \Z/2 \to \pi_{-8}\GW_\topl(\C)=\Z.$$ 
As rings, the map $GW_0(\R) \to GW_0(\C)$ is 
$\Z[\varepsilon]/(\varepsilon^2=1) \to \Z: \varepsilon \mapsto 1$ where 
$\varepsilon$ corresponds to the form $\R \times \R \to 
\R:(x,y)\mapsto -xy$. So $u=a(1-\varepsilon)$ for some $a\in\Z$. 
We have $u^2 = a^2(1-\varepsilon)^2 = 2a^2(1-\varepsilon)$ since $%
\varepsilon^2=1$. Hence for $m=2\nu$, we have $\beta^{m}\eta^{8\cdot(m)} =
u^{(m)}= (u^2)^{\nu}=2^{\nu} a^{(m)}(1-\varepsilon)^{\nu}$ in $GW_0(\R)$. Since $\beta$ is a
unit, we are done.
\end{proof}

Note that the same argument for complex Banach algebras $A$ with involution
gives an integral equivalence 
\begin{equation*}
\GW_\topl(A)\overset{\simeq }{\longrightarrow }(\bbK_\topl(A))^{hC_{2}}
\end{equation*}%
simply because a power of $\eta $ is zero in $\GW_\topl(\C)$; see proof of Lemma \ref{lem:etaNilpotent}.

\bigskip

\end{document}